\newcommand{\bbC}{{\mathbb{C}}}
\newcommand{\bbD}{{\mathbb{D}}}
\newcommand{\bbF}{{\mathbb{F}}}
\newcommand{\bbG}{{\mathbb{G}}}
\newcommand{\bbH}{{\mathbb{H}}}
\newcommand{\bbR}{{\mathbb{R}}}
\newcommand{\bbZ}{{\mathbb{Z}}}
\newcommand{\calA}{{\mathcal{A}}}
\newcommand{\calB}{{\mathcal{B}}}
\newcommand{\calC}{{\mathcal{C}}}
\newcommand{\calE}{{\mathcal{E}}}
\newcommand{\calG}{{\mathcal G}}
\newcommand{\calH}{{\mathcal H}}
\newcommand{\calI}{{\mathcal I}}
\newcommand{\calL}{{\mathcal L}}
\newcommand{\calP}{{\mathcal P}}
\newcommand{\calT}{{\mathcal T}}
\newcommand{\calV}{{\mathcal V}}
\newcommand{\calW}{{\mathcal W}}
\newcommand{\bdone}{{\boldsymbol{1}}}
\newcommand{\lb}{\label}
\newcommand{\wti}{\widetilde  }
\newcommand{\tr}{\text{\rm{Tr}}}
\newcommand{\spec}{\text{\rm{spec}}}
\newcommand{\supp}{\text{\rm{supp}}}
\newcommand{\bi}{\bibitem}
\newcommand{\beq}{\begin{equation}}
\newcommand{\eeq}{\end{equation}}
\newcommand{\ba}{\begin{align}}
\newcommand{\ea}{\end{align}}
\renewcommand{\Im}{\operatorname{Im}}
\let\det=\undefined\DeclareMathOperator{\det}{det}
\newcounter{smalllist}
\newenvironment{SL}{\begin{list}{{\rm\roman{smalllist})}}{%
\setlength{\topsep}{0mm}\setlength{\parsep}{0mm}\setlength{\itemsep}{0mm}%
\setlength{\labelwidth}{2em}\setlength{\leftmargin}{2em}\usecounter{smalllist}%
}}{\end{list}}
\newcommand{\bigtimes}{\mathop{\mathchoice%
{\smash{\vcenter{\hbox{\LARGE$\times$}}}\vphantom{\prod}}%
{\smash{\vcenter{\hbox{\Large$\times$}}}\vphantom{\prod}}%
{\times}%
{\times}%
}\displaylimits}
\newcommand{\comm}[1]{}
\let\hom\relax
\DeclareMathOperator{\hom}{Hom}
\numberwithin{equation}{section}
\newtheorem{theorem}{Theorem}[section]
\newtheorem{proposition}[theorem]{Proposition}
\newtheorem{lemma}[theorem]{Lemma}
\newtheorem{corollary}[theorem]{Corollary}
\theoremstyle{definition}
\newtheorem{example}[theorem]{Example}
\newtheorem{conjecture}[theorem]{Conjecture}
\newtheorem{fact}[theorem]{Fact}
\newtheorem{problem}[theorem]{Problem}
\newtheorem{guess}[theorem]{Initial Guess}
\newtheorem*{remark}{Remark}
\newtheorem*{remarks}{Remarks}
\newcommand{\jap}[1]{\langle #1 \rangle}
\newcommand{\norm}[1]{\lVert#1\rVert}
\begin{document}

\title[Periodic Trees]{Periodic Jacobi Matrices on Trees}
\author[N.~Avni, J.~Breuer and B.~Simon]{Nir Avni$^{1,4}$, Jonathan Breuer$^{2,5}$ \\and Barry Simon$^{3,6}$}

\thanks{$^1$ Department of Mathematics, Northwestern University, Evanston, IL  E-mail: avni.nir@gmail.com}

\thanks{$^2$ Institute of Mathematics, The Hebrew University of Jerusalem, Jerusalem, 91904, Israel. E-mail: jbreuer@math.huji.ac.il.}

\thanks{$^3$ Departments of Mathematics and Physics, Mathematics 253-37, California Institute of Technology, Pasadena, CA 91125, USA. E-mail: bsimon@caltech.edu.}

\thanks{$^4$ Research supported in part by NSF grant DMS-1902041.}

\thanks{$^5$ Research supported in part by Israeli BSF Grant No. 2014337. and Israel Science Foundation Grant No. 399/16}

\thanks{$^6$ Research supported in part by NSF grant DMS-1665526 and in part by Israeli BSF Grant No. 2014337.}

\

\date{\today}
\keywords{Jacobi Matrices, Trees, Spectral Theory}
\subjclass[2010]{47B36, 47B15 20E08}

\begin{abstract}  We begin the systematic study of the spectral theory of periodic Jacobi matrices on trees including a formal definition.  The most significant result that appears here for the first time is that these operators have no singular continuous spectrum.  We review important previous results of Sunada and Aomoto and present several illuminating examples.  We present many open problems and conjectures that we hope will stimulate further work.
\end{abstract}

\maketitle

\section{Introduction} \lb{s1}

One of the crowning achievements of spectral theory and of mathematical physics is the theory of periodic Schr\"{o}dinger operators especially in one dimension and the related theory of periodic Jacobi matrices.  In this paper, we discuss a variety of aspects of a class of objects that we feel are an interesting analog of these one dimensional objects, in many ways closer to the one dimensional periodic theory than the theory on $\bbZ^\nu$ or $\bbR^\nu$ when $\nu>1$.  Remarkably, there is almost no study of these objects in the mathematical physics or spectral theory literature and we hope this paper will stimulate those communities to their further study.

These objects are periodic Jacobi matrices on discrete trees.  In  general, a Jacobi matrix on a graph is an operator that acts on $\ell^2$ of the vertices of the graph with non-zero matrix elements only for pairs of indices that are the two vertices at the ends of an edge of the graph or diagonal matrix elements.  It is not even clear what it means for such a Jacobi matrix to be periodic and one of our goals here is to give a precise definition: If $\calG$ is a finite graph without leaves, then its universal cover, $\calT$, is a tree on which the fundamental group of $\calG$ acts. If $J$ is a Jacobi matrix on $\calG$, it has a natural lift to $\calT$ which commutes with the set of unitaries on $\ell^2(\calT)$ induced by the fundamental group.

What makes these new objects fascinating is that the underlying group is not Abelian (unless $\calT=\bbZ$) and what makes the theory difficult is that there is no longer a Floquet analysis based on the one dimensional irreducible representation of the underlying symmetry group.

There has been some study of the objects we study (albeit without a formal definition of what they are!) among the community of researchers that has focused on the theory of Laplacians and adjacency matrices on infinite graphs, a community which unfortunately has not had close connection to others working in spectral theory. We believe there are, so far, three major results on these objects:

(1) \emph{Gap labeling}.  If $p$ is the underlying period, then the density of states in a gap of the spectrum is $j/p\, (j\in\{1,2,\dots,p-1\})$.  In particular there are at most $p-1$ gaps in the spectrum and so a band structure.  This result is implicit in Sunada \cite{Sun} who proves a band structure for certain continuum Schr\"{o}dinger operators on manifolds with a hyperbolic symmetry and remarks in passing that the ideas also hold for suitable discrete operators.

(2) \emph{No flat bands on regular trees}. In Aomoto \cite{AomotoPoint}, it is proven that if a tree is regular (i.e.\ of constant degree), then periodic Jacobi matrices have no point spectrum.  On the other hand, the same paper describes an example of a non-regular periodic tree with a Jacobi matrix that has point spectrum.

(3) \emph{Absence of singular continuous spectrum}. The result, new here, that periodic Jacobi matrices on trees have no singular continuous spectrum.

One of our main goals in this paper is to discuss these three results.  Another is to describe a few explicit models which we feel are illuminating.  Finally, we intend to discuss a number of conjectures and open questions that we feel could be stimulating.

Section \ref{s2} summarizes the 1D case which serves as the source of some of our conjectures. Section \ref{s3} makes precise our definition of periodic Jacobi matrices and presents a definition of the period.  Section \ref{s4} defines the density of states and discusses its relation to eigenvalue counting densities while Section \ref{s5} presents gap labeling.  The appendix
 provides an exposition of the basic gap labeling result which we feel will be more accessible to spectral theorists. Section \ref{s6} discusses linked equations for the Green's and m--functions which we then use to prove the absence of singular continuous spectrum.  Section \ref{s7} has some of the promised illuminating examples and Section \ref{s8} discusses the context for Aomoto's result on no point spectrum in the case of regular trees.  Sections \ref{s9} and \ref{s10} presents a number of open questions and conjectures.

We'd like to thank a number of people for illuminating discussions: Nalini Anantharaman, Jacob Christiansen, Latif Eliaz, Alexandre Eremenko, Matthias Keller, Wolfgang Woess and Maxim Zinchenko.

\section{Highlights in One Dimension} \lb{s2}

As we explained, thinking of 1D Jacobi matrices as regular degree 2 trees makes classical two-sided Jacobi matrices our guide for what to look for in the analysis of general trees, so to set notation and expectations, we briefly summarize the beautiful theory of 1D periodic Jacobi matrices.  A reference for much of this is \cite[Chapters 5 and 6]{SiSz}.

Our operators, $J$, act on $\ell^2(\bbZ)$ and depend on two sequences of real numbers $\{a_n\}_{n\in\bbZ}$ and $\{b_n\}_{n\in\bbZ}$ with $a_n>0$.  The Jacobi matrix indexed by those sequences acts by
\begin{equation}\label{2.1}
  (Ju)_n = a_n u_{n+1} + b_n u_n + a_{n-1} u_{n-1}
\end{equation}
It is natural to think of $\bbZ$ as a graph with vertices $j\in\bbZ$ and edges from $j$ to $j+1$.  $b_j$ is associated to the vertex at $j$ and $a_j$ to the edge from $j$ to $j+1$.

We suppose that J is periodic, that is for some $p\in\bbZ_+$, we have that
\begin{equation}\label{2.2}
  a_{k+p}=a_k, \qquad b_{k+p}=b_k
\end{equation}
for all $k\in\bbZ$.

For a bounded measurable function, $f$, one defines $f(J)$, an operator on $\ell^2(\bbZ)$ by the spectral theorem \cite[Section 5.1]{OT}.  The spectral measures, $d\mu_n, n\in\bbZ$, are defined by
\begin{equation}\label{2.3}
  \jap{\delta_n,f(J)\delta_n} = \int f(\lambda)\,d\mu_n(\lambda)
\end{equation}
For $\Omega\subset\bbR$, a Borel set and $\chi_\Omega$ the characteristic function of $\Omega$, $\chi_\Omega(J)$ is the spectral projection, $P_\Omega(J)$.  Because of \eqref{2.2}, we have that
\begin{equation}\label{2.3a}
  d\mu_{k+p} = d\mu_k
\end{equation}

\textbf{A. The DOS and Gap Labeling} For $a,b\in\bbZ$, we let $\chi_{[a,b]}$ be the operator on $\ell^2(\bbZ)$ that is the orthogonal projection onto vectors supported on $[a,b]$.  By \eqref{2.2} and \eqref{2.3a}, we have that

\begin{fact} \lb{F2.1}
\begin{equation}\label{2.4}
  \lim_{a\to -\infty;\,b\to\infty} \frac{1}{b-a}\tr(\chi_{[a,b]} f(J)) = \int f(\lambda)\,dk(\lambda)
\end{equation}
where
\begin{equation}\label{2.5}
  dk(\lambda) = \frac{1}{p}\sum_{j=1}^{p} d\mu_j(\lambda)
\end{equation}
\end{fact}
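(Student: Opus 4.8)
The plan is to reduce the normalized trace to an average of the diagonal spectral measures and then exploit their periodicity through a complete-blocks-plus-remainder count. First I would note that since $\chi_{[a,b]}$ is a finite-rank projection and $f(J)$ is bounded, the product $\chi_{[a,b]}f(J)$ is trace class, and its trace is the sum of the diagonal matrix elements over the supporting sites:
\[
  \tr(\chi_{[a,b]} f(J)) = \sum_{n=a}^{b} \jap{\delta_n, f(J)\delta_n} = \sum_{n=a}^{b} \int f(\lambda)\, d\mu_n(\lambda),
\]
where the second equality is the definition \eqref{2.3} of the spectral measures $d\mu_n$. (Here one uses that $\chi_{[a,b]}$ is a self-adjoint projection fixing $\delta_n$ exactly when $n\in[a,b]$, so the off-range diagonal terms drop out.)

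Next, I would use the periodicity \eqref{2.3a}, namely $d\mu_{n+p}=d\mu_n$, to organize this sum. Writing the number of lattice points as $b-a+1 = qp+r$ with $0\le r<p$, the sum decomposes into $q$ complete blocks, each contributing $\sum_{j=1}^{p}\int f\, d\mu_j$, together with a partial block of at most $r<p$ terms. Since each $d\mu_n$ is a probability measure --- taking $f\equiv 1$ in \eqref{2.3} gives total mass $\jap{\delta_n,\delta_n}=1$ --- the partial block is bounded in absolute value by $(p-1)\norm{f}_\infty$, a constant independent of $a$ and $b$.

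Finally, I would divide by $b-a$ and let $a\to-\infty$, $b\to\infty$. The complete blocks contribute $\frac{q}{b-a}\sum_{j=1}^{p}\int f\, d\mu_j$; since $qp = b-a+1-r$ forces $q/(b-a)\to 1/p$, while the remainder is $\Oh(1/(b-a))\to 0$, the limit equals $\frac{1}{p}\sum_{j=1}^{p}\int f\, d\mu_j = \int f(\lambda)\,dk(\lambda)$ by the definition \eqref{2.5} of $dk$.

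This argument is elementary, so I do not expect a genuine obstacle; the only point demanding mild care is controlling the endpoint remainder uniformly as the two limits $a\to-\infty$ and $b\to\infty$ are taken independently, which the fixed bound $(p-1)\norm{f}_\infty$ accomplishes. The same bookkeeping shows the choice of normalization $1/(b-a)$ versus $1/(b-a+1)$ is immaterial in the limit.
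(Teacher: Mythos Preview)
Your proposal is correct and is exactly the argument the paper has in mind: the paper does not spell out a proof but simply writes ``By \eqref{2.2} and \eqref{2.3a}, we have that'' before stating the fact, and your complete-blocks-plus-remainder computation is the natural way to fill in those details.
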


$k(\lambda) = \int_{\infty}^\lambda dk$ is called the \emph{integrated density of states} (IDS) and $dk$ the \emph{density of states} (DOS).  The DOS for periodic Schr\"{o}dinger operators goes back to the earliest days of condensed matter theory.  It seems to have entered the mathematical physics community in the more general consideration of ergodic discrete Schr\"{o}dinger operators; some of the results are due to Benderskii--Pastur \cite{BP}, Pastur \cite{P73}, Nakao \cite{N77}, Kirsch--Martinelli \cite{KM82} and Avron--Simon \cite{AS83} who also had Facts \ref{F2.2} and \ref{F2.3} below.

Since \eqref{2.5} is a finite sum, we have that $\supp(dk)=\cup_{j=1}^p \supp(d\mu_j)$ so

\begin{fact} \lb{F2.2} \spec(J)=\supp(dk)
\end{fact}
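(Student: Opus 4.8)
The plan is to reduce Fact \ref{F2.2} to the general spectral-theoretic identity that, for a bounded self-adjoint operator and any total set of vectors, the spectrum equals the closure of the union of the supports of the associated spectral measures, and then to use periodicity to drop the closure. Since $\{\delta_n\}_{n\in\bbZ}$ is an orthonormal basis of $\ell^2(\bbZ)$, hence a total set, the goal is the intermediate identity
\begin{equation}
  \spec(J) = \overline{\bigcup_{n\in\bbZ}\supp(d\mu_n)}.
\end{equation}

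First I would dispose of the easy inclusion $\supp(d\mu_n)\subseteq\spec(J)$ for each $n$: by the spectral theorem $J$ restricted to the cyclic subspace generated by $\delta_n$ is unitarily equivalent to multiplication by $\lambda$ on $L^2(d\mu_n)$, and multiplication by $\lambda$ has spectrum exactly $\supp(d\mu_n)$, which is therefore contained in $\spec(J)$. Taking the union over $n$ and closing gives one half of the identity.

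The main step is the reverse inclusion, where the totality of $\{\delta_n\}$ is essential. To show $\spec(J)\subseteq\overline{\bigcup_n\supp(d\mu_n)}$, I would argue contrapositively: if $\lambda_0$ lies outside this closed set, choose an open interval $I=(\lambda_0-\veps,\lambda_0+\veps)$ disjoint from every $\supp(d\mu_n)$, so that $\mu_n(I)=0$ for all $n$. For the spectral projection $P_I=\chi_I(J)$ this yields $\norm{P_I\delta_n}^2=\jap{\delta_n,P_I\delta_n}=\mu_n(I)=0$, whence $P_I\delta_n=0$ for every $n$. Since the $\delta_n$ have dense span, $P_I=0$, so $I$ lies in the resolvent set and $\lambda_0\notin\spec(J)$.

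Finally I would invoke periodicity. By \eqref{2.3a} we have $d\mu_{k+p}=d\mu_k$, so $\bigcup_{n\in\bbZ}\supp(d\mu_n)=\bigcup_{j=1}^p\supp(d\mu_j)$ is a \emph{finite} union of closed sets, hence already closed, and the closure in the identity above is superfluous. Combining this with the equality $\supp(dk)=\bigcup_{j=1}^p\supp(d\mu_j)$ noted just before the statement gives $\spec(J)=\supp(dk)$. The proof is short, and the only genuine content is the total-set argument in the reverse inclusion together with the periodicity reduction that turns an infinite union (needing a closure) into a finite one; I expect no serious obstacle beyond ensuring that $\{\delta_n\}$ is indeed invoked as a total set rather than merely as an orthonormal set.
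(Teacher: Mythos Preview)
Your proposal is correct and is essentially the same argument the paper has in mind. The paper gives only the one-line justification ``Since \eqref{2.5} is a finite sum, we have that $\supp(dk)=\cup_{j=1}^p \supp(d\mu_j)$'' and then states Fact~\ref{F2.2}; the general spectral-theoretic identity $\spec(J)=\overline{\bigcup_n\supp(d\mu_n)}$ for a total set $\{\delta_n\}$ is left implicit, and you have simply supplied that missing detail together with the periodicity reduction.
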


Let $J^{(k)}_{D,P}$ be $J$ restricted to $[1,kp]$ with Dirichlet (i.e.\ setting $a_0=a_{kp}=0$) or periodic (i.e.\ putting $a_0$ in the upper right and lower left corners) boundary conditions.  Then Avron--Simon \cite{AS83} proved that

\begin{fact} \lb{F2.3} $k(E)$ is an eigenvalue counting density, i.e. if $N^{(k)}_{D,P}(E)$ is the number of eigenvalues of $J^{(k)}_{D,P}$ below $E$, then
\begin{align}
  k(E) &= \lim_{k\to\infty} N^{(k)}_D(E)/kp \lb{2.6} \\
       &= \lim_{k\to\infty} N^{(k)}_P(E)/kp \lb{2.7}
\end{align}
\end{fact}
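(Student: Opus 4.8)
The plan is to establish \eqref{2.6}--\eqref{2.7} as a consequence of weak convergence of the finite-volume eigenvalue counting measures, proved by the method of moments. For each $k$ let $\nu^{(k)}_D$ and $\nu^{(k)}_P$ be the normalized counting measures $\tfrac{1}{kp}\sum_j \delta_{\lambda_j}$, where the $\lambda_j$ range over the eigenvalues (with multiplicity) of $J^{(k)}_D$ and $J^{(k)}_P$ respectively, so that $N^{(k)}_{D,P}(E)/kp = \nu^{(k)}_{D,P}\big((-\infty,E)\big)$. Since $J^{(k)}_D$ is the compression $\chi_{[1,kp]}J\chi_{[1,kp]}$ and $J^{(k)}_P$ differs from it by the rank-at-most-two operator carrying the corner entries $a_0$, both families of matrices have operator norm bounded uniformly in $k$, so all the measures $\nu^{(k)}_{D,P}$ live in one fixed compact interval. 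On such a set weak convergence is equivalent to convergence of every moment, so it is enough to prove, for each integer $m\ge 0$,
\[
  \frac{1}{kp}\,\tr\!\big((J^{(k)}_{D,P})^m\big)\;\xrightarrow[k\to\infty]{}\;\int \lambda^m\,dk(\lambda).
\]

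The core estimate is a boundary bound. Writing $P=\chi_{[1,kp]}$ and expanding in closed walks on $\bbZ$, $\tr\big((PJP)^m\big)$ counts length-$m$ closed walks confined to $[1,kp]$, while $\tr\big(PJ^mP\big)$ counts those that start and end in $[1,kp]$ but may exit. Because $J$ is tridiagonal, any walk contributing to the difference must originate within distance $m$ of one of the two endpoints; for fixed $m$ the number and magnitude of such walks is bounded independently of $k$, giving $\abs{\tr\big((J^{(k)}_D)^m\big)-\tr\big(PJ^mP\big)}\le C_m$ with $C_m$ independent of $k$. Passing from $J^{(k)}_D$ to $J^{(k)}_P$ only inserts the rank-bounded corner operator into the power, which alters the trace by a further $O(1)$. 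Since $[1,kp]$ is a whole number of periods, \eqref{2.3a} and \eqref{2.5} give $\tfrac{1}{kp}\tr(PJ^mP)=\int\lambda^m\,dk$ exactly (this is Fact \ref{F2.1} with $f(\lambda)=\lambda^m$); dividing the boundary bound by $kp$ then yields the displayed moment convergence for both boundary conditions.

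Convergence of all moments, together with the uniform compact support, upgrades to weak convergence $\nu^{(k)}_{D,P}\to dk$. Weak convergence of probability measures forces convergence of the associated distribution functions $\nu^{(k)}_{D,P}\big((-\infty,E)\big)\to k(E)$ at every $E$ where the limit $k$ is continuous. For a one-dimensional periodic Jacobi matrix the spectrum is a finite union of bands and $dk$ has no point masses, so $k$ is continuous at every energy; this promotes the weak convergence to the pointwise statements \eqref{2.6} and \eqref{2.7}.

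The main obstacle is the boundary (walk-counting) estimate producing the $k$-independent bound $C_m$: one must verify that truncating to $[1,kp]$ perturbs the $m$-th moment only through walks localized near the two endpoints, uniformly in the block length, and that the transition to periodic boundary conditions contributes a comparably controlled error. A secondary point is the continuity of $k$, i.e.\ the absence of atoms in $dk$, which is precisely what allows the passage from weak convergence to convergence of the counting functions at every energy $E$.
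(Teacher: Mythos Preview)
Your argument is sound. The paper itself does not prove Fact~\ref{F2.3}; it simply attributes the result to Avron--Simon \cite{AS83} as part of a survey of the one-dimensional theory, so there is no in-paper proof to compare against line by line.

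Your moment/walk-counting route is a standard and clean way to obtain this statement. The key boundary estimate is correct: for a tridiagonal operator, the walks distinguishing $\tr\big((PJP)^m\big)$ from $\tr\big(PJ^mP\big)$ must start within $m$ steps of one of the two endpoints, giving an $O_m(1)$ discrepancy, and the passage from Dirichlet to periodic boundary conditions is a fixed-rank perturbation whose effect on $\tr\big((\,\cdot\,)^m\big)$ is likewise $O_m(1)$. The identity $\tfrac{1}{kp}\tr(PJ^mP)=\int\lambda^m\,dk$ is exact by periodicity, as you note. One small comment: the final step, upgrading weak convergence to pointwise convergence of the counting functions at \emph{every} $E$, rests on $dk$ having no atoms. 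You invoke this correctly, but note that ``spectrum is a finite union of bands'' alone does not rule out atoms; what you are really using is the absence of point spectrum (Fact~\ref{F2.7}), which in this survey section is a separately recorded classical fact. Since Section~\ref{s2} is a list of known results rather than a logically ordered development, there is no circularity, but it is worth being explicit that this is the input.
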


The following also goes back to the earliest days of quantum theory or even earlier given the work of Hill and Floquet.

\begin{fact} \lb{F2.4} (Band Structure) $\spec(J)$ is a union of at most $p$ disjoint closed intervals.
\end{fact}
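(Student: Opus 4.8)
The plan is to use the classical Floquet (transfer-matrix) analysis, which is available because the recursion $a_nu_{n+1}+b_nu_n+a_{n-1}u_{n-1}=Eu_n$ becomes a two-term recursion once solved for $u_{n+1}$. First I would introduce the one-step transfer matrices
\[
A_n(E)=\begin{pmatrix}\dfrac{E-b_n}{a_n} & -\dfrac{a_{n-1}}{a_n}\\[4pt] 1 & 0\end{pmatrix},
\]
so that $\binom{u_{n+1}}{u_n}=A_n(E)\binom{u_n}{u_{n-1}}$, and set $T(E)=A_p(E)A_{p-1}(E)\cdots A_1(E)$ together with the discriminant $\Delta(E)=\tr T(E)$. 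Two elementary computations organize everything: $\det A_n(E)=a_{n-1}/a_n$, so by telescoping and the periodicity $a_0=a_p$ one gets $\det T(E)=1$; and tracking the top-left entries shows $\Delta$ is a real polynomial in $E$ of degree exactly $p$ with positive leading coefficient $(a_1\cdots a_p)^{-1}$.

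Next I would identify the spectrum. The Floquet/direct-integral transform exhibits $J$ as $\int^\oplus J(\theta)\,\frac{d\theta}{2\pi}$ over $\theta\in[0,2\pi)$, where each fiber $J(\theta)$ is the $p\times p$ self-adjoint matrix obtained by restricting $J$ to one period with the boundary condition $u_{n+p}=e^{i\theta}u_n$. A fiber vector is an eigenvector of $J(\theta)$ at energy $E$ exactly when $T(E)$ has $e^{i\theta}$ as an eigenvalue; since $\det T(E)=1$, the eigenvalues of $T(E)$ are $\lambda,\lambda^{-1}$ with $\lambda+\lambda^{-1}=\Delta(E)$, and $e^{i\theta}$ occurs iff $\Delta(E)=2\cos\theta$. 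Letting $\theta$ sweep $[0,\pi]$, so that $2\cos\theta$ sweeps $[-2,2]$, yields
\[
\spec(J)=\{E\in\bbR:\ |\Delta(E)|\le 2\}=\Delta^{-1}([-2,2]).
\]
Being the preimage of a compact interval under a polynomial, this is a closed, bounded, finite union of intervals (the bands), and the band edges, as eigenvalues of the self-adjoint matrices $J(0)$ and $J(\pi)$, are real.

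It remains to bound the number of bands by $p$, and here the crux is the \textbf{monotonicity lemma}: $\Delta'(E)\neq 0$ whenever $|\Delta(E)|<2$. Granting it, $\Delta$ is strictly monotone on the interior of each band; its two endpoints are then the extreme values $\pm2$ in some order, so on each band $\Delta$ runs monotonically from $-2$ to $+2$ (or the reverse), in particular vanishing exactly once, while in every gap $|\Delta|>2$ forbids a zero. Hence the number of bands is at most the number of real zeros of $\Delta$, which is at most $\deg\Delta=p$. I expect the monotonicity lemma to be the main obstacle. The natural route is a discrete Wronskian (variation-of-parameters) identity: differentiating the constancy of $a_n(u_nv_{n+1}-u_{n+1}v_n)$ in $E$ expresses $\Delta'(E)$ through a sum $\sum_{n=1}^p$ of manifestly signed quadratic expressions in the Floquet solutions, and on a band the two multipliers lie genuinely off the real axis ($e^{\pm i\theta}$ with $\theta\in(0,\pi)$), the nondegeneracy that forces this sum to be nonzero. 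Equivalently, one may invoke Hellmann--Feynman to show each eigenvalue branch $E_j(\theta)$ of $J(\theta)$ is strictly monotone on $(0,\pi)$, which is the same statement phrased on the fibers.
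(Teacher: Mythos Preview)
Your approach is correct and is precisely the standard transfer-matrix/discriminant argument. Note, however, that the paper does not actually give its own proof of Fact~\ref{F2.4}: Section~\ref{s2} is a review of classical one-dimensional results, and Fact~\ref{F2.4} is stated without proof, with a blanket reference to \cite[Chapters 5 and 6]{SiSz}. The paper does explicitly point to your method later, in Fact~\ref{F2.25}, where it records that $\spec(H)=\Delta^{-1}[-2,2]$ for a degree-$p$ polynomial $\Delta$ arising as the trace of the transfer matrix, and remarks that ``this is a key tool in some proofs of the above results.'' So your proposal is exactly the argument the paper has in mind.

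Two small comments on the write-up. First, your claim that the band endpoints take the values $\pm 2$ \emph{in some order} (rather than both $+2$ or both $-2$) does follow from the monotonicity lemma, but you should say why: if $\Delta$ were strictly monotone on $(a,b)$ with $\Delta(a)=\Delta(b)$, it would be constant, contradicting $|\Delta|<2$ in the interior. Second, your sketch of the monotonicity lemma is right in spirit; the cleanest version computes, for the Floquet solution $u(\theta,\cdot)$ at energy $E(\theta)$, the derivative $dE/d\theta$ via Hellmann--Feynman on $J(\theta)$ and checks it is nonzero for $\theta\in(0,\pi)$, or equivalently uses the discrete Wronskian identity you describe. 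Either route is standard and found in the reference the paper cites.
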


\begin{fact} \lb{F2.5} (Gap Labeling) In each gap of $\spec(J)$ (i.e.\ connected bounded open interval in $\bbR\setminus\spec(J)$), we have that $k(E)=j/p$ for some $j \in \{1,\dots,p-1\}$.
\end{fact}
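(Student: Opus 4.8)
The plan is to reduce the statement to the eigenvalue counting formula \eqref{2.7}, after which the content becomes a counting argument built on Floquet theory. First I would introduce the one-step transfer matrices
\[
  T_n(E) = \begin{pmatrix} (E-b_n)/a_n & -a_{n-1}/a_n \\ 1 & 0 \end{pmatrix},
\]
so that any formal solution of $Ju=Eu$ obeys $\binom{u_{n+1}}{u_n}=T_n(E)\binom{u_n}{u_{n-1}}$, together with the one-period monodromy $M(E)=T_p(E)\cdots T_1(E)$. Because the product of the $a_n$ over a full period telescopes, $\det M(E)=1$, so for real $E$ the matrix $M(E)$ lies in $SL(2,\bbR)$; its eigenvalues $\mu_\pm$ satisfy $\mu_+\mu_-=1$ and $\mu_++\mu_-=\Delta(E)$, where $\Delta(E):=\tr M(E)$ is the discriminant, a real polynomial of degree $p$ in $E$.

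Next I would record the two standard structural facts about $\Delta$. The spectrum is $\spec(J)=\{E\in\bbR:\abs{\Delta(E)}\le 2\}$: where $\abs{\Delta}>2$ the monodromy is hyperbolic and every nonzero solution grows exponentially in one direction, precluding generalized eigenfunctions, while on $\{\abs{\Delta}\le 2\}$ the Bloch solutions stay bounded. The harder input, which I expect to be the main obstacle, is the fine band structure: one must show that on each maximal interval where $\abs{\Delta}\le 2$ the polynomial $\Delta$ is strictly monotone, assuming each value of $(-2,2)$ exactly once, and that consecutive band edges are alternately roots of $\Delta=2$ and $\Delta=-2$. Equivalently, $\Delta'$ has no zero in the interior of a band; this is the one genuinely nontrivial point and follows from the classical analysis of the real root structure of $\Delta^2-4$ and the interlacing of the periodic and antiperiodic eigenvalues. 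Granting it, $\Delta^{-1}([-2,2])$ is a union of at most $p$ disjoint closed intervals, which is Fact \ref{F2.4}, and for every $c\in(-2,2)$ the equation $\Delta(E)=c$ has exactly $p$ simple real roots, one in the interior of each band.

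With this in hand the counting is immediate. A vector on the ring of $kp$ sites with periodic boundary conditions solving $J^{(k)}_P u=Eu$ exists precisely when $M(E)^k$ has eigenvalue $1$, i.e.\ when $\mu_\pm$ is a $k$-th root of unity, which by the trace relation means $\Delta(E)=2\cos(2\pi m/k)$ for some $m\in\{0,1,\dots,k-1\}$. For each of these $k$ values $c_m\in[-2,2]$ the equation $\Delta(E)=c_m$ contributes one eigenvalue in each band, so $J^{(k)}_P$ has exactly $k$ eigenvalues (counting the doubling of interior values under $m\leftrightarrow k-m$ against the simplicity of the band-edge values) in every band and $kp$ in all, matching the dimension. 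Now fix $E_0$ in a bounded gap and let $j$ be the number of bands lying entirely below it, so $1\le j\le p-1$. Every root of every $\Delta(E)=c_m$ lying in one of the lowest $j$ bands is below $E_0$, and every root in a higher band is above $E_0$, whence $N^{(k)}_P(E_0)=jk+\Oh(1)$ uniformly in $E_0$. Dividing by $kp$ and letting $k\to\infty$, formula \eqref{2.7} yields $k(E_0)=j/p$, as claimed. The same conclusion follows from the rotation-number picture: on each band the quasimomentum $\theta(E)=\arccos(\Delta(E)/2)$ sweeps $[0,\pi]$, so each band carries DOS mass exactly $1/p$, while $k$ is constant across gaps since $dk$ is supported on $\spec(J)$ by Fact \ref{F2.2}.
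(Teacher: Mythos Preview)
Your argument is correct: this is the classical one-dimensional proof via the discriminant and Floquet eigenvalue counting, and the reduction to \eqref{2.7} works exactly as you describe. The one piece of technical debt you honestly flag---strict monotonicity of $\Delta$ on each band---is indeed the only nontrivial input, and it is standard (see, e.g., \cite[Chapter 5]{SiSz}).

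The paper, however, does not prove Fact \ref{F2.5} in Section \ref{s2} at all; that section merely surveys known 1D results and states this fact with references. The paper's own proof of gap labeling is Theorem \ref{T5.1} for general periodic Jacobi matrices on trees, which specializes to the 1D case when the tree is $\bbZ$. That proof is entirely different from yours: it realizes $H$ inside the reduced group $C^*$-algebra $C^*_{red}(\bbF_\ell;\bbC^p)$, notes that for $\lambda$ in a gap the spectral projection $E_{(-\infty,\lambda)}(H)$ lies in this algebra, and then invokes the Pimsner--Voiculescu-type result (Theorem \ref{TA.3}) that the canonical trace is integer-valued on projections; the appendix proves the latter by comparing two representations of $\bbF_\ell$ differing by a trace-class perturbation and applying the index formula for a pair of projections. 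Your transfer-matrix route is more elementary and yields more detailed information (explicit band edges, the link to periodic and antiperiodic spectra), but it is irreducibly one-dimensional: transfer matrices need a linear order on the lattice. The $C^*$-algebraic argument gives up that explicitness in exchange for working on arbitrary trees, where no discriminant exists.
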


In the mathematical physics literature, the latter fact came to the fore with its extension to the almost periodic case \cite{AS83, JM, Bell}

\medskip
\textbf{B. Spectral Properties}

The basic result is that the spectral measures are purely absolutely continuous which we prefer to list as two facts

\begin{fact} \lb{F2.6} J has no singular continuous spectrum.
\end{fact}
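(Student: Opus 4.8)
The plan is to prove the stronger assertion that $J$ is purely absolutely continuous, which of course implies the absence of singular continuous spectrum. The tool is Floquet (Bloch) theory. Using the periodicity \eqref{2.2}, I would decompose $\ell^2(\bbZ)$ as a direct integral $\int_{[0,2\pi)}^{\oplus} \bbC^p\, \tfrac{d\theta}{2\pi}$ via the Floquet transform adapted to the period $p$. Under this unitary, $J$ becomes $\int^{\oplus} \hatt J(\theta)\, \tfrac{d\theta}{2\pi}$, where $\hatt J(\theta)$ is the $p\times p$ Hermitian matrix carrying $b_1,\dots,b_p$ on the diagonal, the $a_j$ on the off-diagonals, and the wrap-around entries $a_p e^{\pm i\theta}$ in the two corners. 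Every matrix entry is entire in $\theta$, so $\hatt J(\theta)$ is a real-analytic family of Hermitian matrices.

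Next I would record the band structure analytically. Let $E_1(\theta)\le\dots\le E_p(\theta)$ be the eigenvalues of $\hatt J(\theta)$. A direct computation identifies the characteristic equation $\det(\hatt J(\theta)-E)=0$ with $\Delta(E)=2\cos\theta$, where $\Delta(E)=\tr T_p(E)$ is the discriminant, i.e.\ the trace of the one-period monodromy matrix $T_p(E)\in SL(2,\bbR)$. Since $T_p$ is a product of $p$ one-step transfer matrices with entries affine in $E$, $\Delta$ is a real polynomial of degree exactly $p$. By analytic perturbation theory the branches $E_j(\theta)$ may be chosen real-analytic on each open $\theta$-interval free of eigenvalue crossings, and the spectral measure $d\mu_n$ of $\delta_n$ is a finite sum of pushforwards of weighted Lebesgue measures $w_{n,j}(\theta)\,d\theta$ under $\theta\mapsto E_j(\theta)$, where the weights $w_{n,j}(\theta)=\abs{\jap{\delta_n,\psi_j(\theta)}}^2$ (squared components of the normalized Floquet eigenvectors) are bounded.

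The crucial step — and the one that genuinely uses the one-dimensional structure — is that no band function is constant; equivalently, there are no flat bands. This is immediate from $\Delta(E_j(\theta))=2\cos\theta$: because $\Delta$ is a nonconstant polynomial, $E_j(\theta)$ cannot be constant on any interval, and indeed $\Delta'(E)\ne 0$ off the finitely many zeros of $\Delta'$, so $dE_j/d\theta$ vanishes at only finitely many $\theta$.

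Finally I would invoke the elementary pushforward lemma: if $g$ is real-analytic and nonconstant on a compact interval and $w\ge 0$ is bounded, then $g_*(w\,d\theta)$ is purely absolutely continuous. Indeed $g$ has only finitely many critical points, so each level set $g^{-1}(E)$ is finite; hence the pushforward has no atoms, and off the finite (hence null) set of critical values the change-of-variables formula gives it the $L^1$ density $\sum_{g(\theta)=E} w(\theta)/\abs{g'(\theta)}$. A measure with no atoms that is absolutely continuous away from a finite set has neither a singular continuous nor a pure point part. Applying this to each branch and summing shows every $d\mu_n$, and therefore the spectral measure of $J$, is purely absolutely continuous. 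The main obstacle is precisely the nonconstancy of the band functions; here it is handed to us by the polynomial discriminant, but on general (non-regular) trees this is exactly the mechanism that can fail, producing the flat bands and point spectrum of Aomoto's example.
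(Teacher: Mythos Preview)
Your argument is correct and in fact proves the stronger statement that $J$ is purely absolutely continuous (Facts \ref{F2.6} and \ref{F2.7} together). The paper does not actually give a proof of Fact \ref{F2.6}: Section \ref{s2} is a summary of known one-dimensional results, and this fact is simply stated with a reference. So there is no direct ``paper's proof'' to compare against.

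That said, the method the paper develops for the tree analog (Theorem \ref{T6.7}) is genuinely different from yours and worth contrasting. There the argument runs through the coupled polynomial equations \eqref{6.5} for the $m$-functions; one checks the nondegeneracy hypothesis of Theorem \ref{T6.5} at $z=\infty$ and concludes that all the $m_j^\beta$ and $G_j$ are algebraic functions (Theorem \ref{T6.6}). Algebraicity forces the boundary values $G_j(\lambda+i0)$ to exist and be finite off a finite set, which kills the singular continuous part. Specialized to $1$D this amounts to observing that $m_n^\pm$ solve an explicit quadratic over $\bbC(z)$, giving the two-sheeted hyperelliptic picture of Facts \ref{F2.8}--\ref{F2.10}. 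Your approach instead uses the abelian Floquet decomposition (Facts \ref{F2.16}--\ref{F2.17}) together with the discriminant identity (Fact \ref{F2.25}) to rule out flat bands directly.

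Each buys something the other does not. Your direct-integral argument yields pure absolute continuity in one stroke, but it relies essentially on the symmetry group being $\bbZ$ so that the fibers are finite-dimensional and the band functions are controlled by a polynomial discriminant; as you note at the end, this is precisely what fails on trees with $\ell\ge 2$, where the fundamental group is nonabelian and there is no useful Floquet picture. The algebraic-function route survives on trees because the recursion \eqref{6.5} is still polynomial, but it only excludes singular continuous spectrum---and indeed it cannot do better, since Example \ref{E7.2} shows point spectrum can occur.
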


\begin{fact} \lb{F2.7} J has no pure point spectrum.
\end{fact}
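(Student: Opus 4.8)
The plan is to prove the stronger statement that the eigenvalue equation $Ju=Eu$ has \emph{no} nonzero $\ell^2(\bbZ)$ solution for any $E\in\bbR$, which is exactly the assertion that $J$ has empty point spectrum. The tool is the transfer-matrix (Floquet) formalism. Writing the eigenvalue equation \eqref{2.1} as a recursion, $u_{n+1}=\big((E-b_n)u_n-a_{n-1}u_{n-1}\big)/a_n$, one encodes it by the one-step matrices
\begin{equation*}
  A_n(E)=\begin{pmatrix} (E-b_n)/a_n & -a_{n-1}/a_n \\ 1 & 0 \end{pmatrix},
\end{equation*}
so that $\binom{u_{n+1}}{u_n}=A_n(E)\binom{u_n}{u_{n-1}}$. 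First I would form the monodromy matrix over one period, $T(E)=A_p(E)\cdots A_1(E)$, which by the periodicity \eqref{2.2} advances any solution by $p$ sites: $\binom{u_{kp+1}}{u_{kp}}=T(E)^k\binom{u_1}{u_0}$ for all $k\in\bbZ$.

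The key structural fact is that $\det A_n(E)=a_{n-1}/a_n$, so the product telescopes and $\det T(E)=a_0/a_p=1$ by \eqref{2.2}; thus $T(E)\in SL(2,\bbR)$. Moreover, since every $a_n>0$, a solution of the recursion vanishes identically as soon as two consecutive values vanish, so any nonzero solution corresponds to a nonzero initial vector $v=\binom{u_1}{u_0}$.

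Now suppose $u\in\ell^2(\bbZ)$ is an eigenfunction. Then $u_n\to0$ as $n\to\pm\infty$, hence $T(E)^k v\to0$ as $k\to+\infty$ and $T(E)^{-k}v\to0$ as $k\to+\infty$. I claim this forces $v=0$. Because $\det T(E)=1$, the eigenvalues satisfy $\lambda_1\lambda_2=1$, and there are three cases according to the discriminant $\Delta(E)=\tr T(E)$: if $\abs{\Delta(E)}<2$ the eigenvalues lie on the unit circle, $T(E)$ is conjugate to a rotation, and every orbit $T(E)^k v$ with $v\neq0$ stays bounded away from $0$; if $\abs{\Delta(E)}=2$ then $T(E)$ has eigenvalue $\pm1$ with an at-worst nontrivial Jordan block, and orbits grow at most linearly and never decay to $0$ in either time direction; and if $\abs{\Delta(E)}>2$ the eigenvalues are $\lambda,\lambda^{-1}$ with $\abs{\lambda}>1$, so a vector contracted under forward iteration is expanded under backward iteration. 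In all three cases no nonzero $v$ can have both $T(E)^{k}v\to0$ and $T(E)^{-k}v\to0$, so $v=0$ and hence $u\equiv0$, a contradiction.

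The main point to get right is the borderline case $\abs{\Delta(E)}=2$ (the band edges), where $T(E)$ may be a genuine Jordan block and solutions grow only linearly; one must confirm that such growth still precludes membership in $\ell^2$. This is handled uniformly by the area-preservation observation above: the only $SL(2,\bbR)$ orbit that tends to $0$ in both forward and backward time is the trivial one. I would remark that pushing the same Floquet analysis slightly further shows the spectrum is in fact purely absolutely continuous, recovering Fact~\ref{F2.6} as well; but the transfer-matrix dichotomy just described is the most economical route to the absence of eigenvalues on its own.
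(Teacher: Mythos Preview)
Your argument is correct. The paper does not actually prove Fact~\ref{F2.7}; it is stated as background in the review section and attributed to Thomas \cite{Thomas} with the remark that he proved the higher-dimensional analog. So your transfer-matrix proof supplies more than the paper does.

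It is worth noting the contrast in method. Thomas's argument, which the paper alludes to, works in any dimension: one complexifies the Floquet parameter and shows that a putative $\ell^2$ eigenfunction would force the fibered operator $H(\theta)$ to have a fixed eigenvalue on an open set of complex $\theta$, contradicting analyticity. Your route is specifically one-dimensional and exploits the $SL(2,\bbR)$ structure of the monodromy $T(E)$: the product of the eigenvalues is $1$, so no nonzero vector can contract under both $T(E)^k$ and $T(E)^{-k}$. This is the most direct and elementary proof in $1$D, and it dovetails with the paper's later discussion of the discriminant $\Delta(z)=\tr T(z)$ in Fact~\ref{F2.25}. The only place to be careful---which you flagged---is the parabolic case $|\Delta(E)|=2$, and your case analysis handles it: a nontrivial Jordan block with eigenvalue $\pm 1$ has orbits that either stay of constant norm or grow linearly, never decaying.
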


Because he proved the analog for higher dimensions, the last fact is often associated with L. Thomas \cite{Thomas}.

\medskip
\textbf{C. Analyticity of $m-$ and Green's Functions}

The Green's function is
\begin{equation}\label{2.8}
  G_n(z) = \jap{\delta_n,(H-z)^{-1}\delta_n}
\end{equation}
If we replace $a_{n-1}$ by $0$, then $H$ decomposes into a direct sum, $H_n^+$ acting on $\ell^2(n,\infty)$ and $H_{n-1}^-$ acting on $\ell^2(-\infty,n-1)$ and we define
\begin{equation} \lb{2.9}
  m_n^+(z) = \jap{\delta_n,(H_n^+-z)^{-1}\delta_n} \qquad m_n^-(z) = \jap{\delta_{n-1},(H_n^--z)^{-1}\delta_{n-1}}.
\end{equation}

The functions $G_n,m_n^{+}$, and $m_n^{-}$ are related by
\[
G_n(z)=\frac{1}{-z+b_n-a_n^2m_n^+(z)+a_{n-1}^2m_n^-(z)}.
\]

\begin{fact} \lb{F2.8} For all $n,\,G_n(z)$ and $m_n^\pm(z)$ have analytic continuations from  $\bbC\setminus$spec$(H)$ to a finite sheeted Riemann surface with a discrete set of branch points.
\end{fact}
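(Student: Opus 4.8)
The plan is to reduce the entire statement to the one-period transfer (monodromy) matrix and to exploit the fact that periodicity forces the half-line $m$-functions to satisfy a single algebraic equation. First I would write the eigenvalue equation $a_n u_{n+1} + (b_n - z)u_n + a_{n-1}u_{n-1} = 0$ in transfer-matrix form, $\binom{u_{n+1}}{u_n} = A_n(z)\binom{u_n}{u_{n-1}}$ with $A_n(z) = \left(\begin{smallmatrix} (z-b_n)/a_n & -a_{n-1}/a_n \\ 1 & 0 \end{smallmatrix}\right)$, and set $T(z) = A_p(z)\cdots A_1(z)$. Each entry of $T$ is a polynomial in $z$, and periodicity gives $\det T(z) = \prod_{n} (a_{n-1}/a_n) = 1$. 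The discriminant $\Delta(z) = \tr T(z)$ is then a real polynomial of degree $p$.

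The key step uses coefficient stripping together with periodicity. Iterating the relation $m_n^+ = 1/(-z + b_n - a_n^2 m_{n+1}^+)$ over one full period and using $m_{p+1}^+ = m_1^+$ (since the stripped operator is again the original half-line operator) exhibits $m_1^+$ as a fixed point of a M\"obius map whose coefficient matrix $M(z) = \prod_{n=1}^p \left(\begin{smallmatrix} 0 & 1 \\ -a_n^2 & b_n - z \end{smallmatrix}\right)$ has polynomial entries and induces the same projective action as $T(z)$, so that $M(z) = (\prod_n a_n)\,T(z)$. The fixed-point condition is the quadratic $C(z)(m_1^+)^2 + (D(z)-A(z))m_1^+ - B(z) = 0$ for the entries of $M = \left(\begin{smallmatrix} A & B \\ C & D\end{smallmatrix}\right)$, whose discriminant is $(A-D)^2 + 4BC = (\tr M)^2 - 4\det M = (\prod_n a_n)^2(\Delta(z)^2 - 4)$. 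Since the $a_n$ are positive, this yields
\[
m_1^+(z) = \frac{A(z) - D(z) + \bigl(\textstyle\prod_n a_n\bigr)\sqrt{\Delta(z)^2 - 4}}{2C(z)},
\]
with the branch fixed by the Herglotz condition $\Im m_1^+ > 0$ on $\bbC_+$. This presents $m_1^+$ as an algebraic function on the two-sheeted surface $\mathcal{R} = \{(z,w) : w^2 = \Delta(z)^2 - 4\}$, branched precisely over the finitely many (hence discrete) zeros of $\Delta(z)^2 - 4$, which are the band edges.

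Finally I would propagate this to every object in the statement. For each $n$, coefficient stripping expresses $m_n^+$ as a M\"obius function of $m_1^+$ with polynomial coefficients; the left half-line functions $m_n^-$ are handled identically by reflecting the recursion, and both are governed by the same radical $\sqrt{\Delta^2 - 4}$, so they continue to the same surface $\mathcal{R}$. The stated relation $G_n = 1/(-z + b_n - a_n^2 m_n^+ + a_{n-1}^2 m_n^-)$ then writes $G_n$ rationally in functions living on $\mathcal{R}$, so it continues there as well. The main obstacle is not the algebra but the bookkeeping of singularities: one must verify that continuation produces only poles (at the discrete zeros of the denominators and over the eigenvalue-type points) together with genuine square-root branch points at the band edges, with no accumulation. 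This is exactly where the polynomiality of the entries of $M$ and the finiteness of $\deg\Delta$ do the work, forcing a finite-sheeted surface with a discrete branch locus. I would also remark on the closed-gap case, where a double zero of $\Delta^2 - 4$ contributes a removable (non-)branch point, fully consistent with the claim.
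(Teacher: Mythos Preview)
Your proposal is correct and follows exactly the route the paper indicates. The paper does not give a detailed proof of this Fact (Section 2 is a review of known 1D results), but immediately after Facts 2.8--2.10 it says: ``These results follow by writing down an explicit quadratic equation for the $m$-functions (which follows from the recursive relation (6.5)) and analyzing it\ldots'' --- which is precisely what you carry out via coefficient stripping, periodicity, and the transfer/monodromy matrix, arriving at the standard formula $m_1^+ = \bigl(A-D + (\prod a_n)\sqrt{\Delta^2-4}\bigr)/(2C)$ on the hyperelliptic surface $w^2=\Delta(z)^2-4$.
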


\begin{fact}  \lb{F2.9}  These functions are hyperelliptic and, in particular, have only square root branch points and the surface is two sheeted.
\end{fact}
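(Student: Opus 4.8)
The plan is to make the surface of Fact \ref{F2.8} completely explicit by tying $m_n^\pm$ to the monodromy of the period-$p$ problem. First I would record the coefficient-stripping (Riccati) recursion that comes from comparing $H_n^+$ with $H_{n+1}^+$,
\[
m_n^+(z) = \frac{1}{b_n - z - a_n^2\, m_{n+1}^+(z)},
\]
which is the fractional-linear action on $m_{n+1}^+$ of the matrix $B_n(z) = \left(\begin{smallmatrix} 0 & 1 \\ -a_n^2 & b_n - z\end{smallmatrix}\right)$. Composing the $p$ maps $B_n, B_{n+1}, \dots, B_{n+p-1}$ over one period yields a matrix $M_n(z)$, with entries polynomial in $z$, whose Möbius action sends $m_{n+p}^+$ to $m_n^+$. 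By periodicity $m_{n+p}^+ = m_n^+$, so $m_n^+(z)$ is a \emph{fixed point} of the Möbius transformation determined by $M_n(z)$. The same construction, stripping to the left, realizes $m_n^-$ as a fixed point of the corresponding left monodromy.

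Given this, the algebraic content is immediate. Writing $M_n(z) = \left(\begin{smallmatrix}\alpha & \beta\\ \gamma & \delta\end{smallmatrix}\right)$, the fixed-point condition is the quadratic $\gamma\,(m_n^+)^2 + (\delta - \alpha)\, m_n^+ - \beta = 0$ over $\bbC(z)$, whose discriminant is $(\delta-\alpha)^2 + 4\beta\gamma = (\tr M_n)^2 - 4\det M_n$. Here $\det M_n(z) = \prod_{k=1}^p a_k^2$ is a positive constant, and a direct comparison with the standard transfer matrix $T(z)$ (whose trace is the discriminant $\Delta(z) = \tr T(z)$, a degree-$p$ polynomial, with $\det T = 1$) gives $\tr M_n(z) = \pm\bigl(\prod_{k} a_k\bigr)\Delta(z)$, so the discriminant equals $\bigl(\prod_{k} a_k\bigr)^2\bigl(\Delta(z)^2 - 4\bigr)$. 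Hence
\[
m_n^+(z) = \frac{(\alpha-\delta) \pm \bigl(\prod_{k} a_k\bigr)\sqrt{\Delta(z)^2 - 4}}{2\gamma},
\]
an algebraic function of degree at most two over $\bbC(z)$ that extends to a meromorphic function on the two-sheeted hyperelliptic surface $\mathcal{S}:\ w^2 = \Delta(z)^2 - 4$; the same holds for $m_n^-$, and then $G_n$ is rational in $m_n^\pm$ and $z$ via the relation preceding Fact \ref{F2.8}. Thus all three functions live on $\mathcal{S}$, which is branched only at zeros of $\Delta(z)^2-4$ and there only as a square root. Finally I would identify $\mathcal{S}$ with the surface produced in Fact \ref{F2.8}: the minimal surface of continuation is a subcover of $\mathcal{S}$, and it is genuinely two-sheeted because $m_n^+$ and the opposite branch are exchanged under $w\mapsto -w$.

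The main obstacle is the careful local analysis at the branch points. One must check that the zeros of $\Delta(z)^2-4$ that are genuine band edges (endpoints of open gaps, where $\Delta=\pm2$ simply) really do produce square-root branch points of $m_n^\pm$, i.e.\ that $\sqrt{\Delta^2-4}$ does not get cancelled by a zero of the denominator $\gamma(z)$; whereas double zeros of $\Delta^2-4$ (closed gaps, where $\Delta=\pm2$ to second order) are \emph{not} branch points at all, since there $w = \pm(z-z_0)(\cdots)$ is locally single-valued. One must also treat $z=\infty$: because $\deg(\Delta^2-4)=2p$ is even, the point at infinity splits into two sheets and is not a branch point, consistent with $m_n^\pm(z)\to 0$ and $G_n(z)\sim -1/z$ on the physical sheet. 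These are local computations, but getting the multiplicities right at the band edges, and correctly distinguishing open from closed gaps, is where the real work lies.
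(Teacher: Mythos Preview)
Your approach is correct and is essentially the one the paper has in mind: the paper does not give a detailed proof of this background fact, but remarks after Fact~\ref{F2.10} that ``these results follow by writing down an explicit quadratic equation for the $m$-functions (which follows from the recursive relation \eqref{6.5}) and analyzing it,'' which is exactly what you do via the period-$p$ M\"obius/transfer-matrix fixed-point equation. Your identification of the discriminant with $\bigl(\prod_k a_k\bigr)^2(\Delta(z)^2-4)$ and the ensuing two-sheeted hyperelliptic surface $w^2=\Delta(z)^2-4$ is the standard argument, and your caveats about open versus closed gaps and the behavior at infinity are the right points to check.
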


\begin{fact} \lb{F2.10} The branch points are all in $\bbR$ at edges of the spectrum.  There are no poles of $G$ away from the branch points and all poles of $m^\pm$ are in the bounded spectral gaps of one sheet or the other or at the branch points.  There is one pole in each ``gap''.
\end{fact}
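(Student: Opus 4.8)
The plan is to route everything through the one-period transfer (monodromy) matrix and its associated hyperelliptic curve. Let $T_n(z)$ be the product of the one-period transfer matrices based at site $n$, so that any solution of $(J-z)u=0$ satisfies $\binom{u_{n+p}}{u_{n+p-1}}=T_n(z)\binom{u_n}{u_{n-1}}$, and set $\Delta(z)=\tr T_n(z)$, which is independent of $n$ and is a real polynomial of degree $p$. Since $\det T_n\equiv 1$, the Floquet multipliers are $\lambda_\pm(z)=\tfrac12\bigl(\Delta(z)\pm y\bigr)$ with $y^2=\Delta(z)^2-4$, and this is exactly the two-sheeted surface $\calR$ of Fact~\ref{F2.9}. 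The band edges are the roots of $\Delta^2-4$; at an open gap the root is simple ($\Delta=\pm 2$, $\Delta'\neq 0$) and gives a square-root branch point, while a closed gap produces a double root and no branching. Because $\Delta$ is real and $\spec(J)=\{E:\abs{\Delta(E)}\le 2\}$, every branch point is real and lies at an edge of the spectrum, which is the first assertion.

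For the statements about poles I would realize the Floquet solutions as functions holomorphic on the finite part of $\calR$: taking a polynomial eigenvector of $T_n$ for $\lambda_\pm$ (proportional to $\binom{(T_n)_{12}}{\lambda_\pm-(T_n)_{11}}$) yields solutions $u_\pm(n,z)$ that are polynomial in $(z,y)$ and are interchanged by the sheet-exchange involution $y\mapsto -y$. The diagonal Green's function is $G_n=u_-(n)u_+(n)/W$, where $W=a_n\bigl(u_-(n)u_+(n+1)-u_-(n+1)u_+(n)\bigr)$ is the Wronskian, constant in $n$. The numerator is invariant under the involution while $W$ is anti-invariant, and $W$ vanishes precisely when $\lambda_+=\lambda_-$, i.e.\ exactly where $y=0$. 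Hence $G_n=\tilde\rho_n(z)/y$ with $\tilde\rho_n$ a polynomial (any apparent singularity coming from the normalization cancels, and analyticity of the resolvent off $\spec(J)$ forces the rest), so the only finite singularities of $G_n$ on $\calR$ sit at the branch points; this is the second assertion, consistent with Fact~\ref{F2.7}.

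The $m$-functions are ratios of consecutive values of the decaying Floquet solution: in the resolvent set $m_n^+$ is built from the branch with $\abs{\lambda}<1$ at $+\infty$ and $m_n^-$ from the branch decaying at $-\infty$, so each $m_n^\pm$ is again meromorphic on $\calR$, with poles exactly at the zeros of the relevant decaying solution at the boundary site. On the physical sheet these zeros are the eigenvalues of the half-line restrictions, and a half-line periodic Jacobi matrix has at most one bound state per gap; continuing to the second sheet accounts for the case in which the decaying branch is the other multiplier (and for the pole sitting at a branch point when the zero reaches a band edge). To fix the count I would use that the Dirichlet data are the zeros of the polynomial $(T_n)_{12}(z)$, which has degree $p-1$: one shows that its $p-1$ real zeros fall one into the closure of each finite gap, and the sign of the corresponding multiplier selects the sheet. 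This gives that all poles of $m^\pm$ lie in the bounded gaps of one sheet or the other, or at branch points, one per gap.

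The main obstacle is precisely this last, quantitative step: proving the Dirichlet zeros are real and distribute exactly one to a gap. This is the classical oscillation theory for the discriminant -- the monotonicity of $\Delta$ across each band, together with $\Delta=\pm2$ at the edges, forces $(T_n)_{12}$ to change sign exactly once in each gap -- supplemented by the bookkeeping that assigns each resulting pole to the correct sheet via $\abs{\lambda_\pm}$. Everything preceding it (the curve, the location of the branch points, and the Wronskian identity controlling the poles of $G$) is essentially formal once the Floquet solutions have been chosen holomorphically on $\calR$.
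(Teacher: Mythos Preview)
Your approach via the transfer matrix, the discriminant $\Delta$, and the explicit Floquet solutions is correct and standard, but it is not the route the paper sketches. The paper's one-sentence justification of Facts~2.8--2.10 reads: ``These results follow by writing down an explicit quadratic equation for the $m$-functions (which follows from the recursive relation \eqref{6.5}) and analyzing it using, in part, the monotonicity of $G$ in gaps and the fact that poles of $m$ correspond to zeros of $G$.'' So the paper iterates the Riccati-type recursion $p$ times to obtain the quadratic $A(z)m^2+B(z)m+C(z)=0$ directly, reads the branch points off the discriminant, and then locates poles of $m^\pm$ by the Herglotz property of $G$: in each gap $G_n$ is real and strictly increasing, hence has exactly one zero there, and at a zero of $G_n$ exactly one of $m_n^\pm$ has a pole. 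Your route instead constructs the Floquet solutions $u_\pm$ on the curve, obtains $G_n=\tilde\rho_n/y$ from the Wronskian, and seeks the poles of $m^\pm$ as zeros of $(T_n)_{12}$. Both lead to the same hyperelliptic picture; yours makes the algebraic-geometric structure (divisors of $u_\pm$ on $\calR$) more explicit, while the paper's is shorter because it never has to normalize $u_\pm$ or track cancellation.

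One caution on the step you flag as the obstacle. The argument you propose --- ``monotonicity of $\Delta$ across each band, together with $\Delta=\pm 2$ at the edges, forces $(T_n)_{12}$ to change sign exactly once in each gap'' --- is not correct as stated: monotonicity of $\Delta$ on \emph{bands} says nothing directly about the behavior of $(T_n)_{12}$ on \emph{gaps}. The clean way to finish is exactly the paper's: you have already shown $G_n=\tilde\rho_n/y$ with $\tilde\rho_n$ a real polynomial of degree $p-1$, and $G_n$ is Herglotz, hence strictly increasing on each open gap; combined with the square-root blowup at the band edges this forces exactly one zero of $\tilde\rho_n$ in each closed gap. Since a zero of $G_n$ is a pole of $G_n^{-1}=-z+b_n-a_n^2 m_n^+-a_{n-1}^2 m_n^-$, and each $m_n^\pm$ is separately Herglotz (so has at most one pole in the gap), exactly one of them carries the pole, and the sheet assignment follows. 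Alternatively one can use interlacing of the Dirichlet eigenvalues with the periodic/antiperiodic ones via rank-one perturbation, but not bare monotonicity of $\Delta$.
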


These results follow by writing down an explicit quadratic equation for the $m$-functions (which follows from the recursive relation \eqref{6.5}) and analyzing it using, in part, the monotonicity of $G$ in gaps and the fact that poles of $m$ correspond to zeros of $G$.

\medskip
\textbf{D. Universality of the DOS}

Two periodic Jacobi matrices are called \emph{isospectral} if they have the same spectrum (as a set).

\begin{fact} \lb{F2.11}
  Two isospectral Jacobi matrices have the same period and same DOS.
\end{fact}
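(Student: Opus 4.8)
The plan is to prove the two assertions separately by showing that each of the DOS $dk$ and the minimal period $p$ is already determined by the set $\fre:=\spec(J)$. Throughout I write $\Delta(E)$ for the discriminant, i.e.\ the trace of the transfer matrix across one (minimal) period; recall that $\Delta$ is a real polynomial of degree $p$, that $\fre=\{E\in\bbR:\Delta(E)\in[-2,2]\}$, and that as a map of $\bbC\setminus\fre$ onto $\bbC\setminus[-2,2]$ it is a degree-$p$ branched cover. Denote by $B_1<\dots<B_\ell$ the bands of $\fre$ (Fact~\ref{F2.4}) and by $\rho_\fre$ the potential-theoretic equilibrium measure of $\fre$.

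\emph{The DOS.} The key external input is the identification $dk=\rho_\fre$. I would obtain it from the Thouless formula together with the standard description of the Green's function. On the one hand the Lyapunov exponent $\gamma$ of $J$ is the Green's function $G_{\bbC\setminus\fre}$ with pole at infinity: both are nonnegative, harmonic on $\bbC\setminus\fre$, vanish on $\fre$, and grow like $\log\abs{E}$ at infinity. On the other hand $G_{\bbC\setminus\fre}(E)=\int\log\abs{E-x}\,d\rho_\fre(x)-\log\ca(\fre)$, while the Thouless formula gives $\gamma(E)=\int\log\abs{E-x}\,dk(x)-\tfrac1p\sum_{j=1}^p\log a_j$. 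Equating the two and taking the distributional Laplacian in $E$ (which recovers a measure from its logarithmic potential) forces $dk=d\rho_\fre$. Since $\rho_\fre$ depends only on the set $\fre$, two isospectral matrices have the same DOS.

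\emph{The period.} First the elementary half. By gap labeling (Fact~\ref{F2.5}), $k$ is constant equal to some $j_i/p$ on the $i$-th gap, so the $dk$-mass of each band is $\rho_\fre(B_i)=(j_i-j_{i-1})/p\in\tfrac1p\bbZ$ (with $j_0=0,\ j_\ell=p$); writing $m_i:=p\,\rho_\fre(B_i)\in\bbZ_{\ge1}$, so $\sum_i m_i=p$, we see that $p$ is a common denominator of the rationals $\rho_\fre(B_1),\dots,\rho_\fre(B_\ell)$, whence their least common denominator $q$ divides $p$. The substance of the claim is the reverse inequality, that the minimal period is exactly $q$. For this I would invoke the inverse spectral theory of finite-gap Jacobi matrices: the coefficients $(a_n,b_n)$ are given by an Abel map into the Jacobian of the hyperelliptic curve attached to $\fre$, and the one-step shift $n\mapsto n+1$ acts as translation by the fixed vector whose components are the harmonic measures $\rho_\fre(B_i)$. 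The sequence is therefore $r$-periodic precisely when $r$ times this vector lies in the period lattice, i.e.\ when $r\,\rho_\fre(B_i)\in\bbZ$ for all $i$; the minimal such $r$ is $q$. As $\rho_\fre$ and the bands $B_i$ were already seen to depend only on $\fre$, the minimal period $q$ depends only on $\fre$, and isospectral matrices share it.

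The main obstacle is precisely this last step: extracting the exact value of the minimal period, rather than the mere divisibility $q\mid p$ that gap labeling hands over, requires the full force of the inverse problem on the hyperelliptic curve — the translation-on-the-Jacobian picture for the shift and the identification of its displacement vector with $(\rho_\fre(B_i))_i$. An alternative, more self-contained route to the same end would be to show that $\gcd(m_1,\dots,m_\ell)=1$ whenever $p$ is minimal, by proving that $\gcd(m_i)=g>1$ forces a Chebyshev factorization $\Delta=2T_g(Q/2)$ with $Q$ again a discriminant, of period $p/g$; establishing that the local divisibility $g\mid m_i$ of the band covering multiplicities yields this global polynomial factorization — a monodromy and rigidity statement — is the crux in that approach.
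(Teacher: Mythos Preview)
Your argument is correct and, at the top level, follows the same two-step strategy the paper indicates: the paper does not prove Fact~\ref{F2.11} directly but remarks that Fact~\ref{F2.13} (the DOS equals the equilibrium measure of the spectrum) together with the Borg--Hochstadt Theorem (Fact~\ref{F2.15}) implies it. Your DOS argument via the Thouless formula and the identification of the Lyapunov exponent with the Green's function of $\bbC\setminus\fre$ is exactly a proof of Fact~\ref{F2.13}, so on that half you and the paper coincide.

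For the period, the paper's route is shorter than yours. Once the DOS is shared, gap labeling for $J$ puts the IDS values in $\tfrac{1}{p}\bbZ$ in every gap; since $J'$ has the same IDS, Borg--Hochstadt applied to $J'$ gives $p'\mid p$, and by symmetry $p\mid p'$, so $p=p'$. Your approach---pinning down the minimal period as the least common denominator $q$ of the band harmonic measures via the translation-on-the-Jacobian picture, or alternatively via a Chebyshev factorization $\Delta=2T_g(Q/2)$---is in effect a proof of Borg--Hochstadt itself, so you are re-deriving the black box the paper simply cites. What your route buys is an explicit intrinsic formula for the period in terms of $\fre$ alone; what it costs is precisely the heavy inverse-spectral machinery you yourself flag as the main obstacle.
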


\begin{fact} \lb{F2.13}
  The DOS of a periodic Jacobi matrix is equal to the potential theoretic equilibrium measure, aka harmonic measure, of its spectrum.
\end{fact}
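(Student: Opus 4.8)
The plan is to route both objects through a single harmonic function built from the discriminant and to identify that function with the Green's function of the complement of the spectrum. Recall from Floquet theory (see \cite[Ch.~5]{SiSz}) the discriminant $\Delta(z)=\tr\, T(z)$, where $T(z)$ is the one--period transfer (monodromy) matrix; $\Delta$ is a real polynomial of degree $p$ with leading coefficient $(a_1\cdots a_p)^{-1}$, and by Fact \ref{F2.4} the spectrum is exactly the polynomial preimage $\spec(J)=\Delta^{-1}([-2,2])$. On each band $|\Delta(E)|\le 2$, and the standard computation of the integrated density of states gives $k(E)=\tfrac{1}{p\pi}\arccos(\Delta(E)/2)$ up to a band--dependent additive constant, so that
\[
dk(E)=\frac{1}{p\pi}\left|\frac{d}{dE}\arccos\!\Big(\frac{\Delta(E)}{2}\Big)\right|\,dE
\]
on the interior of the bands, with $dk$ supported there.

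First I would introduce $u(z)=\tfrac1p\log\big|\tfrac12\big(\Delta(z)+\sqrt{\Delta(z)^2-4}\big)\big|$, the logarithm of the modulus of the larger eigenvalue $\lambda_+(z)$ of $T(z)$ (the two eigenvalues satisfy $\lambda_+\lambda_-=1$, since $\det T=a_0/a_p=1$, and $\lambda_++\lambda_-=\Delta$). I claim $u=g_\Omega$, the Green's function of $\Omega=\bbC\setminus\spec(J)$ with pole at infinity. Indeed: (i) off the spectrum $\Delta^2-4$ is nonvanishing, so $\lambda_+$ is analytic and nonzero and $u$ is harmonic on $\Omega$; (ii) on $\spec(J)$ one has $|\Delta|\le 2$, hence $\lambda_\pm$ are unimodular complex conjugates and $u\to 0$ there; (iii) as $z\to\infty$, $\Delta(z)\sim z^p/(a_1\cdots a_p)$ gives $u(z)=\log|z|-\tfrac1p\log(a_1\cdots a_p)+o(1)$. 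By the maximum--principle characterization these three properties determine $g_\Omega$ uniquely, so $u=g_\Omega$; equivalently this is the polynomial--preimage identity $g_\Omega=\tfrac1p\,g_{[-2,2]}\circ\Delta$. In passing this identifies the capacity, $C(\spec(J))=(a_1\cdots a_p)^{1/p}$.

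The equality of the two measures then follows by comparing logarithmic potentials. On one side, the Thouless formula (see \cite[Ch.~5]{SiSz}) identifies the Lyapunov exponent $\gamma$ with the potential of the density of states, $\gamma(E)=\int\log|E-E'|\,dk(E')-\tfrac1p\log(a_1\cdots a_p)$, and for real $E$ one has $\gamma=u=g_\Omega$. On the other side, the defining potential representation of the Green's function reads $g_\Omega(E)=\int\log|E-w|\,d\mu_{\mathrm{eq}}(w)-\log C$, where $\mu_{\mathrm{eq}}$ is the equilibrium measure and $\log C=\tfrac1p\log(a_1\cdots a_p)$ by the capacity computation above. Thus the logarithmic potentials of $dk$ and $\mu_{\mathrm{eq}}$ coincide, and since a finite compactly supported measure is determined by its logarithmic potential (both being the Riesz measure of the subharmonic function $g_\Omega$), we conclude $dk=\mu_{\mathrm{eq}}$. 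The harmonic--measure phrasing requires no extra work, since for a compact subset of $\bbR$ the equilibrium measure coincides with harmonic measure of $\Omega$ evaluated at infinity.

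The step I expect to carry the real weight is the identification $u=g_\Omega$ together with the Thouless formula, i.e.\ the bridge from the ergodic/spectral definition of $dk$ to potential theory; once $\gamma$ is recognized as the Green's function, the passage to the equilibrium measure is a one--line comparison of potentials. Alternatively one avoids the Thouless formula entirely and instead computes the boundary jump of $g_\Omega=u$ across the bands directly from the explicit formula for $u$, recovering the density $\tfrac{1}{p\pi}\big|\tfrac{d}{dE}\arccos(\Delta/2)\big|$ of the first paragraph and hence matching $\mu_{\mathrm{eq}}$ with $dk$ without invoking the Lyapunov exponent.
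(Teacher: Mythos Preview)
Your argument is correct and follows precisely the route the paper indicates: the paper does not give a self-contained proof of Fact~\ref{F2.13} (Section~\ref{s2} is a survey of known 1D results), but it explicitly points to ``the Thouless formula and the fact that pure a.c.\ spectrum implies the Lyapunov exponent is zero on the spectrum'' as the mechanism, which is exactly your identification of $u=\gamma=g_\Omega$ followed by a comparison of logarithmic potentials. Your alternative of reading off the equilibrium density directly from the boundary jump of $u$ is also fine and corresponds to the paper's second reference, the Stahl--Totik/regular-Jacobi viewpoint.
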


(Standard references for potential theory are \cite{ArmGard, Helms, Land, Rans, Werm} or \cite[Section 3.6]{HA}.) The second of these facts, together with the Borg-Hochstadt Theorem (Fact 2.14) below, implies the first.  For mathematical physicists, these facts are connected to the Thouless formula and the fact that pure a.c. spectrum implies the Lyaponov exponent is zero on the spectrum; see for example Simon \cite{SiKotani}.  In the OP community, it is connected to the theory of regular Jacobi matrices as developed especially by Stahl--Totik \cite{ST}.  We'll see in G below that these results plus gap labeling restrict the sets that can be spectra of periodic Jacobi matrices.

\medskip
\textbf{E. Borg and Borg--Hochstadt Theorems}

\begin{fact} \lb{F2.14} (Borg's Theorem) If a periodic Jacobi matrix has no gaps in its spectrum, then $a$ and $b$ are constant.
\end{fact}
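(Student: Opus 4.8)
The plan is to pass from $J$ to its \emph{discriminant} and show that the gapless hypothesis forces the discriminant to be an affine rescaling of a Chebyshev polynomial; this identifies $\Delta$ with that of a constant-coefficient matrix, after which a genus-zero (no-gap) argument collapses the isospectral set to a single point. First I would write the recursion from \eqref{2.1} in transfer-matrix form, with $T_n(z)=\left(\begin{smallmatrix}(z-b_n)/a_n & -a_{n-1}/a_n\\ 1 & 0\end{smallmatrix}\right)$, and form the one-period monodromy matrix $M(z)=T_p(z)\cdots T_1(z)$. Periodicity forces $a_0=a_p$, so $\det M\equiv 1$ and $M(z)\in SL(2,\bbR)$ for $z\in\bbR$; the discriminant $\Delta(z)=\tr M(z)$ is then a real polynomial of degree $p$ with positive leading coefficient $(a_1\cdots a_p)^{-1}$, and $\spec(J)=\{z\in\bbR:|\Delta(z)|\le 2\}$. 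I would then invoke the standard band picture: $\Delta$ has exactly $p-1$ critical points, all real, with $|\Delta|\ge 2$ at each of them, a critical value of modulus $>2$ signalling an open gap and a critical value $\pm 2$ a closed gap.

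Next, the no-gap hypothesis says every critical value equals $\pm 2$. Hence, writing $\alpha=\min\spec(J)$, $\beta=\max\spec(J)$, the polynomial $\Delta^2-4$ (of degree $2p$) has simple zeros at $\alpha,\beta$ and double zeros at each of the $p-1$ critical points $z_j$; since $2+2(p-1)=2p$, these are all its zeros, so $\Delta^2-4=(a_1\cdots a_p)^{-2}(z-\alpha)(z-\beta)\prod_j(z-z_j)^2$. As $\Delta'=p(a_1\cdots a_p)^{-1}\prod_j(z-z_j)$ has exactly the $z_j$ as zeros, dividing yields the first-order relation $(\Delta')^2(z-\alpha)(z-\beta)=p^2(\Delta^2-4)$. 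The affine substitution $w=(2z-\alpha-\beta)/(\beta-\alpha)$ converts this to $(d\Delta/dw)^2(w^2-1)=p^2(\Delta^2-4)$, whose only degree-$p$ polynomial solution with positive leading coefficient is $\Delta=2T_p(w)$, $T_p$ the Chebyshev polynomial. Thus $J$ has exactly the discriminant of the constant matrix $\widetilde J$ with $\widetilde a=(\beta-\alpha)/4$, $\widetilde b=(\alpha+\beta)/2$, for which a direct $SL(2)$ trace computation gives $\tr(\widetilde M(z))=2T_p((z-\widetilde b)/2\widetilde a)$.

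The last and most delicate step is to upgrade ``same discriminant as $\widetilde J$'' to ``equal to $\widetilde J$,'' and this is where I expect the real work to lie: the discriminant is only the trace of the monodromy and does not by itself determine the coefficients, the fiber over a fixed $\Delta$ being the isospectral torus whose dimension equals the number of open gaps. Here that number is $0$, so the fiber is a single point. Concretely I would argue through the $m$-functions of part C: by Facts \ref{F2.9}--\ref{F2.10} the $m_n^\pm$ live on a hyperelliptic surface of genus equal to the number of gaps, hence genus $0$; with no gaps there are no poles available, so each $m_n^+$ is the unique genus-zero Herglotz function with branch points $\alpha,\beta$ and $m_n^+(z)\sim -1/z$ at infinity, namely the site-independent free solution of $\widetilde a^2m^2+(z-\widetilde b)m+1=0$. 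Feeding $m_n^+\equiv m$ into the stripping relation $m_n^+=(-z+b_n-a_n^2m_{n+1}^+)^{-1}$ gives $b_n-a_n^2m(z)=z+1/m(z)$ for all $n$ and all $z$; since $1$ and $m(z)$ are linearly independent functions of $z$, both $b_n$ and $a_n^2$ are independent of $n$, i.e. $a_n\equiv\widetilde a$, $b_n\equiv\widetilde b$. The Chebyshev rigidity is essentially a polynomial identity, so the genuine obstacle is this collapse of the isospectral torus, which is exactly what the genus-zero input supplies.
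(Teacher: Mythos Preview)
The paper does not actually prove Fact~\ref{F2.14}: Section~\ref{s2} is a survey of the classical 1D theory, and Borg's theorem is simply stated with a pointer to \cite{Borg}, \cite{Hoch}, and \cite[Theorem~5.4.21 and Corollary~5.13.9]{SiSz}. So there is no ``paper's own proof'' to compare against.

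Your outline is a standard and essentially correct route to the result. The discriminant/Chebyshev part (steps 1--4) is clean: the no-gap hypothesis forces every critical value of $\Delta$ to be $\pm 2$, the factorization of $\Delta^2-4$ and the resulting ODE are correct, and its only polynomial solution with the right leading term is indeed the rescaled Chebyshev polynomial. This already pins down $a_1\cdots a_p=\widetilde a^{\,p}$ and $\spec(J)=[\alpha,\beta]$.

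The place to be careful is exactly where you flag it. Your genus-zero uniqueness claim for $m_n^+$ is right in spirit but slightly underspecified: a nonconstant meromorphic function on a genus-zero surface must have a pole somewhere, and Fact~\ref{F2.10} only rules out poles in \emph{bounded} gaps. What actually happens is that $m_n^+$ has its single pole at the second-sheet point over $\infty$; in the uniformization $z=\widetilde a(\zeta+\zeta^{-1})+\widetilde b$ one finds $m_n^+$ is a rational function of $\zeta$ with a simple zero at $\zeta=\infty$ and a simple pole at $\zeta=0$, hence $m_n^+(z)=-1/(\widetilde a\,\zeta)$ is forced by the normalization $m_n^+\sim -1/z$. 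Once you note that pole location, the rest of your argument (plugging the common $m$ into the stripping relation and reading off $b_n\equiv\widetilde b$, $a_n\equiv\widetilde a$) goes through as written. An equivalent and perhaps quicker finish is to invoke Fact~\ref{F2.23} directly: the isospectral set is a torus of dimension $p-1$ equal to the number of open gaps, hence a point when there are none.
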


\begin{fact} \lb{F2.15} (Borg--Hochstadt Theorem) If the IDS of a periodic Jacobi matrix has a value $j/p$ in each gap of the spectrum, then the period is (a divisor of) $p$.
\end{fact}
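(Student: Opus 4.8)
The plan is to reduce everything to the Floquet discriminant and then to a monodromy computation. Let $q$ denote the minimal period of $J$ and let $\Delta(E)=\tr T_q(E)$ be the discriminant built from the transfer matrix over one minimal period; it is a real polynomial of degree $q$, the spectrum is $\Sigma:=\spec(J)=\Delta^{-1}([-2,2])$, and the IDS increases by $1/q$ across each of the $q$ subbands (the components of $\Delta^{-1}((-2,2))$), so that in every gap its value lies in $\tfrac1q\bbZ$. On $\bbC\setminus\Sigma$ I would introduce the Floquet multiplier $u(E)=\tfrac12\bigl(\Delta(E)+\sqrt{\Delta(E)^2-4}\,\bigr)$, the eigenvalue of $T_q(E)$ with $\abs{u}>1$; since all roots of $\Delta^2-4$ lie in $\Sigma$, the branch making $\abs{u}>1$ is single valued and analytic off $\Sigma$, while on $\Sigma$ one has $\Delta/2=\cos(\pi q k)$ and $u=e^{i\pi q k}$.

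The heart of the argument is the function $F(E):=u(E)^{p/q}+u(E)^{-p/q}$, which on $\Sigma$ equals $2\cos(\pi p\,k(E))$, and the first thing I would verify is that the labelling hypothesis is exactly a single-valuedness condition for $F$. Traversing a small loop around a connected band $B$ of $\Sigma$ with endpoints $\alpha<\beta$, the winding number of $u$ about the origin equals the number of subbands contained in $B$, which is $q\bigl(k(\beta)-k(\alpha)\bigr)$ since the IDS carries mass $1/q$ on each subband (equivalently $q$ times the harmonic measure of $B$, by Fact \ref{F2.13}). As $\alpha,\beta$ border gaps (or $\pm\infty$), the hypothesis gives $k(\alpha),k(\beta)\in\tfrac1p\bbZ$, so $u^{p/q}$ picks up the factor $e^{2\pi i p(k(\beta)-k(\alpha))}=1$ around every band and is therefore single valued on $\bbC\setminus\Sigma$. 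This is the one place the assumption enters, and I expect the careful bookkeeping of these winding numbers — including the behaviour at closed gaps, where two subbands merge and the windings simply add — to be the main obstacle.

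Next I would promote $F$ to a polynomial. The reflection $u\mapsto u^{-1}$ across each cut leaves $F$ invariant, so $F$ continues analytically across the interior of every band; at the edges of $\Sigma$, $\arccos(\Delta/2)$ has only a square-root singularity and $F=2\cos\bigl(\tfrac pq\arccos(\Delta/2)\bigr)$ stays bounded, so those isolated points are removable. Hence $F$ is entire, and since $u\sim\Delta\sim c\,E^{q}$ as $E\to\infty$ gives $F(E)\sim c'\,E^{p}$, $F$ is a real polynomial of degree $p$ with $F^{-1}([-2,2])=\Sigma$ taking the values $\pm2$ at the edges of $\Sigma$ — that is, a Floquet discriminant for a putative period-$p$ problem on $\Sigma$.

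Finally I would invoke the rigidity of such discriminants. Because $q$ is the \emph{minimal} period, $\Delta$ is primitive: it is not a nontrivial Chebyshev composition $2T_m(\Phi/2)$ with $m\ge2$ (such a factorization, with $\Phi$ a discriminant of degree $q/m$, would exhibit the smaller period $q/m$). The classification of polynomials whose preimage of $[-2,2]$ is a prescribed finite-gap set then forces $F=2T_{p/q}(\Delta/2)$; comparing degrees yields $p=(p/q)\,q$ with $p/q\in\bbZ$, i.e. $q\mid p$, and this same identity realizes $J$ as a $p$-periodic matrix. I would concentrate the remaining technical effort on this classification step, as it is what converts the analytic conclusion ``$F$ is a degree-$p$ discriminant for $\Sigma$'' into the arithmetic statement that the minimal period divides $p$.
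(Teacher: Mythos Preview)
The paper does not actually prove Fact~\ref{F2.15}; it is listed among the review facts in Section~\ref{s2} with a pointer to \cite[Theorem 5.4.21 and Corollary 5.13.9]{SiSz}. So there is no in-paper argument to compare against, and your proposal must stand on its own.

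Your construction of $F(E)=u(E)^{p/q}+u(E)^{-p/q}$ is nice, and the monodromy bookkeeping showing that the gap-labelling hypothesis is exactly what makes $u^{p/q}$ single-valued on $\bbC\setminus\Sigma$ is correct and well explained. I also agree that $F$ then extends to a real polynomial of degree $p$ with $F^{-1}([-2,2])=\Sigma$.

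The genuine gap is the final step. You assert that because $q$ is the \emph{minimal} period, the discriminant $\Delta$ is primitive, and you justify this by saying that a factorisation $\Delta=2T_m(\Phi/2)$ ``would exhibit the smaller period $q/m$.'' But it would not --- at least not for \emph{this} $J$. Such a $\Phi$ only tells you that $\Sigma$ admits \emph{some} period-$q/m$ Jacobi matrix (one point on the period-$q/m$ isospectral torus); it does not force your $J$ to lie on that torus. Concretely, $\Delta=\tr T_q$, and the identity $\tr T_q=2T_m(\Phi/2)$ is merely a constraint on the trace of $T_q$; it does not imply $T_q=S^m$ for some transfer matrix $S$ with $\tr S=\Phi$, let alone that $S$ equals $T_{q/m}$ for your particular coefficients. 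In fact the implication ``$\Delta$ factors through a Chebyshev composition $\Rightarrow$ $J$ has the smaller period'' is essentially a restatement of Borg--Hochstadt itself, so invoking it here is circular.

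To close the argument you need an input beyond the spectrum alone --- something that ties the polynomial $F$ (or the factorisation of $\Delta$) back to the specific Jacobi matrix $J$, not just to its spectral set. The standard route (as in \cite{SiSz}) brings in the auxiliary (Dirichlet) data or the $m$-function to pin $J$ down within the isospectral family; the classification of polynomials with a prescribed $[-2,2]$-preimage by itself gives only $d_0\mid q$ and $d_0\mid p$ for some minimal degree $d_0$, which is weaker than $q\mid p$.
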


Both Borg \cite{Borg} and Hochstadt \cite{Hoch} proved their results for Hill's equation (i.e. continuum Schr\"{o}dinger operators) but it is known to hold for the Jacobi case; see for example \cite[Theorem 5.4.21 and Corollary 5.13.9]{SiSz}.

\medskip
\textbf{F. Floquet Theory and Spectral Gaps}

While it is often expressed in terms of Floquet boundary conditions, it is better for our purposes to consider the group of symmetries $W_n = U^n$ where $U$ is the symmetry $Uu_j=u_{j+p}$ so $W_nH=HW_n$.

\begin{fact} \lb{F2.16}
The representation of $\{W_n\}_{n\in\bbZ}$ acting on $\ell^2(\bbZ)$ is a direct integral of all the irreducible representations of $\bbZ$, each with multiplicity $p$
\end{fact}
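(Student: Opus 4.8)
The plan is to diagonalize the abelian unitary representation $n \mapsto W_n = U^n$ by the Fourier transform, after first untangling the period-$p$ structure; this is just the discrete Bloch decomposition. The key point to keep in mind is that the irreducible representations of $\bbZ$ are exactly the one-dimensional characters $\chi_\theta \colon n \mapsto e^{in\theta}$ indexed by $\theta$ in the dual group $\widehat{\bbZ} = \bbT$, so the assertion is precisely that $\{W_n\}$ is unitarily equivalent to $p$ copies of $\int^\oplus_{\bbT} \chi_\theta \, \tfrac{d\theta}{2\pi}$.

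First I would split $\ell^2(\bbZ)$ along residue classes modulo $p$: writing $\bbZ = \bigsqcup_{r=0}^{p-1}(p\bbZ + r)$ gives the orthogonal decomposition $\ell^2(\bbZ) = \bigoplus_{r=0}^{p-1} \ell^2(p\bbZ+r)$. Each summand is invariant under $U$, since $U$ shifts the index by $p$ and hence preserves residues, and the identification $\ell^2(p\bbZ + r) \cong \ell^2(\bbZ)$ sending the coordinate at $pj+r$ to the coordinate at $j$ conjugates the restriction of $U$ to the ordinary bilateral shift $S$ on $\ell^2(\bbZ)$ (up to inversion, which is immaterial since $\{S^n\}_{n\in\bbZ}=\{(S^{-1})^n\}_{n\in\bbZ}$). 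Thus the full representation is $p$ identical copies of $n \mapsto S^n$, which already accounts for the multiplicity $p$ provided the single-copy analysis below is uniform in $\theta$.

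Next I would apply the Fourier transform $\calF \colon \ell^2(\bbZ) \to L^2(\bbT, \tfrac{d\theta}{2\pi})$, $(\calF c)(\theta) = \sum_n c_n e^{in\theta}$, which is unitary and satisfies $\calF S \calF^{-1} = M_{e^{i\theta}}$, multiplication by $e^{i\theta}$. Hence $\calF S^n \calF^{-1} = M_{e^{in\theta}}$, and in the standard direct-integral picture $L^2(\bbT) = \int^\oplus_{\bbT} \bbC \, \tfrac{d\theta}{2\pi}$ the operator $M_{e^{in\theta}}$ acts on the fiber over $\theta$ precisely as the scalar $e^{in\theta} = \chi_\theta(n)$. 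This exhibits the single-copy representation as $\int^\oplus_{\bbT} \chi_\theta \, \tfrac{d\theta}{2\pi}$, the direct integral of all irreducibles of $\bbZ$ each with multiplicity one, and combining with the $p$-fold splitting of the first step yields the claim.

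There is no serious obstacle; the content is bookkeeping. The only points requiring care are (i) recording $\widehat{\bbZ} = \bbT$, so that ``all irreducible representations'' genuinely means the full circle of characters with respect to Haar (Lebesgue) measure, and (ii) verifying that the residue-class decomposition produces a \emph{constant} multiplicity $p$ over the base $\bbT$ rather than a $\theta$-dependent one. The latter is immediate because the $p$ summands are mutually unitarily equivalent, so after the Fourier transform each contributes exactly one copy of $\chi_\theta$ in every fiber.
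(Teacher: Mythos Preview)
Your argument is correct and is the standard Bloch/Fourier decomposition: split $\ell^2(\bbZ)$ into the $p$ residue classes modulo $p$, observe that $U$ acts as the bilateral shift on each, and diagonalize by the Fourier transform so that each copy becomes $\int^\oplus_{\bbT}\chi_\theta\,\tfrac{d\theta}{2\pi}$.

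Note, however, that the paper does not actually prove Fact~\ref{F2.16}. Section~\ref{s2} is a review of known one-dimensional results, and the facts there are stated without proof as background for the tree case; the paper refers the reader to \cite[Chapters 5 and 6]{SiSz} for this material. So there is no ``paper's own proof'' to compare against --- your write-up is exactly the standard argument one would give, and nothing more is needed.
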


\begin{fact} \lb{F2.17}
$H$ is a direct integral of the $p\times p$ matrices $H(\theta); \, e^{i\theta}\in\partial\bbD$ given by
\[
H(\theta)=\left( \begin{matrix} b_1 & a_1 & 0 & 0 & \cdots & 0 & a_pe^{i\theta} \\ a_1 & b_2 & a_2 & 0 & \cdots & 0 & 0 \\ 0 & a_2 & b_3 & a_4 & \cdots & 0 & 0 \\ \vdots & \vdots & \vdots & \vdots & \vdots & \vdots & \vdots \\ 0 & 0 & 0 & 0 & \cdots & b_{p-1} & a_{p-1} \\ a_pe^{-i\theta} & 0 & 0 & 0 & \cdots & a_{p-1} & b_p\end{matrix} \right).
\]
In particular,
\begin{equation} \lb{2.10}
      \spec(H) = \bigcup_{e^{i\theta}\in\partial\bbD} \spec(H(\theta))
\end{equation}
\end{fact}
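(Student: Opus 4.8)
The plan is to turn the abstract direct-integral decomposition furnished by Fact~\ref{F2.16} into the concrete Floquet--Bloch picture and then read off the fibers. First I would regroup the index set: writing each $n\in\bbZ$ uniquely as $n=mp+r$ with $m\in\bbZ$ and $r\in\{1,\dots,p\}$, I identify $\ell^2(\bbZ)$ with $\ell^2(\bbZ;\bbC^p)$ through $(v_m)_r=u_{mp+r}$. Under this identification the generator $U$ of the symmetry group, $(Uu)_j=u_{j+p}$, becomes the block shift $(v_m)_{m\in\bbZ}\mapsto(v_{m+1})_{m\in\bbZ}$ on $\ell^2(\bbZ;\bbC^p)$.

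Next I would apply the fiberwise Fourier transform $\calF\colon\ell^2(\bbZ;\bbC^p)\to L^2(\partial\bbD;\bbC^p)$, $(\calF v)(\theta)=\sum_{m}e^{im\theta}v_m$, which is unitary for normalized arc length. This is exactly the map diagonalizing the representation $\{W_n\}$ of Fact~\ref{F2.16}: it sends $U$ to multiplication by $e^{i\theta}$, the fibers being the copies of $\bbC^p$, so that each character $e^{in\theta}$ of $\bbZ$ occurs with multiplicity $p=\dim\bbC^p$. Since $H$ commutes with every $W_n=U^n$, it is intertwined by $\calF$ with a decomposable operator, i.e.\ multiplication by a matrix-valued symbol $\theta\mapsto H(\theta)$.

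Computing $H(\theta)$ is then bookkeeping in \eqref{2.1}. For $2\le r\le p-1$ the site $mp+r$ couples only within block $m$, contributing the diagonal entry $b_r$ and the off-diagonal entries $a_{r-1},a_r$; assembling these gives the tridiagonal matrix with diagonal $b_1,\dots,b_p$ and sub/super-diagonal $a_1,\dots,a_{p-1}$. The sole inter-block coupling joins the last site $mp+p$ of block $m$ to the first site $(m+1)p+1$ of block $m+1$ with weight $a_{mp+p}=a_p$. Under $\calF$ the block shift acts as the scalar $e^{i\theta}$ and its adjoint as $e^{-i\theta}$, so this coupling deposits $a_pe^{\pm i\theta}$ in the two corner positions $(1,p)$ and $(p,1)$; with the stated Fourier convention this is precisely the displayed matrix $H(\theta)$ (the opposite convention only swaps $e^{i\theta}\leftrightarrow e^{-i\theta}$, which is immaterial below).

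Finally, \eqref{2.10} follows from the general fact that the spectrum of a direct integral $\int^{\oplus}H(\theta)\,\tfrac{d\theta}{2\pi}$ is the essential union of the fiber spectra. The point requiring care---and the one I would treat as the main obstacle---is upgrading this to the honest set equality $\spec(H)=\bigcup_{\theta}\spec(H(\theta))$ with no closure needed. Here I would use that each $H(\theta)$ is a $p\times p$ Hermitian matrix depending continuously on $\theta\in\partial\bbD$: its eigenvalues are the values of finitely many continuous functions on the compact circle, so $\bigcup_\theta\spec(H(\theta))$ is already a finite union of compact intervals, hence closed. Combined with the characterization that $\lambda\in\spec(H)$ iff $\{\theta:\dist(\lambda,\spec(H(\theta)))<\veps\}$ has positive measure for every $\veps>0$, continuity of the fibers forces such a $\lambda$ to lie in some single $\spec(H(\theta_0))$, giving the claimed union.
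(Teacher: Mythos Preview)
Your argument is correct and is exactly the standard Floquet--Bloch derivation. Note, however, that the paper does not actually prove Fact~\ref{F2.17}: Section~\ref{s2} is a review of the classical one-dimensional theory, and the Facts there are stated without proof, with a blanket reference to \cite[Chapters 5 and 6]{SiSz}. So there is no ``paper's own proof'' to compare against; you have supplied the standard one. Your handling of the one genuine subtlety---that the union $\bigcup_\theta\spec(H(\theta))$ is already closed because the eigenvalues of a continuous Hermitian matrix family over a compact parameter space trace out finitely many compact intervals---is the right way to upgrade the essential-union statement to the honest set equality \eqref{2.10}.
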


This lets us describe band edges

\begin{fact} \lb{F2.18}
The edges of gaps correspond to eigenvalues of $H(\theta)$ for $\theta=0,\pi$, that is periodic and antiperiodic boundary conditions
\end{fact}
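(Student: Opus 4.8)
The plan is to route everything through the \emph{discriminant} rather than working with $H(\theta)$ head-on. Define the one-step transfer matrices $A_n(E) = \left(\begin{smallmatrix} (E-b_n)/a_n & -a_{n-1}/a_n \\ 1 & 0\end{smallmatrix}\right)$, the monodromy matrix $T(E) = A_p(E)\cdots A_1(E)$, and the discriminant $\Delta(E) = \tr T(E)$. Because the $a_n$ are $p$-periodic, $\det A_n(E) = a_{n-1}/a_n$ telescopes to $\det T(E) = 1$, so the eigenvalues of $T(E)$ are a reciprocal pair $\lambda,\lambda^{-1}$ with $\lambda + \lambda^{-1} = \Delta(E)$. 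A sequence $u$ solves $(J-E)u = 0$ subject to the Floquet condition $u_{n+p} = e^{i\theta}u_n$ exactly when $e^{i\theta}$ is an eigenvalue of $T(E)$, i.e.\ when $\Delta(E) = 2\cos\theta$. Since such a Floquet solution is precisely an eigenvector of the matrix $H(\theta)$ of Fact \ref{F2.17}, I would first record the polynomial identity $\prod_{j=1}^p(E - E_j(\theta)) = (\prod_{j=1}^p a_j)(\Delta(E) - 2\cos\theta)$ — both sides are degree-$p$ polynomials in $E$ with the same leading coefficient and the same roots — which yields $E\in\spec(H(\theta)) \iff \Delta(E) = 2\cos\theta$ and recovers \eqref{2.10} as $\spec(J) = \{E : \abs{\Delta(E)}\le 2\}$.

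Next I would read off the gap edges. The spectrum is the union of bands $\Delta^{-1}([-2,2])$, and each boundary point $E_0$ of this set satisfies $\abs{\Delta(E_0)} = 2$ by continuity. Now $\Delta(E) = 2$ is equivalent to $\cos\theta = 1$, i.e.\ to $\theta = 0$ and the periodic condition $u_{n+p} = u_n$, so such $E$ are exactly the eigenvalues of $H(0)$; while $\Delta(E) = -2$ is equivalent to $\cos\theta = -1$, i.e.\ to $\theta = \pi$ and the antiperiodic condition $u_{n+p} = -u_n$, so such $E$ are exactly the eigenvalues of $H(\pi)$. Hence every gap edge is a zero of $\Delta\mp 2$ and therefore an eigenvalue of $H(0)$ or of $H(\pi)$, which is the asserted correspondence.

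The one point demanding real work — and the main obstacle — is to rule out any spectral boundary produced at an interior quasimomentum $\theta\in(0,\pi)$; without this, a band function could fold back and create an edge away from $\theta\in\{0,\pi\}$. Equivalently, I must show $\Delta$ is strictly monotone on each open band, so that it maps the band bijectively onto $[-2,2]$ with endpoints landing on $\pm 2$ and each branch $\theta\mapsto E_j(\theta)$ is strictly monotone on $[0,\pi]$, attaining its extrema only at $\theta = 0,\pi$. To prove $\Delta'(E)\neq 0$ for $E$ in an open band I would use the two Floquet solutions $u^{\pm}$, with $u^{\pm}_{n+p} = e^{\pm i\theta}u^{\pm}_n$ and $\theta\in(0,\pi)$; these are linearly independent (since $e^{i\theta}\neq e^{-i\theta}$), so their Wronskian $W = a_n(u^+_{n+1}u^-_n - u^+_n u^-_{n+1})$ is a nonzero $n$-independent constant. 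Differentiating $(J-E)u^+ = 0$ in $E$ to get $(J-E)\partial_E u^+ = u^+$, pairing against $u^-$, and summing the resulting telescoping Green's/Wronskian identity over one period produces a relation of the shape $\Delta'(E)\,W = c\sum_{n=1}^p u^+_n u^-_n$ with $c\neq 0$; since $u^- = \overline{u^+}$ for real $E,\theta$, the right-hand side equals $c\sum_{n=1}^p\abs{u^+_n}^2\neq 0$, forcing $\Delta'(E)\neq 0$. This monotonicity closes the argument: the band edges are exactly the zeros of $\Delta^2 - 4$, that is, exactly $\spec(H(0))\cup\spec(H(\pi))$, with double roots marking closed gaps. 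I expect the delicate part of the computation to be the boundary terms in that Wronskian summation, since it is there that the $E$-dependence of the Floquet phase $\theta$ enters and manufactures the factor $\Delta'(E)$.
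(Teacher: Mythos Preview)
The paper does not actually prove Fact \ref{F2.18}: Section \ref{s2} is an unrefereed summary of the classical 1D theory, and all of Facts \ref{F2.1}--\ref{F2.25} are stated without proof, with \cite[Chapters 5 and 6]{SiSz} given as the reference. So there is no in-paper argument to compare against.

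Your proposal is the standard discriminant/transfer-matrix proof and is correct in outline and in its identification of the real issue. The characteristic-polynomial identity $\det(E-H(\theta))=(\prod a_j)(\Delta(E)-2\cos\theta)$ is exactly right and gives the equivalence $E\in\spec(H(\theta))\iff\Delta(E)=2\cos\theta$. You correctly isolate the nontrivial point: that band edges occur only at $|\Delta|=2$, which requires ruling out interior critical points of $\Delta$ on a band. Your Wronskian argument is the right one; the identity you are aiming for is (in one common normalization) $\Delta'(E)=-\dfrac{\sin\theta}{W}\sum_{n=1}^{p}|u_n^{+}|^{2}$ for $\theta\in(0,\pi)$, obtained by differentiating the Floquet relation in $E$ and telescoping. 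Since $u^{-}=\overline{u^{+}}$ for real $E$ and real Jacobi parameters, the sum is strictly positive and $\sin\theta\ne 0$, so $\Delta'\ne 0$ on the open band. Your anticipation that the boundary terms produce the factor $\Delta'(E)$ (equivalently, the $d\theta/dE$ factor) is accurate; just be careful to fix the normalization of $u^{\pm}$ before differentiating so that the $E$-dependence enters only through the phase, which is what makes the telescoping clean. This is precisely the argument one finds in \cite[Chapter 5]{SiSz}, so your route coincides with the reference the paper cites.
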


There is a detailed analysis; the largest periodic eigenvalue is simple and is the top of $\spec(H)$, the next two are antiperiodic  and they are unequal if and only if there is a gap with IDS value $(p-1)/p$, the next two are periodic $\dots$. One consequence of this gap edge result is

\begin{fact} \lb{F2.19}
Generically, all gaps are open, that is the set of $\{a_n, b_n\}_{n=1}^p$ in $\bbR^{2p}$ for which there is a closed gap is a closed nowhere dense set.
\end{fact}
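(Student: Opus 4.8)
The plan is to reduce the claim to the non-vanishing of a single polynomial on the parameter space. Using the Floquet matrices of Fact \ref{F2.17}, set $\chi_0(E)=\det(E-H(0))$ and $\chi_\pi(E)=\det(E-H(\pi))$, the characteristic polynomials of the periodic and antiperiodic $p\times p$ matrices. Each is monic of degree $p$ in $E$ with coefficients that are polynomials in $\{a_n,b_n\}_{n=1}^p$ (for $\theta\in\{0,\pi\}$ the corner entries of $H(\theta)$ are $\pm a_p$, hence real and polynomial in the data). Concretely, writing $\Delta$ for the discriminant (trace of the one-period monodromy matrix), one has $\chi_0=(\prod_{n=1}^p a_n)(\Delta-2)$ and $\chi_\pi=(\prod_{n=1}^p a_n)(\Delta+2)$, since both sides are monic of degree $p$ with the same roots. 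Let $D_0$ and $D_\pi$ be the discriminants in $E$ of $\chi_0$ and $\chi_\pi$; these are polynomials in the $2p$ real variables $\{a_n,b_n\}$. Everything hinges on the implication: if $\spec(J)$ has a closed gap, then $D_0=0$ or $D_\pi=0$.

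First I would prove that implication. By Fact \ref{F2.18} the $2p$ band edges are exactly the periodic eigenvalues (roots of $\Delta=2$, equivalently of $\chi_0$) together with the antiperiodic eigenvalues (roots of $\Delta=-2$, equivalently of $\chi_\pi$). A gap closes precisely when two of these band edges coincide. Two coinciding edges must be of the \emph{same} Floquet type, since a periodic edge has $\Delta=2$ and an antiperiodic one has $\Delta=-2$, and $2\neq-2$. Hence a closed gap produces a repeated root of $\chi_0$ or a repeated root of $\chi_\pi$, which is exactly $D_0=0$ or $D_\pi=0$, as claimed. (The finer ordering of the edges recorded after Fact \ref{F2.18}, in which the gap-bounding pairs alternate between the antiperiodic and periodic types, is consistent with this but is not needed here.)

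Next I would settle the two topological assertions. For \emph{closedness}: the $2p$ band edges are the roots of the polynomial $\chi_0\chi_\pi$, whose coefficients vary continuously with $\{a_n,b_n\}$, so the edges are continuous functions of the parameters; since every band has positive length, ``all $p-1$ gaps are open'' is equivalent to ``the $2p$ band edges are pairwise distinct in the relevant pattern,'' an open condition, so its complement, the closed-gap locus, is closed. For \emph{nowhere density}: by the implication above the closed-gap locus is contained in $\{D_0D_\pi=0\}$, and the zero set of a polynomial on $\bbR^{2p}$ that is not identically zero is a proper algebraic subset, in particular closed with empty interior, hence nowhere dense. So it remains only to check that $D_0D_\pi$ is not the zero polynomial.

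To see $D_0D_\pi\not\equiv0$ I would exhibit one point where it is nonzero. Take the $b_n$ pairwise distinct and set $a_n=\veps$; as $\veps\to0$ both $H(0)$ and $H(\pi)$ converge to $\mathrm{diag}(b_1,\dots,b_p)$, which has simple spectrum. Therefore for all sufficiently small $\veps>0$ each of $H(0),H(\pi)$ has simple spectrum, so $D_0\neq0$ and $D_\pi\neq0$ there (and indeed all $p-1$ gaps are open, the spectrum being $p$ narrow bands around the distinct $b_n$). This proves $D_0,D_\pi\not\equiv0$ and completes the argument. The only step requiring genuine care is the implication in the second paragraph---namely that a closed gap really registers as a repeated root of one of the $\chi_\bullet$'s; once the band edges are identified with the periodic/antiperiodic eigenvalues via Fact \ref{F2.18}, this is immediate from $2\neq-2$, and the remainder is continuity together with the standard fact about zero sets of nonzero polynomials.
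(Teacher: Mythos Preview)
Your proof is correct. The paper itself does not give a detailed argument for Fact~\ref{F2.19}; it is stated as part of a review of known one-dimensional results, with only the remark that openness of the all-gaps-open set is ``easy to see'' and a reference to \cite{Simongeneric} for the continuum analog. Your argument supplies complete details along natural lines: you identify the closed-gap locus as a subset of the zero set of the polynomial $D_0D_\pi$ (the product of the $E$-discriminants of the periodic and antiperiodic characteristic polynomials), verify $D_0D_\pi\not\equiv 0$ by the small-$a$ example with distinct $b$'s, and conclude nowhere density from the standard fact that a nonzero real polynomial has nowhere-dense zero set. The key observation---that two coinciding band edges must share the same Floquet type because $\Delta$ is a single-valued function---is exactly the right way to avoid invoking the finer alternation pattern described after Fact~\ref{F2.18}.

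One remark: your argument shows the closed-gap locus sits inside a real-algebraic set of codimension $\ge 1$, which suffices for Fact~\ref{F2.19}. The stronger Fact~\ref{2.20} (codimension $2$) requires the additional Wigner--von~Neumann input that a \emph{real symmetric} (here, $H(0)$ or $H(\pi)$) eigenvalue degeneracy is a codimension-$2$ phenomenon, which your discriminant bound alone does not capture.
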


One looks at the set in $\bbR^{2p}$ of all possible $a$'s and $b$'s for which there are gaps where the IDS is $j/p$ for all $j=1,\dots,p-1$. It is easy to see it is open and this says it is dense. In 1976, Simon \cite{Simongeneric} showed that the analog holds for continuum Schr\"{o}dinger operators.

In this Jacobi case, more is true using ideas that go back to Wigner--von Neumann \cite{WvN}.

\begin{fact} \lb{2.20}
The set of $\{a_n, b_n\}_{n=1}^p$ in $\bbR^{2p}$ where one or more gaps are closed is a real variety of codimension $2$.
\end{fact}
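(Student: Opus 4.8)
The plan is to reduce the statement to the Wigner--von Neumann codimension count for eigenvalue degeneracies of real symmetric matrices. By Fact~\ref{F2.18} and the gap--edge analysis following it, every bounded gap is bordered by a pair of eigenvalues belonging to a single matrix --- the periodic matrix $H(0)$ for gaps of one parity of label and the antiperiodic matrix $H(\pi)$ for the other --- and the gap is closed exactly when that pair coincides. Hence the closed--gap set is $\calC=\calC_0\cup\calC_\pi$, where $\calC_0$ (resp.\ $\calC_\pi$) is the set of $\{a_n,b_n\}_{n=1}^p$ for which $H(0)$ (resp.\ $H(\pi)$) has a repeated eigenvalue. I would first record that such degeneracies are always exactly double: any eigenvector of $H(0)$ solves the second--order recursion $a_n\phi_{n+1}+b_n\phi_n+a_{n-1}\phi_{n-1}=\lambda\phi_n$, whose solution space is two--dimensional, so no eigenvalue can have multiplicity three, and a double periodic eigenvalue is precisely the condition that the one--period transfer matrix equals $I$ (and $-I$ for $H(\pi)$).

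Fixing $M=H(0)$ as a point of the space $\operatorname{Sym}_p(\bbR)$ of real symmetric matrices, the Wigner--von Neumann observation is that the locus of matrices with a repeated eigenvalue has real codimension $2$. Near a matrix with a single double eigenvalue $\lambda_0$ and two--dimensional eigenspace spanned by orthonormal real vectors $\psi_1,\psi_2$, ordinary perturbation theory replaces the degenerate block, to first order, by the $2\times 2$ real symmetric matrix with entries $\alpha=\langle\psi_1,\delta M\,\psi_1\rangle$, $\beta=\langle\psi_2,\delta M\,\psi_2\rangle$, $\gamma=\langle\psi_1,\delta M\,\psi_2\rangle$, and the two perturbed eigenvalues stay equal exactly when $\alpha=\beta$ and $\gamma=0$: two independent real conditions. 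Because our matrices never acquire a triple eigenvalue, this smooth codimension--two stratum is the only one we meet.

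The real work, and the step I expect to be the main obstacle, is transversality: I must show that the affine map $\Phi\colon\bbR^{2p}\to\operatorname{Sym}_p(\bbR)$, $\{a_n,b_n\}\mapsto H(0)$, meets this degeneracy locus transversally, so that $\calC_0=\Phi^{-1}(\cdot)$ inherits codimension $2$. It suffices that the two functionals $\delta M\mapsto\alpha-\beta$ and $\delta M\mapsto\gamma$ stay linearly independent when restricted to the tangent directions of the family, namely the diagonal perturbations $\delta b_k$ and the edge perturbations $\delta a_k$ (including the corner). A dependence $c_1(\alpha-\beta)+c_2\gamma\equiv 0$ on all these directions is equivalent to the rank--two symmetric matrix $Q=c_1(\psi_1\psi_1^{T}-\psi_2\psi_2^{T})+\tfrac{c_2}{2}(\psi_1\psi_2^{T}+\psi_2\psi_1^{T})$ having vanishing diagonal and vanishing nearest--neighbour (and corner) entries. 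Writing $w(k)=\psi_1(k)+i\psi_2(k)$, this says $\Re(\zeta\,w(k)w(l))=0$ for a fixed $\zeta\neq 0$ whenever $k=l$ or $k,l$ are adjacent on the cycle; I would feed this back into the recursion, where $a_n>0$ forbids two consecutive zeros of any eigenvector, to conclude that it can hold only for special configurations --- those in which, after an orthogonal rotation of $(\psi_1,\psi_2)$, the two eigenvectors have disjoint, strictly alternating supports, which pins the $b_n$ on a sublattice to the common value $\lambda_0$. Such configurations form a subset of $\calC_0$ of strictly positive codimension, so transversality holds off a lower--dimensional set.

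Finally I would assemble the pieces. Off this exceptional subset, $\calC_0$ and $\calC_\pi$ are smooth of codimension $2$; and each is genuinely a real algebraic variety, being the zero set of the discriminant of the characteristic polynomial of $H(0)$ (resp.\ $H(\pi)$), a polynomial in $\{a_n,b_n\}$ that is moreover nonnegative since it equals the product of squared differences of the real eigenvalues of a symmetric matrix. A finite union of codimension--two real varieties is again a real variety of codimension $2$, which gives the claim, and that the codimension is exactly $2$ (rather than larger) follows by exhibiting a one--parameter deformation that closes a single gap while leaving the others open, producing a smooth point of $\calC$ of dimension $2p-2$.
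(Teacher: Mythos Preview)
The paper does not actually prove Fact~\ref{2.20}; it appears in Section~\ref{s2}, the survey of known one--dimensional results, with only the remark ``using ideas that go back to Wigner--von Neumann \cite{WvN}'' as justification. So there is no proof in the paper to compare against, and your approach---reducing to eigenvalue coincidences of the real symmetric matrices $H(0)$ and $H(\pi)$ and invoking the Wigner--von Neumann codimension count---is precisely the route the paper points to.

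Your argument is essentially correct, and in fact cleaner than you give it credit for. The transversality step, which you flag as the main obstacle, goes through without any exceptional set. The key observation you are missing is that $w(k)=\psi_1(k)+i\psi_2(k)$ \emph{never vanishes}: if $\psi_1(k)=\psi_2(k)=0$ at some site $k$, then since $\psi_1,\psi_2$ span the full two--dimensional solution space of the recursion at eigenvalue $\lambda_0$, every solution would vanish at $k$, which is impossible (the solution with initial data $\psi(k)=1$, $\psi(k-1)=0$ certainly does not). With $w(k)\ne 0$ for all $k$, your condition $\Re(\zeta\,w(k)^2)=0$ forces each $w(k)$ to lie on one of the two lines $\bbR e^{\pm i\pi/4}$ (after rotating by the argument of $\zeta$), and the edge condition $\Re(\zeta\,w(k)w(k+1))=0$ forces adjacent $w$'s onto the \emph{same} line. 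Since the index set is a cycle, all $w(k)$ lie on a single real line in $\bbC$, making $\psi_1$ and $\psi_2$ proportional---contradicting their linear independence. Thus the two linear functionals $\alpha-\beta$ and $\gamma$ are always independent on the Jacobi tangent directions, and $\calC_0$, $\calC_\pi$ are everywhere smooth of codimension $2$. You can drop the discussion of ``alternating supports'' entirely.
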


\medskip
\textbf{G. Isospectral Manifold}

The set of $p$ band sets, $\cup_{j=1}^p [\alpha_j,\beta_j], \quad \alpha_1<\beta_1<\alpha_2<\dots<\beta_p$, is described by $2p$ real numbers so a manifold of dimension $2p$. But they are not all possible spectra of periodic Jacobi matrices because a general set has arbitrary real harmonic measures of the bands while, in the periodic case, bands have harmonic measures of the form $j/p$.  This places $p-1$ constraints on the set (not $p$ because it suffices that $p-1$ harmonic measures be rational).

\begin{fact} \lb{F2.21}
The dimension of allowed periodic spectra of period $p$ is $p+1$, that is the set of $\{\alpha_j,\beta_j\}_{j=1}^p$ in the subset of $\bbR^{2p}$ with $\alpha_1<\beta_1<\alpha_2<\dots<\beta_p$ which is the spectrum of some period $p$ Jacobi matrix is a manifold of dimension $p+1$.
\end{fact}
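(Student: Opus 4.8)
The plan is to avoid computing the harmonic-measure map directly and instead identify the allowed set of spectra with an open set in the $(p+1)$-dimensional space of admissible period-$p$ discriminants. First I would pin down what gap labeling forces. For a $p$-band set $E=\bigcup_{j=1}^p[\alpha_j,\beta_j]$ with all $p-1$ gaps open, the IDS $k$ is strictly increasing and, by Fact \ref{F2.5}, equals one of $1/p,\dots,(p-1)/p$ on each of the $p-1$ gaps; since these values are strictly increasing there is no freedom and $k\equiv i/p$ on the $i$th gap. By Fact \ref{F2.13}, $k$ is the equilibrium measure of $E$, so if $E$ is a period-$p$ spectrum then every band has harmonic measure exactly $1/p$. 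Conversely, every $p$-band $E$ with this property is realized: finite-gap (isospectral torus) theory produces a Jacobi matrix with spectrum $E$ whose IDS equals the equilibrium measure of $E$ (Fact \ref{F2.13}), hence takes the value $i/p$ in each gap, so Borg--Hochstadt (Fact \ref{F2.15}) forces its period to divide $p$. Thus the allowed set $S$ is \emph{exactly} the set of $p$-band configurations all of whose bands have harmonic measure $1/p$.

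Next I would use the equivalence: all harmonic measures equal $1/p$ iff $E=\Delta^{-1}([-2,2])$ for a real polynomial $\Delta$ of degree $p$ with positive leading coefficient, namely the discriminant $\Delta=\tr$ of the one-period transfer matrix. The forward direction is the band picture of Facts \ref{F2.17}--\ref{F2.18}; for the measure statement note $\Delta$ maps each band homeomorphically onto $[-2,2]$, whence $g_E=\frac1p\,g_{[-2,2]}\circ\Delta$ and each band carries equilibrium mass $\frac1p$. Let $\calP_p$ denote the set of real degree-$p$ polynomials with positive leading coefficient for which $\Delta^2-4$ has $2p$ simple real roots forming a nested $p$-band configuration. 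This is an \emph{open} subset of the $(p+1)$-dimensional space of degree-$\le p$ real polynomials with positive leading coefficient, so $\dim\calP_p=p+1$.

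Now define $F:\calP_p\to\bbR^{2p}$ sending $\Delta$ to the ordered roots $\alpha_1<\beta_1<\dots<\beta_p$ of $\Delta^2-4$; by the previous paragraph its image is $S$. I would check $F$ is a real-analytic injective immersion. Injectivity: if $\Delta_1^2-4$ and $\Delta_2^2-4$ share the same simple roots they are proportional, $\Delta_1^2-4=\lambda(\Delta_2^2-4)$, and factoring $(\Delta_1-\sqrt\lambda\,\Delta_2)(\Delta_1+\sqrt\lambda\,\Delta_2)=4(1-\lambda)$ forces $\lambda=1$ (the second factor is nonconstant of positive leading coefficient), hence $\Delta_1=\Delta_2$. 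Immersion: a perturbation $\dot\Delta$ moves the root $e_i$ of $\Delta^2-4$ by $\dot e_i=-2\Delta(e_i)\dot\Delta(e_i)/(\Delta^2-4)'(e_i)$; since $\Delta(e_i)=\pm2\neq0$ and $e_i$ is simple, $\dot e_i=0$ for all $2p$ edges forces $\dot\Delta$ to vanish at $2p$ points, so $\dot\Delta\equiv0$ because $\deg\dot\Delta\le p<2p$. Thus $dF$ is injective, and properness (as $\Delta$ approaches $\partial\calP_p$ the edges collide or escape to infinity) upgrades the injective immersion to an embedding. Therefore $S=F(\calP_p)$ is an embedded real-analytic submanifold of $\bbR^{2p}$ of dimension $p+1$.

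The hard part is the converse in the first step, namely that \emph{every} $p$-band set with all harmonic measures $1/p$ is genuinely a period-$p$ spectrum (equivalently, that all of $\calP_p$ consists of discriminants); this is the inverse-spectral content, which I would supply by citing the construction of the isospectral torus together with Borg--Hochstadt (Fact \ref{F2.15}). Everything else is bookkeeping: the heuristic count $2p-(p-1)=p+1$ (the $p$ conditions $\rho_j=1/p$ carry one linear relation) is made rigorous by the explicit $(p+1)$-parameter discriminant coordinates, and the immersion computation is elementary. An alternative, more analytic route would dispense with discriminants and instead show directly that $E\mapsto(\rho_1,\dots,\rho_{p-1})$ is a submersion; there the crux becomes non-degeneracy of the Jacobian $\partial\rho_i/\partial(\text{edges})$, which one extracts from variational formulas for periods of the hyperelliptic curve $y^2=\prod_j(z-\alpha_j)(z-\beta_j)$ together with Riemann's bilinear relations, and which I expect to be the genuinely technical step along that path.
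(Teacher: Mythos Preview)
The paper does not actually prove Fact \ref{F2.21}; Section \ref{s2} is a review of known 1D results, and the only justification offered is the heuristic count in the paragraph preceding the statement (``This places $p-1$ constraints on the set (not $p$ because it suffices that $p-1$ harmonic measures be rational)''), i.e.\ $2p-(p-1)=p+1$. So there is no proof in the paper to compare against, only an informal dimension count.

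Your proposal supplies a genuine proof, and the approach---parametrizing allowed spectra by the open set $\calP_p$ of admissible discriminants inside the $(p{+}1)$-dimensional space of real degree-$p$ polynomials with positive leading coefficient, and showing the edge map $F:\calP_p\to\bbR^{2p}$ is an injective immersion---is the natural one and is correct in outline. The injectivity and immersion computations are clean. Two small points: (i) for the surjectivity onto the allowed spectral sets you invoke finite-gap inverse theory plus Borg--Hochstadt; this is fine, but note you could equally cite Fact \ref{F2.25} directly for the backward inclusion (every period-$p$ spectrum is $\Delta^{-1}[-2,2]$ for its discriminant), and for the forward inclusion the standard construction (e.g.\ via the polynomial-preimage formalism of \cite{GvA,Tot} or \cite[Chapter 5]{SiSz}) shows directly that any $\Delta\in\calP_p$ is the discriminant of an actual period-$p$ Jacobi matrix, without the detour through equilibrium measures and Borg--Hochstadt; (ii) your properness claim is a bit quick---a cleaner way to upgrade the injective immersion to an embedding is to observe that the inverse $F^{-1}$ is continuous, since the unique $c>0$ for which $4+c\prod_i(z-e_i)$ is a perfect square depends continuously on the $e_i$ (it is determined by a polynomial equation in $c$ with simple root, by your own injectivity argument).
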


\begin{fact} \lb{F2.22}
   The isospectral family associated to a $p$-band periodic spectral set is a manifold of dimension $p-1$
\end{fact}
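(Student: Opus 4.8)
The plan is to reduce the statement to inverse spectral theory for the \emph{discriminant} and to exhibit the isospectral family as a fiber of an explicit map. Recall that for a period-$p$ Jacobi matrix $J$ one forms the one-period transfer (monodromy) matrix $A(z)\in SL(2,\bbR)$ and sets $\Delta(z)=\tr A(z)$, a real polynomial of degree $p$ with $\Delta(z)\sim z^p/(a_1\cdots a_p)$, whose $p+1$ coefficients are polynomials in $\{a_n,b_n\}_{n=1}^p$; by Floquet theory $\spec(J)=\Delta^{-1}([-2,2])$. First I would record the rigidity fact that, for a genuine $p$-band set (all gaps open), the spectrum determines $\Delta$ uniquely. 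Indeed $\Delta^2-4$ is a degree-$2p$ polynomial vanishing simply at the $2p$ band edges $e_k$, so $\Delta^2-4=C\prod_k(z-e_k)$; the requirement that $4+C\prod_k(z-e_k)$ be the square of a degree-$p$ polynomial with positive leading coefficient pins down $C=(a_1\cdots a_p)^{-2}$ and the sign pattern uniquely. Hence $\prod_n a_n$ is an isospectral invariant, and two period-$p$, $p$-band Jacobi matrices are isospectral if and only if they have the same $\Delta$.

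With this in hand the isospectral family is exactly a fiber of the \emph{discriminant map}
\[
\Phi\colon \bbR_+^p\times\bbR^p \longrightarrow \bbR^{p+1},\qquad (a,b)\mapsto \big(\text{coefficients of }\Delta_{a,b}\big),
\]
where $\bbR_+^p\times\bbR^p$ records $\{a_n>0\}_{n=1}^p$ and $\{b_n\in\bbR\}_{n=1}^p$. On the open set where all gaps are open, the image of $\Phi$ is the $(p+1)$-dimensional space corresponding to allowed spectra in Fact \ref{F2.21}. A dimension count then gives the expected answer: if $\Phi$ is a submersion onto its image, each nonempty fiber is a manifold of dimension $2p-(p+1)=p-1$.

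To upgrade the dimension count to a genuine manifold statement I would use the classical \emph{Dirichlet data} coordinates. To each matrix in the family attach the $p-1$ auxiliary eigenvalues $\mu_j$, one in the closed finite gap $[\beta_j,\alpha_{j+1}]$ (the zeros in the gaps of the appropriate off-diagonal entry of $A(z)$), together with a sheet label $\sigma_j\in\{\pm\}$ recording which Floquet solution decays at $\pm\infty$. When $\mu_j$ reaches a gap edge the two sheets coalesce, so $(\mu_j,\sigma_j)$ traces a circle and the collection ranges over $\bbT^{p-1}$. The content of inverse spectral theory is that, for fixed $\Delta$, the assignment $J\mapsto ((\mu_j,\sigma_j))_{j=1}^{p-1}$ is a real-analytic bijection onto $\bbT^{p-1}$: one reconstructs $J$ from $\Delta$ and the Dirichlet data via trace formulas and the Abel map into the Jacobian of the hyperelliptic curve $y^2=\Delta(z)^2-4$, whose genus is precisely $p-1$. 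This simultaneously identifies the isospectral family with $\bbT^{p-1}$ and shows $\Phi$ has locally constant rank $p+1$.

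The main obstacle is exactly the bijectivity of the Dirichlet-data map, equivalently the submersion property of $\Phi$: one must show the auxiliary spectrum together with its sheet data move freely over the entire gap structure and that the reconstruction always returns a genuine \emph{positive} Jacobi matrix with the prescribed $\Delta$. This is where the hyperelliptic/Its--Matveev machinery does the real work (nonvanishing of the relevant Jacobian, or completeness of the Dubrovin flows), while the remainder is the bookkeeping above. Since for us this is a statement about the already well-understood one-dimensional theory, I would ultimately invoke the inverse-spectral treatment in \cite[Chapter 5]{SiSz}.
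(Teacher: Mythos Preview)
Your proposal is correct, but there is nothing to compare it against: the paper does not prove Fact \ref{F2.22}. Section \ref{s2} is a review of the classical 1D theory, and Fact \ref{F2.22} is stated without proof, accompanied only by pointers to the Toda/completely-integrable-system picture and to the Jacobian-variety picture (the paragraph following Fact \ref{F2.22}, together with Facts \ref{F2.23}--\ref{F2.24}). Your Dirichlet-data/Abel-map argument is exactly the second of these approaches, and your terminal citation of \cite[Chapter 5]{SiSz} is in the same spirit as the paper's own deferral to \cite[Chapters 5--6]{SiSz}. So you have supplied a genuine (and standard) outline where the paper merely gestures at the literature.

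One minor quibble: in your first paragraph you assert that $4+C\prod_k(z-e_k)$ being a perfect square ``pins down $C$'' --- this is true, but the cleanest way to see it is simply that the leading coefficient of $\Delta$ is already determined (positive, equal to $(a_1\cdots a_p)^{-1}$), so $C$ is fixed from the outset and the real content is the sign pattern on each band, which is forced by $|\Delta|\le 2$ on the bands together with connectedness of each band. This does not affect the validity of your argument.
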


One can describe this isospectral manifold in detail due to several beautiful underlying structures.  One involves the Toda flow and shows that the isospectral manifolds are the fibers of a completely integrable Hamiltonian system (see, for example \cite[Chapter 6]{SiSz}). In particular, they are tori. Another views the isospectral torus as the Jacobian variety of a hyperelliptic Riemann surface \cite{Netal, McvM}.

\begin{fact} \lb{F2.23}
    The isospectral family associated to a given $p$-band periodic spectral set is a torus of dimension $p-1$.
\end{fact}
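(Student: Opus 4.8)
The plan is to realize the isospectral family concretely as a space of divisors (Dirichlet data) on the hyperelliptic Riemann surface attached to the spectrum, and to read the torus structure off from there. Fix the $p$-band set $S=\bigcup_{j=1}^p[\alpha_j,\beta_j]$ and let $\Delta$ be the associated discriminant, a real polynomial of degree $p$ whose values $\pm 2$ are attained precisely at the $2p$ band edges. First I would form the compact Riemann surface $\calR_S$ defined by $y^2=\Delta(z)^2-4$; since $\Delta^2-4$ has $2p$ simple roots, this is a two-sheeted branched cover of the sphere of genus $g=p-1$. This is exactly the surface of Facts~\ref{F2.8}--\ref{F2.9} on which the $m$- and Green's functions continue.

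Next, to each $J$ in the isospectral class I would attach its \emph{Dirichlet divisor}: fixing a site, the diagonal Green's function $G_n$ (equivalently $m_n^\pm$) is meromorphic on $\calR_S$ and, by Fact~\ref{F2.10}, has exactly one pole $\gamma_j(J)$ in each of the $p-1$ bounded gaps $[\beta_j,\alpha_{j+1}]$, sitting on one of the two sheets. Each gap, with its two sheets glued at the branch points $\beta_j,\alpha_{j+1}$ where they meet, is topologically a circle; collecting the poles therefore defines a point of $(S^1)^{p-1}$. The core of the argument is then to prove that $J\mapsto(\gamma_1(J),\dots,\gamma_{p-1}(J))$ is a bijection onto $(S^1)^{p-1}$.

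Injectivity I expect to be straightforward via coefficient stripping: the $m$-function determines its continued-fraction expansion and hence recovers all $a_n,b_n$, so the divisor together with the (fixed) spectrum reconstructs $J$. Surjectivity is the substantive point and, I expect, the main obstacle: one must show that \emph{every} admissible divisor (one point per gap-circle) is realized by a genuine periodic Jacobi matrix with $a_n>0$ and $b_n\in\bbR$. The natural tool is the Abel--Jacobi inversion theorem — the Abel map carries degree-$g$ divisors into $\mathrm{Jac}(\calR_S)=\bbC^{g}/\Lambda$, and Jacobi inversion produces a unique positive divisor over each point — but the delicate part is tracking the reality and positivity constraints, which confine the image to the real locus and cut the complex $g$-torus down to exactly the real torus $(S^1)^{p-1}$.

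Finally I would upgrade this homeomorphism to an identification of smooth tori. Here I would bring in the integrable structure noted after Fact~\ref{F2.22}: the commuting Toda Hamiltonians restrict to the isospectral manifold, where the Arnold--Liouville theorem says that a compact connected level set carrying a transitive action of $\bbR^{p-1}$ by commuting complete flows is a torus. Matching this against the divisor picture — under the Abel map the isospectral family sits inside $\mathrm{Jac}(\calR_S)$ as a subtorus on which the Toda flows become straight-line motions — pins down the smooth structure and yields the torus of dimension $p-1$.
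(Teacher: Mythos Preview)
The paper does not actually prove Fact~\ref{F2.23}: it appears in the review Section~\ref{s2} as a known result, with the surrounding text merely pointing to two standard mechanisms---the Toda flow realizing the isospectral set as a Liouville torus of a completely integrable system, and the identification with (a real locus of) the Jacobian of the hyperelliptic curve---and citing \cite{SiSz}, \cite{Netal}, \cite{McvM} for details. Your outline is a faithful synthesis of exactly those two approaches, so there is nothing to contrast; your sketch is correct and is precisely what the cited references carry out.
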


\begin{fact} \lb{F2.24}
    The torus can be described by giving the position of the poles of $m_1^+$ on the two sheeted Riemann surface, one in each gap.
\end{fact}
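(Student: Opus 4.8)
The plan is to realize this as the classical statement that the \emph{Dirichlet data} of $J$ both determine and are determined by $J$ within its isospectral class, and then to identify the space of such data with the real part of the Jacobian of the spectral curve. Throughout, let $\mathcal{S}$ be the two--sheeted hyperelliptic surface of Fact \ref{F2.9}, whose branch points are the band edges $\{\alpha_j,\beta_j\}_{j=1}^p$ and whose genus is $g=p-1$. By Fact \ref{F2.10}, $m_1^+$ continues to a meromorphic function on $\mathcal{S}$ with exactly one pole in each of the $p-1$ gaps; above a closed gap $[\beta_j,\alpha_{j+1}]$ the two sheets are glued along the band edges into a circle $C_j$, so the pole position $\gamma_j$ lives on $C_j$ and the totality of admissible pole configurations is the torus $\prod_{j=1}^{p-1} C_j\cong\bbT^{p-1}$, of the correct dimension by Fact \ref{F2.23}.

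First I would establish injectivity of the map $J\mapsto(\gamma_1,\dots,\gamma_{p-1})$ on the isospectral family. Since the surface is fixed across the family (its branch points are the common spectrum), specifying the poles $\gamma_j$ together with the known zero behavior and the normalization of $m_1^+$ at $z=\infty$ pins down $m_1^+$ as the unique meromorphic function on $\mathcal{S}$ with these divisor and asymptotic constraints. From $m_1^+$ one recovers the parameters by the coefficient--stripping/continued--fraction recursion behind \eqref{6.5}, and periodicity then reads off one full period $\{a_n,b_n\}_{n=1}^p$. Hence distinct points of the isospectral torus give distinct pole data.

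Next I would obtain surjectivity and the torus structure through the Abel--Jacobi map $A$. The divisor $\mathcal{D}=\sum_{j=1}^{p-1}\gamma_j$ has degree $g$, and by the Jacobi inversion theorem $A$ restricts to a real--analytic bijection from the space of such real divisors onto the real part $\operatorname{Jac}_{\bbR}(\mathcal{S})$, itself a torus of dimension $g=p-1$. The constraint that each $\gamma_j$ lie over the $j$--th real gap is exactly what selects this real component and matches the dimension count. Reconstructing the Baker--Akhiezer function from any such divisor via the theta--function formula yields an $m_1^+$, and thence a periodic $J$; combined with the previous paragraph this shows the correspondence between the isospectral torus and $\bbT^{p-1}$ is a bijection, indeed (both maps being real--analytic with nonvanishing Jacobian by Jacobi inversion) a diffeomorphism.

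The main obstacle I expect is the reality and positivity bookkeeping in the surjectivity step: the purely algebraic inversion produces a priori complex spectral data, and one must verify that a divisor supported over the real gaps reconstructs to a genuinely self--adjoint periodic matrix with $a_n>0$ and $b_n\in\bbR$. This is controlled by the real structure of the hyperelliptic Jacobian together with the sign and interlacing properties forced by Fact \ref{F2.10} (poles of $m$ sit at zeros of $G$, and $G$ is monotone in each gap), which guarantee reality of the reconstructed parameters and strict positivity of the off--diagonal entries. As a useful cross--check one can see the torus dynamically: under the Toda flow the $\gamma_j$ satisfy the Dubrovin equations and evolve linearly on $\operatorname{Jac}(\mathcal{S})$, exhibiting the pole positions as the angle variables of the integrable system whose fibers are precisely these isospectral tori.
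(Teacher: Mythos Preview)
The paper does not actually prove Fact~\ref{F2.24}: it appears in Section~\ref{s2}, which is an unproved summary of the classical one--dimensional theory, with the blanket reference ``A reference for much of this is \cite[Chapters 5 and 6]{SiSz}'' and the more specific pointers to \cite{Netal, McvM} just before Facts~\ref{F2.23}--\ref{F2.24}. So there is no ``paper's own proof'' to compare against.

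Your outline is essentially the standard route taken in those references: identify the isospectral torus with the real part of the Jacobian of the hyperelliptic spectral curve via the Abel map, and parametrize points by the divisor of poles of $m_1^+$ (equivalently, the Dirichlet data). The injectivity argument via coefficient stripping and the surjectivity via Jacobi inversion/Baker--Akhiezer reconstruction are exactly what is done in, e.g., \cite[Sections~5.12--5.13]{SiSz} and \cite{McvM}. Your caveat about the reality/positivity bookkeeping is the genuine technical content of the surjectivity direction and is handled in those sources. As a proof \emph{sketch} this is fine; as a self--contained proof it would need the details of the reality analysis and the verification that the reconstructed sequence is indeed $p$--periodic, both of which are nontrivial but standard.
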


\medskip
\textbf{H. Discriminant}

We'd be remiss if we didn't mention the discriminant, $\Delta(z)$

\begin{fact} \lb{F2.25}
 There is a polynomial, $\Delta(z)$, of degree $p$ so that
\begin{equation} \lb{2.11}
   \spec(H) = \Delta^{-1}[-2,2]
\end{equation}
\end{fact}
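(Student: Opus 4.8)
The plan is to build the discriminant from the one-step transfer matrices of the second-order difference equation $Ju=zu$ and then identify the spectrum via the Floquet decomposition of Fact \ref{F2.17}. Writing the eigenvalue equation \eqref{2.1} as $a_n u_{n+1} = (z-b_n)u_n - a_{n-1}u_{n-1}$, I would define
\[
A_n(z) = \begin{pmatrix} \frac{z-b_n}{a_n} & -\frac{a_{n-1}}{a_n} \\ 1 & 0 \end{pmatrix},
\qquad
\begin{pmatrix} u_{n+1} \\ u_n \end{pmatrix} = A_n(z)\begin{pmatrix} u_n \\ u_{n-1} \end{pmatrix},
\]
and set $T(z) = A_p(z)A_{p-1}(z)\cdots A_1(z)$ for the monodromy over one period. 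Since $\det A_n(z) = a_{n-1}/a_n$, periodicity \eqref{2.2} (in particular $a_0=a_p$) gives $\det T(z) = \prod_{n=1}^p a_{n-1}/a_n = 1$, so $T(z)\in SL(2,\bbC)$. The discriminant is then $\Delta(z) = \tr T(z)$.

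For the degree claim, note that the only $z$-dependent entry of $A_n(z)$ is the $(1,1)$ entry $\tfrac{z-b_n}{a_n}$, affine in $z$ with leading coefficient $1/a_n$. Expanding the product, the unique term of top degree in $\tr T(z)$ is the product of these $(1,1)$ entries, $\prod_{n=1}^p\tfrac{z-b_n}{a_n}$, which sits in the $(1,1)$ entry of $T(z)$ and has leading behavior $z^p/\prod_{n=1}^p a_n$; every path contributing to the $(2,2)$ entry must turn away from the diagonal and so has strictly smaller degree. Hence $\Delta$ is a polynomial of degree exactly $p$ with positive leading coefficient.

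The heart is the identity $\spec(H) = \Delta^{-1}([-2,2])$. Using Fact \ref{F2.17}, $\spec(H)=\bigcup_{e^{i\theta}\in\partial\bbD}\spec(H(\theta))$, so it suffices to show $z\in\spec(H(\theta))$ for some $\theta$ iff $\Delta(z)\in[-2,2]$. I would unfold the eigenvector equation $H(\theta)v=zv$: for a $p$-vector $v$, extending by the Floquet rule $v_{n+p}=e^{i\theta}v_n$ turns the corner entries $a_pe^{\pm i\theta}$ of $H(\theta)$ into the ordinary three-term recursion \eqref{2.1} at every site. Thus eigenvectors of $H(\theta)$ at energy $z$ correspond exactly to solutions of $Ju=zu$ satisfying $u_{n+p}=e^{i\theta}u_n$, i.e.\ to eigenvectors of $T(z)$ with eigenvalue $e^{i\theta}$. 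Since $\det T(z)=1$, the characteristic polynomial is $\mu^2-\Delta(z)\mu+1$, so $e^{i\theta}$ is an eigenvalue precisely when $\Delta(z)=e^{i\theta}+e^{-i\theta}=2\cos\theta$. As $\theta$ ranges over $[0,2\pi)$ the right side sweeps out $[-2,2]$, yielding $\spec(H)=\{z:\Delta(z)\in[-2,2]\}$.

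The step I expect to require the most care is the correspondence between eigenvectors of $H(\theta)$ and unimodular eigenvectors of $T(z)$, and in particular the degenerate band-edge case $\Delta(z)=\pm 2$: there $T(z)$ has the double eigenvalue $\pm1$ and may fail to be diagonalizable, so one must check that such $z$ are genuinely attained as eigenvalues of $H(0)$ or $H(\pi)$ (the periodic/antiperiodic problems of Fact \ref{F2.18}) rather than being spuriously included or excluded. One also wants to confirm that the corner-entry bookkeeping matches the Floquet phase with the correct sign; since the union over $\theta$ is invariant under $\theta\mapsto-\theta$ (equivalently $\mu\mapsto\mu^{-1}=\bar\mu$ on the unit circle), any such sign ambiguity is harmless for the set identity.
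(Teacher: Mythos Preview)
Your argument is correct and is precisely the standard transfer-matrix derivation. Note, however, that the paper does not supply its own proof of Fact \ref{F2.25}: Section \ref{s2} is an expository summary of the classical 1D theory, and the paper merely records that ``$\Delta$ arises as the trace of a transfer matrix (see \cite[Chapter 5]{SiSz})'' and alternatively as a Chebyshev polynomial in the orthogonal-polynomial literature. So there is nothing to compare against beyond that remark, and your write-up is exactly the approach the paper is pointing to.

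One small comment on your caveat about the band edges: once you have established the bijection between eigenvectors of $H(\theta)$ and nonzero Floquet solutions, the case $\Delta(z)=\pm 2$ is automatic. Whether or not $T(z)$ is diagonalizable, it always has at least one eigenvector with eigenvalue $\pm 1$, hence a nonzero periodic or antiperiodic solution, hence an eigenvector of $H(0)$ or $H(\pi)$. No separate analysis of the Jordan block case is needed for the set identity \eqref{2.11}.
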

In the math physics literature, $\Delta$ arises as the trace of a transfer matrix (see \cite[Chapter 5]{SiSz}) while in the OP literature as a Chebyshev polynomial; see, for example \cite{GvA, Tot, CSZ1}.  This is a key tool in some proofs of the above results.

\section{Definition of Periodic Jacobi Matrices} \lb{s3}

In this section, we'll define what we mean by a periodic Jacobi matrix on a tree.  To set notation and terminology, we begin with some preliminaries and facts about graphs. Our graphs will either be finite or infinite leafless graphs with bounded degree.  References on graph theory include \cite{Bondy, Diestel,Hartsfield}.

A \emph{graph}, $\calG$, is a collection of \emph{vertices} $V\left( \calG \right)$ and of \emph{edges} $E\left( \calG\right)$.  Our edges will be undirected.  Each edge was two ends $v_1, v_2$ in $V$.  In the finite case, $V$ has $p<\infty$ elements and $E$ has $q<\infty$ elements.  We also demand that every $v\in V$ is the end of some edge.  In the infinite case, we demand that each vertex is the end of only finitely many edges.

We allow self--loops, i.e.\ edges where the two ends are the same $v\in V$ and we allow a given pair $v, v' \in V$ to be the ends of more than one edge.  The \emph{degree}, $d(v)$, of a vertex is the number of edges of which $v$ is an end, with an edge counted twice if that edge is a self--loop with $v$ at both ends.  In the infinite case, we will demand that
\begin{equation}\label{3.1}
  \sup_{v\in V} d(v) < \infty
\end{equation}

A \emph{leaf} is a vertex with degree $1$.  We will normally only consider graphs with no leaves although in the finite case, we will sometimes drop edges to produce a finite tree which must have some leaves.  A graph with constant degree is called \emph{regular}. There is a natural notion of homeomorphism between graphs and once we define trees, it is easy to see that, up to homeomorphism, there is a unique infinite regular tree of degree $d$.

While our edges are not directed, it is sometimes useful to temporarily assign a direction in which case we write $\tilde{e}_\alpha$ and call the vertices the initial and final vertices of the directed edge.  A \emph{path} in $\calG$ is a finite set of edges $e_1,\dots,e_k\in E$ and a direction for each edge so that the final vertex of $e_j$ is the initial vertex of $e_{j+1}$ for $j=1,\dots,k-1$.  We say that the path goes from the initial vertex of $e_1$ to the final vertex of $e_k$.  We say that a path is \emph{simple} if  its initial vertices are distinct, its final vertices are distinct and its edges are all distinct.  A path is called \emph{closed} (or a \emph{cycle}) if it goes from some vertex to itself.  We will only consider graphs with the property that there is a path from any vertex to any other distinct vertex.  Such graphs are called \emph{connected}.

It is easy to see that the following are equivalent under the connectedness assumption: that $\calG$ contains no simple closed paths and that there is a unique simple path between any pair of points.  In that case, we say that $\calG$ is a \emph{tree}.  Associated to any graph, $\calG$, is a topological space: We start out with a point for each vertex and a topological copy of $[0,1]$ for each edge.  We then glue the end points of the copy of $[0,1]$ associated to an edge to the two vertices at its ends.  We also use $\calG$ for this topological space.  It is easy to that $\calG$ is connected as a topological space if and only if it is connected as a graph.  Moreover, $\calG$ is a tree if and only if it is simply connected as a topological space.

It is easy to see that a finite graph which is a tree has leaves, so our basic finite graphs are never trees although we will sometimes drop some edges from our leafless graphs to get a connected tree.  A simple induction proves that any finite connected graph has $q \ge p-1$ and such a graph is a tree if and only if equality holds.  If $\calG$ is a finite connected graph and $q>p-1$, one can prove that one can turn it into a connected tree by dropping
\begin{equation}\label{3.1A}
  \ell\equiv q-(p-1)
\end{equation}
suitable edges. That implies that the fundamental group of a finite connected graph with $p$ vertices and $q$ edges is the free (non--abelian when $\ell\ge 2$) group, $\bbF_\ell$, on $\ell$ generators.

A \emph{Jacobi matrix on a graph}, $\calG$, is associated to a set of real numbers $\{b_j\}_{j\in V}$ assigned to each vertex and strictly positive reals $\{a_\alpha\}_{\alpha\in E}$ assigned to each edge. Because we will only consider finite graphs or infinite trees with periodic parameters, the sets of $a$'s and $b$'s are finite.  The Jacobi matrix acts on $\ell^2(V)$, the vector space of square summable sequences indexed by the vertices of the graph. It has matrix elements
\begin{equation} \lb{3.2}
  H_{jk} = \left\{
             \begin{array}{ll}
               b_j, & \hbox{ if $j=k$;}  \\
               \sum_\alpha a_\alpha, & \hbox{ if $j \ne k$  are ends of one or more edges } \\
               &\qquad\qquad \alpha \hbox{ which we sum over;}  \\
               0, & \hbox{ if no edges have $j$ and $k$ as ends.}
             \end{array}
           \right.
\end{equation}
If there are self--loops, one needs to modify this.

Let $\calG$ be a connected finite graph (with no leaves). Its universal cover, $\calT$, is easily seen to be the topological space associated to an infinite graph so the covering map takes edges to edges and vertices to vertices.  This graph is a tree so that if $\calG$ has constant degree, so does $\calT$, i.e. it is a \emph{regular tree}.

Now let $J$ be a Jacobi matrix on $\calG$.  There is a unique Jacobi matrix, $H$, on $\calT$ so that if $\Xi:\calT \to \calG$ is the covering map and $B_j,A_\alpha$ the Jacobi parameters of $J$ and $b_j,a_\alpha$ of $H$, then $b_j=B_{\Xi(j)}, a_\alpha=A_{\Xi(\alpha)}$.  Any deck transformation, $G\in\Gamma$, the set of deck transformations on $\calT$, induces a unitary on $\calH(\calT)$ and these unitaries all commute with $H$.  We call $H$ a \emph{periodic Jacobi matrix} and set $p$, the number of vertices of $\calG$ to be its \emph{period}, although, as we'll explain, there is some question if this is the right definition of period!

As we've discussed, if $\calG$ has $\ell$ independent cycles (equivalently, one can drop $\ell$ edges and turn $\calG$ into a connected finite tree), then the fundamental group of $\calG$ is the free nonabelian group with $\ell$ generators, $\bbF_\ell$.  So that is the natural symmetry of our periodic trees.

By the \emph{free Laplacian matrix} on a tree, we will mean the one with all $b$'s 0 and all $a$'s 1 (this is sometimes called the adjacency matrix; the Laplacian (or its negative!) has  $b_j$ equal to minus the degree at $j$.  If the tree is regular, the two differ by a constant but they don't in the non--regular case).  In this regard, there is a strange distinction between regular trees of constant degree $d$ depending on whether $d$ is even or odd!  The graph with one vertex and $\ell$ self loops has degree $d=2\ell$.  Its universal cover is the regular graph of degree $d=2\ell$ and its free Laplacian is a period $1$ Jacobi matrix.  But there is no graph with a single vertex of odd degree, so, with our definition, the free Laplacian on an odd degree homogenous tree is of period $2$! So perhaps one needs to refine our definition of period.  In any event, we'll see that there are some significant differences between periodic Jacobi matrices on homogenous trees of even and of odd degree.

The point is that the free group with $\ell$ generators acts freely (i.e. no fixed point for non-identity elements) and transitively on the degree $2\ell$ regular tree.  There is no such symmetry group on any odd degree regular tree, although by looking at the cover of the two vertex, no self loop, $d$ edge graph, one sees that $\bbF_{d-1}$ acts freely on the degree $d$ regular tree but with two orbits rather than transitively.  One can add an extra generator to get a transitive symmetry group but that action is no longer free.

Sunada \cite{Sun} who dealt primarily with continuum Schrodinger type operators noted that there is often a realization of operators invariant under discrete groups as operators acting on $\ell^2(\calT,\calW)$ where $\calT$ is a Cayley graph of the symmetry group, and $\calW$ in his situation is an infinite dimensional Hilbert space, essentially the spaces of orbits of the symmetry group.  We owe to Christiansen and Zinchenko the explicit realization of this picture for our situation which appears in Theorem \ref{T3.1}.

We use notation from appendix \ref{appendix}.  $\calT_{2\ell}$ denotes the regular tree of degree $2\ell$ thought of as the Cayley graph of $\bbF_\ell$. $\calH_{2\ell;p}$ is $\ell^2(\calT_{2\ell},\bbC^p)$.  By a \emph{simple Jacobi matrix} we mean the operator $\wti{J}$ on $\calH_{2\ell;p}$ given by
\begin{equation}\label{3.3}
  (\wti{J}u)_w=\wti{B}u_w + \sum_{j=1}^{\ell} \wti{A}_j u_{x_jw} + \sum_{j=1}^{\ell} \wti{A}_j^* u_{x_j^{-1}w}
\end{equation}
where $w$ is an index labeling a point in $\calT_{2\ell}$ thought of as an element in $\bbF_\ell$ and $\{x_j\}_{j=1}^\ell$ are the basic generators of $\bbF_\ell$.  Here $\wti{B}$ is a self--adjoint $p\times p$ matrix and $\wti{A}_1,\dots,\wti{A}_\ell$ are $p\times p$ (possibly non--self--adjoint) matrices.

Given a periodic Jacobi matrix, $H$, built over a Jacobi matrix, $J$, on a finite graph $\calG$ with $p$ vertices and $p+\ell-1$ edges, we can remove $\ell$ of those edges to get a tree, $\calT_\calG$ (aka a \emph{spanning tree} for $\calG$).  The Jacobi matrix of that tree will be $\wti{B}$ obtained by restricting the original Jacobi parameter to $\calT_\calG$  and defines a self--adjoint $p\times p$ matrix.

Let $e_\alpha$ be one of the dropped edges which we can associate with one of the generators, $x_{j(\alpha)}$, of $\bbF_\ell$.  If $a_\alpha$ is the corresponding Jacobi parameter and $e_\alpha$ goes from $i(\alpha)$ to $j(\alpha)$, we can define a rank 1 matrix, $\wti{A}_j$, with only a single non--zero matrix element, the $i(\alpha)j(\alpha)$ matrix element which has the value $a_\alpha$.  A little thought shows that

\begin{theorem}  \lb{T3.1} $H$ is unitarily equivalent to the simple Jacobi matrix on $\calH_{2\ell;p}$ with $\wti{B}$ the Jacobi matrix on $\calT_\calG$ and $\wti{A}_j$ the rank one matrices above (which are not self--adjoint if $e_\alpha$ is not a self loop).
\end{theorem}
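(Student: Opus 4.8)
The plan is to construct an explicit unitary $U\colon \ell^2(V(\calT)) \to \calH_{2\ell;p}$ implementing the equivalence, exploiting the fact that the spanning tree $\calT_\calG$ is a fundamental domain for the free action of $\Gamma = \bbF_\ell$ on $\calT$. First I would fix a base vertex $v_0 \in V(\calG)$ and a lift $\tilde v_0$, and use lifts of the unique spanning-tree paths out of $v_0$ to build a section $\sigma\colon V(\calG)\to V(\calT)$ whose image is one lifted copy of $\calT_\calG$. Since $\Gamma$ acts freely and transitively on each fiber of $\Xi$, every vertex of $\calT$ is uniquely $w\cdot\sigma(v)$ with $w\in\bbF_\ell$ and $v\in\{1,\dots,p\}$, giving a bijection $V(\calT)\cong \bbF_\ell\times\{1,\dots,p\}$ under which $\Gamma$ acts by multiplication on the first factor and $\bbF_\ell\cong\calT_{2\ell}$ is its Cayley graph. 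I would then set $(U\psi)_w = (\psi(w\sigma(1)),\dots,\psi(w\sigma(p)))\in\bbC^p$; this is unitary, being a relabeling of the orthonormal basis $\{\delta_{\tilde u}\}$.

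Next I would split the edges of $\calT$ via $\Xi$ into two classes. Edges lying over $\calT_\calG$ stay inside a single copy $w$, so the corresponding block of $UHU^{-1}$ is the Jacobi matrix of the spanning tree, i.e.\ the $p\times p$ self-adjoint $\wti B$ with $(\wti B)_{vv}=b_v$ and off-diagonal entries the spanning-tree $a$'s. Edges lying over a dropped edge $e_\alpha$ implement the generator to which $e_\alpha$ is assigned: the lift of $e_\alpha$ starting at $\sigma(i(\alpha))$ ends at $x_k\sigma(j(\alpha))$ — precisely the statement that the loop through $e_\alpha$ represents the generator $x_k$ of $\pi_1(\calG,v_0)$ — so by equivariance this lift joins copy $w$ to copy $wx_k$, connecting only the single vertex $i(\alpha)$ of one copy to the single vertex $j(\alpha)$ of the other. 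Hence the hop across such edges is the rank-one matrix carrying $a_\alpha$ in the $(i(\alpha),j(\alpha))$ slot, namely $\wti A_k$, while the reverse hop contributes $\wti A_k^*$; summing over the $\ell$ dropped edges reproduces exactly the two sums in \eqref{3.3}.

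The step I expect to be the main obstacle — the one hidden behind the paper's ``a little thought'' — is the covering-space bookkeeping in the previous paragraph: verifying that crossing the lift of $e_\alpha$ is exactly the deck transformation by the associated free generator, and then pinning down the left/right multiplication conventions so the neighbors of $w$ come out as $x_j w$ and $x_j^{-1}w$ as in \eqref{3.3} rather than $wx_j^{\pm 1}$. This is harmless but should be done honestly: the relabeling $w\leftrightarrow w^{-1}$ is a graph automorphism of $\calT_{2\ell}$ converting right-multiplication neighbors into the left-multiplication form of \eqref{3.3}, and the assignment of dropped edges to generators is simply a choice of free basis of $\pi_1(\calG)$ dual to the spanning tree. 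Once the two edge classes are accounted for and the conventions aligned, self-adjointness of $H$ forces the $\wti A_j$/$\wti A_j^*$ pairing, and $UHU^{-1}=\wti J$ follows; that $H$ commutes with the deck unitaries — guaranteeing $\wti J$ has the translation-invariant form \eqref{3.3} — is immediate from the deck-invariance of the lifted Jacobi parameters.
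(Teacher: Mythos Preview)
Your proposal is correct and is precisely the explicit version of what the paper leaves implicit: the paper offers no proof beyond the phrase ``a little thought shows that'' and the subsequent ``lego viewpoint'' paragraph, which is exactly your fundamental-domain construction (a copy of $\calT_\calG$ at each vertex of $\calT_{2\ell}$, with the dropped edges becoming the connectors carrying the rank-one $\wti A_j$). Your identification of the left/right multiplication bookkeeping as the only genuine wrinkle, and its resolution via $w\leftrightarrow w^{-1}$, is apt; there is nothing in the paper to compare against beyond that.
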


One way of thinking of this is what we'll call the lego viewpoint of a periodic Jacobi matrix on a tree.  Our original graph, $\calG$, leads to a tree, $\calT_\calG$, (and matrix $B$ obtained by restricting $J$ to $\calT_\calG$) by throwing away $\ell$ edges $e_1,\dots,e_\ell$.  Instead of throwing away these edges, we think of cutting them leaving $2 \ell$ connectors $e_1^+,\dots,e_\ell^+,e_1^-,\dots,e_\ell^-$, half edges with only one vertex.  Our basic lego block is $\calT_\calG$ with the $2\ell$ labeled connectors sticking out.

We get the infinite Jacobi matrix, $\wti{J}$, over the Jacobi matrix $J$ on $\calG$ by placing a lego block at each vertex on the regular tree, $\calT_{2\ell}$ connecting neighboring set of lego pieces via matching connectors, an $e_j^+$ on one to an $e_j^-$ on the neighbor.  The blocks get a matrix $B$ acting on the copy of $\bbC^p$ at the corresponding site.  The connectors get an $A_j$. The finite tree $\calT_\calG$ with the dangling half edges is also known as a fundamental domain of the action of $\bbF_{2\ell}$ on $\calT$ viewed as the universal cover of $\calG$.

This realization allows certain natural *--algebras discussed in the Appendix to enter the picture.  $\calC^{(0)}_{2\ell;p}$ is the *--algebra generated by the set of all simple Jacobi matrices operating on $\calH_{2\ell;p}$, $C^*_{red}(\bbF_\ell;\bbC^p)$ its norm closure and $\calV_{2\ell;p}$ the set of all operators commuting with the natural representation of $\bbF_\ell$ on $\calH_{2\ell;p}$ -- it is the weak* closure of  $\calC^{(0)}_{2\ell;p}$.  Clearly $H$ (or the unitarily equivalent $\wti{J}$) lies in $\calC^{(0)}_{2\ell;p}$, so $f(H)$ lies in $C^*_{red}(\bbF_\ell;\bbC^p)$ if $f$ is a continuous function and in $\calV_{2\ell;p}$ if $f$ is a Borel function.  In particular

\begin{theorem} \lb{T3.2} For any $\lambda\in\bbR$, the spectral projection $E_{(-\infty,\lambda)}(H)\in\calV_{2\ell;p}$.  If $\lambda\notin\spec(H)$, then $E_{(-\infty,\lambda)}(H)\in\calC^{(0)}_{2\ell;p}$.
\end{theorem}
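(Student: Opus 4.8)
Both assertions are functional-calculus statements, and the plan is to reduce each to the corresponding fact recorded immediately above the theorem: a Borel function of $H$ lies in $\calV_{2\ell;p}$, and a continuous function of $H$ lies in $C^*_{red}(\bbF_\ell;\bbC^p)$. Since $H\in\calC^{(0)}_{2\ell;p}$, the only real work is to display the spectral projection $E_{(-\infty,\lambda)}(H)=\chi_{(-\infty,\lambda)}(H)$ as $f(H)$ for an $f$ of the appropriate regularity.

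The first claim is essentially immediate: $\chi_{(-\infty,\lambda)}$ is a bounded Borel function, so the stated Borel fact gives $E_{(-\infty,\lambda)}(H)\in\calV_{2\ell;p}$. Conceptually this is just the statement that $\calV_{2\ell;p}$, being the commutant of the natural representation of $\bbF_\ell$, is a von Neumann algebra containing $H$: each representation unitary commutes with the self-adjoint $H$ by the defining equivariance visible in \eqref{3.3}, hence by the spectral theorem commutes with every bounded Borel function of $H$, so all spectral projections of $H$ lie in the commutant.

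For the second claim the hypothesis $\lambda\notin\spec(H)$ is exactly what upgrades Borel to continuous functional calculus. Because $\spec(H)$ is closed, there is $\delta>0$ with $(\lambda-\delta,\lambda+\delta)\cap\spec(H)=\varnothing$, so on $\spec(H)$ the indicator $\chi_{(-\infty,\lambda)}$ is locally constant: it equals $1$ on $\spec(H)\cap(-\infty,\lambda)$ and $0$ on $\spec(H)\cap(\lambda,\infty)$, two closed sets separated by the gap. Hence $\chi_{(-\infty,\lambda)}|_{\spec(H)}$ is the restriction of some continuous $f$ on $\bbR$ (take $f$ piecewise linear through the gap), and since functional calculus only sees values on $\spec(H)$ we have $E_{(-\infty,\lambda)}(H)=f(H)$. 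I would make this concrete by writing $f(H)=\frac{1}{2\pi i}\cint_\Gamma (z-H)^{-1}\,dz$, with $\Gamma$ a loop through the gap enclosing the part of the spectrum below $\lambda$; by spectral permanence each resolvent $(z-H)^{-1}$ already lies in the unital $C^*$-algebra $C^*_{red}(\bbF_\ell;\bbC^p)$ for $z\notin\spec(H)$, and the integral converges in operator norm.

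The main obstacle is the sharper membership the theorem asserts, namely that $E_{(-\infty,\lambda)}(H)$ sits in the dense $*$-subalgebra $\calC^{(0)}_{2\ell;p}$ and not merely in its norm closure $C^*_{red}(\bbF_\ell;\bbC^p)$. Continuous functional calculus and the contour integral deliver only the closure, so this last step cannot be soft: one would have to exploit the explicit finite-range description of Theorem \ref{T3.1}, expressing $H$ through the generators $\wti B,\wti A_j$ and controlling the resolvent expansion along $\calT_{2\ell}$ closely enough to place the gap projection inside $\calC^{(0)}_{2\ell;p}$ itself. This is the delicate point I expect to carry the weight of the argument, and the only place where the tree geometry, rather than general operator theory, must enter.
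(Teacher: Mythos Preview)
Your argument is exactly the paper's: the sentence immediately before the theorem says that since $H\in\calC^{(0)}_{2\ell;p}$, one has $f(H)\in\calV_{2\ell;p}$ for Borel $f$ and $f(H)\in C^*_{red}(\bbF_\ell;\bbC^p)$ for continuous $f$; when $\lambda\notin\spec(H)$, the indicator $\chi_{(-\infty,\lambda)}$ agrees on $\spec(H)$ with a continuous function, and that is the entire proof.

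You have also correctly identified a genuine discrepancy. The second assertion as printed---membership in $\calC^{(0)}_{2\ell;p}$ rather than in $C^*_{red}(\bbF_\ell;\bbC^p)$---is \emph{not} what the paper's own justification yields, and it is in fact false in general. Already in the one-dimensional case ($\ell=1$), the spectral projection onto a single band of a period-$p$ Jacobi matrix has an exponentially decaying but not finitely supported kernel, so it lies in the norm closure $C^*_{red}(\bbF_\ell;\bbC^p)$ but not in $\calC^{(0)}_{2\ell;p}$. This is a misstatement in the paper; the intended conclusion is $C^*_{red}(\bbF_\ell;\bbC^p)$. The only place the result is used is the proof of Theorem \ref{T5.1}, which immediately appeals to Theorem \ref{TA.3}, and Theorem \ref{TA.3} requires only that the projection belong to $C^*_{red}(\bbF_\ell;\bbC^p)$. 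So do not attempt the sharper $\calC^{(0)}_{2\ell;p}$ claim: your continuous-functional-calculus (or contour-integral) argument already delivers everything that is actually needed.
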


\section{The Free Model and the DOS} \lb{s4}

The definition we will take for the density of states (DOS), $dk$ (and so integrated density of states (IDS), $k$) is simple. We fix a finite graph, $\calG$, with $p$ vertices and $q$ edges.  For each vertex, $j\in\calG$, the spectral measure for $H$ at vertex $r\in\calT$, $d\mu_ r$ is the same for all $r\in\calT$ with $\Xi(r)=j$.  The DOS is defined by picking one $d\mu_r$ for each $j\in\calG$, summing over $j$ and dividing by $p$, the number of vertices in $\calG$.  That is
\begin{equation}\label{4.1}
  dk(\lambda) = \frac{1}{p}\sum_{j\in \calG; r \text{ so that } \Xi(r)=j} d\mu_r
\end{equation}

One of the simplest examples for which one can compute the DOS is the free Laplacian on, $\calT_d$, the regular tree of degree $d$, for which the DOS is
\begin{equation}\label{4.2}
  dk_d(\lambda) = \frac{d\sqrt{4(d-1)-\lambda^2}}{2\pi(d^2-\lambda^2)}\chi_{[-s_d,s_d]}(\lambda)d\lambda
\end{equation}
where $s_d=\sqrt{4(d-1)}$ which is the top of the spectrum.  Since it follows easily from our formalism in Section \ref{s6}, we'll compute this in Section \ref{s7}, but it has been presented earlier in discussions of random Schr\"{o}dinger operators on trees \cite{Klein, AW1,AW2} and radially symmetric potentials \cite{Den,DK,SiSz}.  It is called the Kesten--McKay distribution (after \cite{McK,Keston}) since it also describes finite random graphs of constant degree $d$ (this may seem surprising but we'll discuss the reason later).

An important tool in understanding the DOS involves the operator algebras discussed in the Appendix.  Fix a finite graph $\calG$ with universal cover tree $\calT_{2\ell}$.  As we saw in Theorem \ref{T3.2}, the spectral projections of the periodic Jacobi matrix, $H$, lie in the von Neumann algebra $\calV_{2\ell;p}$ and the spectral projection of an interval $(a,b)\subset\bbR$ with $a,b\notin\spec(H)$ lies in the $C^*$-algebra $C^*_{red}(\bbF_\ell;\bbC^p)$.  By \eqref{4.1}, we have that
\begin{equation}\label{4.3}
  k(\lambda)=p^{-1}T(E_{(-\infty,\lambda)}(H))
\end{equation}
where $T$ is the unnormalized trace of \eqref{A.4D}.

We next want to discuss whether the DOS can be viewed as an infinite volume limit of eigenvalue densities of operators restricted to finite boxes as it can in the 1D case.  Fix the base point, $e_0\in\calT_{2\ell}$ and define the ball, $\wti{\Lambda}_r$, as the set of all vertices in $\calT_{2\ell}$ with distance at most $r$ from $e_0$.  Let $\Lambda_r$ be the $p[1+\sum_{q=1}^r(2\ell)(2\ell-1)^{q-1}]$ points in $\calT$ that map to $\wti{\Lambda}_r$ under the representation of Theorem \ref{T3.1}. Because the number of boundary points in $\Lambda_r$ is comparable to the total number of points in $\Lambda_r$, one \emph{cannot} expect to get $dk$ as a limit eigenvalue counting measures with free boundary conditions.  We can make this explicit.

\begin{theorem} \lb{T4.1} Let $H$ be a periodic Jacobi matrix on any infinite tree which is not a line.  Then no limit point of free boundary condition eigenvalue counting density agrees with the DOS.  Indeed,
\begin{equation}\label{4.3A}
   \limsup_r \int \lambda^2 dN_r(\lambda) < \int \lambda^2 dk(\lambda)
\end{equation}
\end{theorem}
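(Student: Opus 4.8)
The plan is to reduce everything to a comparison of the second moments $\int\lambda^2\,dN_r$ and $\int\lambda^2\,dk$ and to prove the displayed strict inequality; the ``no limit point agrees with the DOS'' claim then comes for free. Here $H_r$ is the compression of $H$ to $\ell^2(\Lambda_r)$ (free boundary conditions), so $\|H_r\|\le\|H\|$ and all of the probability measures $dN_r$ and $dk$ are supported in the fixed compact interval $[-\|H\|,\|H\|]$. Consequently $\{dN_r\}$ has weak-$*$ limit points and $\nu\mapsto\int\lambda^2\,d\nu$ is weak-$*$ continuous on measures supported there. Thus, once $\limsup_r\int\lambda^2\,dN_r<\int\lambda^2\,dk$ is known, every weak-$*$ limit point $\nu$ satisfies $\int\lambda^2\,d\nu\le\limsup_r\int\lambda^2\,dN_r<\int\lambda^2\,dk$, so $\nu\ne dk$.

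Next I would write both second moments as traces of $H^2$. On one side $\int\lambda^2\,dN_r=|\Lambda_r|^{-1}\tr(H_r^2)=|\Lambda_r|^{-1}\sum_{v,w\in\Lambda_r}H_{vw}^2$. On the other, using \eqref{4.1} and periodicity, put $c_j:=\langle\delta_r,H^2\delta_r\rangle=b_j^2+\sum_{e\ni j}a_e^2$ for any vertex $r$ over $j\in\calG$ (this is independent of the choice of $r$), so that $\int\lambda^2\,dk=p^{-1}\sum_{j\in\calG}c_j$. Because $\Lambda_r$ is the full preimage of the ball $\wti\Lambda_r\subset\calT_{2\ell}$ and hence a disjoint union of $|\wti\Lambda_r|$ complete fundamental domains (each carrying exactly one vertex over each $j$), one gets $\sum_{v\in\Lambda_r}(H^2)_{vv}=|\wti\Lambda_r|\sum_{j\in\calG}c_j=|\Lambda_r|\int\lambda^2\,dk$.

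The difference of the two is exactly the boundary discarded by truncation:
\[
\sum_{v\in\Lambda_r}(H^2)_{vv}-\tr(H_r^2)=\sum_{\substack{v\in\Lambda_r,\,w\notin\Lambda_r\\ v\sim w}}H_{vw}^2=:\calB_r,
\]
the sum of $a_e^2$ over edges $e$ of $\calT$ with exactly one endpoint in $\Lambda_r$. Hence $\int\lambda^2\,dk-\int\lambda^2\,dN_r=\calB_r/|\Lambda_r|$, which is already $>0$ for each $r$. The crux is the geometric estimate $\liminf_r\calB_r/|\Lambda_r|>0$. The boundary edges of $\Lambda_r$ are precisely the inter-domain (cut) edges joining $\wti\Lambda_r$ to its complement in $\calT_{2\ell}$, since any intra-domain edge has both ends in a single domain lying entirely inside or outside $\Lambda_r$. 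Since $\calT$ is not a line we have $\ell\ge2$ (the no-leaves hypothesis forces $\ell\ge1$, and $\ell=1$ makes $\calG$ a single cycle with cover $\bbZ$), so $\calT_{2\ell}$ is the regular tree of degree $2\ell\ge4$. A short count gives $(2\ell)(2\ell-1)^r$ boundary edges and $|\Lambda_r|=p\,|\wti\Lambda_r|\sim p\tfrac{2\ell}{2\ell-2}(2\ell-1)^r$, so the number of boundary edges divided by $|\Lambda_r|$ tends to $2(\ell-1)/p>0$. With $a_{\min}:=\min_e a_e>0$ and $\calB_r\ge a_{\min}^2\cdot(\#\text{boundary edges})$, this gives $\liminf_r\calB_r/|\Lambda_r|\ge a_{\min}^2\cdot 2(\ell-1)/p>0$, hence the strict inequality.

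I expect the main obstacle to be precisely the geometric step: identifying the boundary with the cut edges, checking that ``not a line'' is exactly $\ell\ge2$, and establishing that on such a tree the boundary-to-volume ratio stays bounded below. This is where the exponential growth (the non-amenability that distinguishes genuine trees from $\bbZ$) is essential. By contrast, the trace bookkeeping in the first two paragraphs is routine, the one point to handle with care being that $\Lambda_r$ is an \emph{exact} union of fundamental domains so that its per-vertex average of $(H^2)_{vv}$ matches $\int\lambda^2\,dk$ without error terms.
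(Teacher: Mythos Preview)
Your proof is correct and follows essentially the same approach as the paper: both identify $\int\lambda^2\,dk-\int\lambda^2\,dN_r$ with the sum of $a_\alpha^2$ over boundary edges of $\Lambda_r$ and then exploit the exponential growth of $\calT_{2\ell}$ (i.e.\ $\ell\ge 2$) to show this boundary-to-volume ratio stays bounded away from zero. The paper sets things up via a walk representation of moments and bounds the number of boundary \emph{vertices} from below, whereas you count boundary \emph{edges} directly via the lego picture and obtain the sharper limiting constant $2(\ell-1)a_{\min}^2/p$; but the substance is the same.
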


Here, letting $n_r=p[1+\sum_{q=1}^{r} (2\ell)(2\ell-1)^{q-1}]$ be the number of vertices in $\Lambda_r$, one defines $dN_r$ to be $(n_r)^{-1}$ times the eigenvalue counting measure for the Jacobi matrix, $J_r$, obtained by keeping only the vertices in $\Lambda_r$ and edges between them and taking the restricted Jacobi parameters associated to the resulting graph.

To prove this result, we need a graphical representation of the moments of the density of states.  If $d\mu_j$ is the spectral measure of a point $j\in\calT$, then
\begin{equation}\label{4.4}
  \int \lambda^k d\mu_j(\lambda) = \jap{\delta_j,H^k\delta_j}
\end{equation}
Expanding $H^k$, one sees that
\begin{equation}\label{4.5}
  \jap{\delta_j,H^k\delta_j}=\sum_{\omega\in W_{j,k}} \rho(\omega)
\end{equation}
where $W_{j,k}$ is the set of all ``walks'' of length $k$ starting and ending at site $j$, i.e. $\omega_1,\dots,\omega_{k+1} \in\calT$ where $\omega_1=\omega_{k+1}=j$ and for $m=1,\dots,k$, one has that either $\omega_{m+1}=\omega_m$ or $\omega_m$ and $\omega_{m+1}$ are neighbors in $\calT$ (i.e. two ends of an edge).  Moreover
\begin{equation}\label{4.6}
  \rho(\omega) = \rho_1(\omega)\dots\rho_k(\omega)
\end{equation}
\begin{equation}\label{4.7}
  \rho_m(\omega) = \left\{
                     \begin{array}{ll}
                       b_{\omega_m}, & \hbox{ if } \omega_m=\omega_{m+1} \\
                       a_{(\omega_m,\omega_{m+1})} & \hbox{ if } \omega_m\ne\omega_{m+1}
                     \end{array}
                   \right.
\end{equation}

On the other hand,
\begin{align}
  \int \lambda^k dN_r(\lambda) &= n_r^{-1} \tr(H_r^k) \nonumber \\
                               &= n_r^{-1} \sum_{j\in\Lambda_r} \jap{\delta_j,H_r^k\delta_j}  \nonumber \\
                               &= n_r^{-1} \sum_{j\in\Lambda_r} \sum_{\omega\in W_{j,k,r}} \rho(\omega) \lb{4.8}
\end{align}
where $W_{j,k,r}$ is defined like $W_{j,k}$ except that we require $\omega_m\in\Lambda_r$ instead of $\omega_m\in\calT$.

\begin{proof} [Proof of Theorem \ref{T4.1}] It clearly suffices to prove \eqref{4.3A}.  Comparing \eqref{4.5} and \eqref{4.8} and using the definition of $dk$, one sees that
\begin{align}
  \int \lambda^2 [dk-dN_r](\lambda) &= n_r^{-1} \sum_{j\in\Lambda_r} \sum_{\omega\in W_{j,2}\setminus W_{j,2,r}} \rho(\omega) \nonumber \\
                                    &= n_r^{-1} \sum_{j\in\partial\Lambda_r} \sum_{\alpha=(jk),\,k\notin\Lambda_r} a_\alpha^2 \lb{4.9}
\end{align}

Here $\partial\Lambda_r$ is the set of $j\in\Lambda_r$ which are one end of an edge in $\calT$ whose other end is not in $\Lambda_r$.  Since each boundary cell has at least one vertex in $\partial\Lambda_r$, the number of points, $s_r$, in $\partial\Lambda_r$ is at least
\begin{equation*}
  s_r \ge 2\ell(2\ell-1)^{r-1}
\end{equation*}
In particular,
\begin{equation}\label{4.10}
  \liminf_{r\rightarrow \infty}s_rn_r^{-1} \ge p^{-1}\left[ \sum_{q=0}^{\infty} (2\ell-1)^{-q} \right]^{-1} = p^{-1}[(2\ell-1)/(2\ell-2)]
\end{equation}
Therefore,
\begin{equation}\label{4.11}
\begin{split}
  \liminf_{r \rightarrow \infty}\text{RHS of }\eqref{4.9} & \ge \liminf_{r \rightarrow\infty} s_rn_r^{-1} \min(a_\alpha^2) \\
  & \ge p^{-1}[(2\ell-1)/(2\ell-2)]\min(a_\alpha^2)
\end{split}
\end{equation}
which proves \eqref{4.3A}
\end{proof}

\begin{remarks} 1. The lower bounds above are far from optimal.  For example, we have not used the fact that there are $\ell$ links from a boundary cell to the outside of $\Lambda_r$.

2.  If all $b_j\ge 0$, one can replace $\lambda^2$ in \eqref{4.3A} by $\lambda^{2k}$ for any $k\ge 1$.

3. Without much additional effort, one can actually compute the limit in \eqref{4.3A} and even prove that the measures $dN_r$ have a limit. In fact, using the \emph{canopy tree} $\mathcal{C}$ defined in \cite{AW3}, it is possible to define a simple Jacobi matrix $\widetilde{J}_\infty$ on $\mathcal{C}$ such that its appropriately defined DOS is the limit of $dN_r$. Since the details are of marginal relevance to the discussion here, we leave them to the interested reader.

4. However, this doesn't mean that there is a single possible ``free boundary condition'' density of states because in forming $\Lambda_r$ we made a choice of which links in $\calG$ to break to get $\calT_\calG$.  For example, in the $ace$ model of Example \ref{E9.4}, $\int \lambda^2 dk(\lambda)-\limsup_r \int \lambda^2 dN_r(\lambda)$ can be any of (a multiple of) $a^2+c^2$, $a^2+e^2$ or $c^2+e^2$ depending on whether we leave the $e$, $c$ or $a$ edge unbroken.
\end{remarks}

The easiest way to discuss periodic boundary conditions (BC) is to use the lego pieces picture described after Theorem \ref{T3.1}.  The boundary points in $\Lambda_r$ are described by words, $\omega=\alpha_1\dots\alpha_r$, in $\bbF_\ell$.  Here each $\alpha_j$ is one of the $\ell$ generators of $\bbF_\ell$ or its inverse with $\alpha_j\alpha_{j+1} \ne e$ for $j=1,\dots,r-1$.  We define the opposite word to be $\tilde{\omega} =\alpha_1^{-1}\dots\alpha_r^{-1}$ (this is not usually $\omega^{-1}$ which is $\alpha_r^{-1}\dots\alpha_1^{-1}$).  $\tilde{\omega}$ is obtained by walking from the origin taking the opposite step to the one taken for the walk to $\omega$.

Place a lego piece down at each vertex in $\Lambda_r$, linking neighbors by the group labels on generators on the edges.  The free boundary condition operator $H_r$ has a link at $\omega$ that uses the $\alpha_r^{-1}$ connector to $\omega$ by its only neighbor in $\Lambda_r$ but the other $2\ell-1$ connectors are unlinked.  Consider linking each connector left at $\omega$ to the matching free connector at $\tilde{\omega}$.  We define the canonical periodic boundary condition operator by taking the graph $\pi_C^{(r)}$ obtained by adding these links for each $\omega$ in $\partial\Lambda_r$, the boundary of $\Lambda_r$. $\pi_C^{(r)}$ is a finite graph with fixed degree $2\ell$ (although once we put the lego pieces in we get a finite graph which may not have a fixed degree -- recall the lego pieces, $\mathcal{T}_{\mathcal{G}}$, do not necessarily have a fixed degree).  $H_{\pi_C^{(r)}}$ is the Jacobi matrix obtained by adding to $H_r$ the $A_\alpha$ links associated to the added connectors.  There is a graphical representation for matrix elements of $H_{\pi_C^{(r)}}^k$ which involves walks that can go through the added links.  There are at least as many walks for $r$ much larger than $k$ as on the infinite graph but due to loops there can be additional walks.

$\pi_C^{(r)}$ has lots of closed loops of length $2$ since we can go from $\omega$ to $\tilde{\omega}$ by one link and come back via another.  For this reason, the eigenvalue counting measure for $H_{\pi_C^{(r)}}$ does not converge to the infinite tree DOS, indeed its second moment is strictly larger than for the infinite tree DOS.

Pairing all the links of $\omega$ to $\tilde{\omega}$ is natural in one sense. $\Lambda_r$ with lego pieces has $\frac{2\ell-1}{2\ell}s_r$ connectors of each of the $2\ell$ types, $e_1^+,e_1^-,\dots,e_\ell^+,e_\ell^-$.  By a periodic pairing, we mean a pairing of each of the $\frac{2\ell-1}{2\ell}s_r$, $e_j^+$'s with a different $e_j^-$ for each $j=1,\dots,\ell$.  There are $\left[\left(\frac{2\ell-1}{2\ell}s_r\right)!\right]^\ell$ such pairings in all.  We'll use $\calP_r$ for the set of all such pairings.  Associated to any pairing is a graph, $\pi$, of constant degree $2\ell$ and using lego pieces, also a Hamiltonian $H_\pi$ on $\ell^2(\Lambda_r,\bbC^p)$. These all have a claim to be a periodic BC operator.

There is a natural way to randomly pick a sequence of periodic pairings.  On $\calP_r$, we put counting measure (i.e.\ weight $1/\left[\left(\frac{2\ell-1}{2\ell}s_r\right)!\right]^\ell$) to each possibility).  By a random set of pairings, we means a point in $\bigtimes_{r=1}^\infty \calP_r$ of a sequence of pairings, one for each $r$ and we give the space the infinite product of counting measures.  In \cite{ABKS}, we will prove with Kalai:

\begin{theorem} \lb{T4.2} For a.e.\ sequence of periodic pairings, the eigenvalue counting measure for $H_\pi$ converges to the DOS for the infinite tree.
\end{theorem}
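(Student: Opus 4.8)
The plan is to prove Theorem \ref{T4.2} by the method of moments, treating it as a matrix--valued, lego--decorated analogue of McKay's theorem that the empirical spectral distribution of a random regular graph converges to the Kesten--McKay law. First I would reduce to a statement about individual moments. All the operators $H_\pi$ are bounded uniformly in $r$ and in the pairing $\pi\in\calP_r$ (indeed $\norm{H_\pi}\le\norm{\wti{B}}+2\sum_{j=1}^\ell\norm{\wti{A}_j}$), so the eigenvalue counting measures $dN_\pi$ and the DOS $dk$ are all supported in one fixed compact interval. On measures with common compact support, weak convergence is equivalent to convergence of every moment, and $dk$ is determined by its moments; hence it suffices to prove that for each fixed $k$, almost surely $\int\lambda^k\,dN_\pi\to M_k:=\int\lambda^k\,dk$. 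As this is a countable family of events, establishing each almost surely yields almost sure weak convergence. By \eqref{4.1} and \eqref{4.4}--\eqref{4.7}, $M_k=p^{-1}\sum_{j\in\calG}\sum_{\omega\in W_{j,k}}\rho(\omega)$ is the $\rho$-weighted count of closed length-$k$ walks on the infinite periodic tree, while $\int\lambda^k\,dN_\pi=n_r^{-1}\tr(H_\pi^k)=n_r^{-1}\sum_v\sum_\omega\rho(\omega)$ is the analogous count on the finite graph $\pi$, where walks may now traverse the random pairing links as well as the fixed internal (lego and tree) edges. I would prove moment convergence in two stages: (i) $\bbE[\int\lambda^k\,dN_\pi]\to M_k$, and (ii) concentration of $\int\lambda^k\,dN_\pi$ about its mean.

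For (i), the key is to classify each closed walk by the pattern in which it uses the random pairing links. A walk that either avoids them or traverses each of them in cancelling (back--and--forth) pairs descends, under the lego/covering identification, to a genuine closed walk of length $k$ on the infinite tree; summing these over all base vertices $v$, using the periodicity (every lego block carries the same $\wti{B}$ and the same incident $\wti{A}_j$), reproduces exactly $M_k$. A walk that uses some pairing link in a non--cancelling way must close up a cycle through the random matching, which forces a coincidence: a specified pair of boundary half--edges must be matched, an event of probability $O(1/s_r)$ for the uniform type--respecting matching on $\asymp s_r$ half--edges of each type. Since for fixed $k$ each base vertex supports only $O_k(1)$ combinatorial walk shapes (degree at most $2\ell$, length $k$), the total expected contribution of cyclic walks to $n_r^{-1}\tr(H_\pi^k)$ is $O_k(1/s_r)=o(1)$. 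I want to stress why this randomness is unavoidable: by Theorem \ref{T4.1} the boundary $\partial\Lambda_r$ is a \emph{constant} fraction of $\Lambda_r$, so the deep--interior vertices alone contribute only the fraction $(2\ell-1)^{-k}M_k$ of the moment; it is precisely the random pairing that makes the boundary vertices supply the correct remaining amount in expectation.

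For (ii), I would exploit that a periodic pairing is nothing but a choice, for each generator $j$, of a uniformly random bijection between the $m_r:=\tfrac{2\ell-1}{2\ell}s_r$ connectors $e_j^+$ and the $m_r$ connectors $e_j^-$. Thus $\tr(H_\pi^k)$ is a function of $\ell$ independent uniform permutations of an $m_r$--element set, and composing one of these permutations with a transposition rewires only two pairing links, hence alters only $O_k(1)$ closed walks of length $k$ and changes $\tr(H_\pi^k)$ by at most a constant $c_k$ independent of $r$. McDiarmid's bounded--difference inequality for uniformly random permutations then yields $\Pr\bigl(|\tr(H_\pi^k)-\bbE\,\tr(H_\pi^k)|>\epsilon n_r\bigr)\le 2\exp(-c_k'\,\epsilon^2 n_r^2/s_r)$, which, since $s_r\asymp n_r$, is summable in $r$. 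By Borel--Cantelli, $\int\lambda^k\,dN_\pi-\bbE\int\lambda^k\,dN_\pi\to0$ almost surely; combined with (i) this gives $\int\lambda^k\,dN_\pi\to M_k$ almost surely, and then the reduction of the first paragraph finishes the proof.

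The hard part will be stage (i): making the walk--contraction map between cyclic--free walks on $\pi$ and closed walks on the infinite tree precise while carrying along the \emph{matrix--valued} weights $\rho(\omega)$ built from entries of $\wti{B}$ and the non--self--adjoint $\wti{A}_j$, and verifying that the type--respecting constraint $e_j^+\!\leftrightarrow e_j^-$ produces exactly the weights appearing in $M_k$ rather than spurious terms. Conceptually this is the assertion that the random graphs $\pi_r$ converge in the Benjamini--Schramm (local weak) sense to the infinite periodic tree rooted at a uniform vertex of $\calG$, with the spectral--measure functional continuous along such convergence; the moment bookkeeping above is the quantitative form of that statement.
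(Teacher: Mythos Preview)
The paper does not actually prove Theorem~\ref{T4.2}; it announces the result and defers the proof to a companion paper with Kalai (reference \cite{ABKS}, in preparation). The only information the paper gives about the argument is the sentence ``random $\pi$'s have few small closed loops. Indeed we prove the expected number of loops of a fixed size is finite. While the method of proof is different, this result is related to McKay's result.'' So there is no proof here to compare your attempt against.

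That said, your method-of-moments plus concentration outline is a sound and essentially complete strategy, and it is exactly the matrix--weighted version of McKay's argument. Your stage~(i) is in fact cleaner than you make it sound. Since the decorated $\pi$ is a finite cover of $\calG$ (as the paper observes just after Theorem~\ref{T4.2}), its universal cover is $\calT$ itself, and one has the exact covering identity
\[
\langle\delta_v,H_\pi^k\delta_v\rangle=\sum_{\gamma\in\pi_1(\pi)}\langle\delta_{\tilde v},H^k\delta_{\gamma\tilde v}\rangle.
\]
The $\gamma=e$ term is precisely $\int\lambda^k\,d\mu_{\Xi(v)}$; since each vertex type $j\in\calG$ occurs exactly $n_r/p$ times in $\pi$, summing over $v$ and dividing by $n_r$ gives $M_k$ \emph{deterministically}, not merely in expectation---so the matrix bookkeeping you flag as the ``hard part'' is handled automatically by the covering structure, with no need to match weights by hand. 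Your phrase ``cancelling back-and-forth pairs'' should be sharpened to ``null-homotopic in $\pi$'' (equivalently, fully reducible by backtrack removal), but the content is right. The $\gamma\ne e$ terms are controlled by the number of reduced cycles of length $\le k$ through $v$; the paper's hint is precisely that the expected total number of such short cycles is $O_k(1)$, which gives the $O_k(1/n_r)$ you claim. Your stage~(ii) via McDiarmid for uniform permutations is correct, and since $n_r$ grows geometrically the tail bounds are trivially summable.

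The paper's remark that their method ``is different'' from McKay's suggests that \cite{ABKS} may argue via Benjamini--Schramm local convergence directly rather than through moments---the route you yourself mention at the end---but without that paper one cannot say more.
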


The reason this is true is that random $\pi$'s have few small closed loops.  Indeed we prove the expected number of loops of a fixed size is finite.  While the method of proof is different, this result is related to McKay's result \cite{McK} quoted at the start of this section that a randomly chosen degree $d$ graph has an eigenvalue density that converges to \eqref{4.2}; he shows that random graphs have few small closed loops.

There is a different way of thinking of about periodic boundary conditions.  Our graphs, $\pi$, are of constant degree $2\ell$ with labeled edges so that there is a natural map of $\pi$ to the basic graph with $p=1$ point and $q=\ell$ edges, each a self loop to the single point. By replacing the points of $\pi$ by the tree $\calT_\calG$ in the lego representation, we get a natural map onto the initial graph, $\calG$.  This map is then a covering map, i.e., the decorated $\pi$ is a covering space to $\calG$ and $H_\pi$ is the lift of the Jacobi matrix on $\calG$ to the covering space.

More generally, we can regard any such finite cover and its associated Jacobi matrix lift as a periodic BC Hamiltonian.  Such finite covers are in one-one correspondence to finite index subgroups, $\bbH$, of $\bbF_\ell$, the fundamental group of $\calG$. At first sight, this looks ideal since $\bbH$ is connected to loops in the finite covering space, but on more careful examination, one realizes that $\bbH$ is only closed loops through the base point.

An additional point is that while the periodic BC objects studied in Theorem \ref{T4.2} are reasonable from the point of view of someone who thinks about periodic BC on $\bbZ^\nu$, they have a big lack in comparison with that case. The periodic BC graph on $\bbZ^\nu$ is a torus which has a huge symmetry group while, for example, it is easy to see that the graph $\pi_C^{(r)}$ has no translational symmetries at all.  The key is to consider normal subgroups, $\bbH$, in $\bbF_\ell$.  The points in the covering space for the one point, $\ell$ self-loop case are then points in the quotient group $\bbG=\bbF_\ell/\bbH$. Because the subgroup is normal, $\bbG$ acts on it by right multiplication and the covering space has a transitive group of symmetries.  For the objects built over a graph $\calG$, the action is transitive at the level of copies of $\calG$ (although not transitive on the vertices of $\calG$ and its covering space).  So we will call the periodic BC objects associated with normal $\bbH$, \emph{symmetric periodic BC} objects.  For symmetric graphs, to know there are no small loops, it suffices to see that there are no small loops through the origin.

There is a huge literature on normal subgroups of $\bbF_\ell$ due to its importance in topology, group theory and number theory.  Using ideas from that literature, \cite{ABKS} will also prove:

\begin{theorem} \lb{T4.3} There exist sequences of symmetric periodic BC whose eigenvalue counting distributions converge to the infinite tree DOS.
\end{theorem}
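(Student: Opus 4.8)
The plan is to realize the symmetric periodic BC objects as the finite covers $X_n$ of $\calG$ associated to a suitable sequence of finite-index \emph{normal} subgroups $\bbH_n\triangleleft\bbF_\ell$, and to arrange that these covers contain no short nontrivial loops, so that the walk expansion of Section \ref{s4} forces their eigenvalue counting measures to match the DOS through more and more moments. Recall that $X_n=\bbH_n\backslash\calT$, so that $\calT\to X_n\to\calG$, and that $H_{X_n}$ is the lift of $J$; normality of $\bbH_n$ makes $\bbG_n=\bbF_\ell/\bbH_n$ act transitively on $X_n$ by deck transformations, which is exactly what makes these objects \emph{symmetric} periodic BC.

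First I would reduce the convergence to a girth condition. Recall from \eqref{4.4}--\eqref{4.8} that, writing $m$ for the exponent, the $m$-th moment of the eigenvalue counting measure of any Jacobi matrix assembled from the fixed lego data is the normalized sum $|\Lambda|^{-1}\sum_j\sum_\omega \rho(\omega)$ over closed walks $\omega$ of length $m$, with the \emph{same} weights $\rho(\omega)$ as on $\calT$. Now a closed walk of length $m$ on $X_n$ through a vertex $j$ lifts to a walk on $\calT$ from a chosen lift $\tilde\jmath$ to $h\,\tilde\jmath$ with $h\in\bbH_n$ represented by a reduced word of length at most $m$; it is therefore a \emph{closed} walk on $\calT$ (hence counted, with identical weight $\rho$, in the $m$-th moment of $dk$) unless $h$ is a nontrivial element of $\bbH_n$ of length $\le m$. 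Thus if $\bbH_n\cap B(n)=\{e\}$, where $B(n)=\{w\in\bbF_\ell:\ |w|\le n\}$ -- equivalently the Schreier graph of $\bbG_n$ has girth $>n$ -- then for every $m\le n$ the closed walks of length $m$ on $X_n$ are in weight-preserving bijection with those on $\calT$, so
\begin{equation*}
 \int\lambda^m\,dN_{X_n}(\lambda)=\int\lambda^m\,dk(\lambda),\qquad m\le n.
\end{equation*}
This is the precise form of the remark that for symmetric graphs it suffices to control loops through the origin.

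Second, I would construct the subgroups. Since $\bbF_\ell$ is residually finite, for each nontrivial $g\in B(n)$ there is a finite-index normal subgroup of $\bbF_\ell$ not containing $g$; intersecting these finitely many subgroups (or passing to the normal core of a single separating subgroup) produces a finite-index normal $\bbH_n\triangleleft\bbF_\ell$ with $\bbH_n\cap B(n)=\{e\}$, as required above. Finally I would upgrade moment agreement to weak convergence: all the $H_{X_n}$ are built from the same finite collection of Jacobi parameters, so their spectra, and hence the supports of every $dN_{X_n}$, lie in one fixed compact interval, on which a measure is determined by its moments. Since for each fixed $m$ one has $\int\lambda^m\,dN_{X_n}\to\int\lambda^m\,dk$ (indeed equality once $n\ge m$), it follows that $dN_{X_n}\to dk$ weakly, giving the assertion.

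I expect the main obstacle to lie in the construction step, which must simultaneously deliver \emph{normality} (for the symmetry of the cover) and \emph{large girth} (for the moment match). Residual finiteness supplies both at once through the normal core, and one must only check that passing to the core preserves the no-short-loop property, which it does since the core is contained in the chosen subgroup and $\bbH_n\cap B(n)=\{e\}$ is inherited by subgroups. The remaining bookkeeping -- that a closed walk of length $m$ on $X_n$ really does displace a lift by an element of $\bbH_n$ of word length at most $m$, using normality to ignore the choice of basepoint -- is where the lego/covering structure of Theorem \ref{T3.1} enters, but it is routine once the girth reduction is in place.
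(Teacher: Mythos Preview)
Your argument is correct. Note, however, that the paper does not actually prove Theorem~\ref{T4.3}: it is announced there and the proof is deferred to \cite{ABKS}, with only the hint that one ``uses ideas from'' the literature on normal subgroups of $\bbF_\ell$. Your route---residual finiteness of $\bbF_\ell$ to manufacture finite-index normal $\bbH_n$ avoiding the ball $B(n)$, hence symmetric covers $X_n$ in which every closed $m$-walk with $m\le n$ lifts to a genuinely closed walk on $\calT$, then the walk expansion \eqref{4.4}--\eqref{4.8} plus the method of moments on a common compact interval---is exactly the natural implementation of that hint and is presumably what \cite{ABKS} does.

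One point worth spelling out for a reader: the bound $|h|\le m$ on the deck element associated to a closed $m$-walk holds because each step of the walk on $\calT$ crosses at most one of the $\ell$ edges dropped in forming the spanning tree $\calT_\calG$ (the ``connector'' edges of the lego picture), and only those crossings contribute a generator to the word for $h$; steps along edges internal to a lego block, and the ``stay'' steps carrying a factor $b_j$, do not change the fundamental domain. With that made explicit, the bijection you assert between closed $m$-walks on $X_n$ and on $\calT$ (for $m\le n$) is immediate from unique path lifting, and averaging over $X_n$ reproduces the average over one copy of $\calG$ because the $\bbG_n$-action permutes the fibers transitively.
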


\section{Sunada's Gap Labeling Theorem} \lb{s5}

The following is a fundamental result in the theory

\begin{theorem} [Sunada \cite{Sun}] \lb{T5.1} Let $H$ be a period $p$ periodic Jacobi matrix on a tree $\calT$.  Then the integrated density of states, $k(\lambda)$, takes a value $j/p,\, j\in\{1,\dots,p-1\}$ in any gap of the spectrum.  In particular, the spectrum of $H$ is a union of at most $p$ closed intervals.
\end{theorem}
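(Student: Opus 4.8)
The plan is to derive the quantization of $k$ in gaps from an integrality property of the trace on $K_0$ of the relevant $C^*$-algebra, and then to read off the band structure from the monotonicity of $k$. Throughout I use the identification $C^*_{red}(\bbF_\ell;\bbC^p)\cong C^*_{red}(\bbF_\ell)\otimes M_p(\bbC)$ and write the unnormalized trace $T$ of \eqref{4.3} as $\tau\otimes\tr$, where $\tau$ is the canonical trace on $C^*_{red}(\bbF_\ell)$ (the coefficient of the group identity) and $\tr$ is the unnormalized matrix trace. Since the canonical trace on $\calV_{2\ell;p}$ is faithful, $\spec(H)=\supp(dk)$: for $Q\neq 0$ a projection in $\calV_{2\ell;p}$ one has $T(Q)>0$, and applying this to $E_{(\lambda_0-\veps,\lambda_0+\veps)}(H)$ shows that $\lambda_0$ lies in the spectrum exactly when every neighborhood carries positive $dk$-mass.

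First I fix $\lambda$ in a gap of $\spec(H)$. By Theorem \ref{T3.2} the spectral projection $P:=E_{(-\infty,\lambda)}(H)$ lies in $\calC^{(0)}_{2\ell;p}\subset C^*_{red}(\bbF_\ell;\bbC^p)$, so it defines a class $[P]\in K_0(C^*_{red}(\bbF_\ell;\bbC^p))$. Because the trace is additive on orthogonal projections and invariant under Murray--von Neumann equivalence, $T(P)$ depends only on $[P]$; that is, $T(P)=T_*([P])$ for the induced homomorphism $T_*\colon K_0\to\bbR$, so it suffices to show $T_*$ has range in $\bbZ$. The key input is the Pimsner--Voiculescu computation $K_0(C^*_{red}(\bbF_\ell))\cong\bbZ$, generated by the class $[1]$ with $\tau_*[1]=\tau(1)=1$. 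By Morita invariance $K_0(C^*_{red}(\bbF_\ell)\otimes M_p(\bbC))\cong K_0(C^*_{red}(\bbF_\ell))\cong\bbZ$, and under this isomorphism $T=\tau\otimes\tr$ satisfies $T_*=\tau_*$, so its range on $K_0$ is exactly $\bbZ$. Hence $T(P)\in\bbZ$ and, by \eqref{4.3}, $k(\lambda)=p^{-1}T(P)\in\tfrac1p\bbZ$. Since $dk$ is an average of $p$ probability measures it is a probability measure, so $k$ is non-decreasing with $k(-\infty)=0$ and $k(+\infty)=1$; a bounded gap has spectrum, hence $dk$-mass, both below and above it, forcing $0<k(\lambda)<1$ and therefore $k(\lambda)=j/p$ with $j\in\{1,\dots,p-1\}$.

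Finally I turn to the band count. By $\spec(H)=\supp(dk)$, the function $k$ strictly increases across every band. If two distinct bounded gaps carried the same value $j/p$, then, $k$ being monotone, it would be constant on the closed interval between them; but that interval meets $\spec(H)=\supp(dk)$, a contradiction. Thus the values of $k$ on distinct gaps are distinct elements of the $(p-1)$-element set $\{1/p,\dots,(p-1)/p\}$, so there are at most $p-1$ bounded gaps. As $\spec(H)$ is compact, its complement has at most $p-1$ bounded components, whence $\spec(H)$ is a union of at most $p$ closed intervals.

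The main obstacle is the $K$-theoretic integrality $T_*(K_0)\subseteq\bbZ$: the remaining arguments are bookkeeping with the monotone function $k$, but this step rests on the full force of the Pimsner--Voiculescu theorem (equivalently, the idempotent picture of Kadison--Kaplansky type for free groups). It is precisely here that the non-abelian nature of $\bbF_\ell$ is accommodated, replacing the Floquet/direct-integral decomposition that is available only in the abelian case $\calT=\bbZ$.
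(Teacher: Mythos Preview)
Your proof is correct and follows the same overall architecture as the paper's: reduce gap labeling to the statement that the unnormalized trace $T$ takes integer values on projections in $C^*_{red}(\bbF_\ell;\bbC^p)$ (the paper's Theorem~\ref{TA.3}), then invoke \eqref{4.3}. Your treatment of the monotonicity bookkeeping and the band count is more explicit than the paper's, which simply asserts the ``at most $p$ intervals'' conclusion.

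The genuine difference is in how you justify the integrality of $T$ on projections. You argue at the level of $K$-theory: $T$ factors through $K_0$, and the Pimsner--Voiculescu computation $K_0(C^*_{red}(\bbF_\ell))\cong\bbZ$ together with Morita invariance and the normalization $\tau(1)=1$ forces $T_*(K_0)=\bbZ$. The paper instead supplies a self-contained proof of Theorem~\ref{TA.3} in the appendix, following Effros: it builds an auxiliary degenerate representation $\pi_1$ (obtained by deleting the base vertex and regrouping the $2\ell$ rooted subtrees into $\ell$ copies of $\calT_{2\ell}$), shows that $\pi_0(P)-\pi_1(P)$ is trace class for $P$ in a dense subalgebra, and then appeals to the elementary fact (Theorem~\ref{TA.1}) that $\tr(P-Q)\in\bbZ$ whenever $P,Q$ are projections with $P-Q\in\calI_1$. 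The paper's route is more hands-on and avoids the $K$-theory machinery entirely, which is the stated goal of the appendix (``accessible to spectral theorists''); your route is shorter but imports the full Pimsner--Voiculescu theorem as a black box. The paper itself notes the kinship: its appendix opens by remarking that Theorem~\ref{TA.3} ``is close to a theorem of Pimsner--Voiculescu'' and that Corollary~\ref{CA.4} is exactly their result, so your approach is in effect the one the paper is deliberately unpacking.
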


\begin{remarks} 1.  It is trivial that $k$ is constant in any gap.  The deep fact is, of course, the quantization.  Fix $j\in\{1,\dots,p-1\}$.  If there is a unique point in $\spec(H)$ where $k(\lambda)=j/p$, so that this point is neither in a gap or an edge of the gap, then we say the gap at $j/p$ is closed.  Otherwise, we say that the gap at $j/p$ is open.

2.  Sunada's paper deals primarily with differential operators invariant under a discrete group but he remarks that it applies to difference operators.  Since the IDS is not normalizable in the continuum case, he only has a quantization claim.  If you follow his argument in our case, you get a precise $1/p$ quantization although it never appears explicitly in his paper.
\end{remarks}

\begin{proof}  By Theorem \ref{T3.1}, H is unitarily equivalent to an operator $\wti{J}$ on $\calH_{2\ell;p}$ where $\bbF_\ell$ is the fundamental group of the graph $\calG$ underlying the model.  By Theorem \ref{T3.2}, if $\lambda$ is in a gap of the spectrum $E_{(-\infty,\lambda)}(\wti{J})\in\calC^{(0)}_{2\ell;p}$.  If $T$ is the unnormalized trace of $\eqref{A.4D}$, then $T(E_{(-\infty,\lambda)}(\wti{J})) \in\bbZ$ by Theorem \ref{TA.3}.  By \eqref{4.3}, we conclude that $pk(\lambda)\in\bbZ$.
\end{proof}

We note a curious consequence of gap labelling:

\begin{theorem} \lb{T5.2} Let $H$ be a periodic Jacobi matrix on a tree with odd period.  Suppose that all $b_j=0$.  Then $0\in\spec(H)$
\end{theorem}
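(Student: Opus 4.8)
The plan is to exploit the bipartite structure of the tree together with the hypothesis $b_j=0$ to produce a chiral symmetry that forces $\spec(H)$ to be reflection–symmetric about the origin, and then to show that gap labeling (Theorem \ref{T5.1}) cannot accommodate this symmetry when $p$ is odd unless $0\in\spec(H)$.

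First I would build the chiral involution. A tree has no cycles, hence no odd cycles, so $\calT$ is bipartite: fixing a base vertex $o$ and setting $\epsilon(v)=(-1)^{\dist(o,v)}$ two-colors the vertices so that every edge joins vertices of opposite color. Define the unitary $\Pi$ on $\ell^2(\calT)$ by $(\Pi u)_v=\epsilon(v)u_v$; it is self-adjoint with $\Pi^2=I$. Since all diagonal matrix elements of $H$ vanish and every off-diagonal element connects vertices of opposite color, a direct check gives $\Pi H\Pi=-H$. Hence $H$ and $-H$ are unitarily equivalent and $\spec(H)=-\spec(H)$. Note that $\Pi$ need not respect the deck-transformation action; we only use the operator identity.

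Next I would transfer this to the density of states. For each vertex $v$, conjugating by $\Pi$ and using $\Pi\delta_v=\epsilon(v)\delta_v$ yields $\jap{\delta_v,f(H)\delta_v}=\jap{\delta_v,f(-H)\delta_v}$ for every bounded Borel $f$, so each spectral measure $d\mu_v$ is invariant under $\lambda\mapsto-\lambda$. By \eqref{4.1} the DOS $dk$ is an average of such measures, so $dk$ is also reflection invariant; recall it is a probability measure with $\supp(dk)=\spec(H)$. Now suppose for contradiction that $0\notin\spec(H)$. Since $\spec(H)$ is symmetric and nonempty, $0$ can lie neither strictly above nor strictly below it, so $0$ lies in a bounded gap and $dk$ assigns no mass to a neighborhood of $0$. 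Reflection symmetry then forces $\int_{(-\infty,0)}dk=\int_{(0,\infty)}dk$, and as there is no atom at $0$ these sum to the total mass $1$; therefore $k(0)=\int_{(-\infty,0)}dk=\tfrac12$.

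Finally, Theorem \ref{T5.1} asserts $k(0)=j/p$ for some $j\in\{1,\dots,p-1\}$, so $\tfrac12=j/p$, i.e.\ $2j=p$, which is impossible for odd $p$. This contradiction shows $0\in\spec(H)$. The argument is short, and I expect its only real content to be the recognition that spectral symmetry pins the candidate IDS value to exactly $\tfrac12$, a value the quantization $j/p$ can realize only when $p$ is even; the construction of $\Pi$ and the passage to $dk$ are routine once the bipartiteness is observed, and the mild care needed is simply to note that $0$ sits in the interior of a gap (not at a band edge, where $0\in\spec(H)$ holds trivially).
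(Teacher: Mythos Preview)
Your proof is correct and follows essentially the same approach as the paper: you build the chiral involution $\Pi$ from the bipartite coloring of the tree (the paper calls it $V$), deduce that the spectrum and the DOS are symmetric about $0$, and then observe that if $0$ were in a gap the IDS would equal $\tfrac12$, which gap labeling forbids when $p$ is odd. The only cosmetic difference is that the paper phrases the symmetry step as $k(\varepsilon)=1-k(\varepsilon)$ at a point $\varepsilon$ in the gap rather than computing $k(0)=\tfrac12$ directly.
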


\begin{proof} Let $V$ be the operator $V\varphi(j) = (-1)^{\rho(j)}\varphi(j)$ where $\rho(j)$ is the distance of $j$ from some base vertex picked once and for all.  Then
\begin{equation}\label{5.1}
   VHV^{-1} = -H
\end{equation}
If $0\notin\spec(H)$, there is a gap $(-2\varepsilon,2\varepsilon)$ in the spectrum.  By \eqref{5.1}, we have that $k(\varepsilon) = 1-k(\varepsilon)$.  By gap labeling, for some integer $q$, $k(\varepsilon)=q/p$.  It follows that $1=k(\varepsilon)+(1-k(\varepsilon)) = 2q/p$ so $p=2q$.  Since $p$ is odd, we conclude that $0\in\spec(H)$.
\end{proof}

\section{Equations for $G$ and $M$ and the Absence of SC Spectrum} \lb{s6}

In this section, we introduce the basic Green's and $m$-functions for Jacobi operators on graphs, derive equations amongst them and use these equations to prove the absence of singular continuous spectrum for periodic Jacobi matrices on trees.  Let $H$ be a bounded Jacobi matrix on an infinite tree, $\calT$, with no leaves (for now, we do not suppose either that the Jacobi matrix or even that the tree is periodic). If $\alpha$ is an edge with ends $j$, $k$, then removing the edge $\alpha$ disconnects $\calT$ into two components,  $\calT^\alpha_j$ and $\calT^\alpha_k$, containing $j$ and $k$ respectively.  They are also infinite trees although if either vertex has degree $2$, they may have a leaf.  We let $H(\calT^\alpha_j)$ be the obvious Jacobi matrix acting on $\ell^2(\calT^\alpha_j)$ and similarly for $H(\calT^\alpha_k)$.  Define
\begin{equation} \lb{6.1}
  G_j(z) = \jap{\delta_j,(H-z)^{-1}\delta_j} \qquad m^\alpha_j = \jap{\delta_j,(H(\calT^\alpha_j)-z)^{-1}\delta_j}
\end{equation}
and similarly for $m^\alpha_k$.  These are defined as analytic functions on $\bbC\setminus [A,B]$ if $A$ and $B$ are the bottom and top of $\spec(H)$.  They are also analytic at infinity and in the gaps of suitable spectra. In the periodic case, one can show that the three operators have the same essential spectra, so all are meromorphic on $\bbC\setminus\text{\rm{ess spec}}(H)$.

We want to derive the equations for $G$ and $m$. These have often appeared in the literature on random discrete Schr\"{o}dinger operators on regular trees; see e.g. Klein \cite{Klein}, Aizenman-Sims-Warzel \cite{AW1,AW2} and Froese-Hasler-Sptizer \cite{FHS1,FHS2}. Unfortunately these equations have appeared many times with incorrect signs (whose corrections, fortunately, don't seem to effect the validity of their theorems)!  So we want to provide a derivation. A particularly clean method involves Banachiewicz' formula \cite{Banach} from the theory of Schur complements \cite{Schur} (also known as the method of Feshbach projections, after \cite{Fesh}, or the Liv\v{s}ic matrix after \cite{Livsic}) . This formula has been used often before in spectral theory in related contexts, see, for example, \cite{BFS, Howland}. Consider a Hilbert space that is a direct sum $\calH=\calH_1\oplus\calH_2$ so that any $N\in\calL(\calH)$ can be written
\begin{equation*}
  N = \left(
        \begin{array}{cc}
          X & Z \\
          Z^* & Y \\
        \end{array}
      \right)
\end{equation*}
where, for example, $X\in\calL(\calH_1)$.  Given such an $N$ with $Y$ invertible, we define the \emph{Schur complement} of $Y$ as $S = X-ZY^{-1}Z^*\in\calL(\calH_1)$. Let
\begin{equation*}
  L = \left(
        \begin{array}{cc}
          \bdone & 0 \\
          -Y^{-1}Z^* & \bdone \\
        \end{array}
      \right) \mbox{ so } L^{-1} = \left(
                                     \begin{array}{cc}
                                       \bdone & 0 \\
                                       Y^{-1}Z^* & \bdone \\
                                     \end{array}
                                   \right)
\end{equation*}

A simple calculation shows that
\begin{equation}\label{6.2}
  L^*NL = \left(
               \begin{array}{cc}
                 S & 0 \\
                 0 & Y \\
               \end{array}
             \right)
\end{equation}
so that
\begin{align*}
  N^{-1} &= L\left(
              \begin{array}{cc}
                S^{-1} & 0 \\
                0 & Y^{-1} \\
              \end{array}
            \right) L^* \\
           &= \left(
                        \begin{array}{cc}
                          S^{-1} & -S^{-1} ZY^{-1} \\
                          -Y^{-1} Z^*S^{-1} & Y^{-1}+Y^{-1}Z^*S^{-1}ZY^{-1} \\
                        \end{array}
                      \right)
\end{align*}
which proves Banachiewicz' formula
\begin{equation}\label{6.3}
(N^{-1})_{11} = S^{-1}
\end{equation}

For a tree, we fix $j\in\calT$ so that we can write $\ell^2(\calT) = \bbC\oplus\ell^2(\cup_{\alpha=(jk)} \calT_k^\alpha)$ corresponding to singling out the site $j$. Then $(N^{-1})_{11}$ is a number, $X$ is $b_j$, $Y =\oplus_{\alpha=(jk)} H(\calT_k^\alpha)$ and $Z$ is the various $a_\alpha$. The result of applying \eqref{6.3} both to $H$ and to $H(\calT_j^{(rj)})$ is

\begin{theorem} \lb{T6.1}  Let H be an bounded Jacobi matric on an infinite tree $\calT$ (not necessarily periodic).  Then the Green's functions and $m$-functions given by \eqref{6.1} are related by

\begin{equation} \lb{6.4}
  G_j(z) = \frac{1}{-z+b_j-\sum_{\alpha=(jk)} a^2_\alpha m_k^\alpha(z)}
\end{equation}
If $\beta=(rj)$ is an edge in $\calT$, we have that
\begin{equation} \lb{6.5}
  m^\beta_j(z) = \frac{1}{-z+b_j-\sum_{\alpha=(jk);\,k\ne r} a^2_\alpha m_k^\alpha(z)}
\end{equation}
\end{theorem}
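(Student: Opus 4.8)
The plan is to obtain both identities as direct applications of Banachiewicz' formula \eqref{6.3}, with $N = H - z$ for \eqref{6.4} and with $N = H(\calT_j^\beta) - z$ for \eqref{6.5}, using exactly the decomposition singled out just before the theorem statement. First I would fix the vertex $j$ and write $\ell^2(\calT) = \bbC\delta_j \oplus \ell^2(\calT\setminus\{j\})$, taking $\calH_1 = \bbC\delta_j$ and $\calH_2$ its orthogonal complement. With respect to this splitting, the block $X$ of $H - z$ is the scalar $b_j - z$, the off-diagonal block $Z$ is the row of hopping amplitudes connecting $\delta_j$ to its neighbors (one entry $a_\alpha$ for each edge $\alpha=(jk)$ incident to $j$, so that $Z^*\delta_j = \sum_{\alpha=(jk)} a_\alpha\delta_k$), and $Y$ is the compression of $H - z$ to $\calH_2$.

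The crucial step is to identify $Y$ and the product $ZY^{-1}Z^*$, and here the tree structure does all the work. Because $\calT$ is a tree, deleting the vertex $j$ disconnects $\ell^2(\calT\setminus\{j\})$ into the mutually orthogonal pieces $\ell^2(\calT_k^\alpha)$, one for each edge $\alpha=(jk)$, with no matrix element of $H$ linking distinct pieces (any such link would produce a cycle through $j$). Moreover the component of $\calT\setminus\{j\}$ containing $k$ is exactly the component $\calT_k^\alpha$ obtained by deleting the edge $\alpha$, so the compression of $H$ to $\ell^2(\calT_k^\alpha)$ is precisely $H(\calT_k^\alpha)$. Hence $Y = \bigoplus_{\alpha=(jk)}(H(\calT_k^\alpha) - z)$ is block diagonal and, for $z$ in the common resolvent set, invertible. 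Since $Z^*\delta_j = \sum_{\alpha=(jk)} a_\alpha\delta_k$, the Schur complement collapses to a scalar,
\[
  S = (b_j - z) - \sum_{\alpha=(jk)} a_\alpha^2\, \jap{\delta_k,(H(\calT_k^\alpha)-z)^{-1}\delta_k} = -z + b_j - \sum_{\alpha=(jk)} a_\alpha^2\, m_k^\alpha(z),
\]
where I have used the definition of $m_k^\alpha$ from \eqref{6.1}. Banachiewicz' formula \eqref{6.3} then gives $G_j(z) = (N^{-1})_{11} = S^{-1}$, which is \eqref{6.4}.

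Finally, \eqref{6.5} follows by running the identical argument with $\calT$ replaced by the subtree $\calT_j^\beta$ obtained by deleting the edge $\beta=(rj)$. In $\calT_j^\beta$ the neighbors of $j$ are all its original neighbors except $r$, so the same singling-out of $\delta_j$ and the same Schur-complement computation go through verbatim, with the sum restricted to $\alpha=(jk)$, $k\neq r$, yielding $m_j^\beta(z) = S^{-1}$ in the stated form. I expect the only genuine point requiring care --- the main obstacle --- to be the bookkeeping that the vertex-deletion components coincide with the edge-deletion components $\calT_k^\alpha$ and that all the operators $H(\calT_k^\alpha)-z$ are invertible on the region where the identities are asserted; both are immediate consequences of the uniqueness of paths in a tree together with $z$ lying in the relevant resolvent set, so no estimate beyond that is needed.
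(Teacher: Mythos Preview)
Your proposal is correct and follows essentially the same approach as the paper: single out the vertex $j$, use the tree structure to see that $Y$ decomposes as a direct sum $\bigoplus_{\alpha=(jk)}(H(\calT_k^\alpha)-z)$, compute the Schur complement, and apply Banachiewicz' formula \eqref{6.3}; then repeat on the subtree $\calT_j^\beta$ for \eqref{6.5}. The paper's argument is stated more tersely just before the theorem, but your added remarks on why vertex-deletion components coincide with the edge-deletion subtrees $\calT_k^\alpha$ and on invertibility of $Y$ are exactly the bookkeeping the paper leaves implicit.
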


Note that if $H$ is a periodic Jacobi matrix built over a finite graph $\calG$ with $p$ vertices and $q$ edges, the Green's and $m$-functions are invariant under the action of the group $\bbF_\ell$ so that there are $p$ distinct Green's functions and $2q$ $m$--functions.

If you compare the two equations for $G$ and $m$, they differ in a single term, so returning to the not necessarily periodic case

\begin{corollary} \lb{C6.2}  If $\beta=(rj)$ is an edge in $\calT$, we have that
\begin{equation} \lb{6.6}
  G_j(z) = \frac{1}{\left[m_j^\beta(z)\right]^{-1} - a_\beta^2 m_r^\beta(z)}
\end{equation}
\end{corollary}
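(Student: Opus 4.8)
The plan is to read off Corollary \ref{C6.2} directly from Theorem \ref{T6.1} by comparing the two displayed formulas \eqref{6.4} and \eqref{6.5}. The statement differs from the $m$-function equation \eqref{6.5} only by the presence of a single additional term, so the work amounts to isolating that term algebraically.

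First I would fix the edge $\beta=(rj)$ and write down the two relevant formulas side by side. From \eqref{6.5} applied to this edge,
\begin{equation*}
  \left[m_j^\beta(z)\right]^{-1} = -z+b_j-\sum_{\alpha=(jk);\,k\ne r} a^2_\alpha m_k^\alpha(z),
\end{equation*}
whereas from \eqref{6.4},
\begin{equation*}
  \left[G_j(z)\right]^{-1} = -z+b_j-\sum_{\alpha=(jk)} a^2_\alpha m_k^\alpha(z).
\end{equation*}
The sum in \eqref{6.4} ranges over \emph{all} edges $\alpha=(jk)$ incident to $j$, while the sum in \eqref{6.5} omits precisely the term $\alpha=\beta$, i.e.\ the one with $k=r$. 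Subtracting, the two right-hand sides differ by exactly the single term $a_\beta^2 m_r^\beta(z)$, where I use that $a_\beta=a_\alpha$ for $\alpha=\beta$ and that the neighbor $k$ along $\beta$ is $r$. (Here one should note that $m_r^\beta$ is the $m$-function of the component $\calT_r^\beta$ containing $r$ after removing $\beta$, which is exactly the companion quantity to $m_j^\beta$ introduced after \eqref{6.1}.)

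Hence
\begin{equation*}
  \left[G_j(z)\right]^{-1} = \left[m_j^\beta(z)\right]^{-1} - a_\beta^2 m_r^\beta(z),
\end{equation*}
and inverting gives \eqref{6.6}. The only genuine point to check is that $\beta$ really is incident to $j$ so that its term appears in the full sum \eqref{6.4} but is the one excluded from \eqref{6.5}; this is immediate since $\beta=(rj)$ has $j$ as an endpoint.

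The main obstacle, such as it is, is purely bookkeeping: one must match the indexing conventions so that the omitted term in \eqref{6.5} is identified correctly with $a_\beta^2 m_r^\beta(z)$ rather than with some other incident edge, and one must remember that inversion is legitimate wherever the $m$-functions are defined and finite (away from $\text{\rm{ess spec}}(H)$ and poles), so the identity holds as an identity of meromorphic functions on the common domain of analyticity. There is no analytic difficulty beyond what Theorem \ref{T6.1} already supplies.
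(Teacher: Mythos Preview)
Your proposal is correct and follows exactly the paper's own approach: the paper simply remarks that \eqref{6.4} and \eqref{6.5} ``differ in a single term'' and then states the corollary. Your write-up merely makes that one-line observation explicit, with the same algebraic subtraction of the two displayed reciprocals.
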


This is an analog of a well known formula from the 1D case; see, for example, Simon \cite[(5.4.45)]{SiSz}.

We should mention that in \cite{AomotoEqn}, Aomoto derived some coupled equations for Green's functions (without being explicit about what branches of the square roots one needs to take):
\begin{equation}\label{6.6A}
  G_j(z)^{-1} = z-b_j-\tfrac{1}{2}\sum_{\alpha=(jk)} \left(-G_j(z)^{-1}+\sqrt{G_j(z)^{-2}+4a_\alpha^2(G_k(z)/G_j(z))}\right)
\end{equation}

Given the definition of spectral measures and \eqref{6.1}, we have that
\begin{equation}\label{6.7}
  G_j(z) = \int \frac{d\mu_j(\lambda)}{\lambda-z}
\end{equation}
so that in the periodic case
\begin{equation}\label{6.8}
 \frac{1}{p} \sum_{\text{one }j\text{ in each orbit}}G_j(z) = \int \frac{dk(\lambda)}{\lambda-z}
\end{equation}

That means (given that we will see below that $d\mu_j$, and so $dk$, have no singular continuous part) that one can recover $dk$ from knowing all the $G_j$: pure points of $dk$ occur at the poles of $G_j$ and the pure point masses are essentially averages of the residues at these poles and the a.c.\ weight of $k$ is related to limits of averages of $G_j(\lambda+i\epsilon)$.  In the next section, we'll use Theorem \ref{T6.1} to compute $G$ and $m$ in certain models and then compute $dk$ using this connection.

In the periodic case, we'll see that the Green's and $m$-functions are algebraic by which we mean functions $f(z)$ that solve $P(z,f(z))=0$ where $P$ is a polynomial in two variables (that depends non--trivially on both variables).  We need several well known results about algebraic functions.  By the degree in $w$ of $P(z,w)$ we mean the highest power of $w$ that occurs with a non-vanishing coefficient.  First we need the following:

\begin{theorem} \lb{T6.3} Let $P(z,w)$ be a polynomial of degree $n$ in $w$.  Then there exist two finite sets $F_1$ and $F_2$ in the Riemann sphere, $\widehat{\bbC}$, and $k\le n$, so that if $z_0\notin F\equiv F_1\cup F_2$, then $P(z_0,w)=0$ has $k$ solutions, each the value of a function $f_j(z)$ analytic near $z_0$. Each $f_j$ can be meromorphically continued along any curve in $\left(\widehat{\bbC} \right)\setminus F_1$ with the only possible poles at $F_2$. At points in $F$, there are fewer than $k$ solutions. At points $z_2\in F_2$, there are $k$ functions meromorphic at $z_2$ which give all solutions for $z$ near $z_2$ while at points $z_1\in F_1$ all the solutions near $z_1$ are given by one or more Laurent-Pusieux series.
\end{theorem}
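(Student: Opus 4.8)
The plan is to recognize this as the standard structure theorem for the algebraic function defined by $P$, proved via the discriminant together with Puiseux's theorem. First I would reduce to the squarefree case: writing $P=\prod_i Q_i^{e_i}$ with the $Q_i$ the distinct irreducible factors of $P$ in $\bbC(z)[w]$, the radical $Q=\prod_i Q_i$ has exactly the same zero set $\{w:P(z_0,w)=0\}$ for every fixed $z_0$, and $\deg_w Q=k\le n$. Replacing $P$ by $Q$, I may assume $P$ is squarefree in $w$, so that its discriminant $D(z)$ (a polynomial in $z$) is not identically zero and its leading coefficient $c_k(z)$ is a nonzero polynomial.

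Next I would isolate the generic behaviour. Let $R_0\subset\widehat{\bbC}$ be the finite set consisting of the zeros of $c_k$, the zeros of $D$, and the point $\infty$. For $z_0\notin R_0$ the polynomial $P(z_0,\cdot)$ has degree exactly $k$ and, since $D(z_0)\ne 0$, has $k$ distinct simple roots; the implicit function theorem then produces $k$ local analytic branches $f_1,\dots,f_k$, and because simple roots remain simple under continuation, these branches continue analytically along any curve in $\widehat{\bbC}\setminus R_0$, yielding a monodromy action of $\pi_1(\widehat{\bbC}\setminus R_0)$ on $\{f_1,\dots,f_k\}$.

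The heart of the argument is the local analysis at each $z_*\in R_0$, for which I would invoke Puiseux's theorem in the local coordinate $z-z_*$ (and $1/z$ at $\infty$): since $P$ is squarefree, the roots near $z_*$ are given by finitely many convergent Laurent--Puiseux series, organized into cycles of periods $q_1,q_2,\dots$ with $\sum_i q_i=k$, on which the monodromy around $z_*$ acts as disjoint $q_i$-cycles; negative exponents occur precisely for roots escaping to $\infty$, i.e.\ where $c_k$ vanishes. I then define $F_1$ to be the set of $z_*\in R_0$ possessing some cycle of period $q_i>1$ (the genuine branch points, around which the monodromy permutes the $f_j$), and $F_2$ to be the set of remaining points of $R_0$ at which some branch has a pole (all cycles of period $1$, so each branch is single-valued and meromorphic there). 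Points of $R_0$ with all cycles of period $1$ and no poles are apparent singularities and are discarded. Setting $F=F_1\cup F_2$, every point of $F$ genuinely loses a solution -- roots collide where $D$ vanishes and escape to $\infty$ where $c_k$ vanishes -- so there are fewer than $k$ solutions there.

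Finally I would read off the asserted properties from this classification. For $z_0\notin F$ the $k$ branches are analytic, either by the generic analysis or because the point is an apparent singularity; continuation of any $f_j$ along a curve in $\widehat{\bbC}\setminus F_1$ stays meromorphic because the curve meets only regular points and points of $F_2$, and at an $F_2$-point every cycle has period $1$, so the branch is a convergent Laurent series whose only possible pole lies in $F_2$; at $z_2\in F_2$ the $k$ period-one cycles give exactly $k$ functions meromorphic at $z_2$ that exhaust the local solutions; and at $z_1\in F_1$ the Puiseux cycles supply the claimed Laurent--Puiseux expansions. The main obstacle is the bookkeeping that matches the algebraic description of the bad set (the zeros of $D$ and $c_k$, together with $\infty$) to the analytic dichotomy between branching and poles, handled uniformly through the point at infinity and the possibly non-monic leading coefficient; Puiseux's theorem does the real work, and the remainder is careful accounting.
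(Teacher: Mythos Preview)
The paper does not actually prove this theorem: it cites \cite[Theorem 3.5.2]{BCA} and adds only the observation that passing to $Q(z,w)=z^nP(1/z,w)$ shows the exceptional set is finite at infinity as well. Your proposal supplies exactly the standard argument behind such a citation---squarefree reduction, discriminant and leading-coefficient loci, implicit function theorem on the complement, and Puiseux's theorem at the bad points---so in spirit you are on the same track as whatever appears in the cited reference, just written out in full.

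There is one small bookkeeping slip. You discard points of $R_0$ at which all Puiseux cycles have period $1$ and no branch has a pole, calling them apparent singularities. But such a point can still be a zero of the discriminant with $c_k\ne 0$, in which case two holomorphic branches collide there (think of $P(z,w)=w^2-z^2$ at $z=0$). At that point $P(z_*,w)=0$ has strictly fewer than $k$ solutions, so by the theorem's clause ``At points in $F$, there are fewer than $k$ solutions'' (together with the converse clause for $z_0\notin F$), it must lie in $F$. The description of $F_2$ in the statement---$k$ functions meromorphic at $z_2$ giving all nearby solutions, with poles only \emph{possibly} occurring there---accommodates these collision points perfectly, so the fix is simply to enlarge your $F_2$ to include them rather than discarding them. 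With that adjustment your argument goes through.
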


\begin{remarks} 1.  Thus the functions defined at regular points (i.e.\ points not in $F$) are analytic functions which can be meromorphically continued to multisheeted Riemann surfaces with branch points at (perhaps a proper subset of) $F_1$ with possible poles at $F_2$.  The branch points are at most $k$ fold and there are at most $k$ sheets.

2. This follows, for example, from \cite[Theorem 3.5.2]{BCA}; there the set is discrete but by looking at $Q(z,w)=z^nP(1/z,w)$, one sees it is also discrete at infinity, so $F$ is finite.  This remark also shows that an algebraic function of $1/z$ is algebraic in $z$.
\end{remarks}

By algebraic functions, we'll mean either the global possibly multisheeted functions or else the local germs defined by a single solution near a point of analyticity. Since the analytic continuation of a function obeying $P(z,f(z))=0$ near a point $z_0$ obeys the same equation near any point to which $f$ can be analytically continued, the two notions are essentially the same. In one place, (to conclude that $G$ is algebraic; we could avoid it but it is more elegant to prove it) we will need:

\begin{theorem} \lb{T6.4} The set of algebraic functions is a field, that is sums, products and inverses of such functions are again such functions.
\end{theorem}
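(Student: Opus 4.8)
The plan is to identify the algebraic functions with the relative algebraic closure of the rational function field inside a genuine field of germs, and then to invoke the standard tower-law argument. First I would fix a base point $z_0$ lying outside the finite bad set $F$ that Theorem~\ref{T6.3} attaches to each of the finitely many polynomials in play, and work in the field $\calM_{z_0}$ of germs of meromorphic functions at $z_0$. This $\calM_{z_0}$ is the field of fractions of the integral domain of holomorphic germs, hence really a field, and it contains the subfield $K=\bbC(z)$ of rational functions. The point is that an element $f\in\calM_{z_0}$ is an algebraic function in the sense of the definition exactly when it is algebraic over $K$ in the field-theoretic sense: clearing denominators turns a relation $\sum_k c_k(z)f^k=0$ with $c_k\in\bbC(z)$ into a two-variable polynomial relation $P(z,f)=0$, and conversely. (Constants and rational functions are the degenerate cases; they too lie in the field, while the genuinely two-variable examples such as $G$ and the $m^\alpha$ are the ones we care about.) So the theorem reduces to the assertion that the elements of $\calM_{z_0}$ algebraic over $K$ form a subfield.

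Second, I would prove that by the usual degree count. If $f$ and $g$ are algebraic over $K$, then $[K(f):K]\le\deg_w P<\infty$ and $[K(f)(g):K(f)]\le\deg_w Q<\infty$, so by the tower law $[K(f,g):K]<\infty$. Every element of a finite extension is algebraic over the base, since its successive powers are eventually $K$-linearly dependent; and $K(f,g)$ contains $f+g$, $f-g$, $fg$ and, when $f\not\equiv 0$, $f^{-1}$. Hence each of these is algebraic over $K$, which is precisely the claim.

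To exhibit explicitly the two-variable polynomial required by the definition, I would record the resultant construction as an alternative route. If $P(z,w)=0$ and $Q(z,w)=0$ with $m=\deg_w P$ and $n=\deg_w Q$, then $\operatorname{Res}_w\!\big(P(z,w),\,Q(z,u-w)\big)$ is a polynomial in $(z,u)$ that vanishes at $u=f+g$ (because $w=f$ is then a common root of the two entries), and $\operatorname{Res}_w\!\big(P(z,w),\,w^n Q(z,u/w)\big)$ vanishes at $u=fg$; the inverse is immediate from the reversed polynomial $w^m P(z,1/w)$, which is solved by $1/f$.

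The step demanding the most care is not the algebra, which is textbook, but the bookkeeping that keeps the analytic and algebraic pictures aligned. One must ensure $z_0$ is a point of analyticity for all the germs involved so that $\calM_{z_0}$ genuinely contains them, and, in the resultant route, that the resultants are not identically zero in $u$, so that they yield honest algebraic relations rather than the trivial $0=0$; the field-theoretic route avoids this check automatically, since the minimal polynomial of $f+g$ (or $fg$, or $f^{-1}$) over $K$ is nonzero by construction. Finally, Theorem~\ref{T6.3} lets me pass from the germ at $z_0$ to the global multisheeted continuation with no extra work, since analytic continuation along any path preserves the identity $P(z,f(z))=0$.
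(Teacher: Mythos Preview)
Your proposal is correct and your main route coincides with the first of the two approaches the paper sketches in its Remarks: view algebraic functions as the elements algebraic over $\bbC(z)$ inside an ambient field and invoke the standard fact that such elements form a field. You add rigor the paper omits by naming the ambient field explicitly (germs $\calM_{z_0}$ at a good point) and by spelling out the tower-law argument, whereas the paper simply cites Lang for ``the algebraic closure of a field is a field.'' Your alternative route via resultants differs from the paper's second sketch, which instead suggests the tensor-product-of-matrices trick (companion matrices for $f$ and $g$ give matrices whose eigenvalues are $f+g$ and $fg$); both produce explicit two-variable polynomials, and you and the paper use the identical reversal $w^m P(z,1/w)$ for inverses. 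The resultant construction is arguably more elementary and self-contained, while the tensor-product approach generalizes cleanly to other rings of ``integers''; either way these are only offered as alternatives to the cleaner field-theoretic argument you lead with.
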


\begin{remarks}  1.  One proof of this, attractive to algebraists, uses the fact that the algebraic functions are precisely the set of solutions of polynomials over the field of rational functions and the fact \cite[Proposition V.1.4]{Lang} that the algebraic closure of a field is a field.

2.  Another proof, attractive to analysts, that uses tensor products of matrices, can be obtained by modifying \cite[Proposition III.4.4]{SimonGroup}. That proposition proves that the algebraic integers are a ring, but by replacing integers by polynomials in $z$, it shows that the algebraic functions are a ring.  To see that the inverse of an algebraic function is also an algebraic function, one notes that if $P(z,w)$ is a polynomial then, for suitable $k$, so is $R(z,w) = w^kP(z,1/w)$ and if $P(z,f(z))=0$, so does $R(z,1/f(z))=0$.
\end{remarks}

The final general theorem on algebraic functions that we'll need can be found in Lang \cite[Proposition VIII.5.3]{Lang}:

\begin{theorem} \lb{T6.5} Let $\{P_j(z,w_1,\dots,w_n)\}_{j=1}^n$ be $n$ polynomials in $n+1$ variables.  Suppose that $(z_0,w_1^{(0)},\dots,w_n^{(0)})$ is a point where
\begin{equation}\label{6.9}
  \left.
    \begin{array}{ll}
     P_j(z_0,w_1^{(0)},\dots,w_n^{(0)})\}=0,\,j=1,\dots,n \\
     \det\left(\frac{\partial P_j}{\partial w_j}\right)_{j,k=1,\dots,n}(z_0,w_1^{(0)},\dots,w_n^{(0)})\ne 0
    \end{array}
  \right.
\end{equation}
Then there is a neighborhood, $N$, of $z_0$, and $\delta>0$ so that for $z\in N$, there is a unique solution, $f_j(z), \,j=1,\dots,n$ of $P_j(z,f_1(z),\dots,f_n(z))=0\,j=1,\dots,n$ with $|f_j(z)-w_j^{(0)}| < \delta,\,j=1,\dots,n$.  Moreover, each $f_j$ is an algebraic function.
\end{theorem}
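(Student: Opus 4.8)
The plan is to split the statement into its two assertions: the local existence and uniqueness of the solution functions, which is classical, and the algebraicity of each $f_j$, which is the real content. For existence and uniqueness I would simply invoke the holomorphic implicit function theorem. The $P_j$ are polynomials, hence holomorphic in all $n+1$ variables, and the hypothesis that $\det\left(\partial P_j/\partial w_k\right)$ is nonzero at the base point $(z_0,w_1^{(0)},\dots,w_n^{(0)})$ says precisely that the $w$-Jacobian is invertible there. This yields a neighborhood $N$ of $z_0$, a radius $\delta>0$, and unique holomorphic functions $f_1,\dots,f_n$ on $N$ with $f_j(z_0)=w_j^{(0)}$, $|f_j(z)-w_j^{(0)}|<\delta$, and $P_j(z,f_1(z),\dots,f_n(z))\equiv 0$ for all $j$.

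For algebraicity I would pass to the affine variety $V=\{(z,w_1,\dots,w_n)\in\bbC^{n+1} : P_j=0,\ j=1,\dots,n\}$ and perform a local dimension count at the base point. Since $V$ is cut out of $\bbC^{n+1}$ by $n$ equations, every irreducible component has dimension at least $(n+1)-n=1$. On the other hand, the full Jacobian of $(P_1,\dots,P_n)$ with respect to all $n+1$ variables already has rank $n$ at the base point, because its $w$-block is invertible; hence the Zariski tangent space there is $1$-dimensional. Combining the two bounds, the base point is a smooth point of $V$ lying on a unique irreducible component $V_0$, and $\dim V_0=1$.

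I would then finish with a transcendence-degree argument. The function field $\bbC(V_0)$ has transcendence degree $1$ over $\bbC$. Because the implicit function theorem realizes a neighborhood of the base point inside $V_0$ as the graph $z\mapsto(f_1(z),\dots,f_n(z))$, the coordinate $z$ is non-constant on $V_0$, so it is transcendental over $\bbC$ and forms a transcendence basis of $\bbC(V_0)$. Consequently each coordinate $w_j\in\bbC(V_0)$ is algebraic over $\bbC(z)$: there is a nonzero polynomial $Q_j(z,w)$, of positive degree in $w$, that vanishes identically on $V_0$. Restricting to the graph gives $Q_j(z,f_j(z))\equiv 0$ on $N$, which is exactly the assertion that $f_j$ is an algebraic function.

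The step I expect to be the main obstacle is the local dimension count that pins $V_0$ to dimension exactly $1$: one must pair the lower bound coming from cutting $\bbC^{n+1}$ by $n$ hypersurfaces with the upper bound coming from the rank of the Jacobian at the base point, after which smoothness forces the relevant germ onto the single one-dimensional component $V_0$. A more constructive alternative would replace the transcendence-degree step by iterated resultants eliminating $w_1,\dots,\widehat{w_j},\dots,w_n$ to produce $Q_j$ explicitly; this is closer to the route suggested by the citation to Lang, but it requires careful bookkeeping of the leading coefficients that can vanish during elimination, which is precisely the price one pays for sidestepping the dimension argument.
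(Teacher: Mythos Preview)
Your proposal is correct. The paper does not actually prove this theorem: it simply observes that everything before ``Moreover'' is the holomorphic implicit function theorem (citing Krantz) and that the algebraicity assertion ``is the essence of what is in Lang's text'' (citing \cite[Proposition VIII.5.3]{Lang}). So you have supplied what the paper delegates to references.

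Your route for the algebraicity---passing to the affine variety $V\subset\bbC^{n+1}$, using the lower bound $\dim\ge 1$ from cutting by $n$ hypersurfaces together with the rank-$n$ Jacobian to pin down a smooth one-dimensional component $V_0$ through the base point, and then invoking transcendence degree to conclude each $w_j$ is algebraic over $\bbC(z)$---is a clean and standard argument, and is in the spirit of Lang's treatment (which is phrased in the language of algebraic extensions of function fields). The alternative you mention, eliminating variables via iterated resultants, is indeed more constructive but, as you note, requires care with vanishing leading coefficients; the dimension/transcendence argument sidesteps this entirely and is the better choice here. Your identification of the dimension count as the crux is accurate: once $\dim V_0=1$ is established, the rest is immediate.
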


Everything before ``Moreover'' is just the implicit function theorem for analytic functions \cite[Theorem 1.4.11]{Krantz}.  That the functions are algebraic is the essence of what is in Lang's text.  We can now apply these theorems to Green's and $m$-functions.

\begin{theorem} \lb{T6.6} The Green's functions and $m$--functions of a periodic Jacobi matrix on a tree, defined originally on $\bbC\setminus [A,B]$, are algebraic functions.  In particular, there is a finite subset, $F_0 \subseteq \bbR$, so that uniformly on compact subsets of $\bbR\setminus F_0$, all these functions have limits (which might be equal to infinity) evaluated at $x+i\varepsilon$ as $\varepsilon\downarrow 0$.  These functions all have meromorphic continuations to a finite sheeted Riemann surface.
\end{theorem}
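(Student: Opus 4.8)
The plan is to use the recursion \eqref{6.5} to realize the $m$-functions as the solution of a \emph{closed} polynomial system, invoke the algebraic implicit function theorem (Theorem \ref{T6.5}) to conclude they are algebraic, deduce the same for the Green's functions from the field property (Theorem \ref{T6.4}), and finally read off the analytic structure from Theorem \ref{T6.3}. As noted after Theorem \ref{T6.1}, periodicity forces there to be only finitely many distinct $m$-functions, say $w_1,\dots,w_{2q}$, and finitely many distinct $G_j$. The point is that the right-hand side of \eqref{6.5} for $m_j^\beta$ involves only $m$-functions $m_k^\alpha$ attached to the further subtrees, and by $\bbF_\ell$-invariance each of these is again one of the $w_m$; hence \eqref{6.5} closes on $\{w_1,\dots,w_{2q}\}$. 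Clearing the denominator turns each such relation into a polynomial equation
\[
P_i(z,w_1,\dots,w_{2q}) := w_i\Bigl(-z + b_{j(i)} - \sum_{\alpha} a_\alpha^2\, w_{k(i,\alpha)}\Bigr) - 1 = 0,
\]
so the $2q$ distinct $m$-functions solve $2q$ polynomials in the $2q+1$ variables $(z,w_1,\dots,w_{2q})$.

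First I would verify the hypotheses of Theorem \ref{T6.5} at a base point $z_0$ with $|z_0|$ large. Since each $m$-function is Herglotz with a probability spectral measure, one has $w_i(z) = -1/z + \Oh(z^{-2})$ as $z\to\infty$, so at $z_0$ the values $w_i^{(0)}$ are all near $-1/z_0$ and satisfy $P_i=0$. For the Jacobian, differentiation gives $\partial P_i/\partial w_i = -z + b_{j(i)} - \sum_\alpha a_\alpha^2 w_{k(i,\alpha)} = 1/w_i$ (using $P_i=0$), while the off-diagonal entries are $\partial P_i/\partial w_{k(i,\alpha)} = -a_\alpha^2\, w_i$. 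At $z_0$ the diagonal entries are $\approx -z_0$ while every off-diagonal entry (and any self-correction, should $w_i$ recur on its own right-hand side) is $\Oh(1/z_0)$, so the matrix $(\partial P_i/\partial w_m)$ is strictly diagonally dominant and its determinant is nonzero. Theorem \ref{T6.5} then yields a unique algebraic local solution near $z_0$; since the genuine $m$-functions are exactly that solution, each $w_i$ is algebraic.

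The Green's functions follow at once: by \eqref{6.4}, $G_j = \bigl(-z + b_j - \sum_\alpha a_\alpha^2 m_k^\alpha\bigr)^{-1}$ is a rational expression in $z$ and the algebraic $m$-functions, so $G_j$ is algebraic because the algebraic functions form a field (Theorem \ref{T6.4}). For the analytic conclusions I would apply Theorem \ref{T6.3} to each of the finitely many defining polynomials, obtaining finite sets $F_1,F_2\subset\widehat{\bbC}$ of branch points and poles; taking the compositum over the finitely many functions produces a single finite-sheeted Riemann surface carrying all of them, with meromorphic continuation and only the $F_1$-branching. Setting $F_0$ to be the (finite) set of real branch points, on any compact $K\subseteq\bbR\setminus F_0$ the distinguished branch coming from $\Im z>0$ continues across $\bbR$ as a meromorphic function, so $G_j(x+i\veps)$ and $m(x+i\veps)$ converge uniformly on $K$ in the spherical metric as $\veps\downarrow 0$, the limit being finite except at the finitely many real poles, where it equals infinity.

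I expect the crux to be the verification that the system Jacobian is nonvanishing at the base point, since the entire reduction to Theorem \ref{T6.5} rests on it; the large-$|z|$ diagonal-dominance estimate is what makes this transparent, and it is essentially the only place where the specific structure of \eqref{6.5} — each $m$-function entering the recursion of its neighbors linearly — is used. A secondary point requiring a little care is checking that the genuine $m$-functions coincide with the \emph{unique} solution branch supplied by Theorem \ref{T6.5}, which is immediate from the asymptotics $w_i = -1/z + \Oh(z^{-2})$ and the uniqueness clause of that theorem.
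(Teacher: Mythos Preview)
Your proof is correct and follows essentially the same route as the paper: set up the closed polynomial system from \eqref{6.5}, apply Theorem \ref{T6.5} to get algebraicity of the $m$-functions, then use Theorem \ref{T6.4} and \eqref{6.4} for the $G_j$, and Theorem \ref{T6.3} for the analytic structure. The only difference is in the verification of the Jacobian condition: the paper substitutes $u=1/z$, $f_j^\beta(u)=m_j^\beta(1/u)$ so that the base point becomes $(u,f)=(0,0)$ and the Jacobian is literally the identity, whereas you work directly at a large $z_0$ and use diagonal dominance; both are fine, and your remark that algebraicity in $1/z$ is the same as algebraicity in $z$ is exactly why the paper's substitution is harmless.
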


\begin{proof}  In terms of $u=1/z$ and $f_j^\beta(u) = m_j^\beta(1/u)$, the equations \eqref{6.5} for the $2q$ functions $f_j^\beta$ can be written (with $\beta=(rj)$:
\begin{equation}\label{6.10}
  f_j^\beta(u)+ u -b_j u f_j^\beta(u) + \sum_{\alpha=(jk);k\ne r} u a_\alpha^2 f_j^\beta(u) f_k^\alpha(u) = 0
\end{equation}
These equations hold at $u=0, f_j^\beta = 0$ and at that point, the Hessian matrix whose determinant appears in \eqref{6.9} is the identity, so the determinant is $1$.  It follows by Theorem \ref{T6.5} that for $u$ small there are unique small solutions $f$'s which are algebraic in $u$.  We know the $m$'s go to zero at infinity, so the $f$'s they define must be this unique small solution which is algebraic.  It follows that the $m$'s are algebraic in $z$.  Since each $m$ is algebraic, by Theorem \ref{T6.3}, they have analytic continuations to multisheeted analytic functions over $\bbC\setminus F_j^\beta$.  So the collection of $m$'s has a continuation to a mutlisheeted family over $\bbC\setminus F$ with $F=\cup_{j,\beta} F_j^\beta$.  By analyticity, this family obeys the set of polynomial equations \eqref{6.5}.

\eqref{6.4} or \eqref{6.6} and Theorem \ref{T6.4} then imply that the G's are algebraic.  The remaining assertions follow from Theorem \ref{T6.3} if one uses the fact that the $m$'s are initially given as analytic functions on $\bbC\setminus [A,B]$.  Thus while their analytic continuation might have singularities at possible non-real points in $F$, these non-real singularities are on different sheets so that the only singularities on the initial sheet lie on $F_0 = F\cap\bbR$.
\end{proof}

We note that given that the basic equations are quadratic in the $f$'s and that there are $2q$ of them, Bezout's Theorem \cite[Section 3.2.2]{Shar} implies that the number of sheets is at most $2^{2q}$, although in the few cases we can compute, there are only $2$ sheets.  Theorem \ref{T6.6} has one immediate corollary that is so important we'll call it a Theorem:

\begin{theorem} \lb{T6.7} Periodic Jacobi matrices on trees have no singular continuous spectrum.
\end{theorem}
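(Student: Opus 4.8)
The plan is to deduce the absence of singular continuous spectrum directly from the boundary behaviour of the diagonal Green's functions $G_j$, using the rigidity just established in Theorem \ref{T6.6}. The starting point is the Borel (Stieltjes) transform representation \eqref{6.7}, $G_j(z)=\int d\mu_j(\lambda)/(\lambda-z)$, which exhibits $G_j$ as a Herglotz function whose associated measure is the spectral measure $d\mu_j$ of $H$ at the vertex $j$, defined by $\jap{\delta_j,f(H)\delta_j}=\int f\,d\mu_j$. The key classical input is the boundary-value theory of such transforms (de la Vall\'ee Poussin): writing the Lebesgue decomposition $d\mu_j=f_j\,dx+d\mu_{j,\sing}$, the absolutely continuous density is $f_j(x)=\pi^{-1}\lim_{\varepsilon\downarrow0}\Im G_j(x+i\varepsilon)$ for (Lebesgue) a.e.\ $x$, while the singular part $\mu_{j,\sing}$ is carried by the set $S_j=\{x\in\bbR:\lim_{\varepsilon\downarrow0}\Im G_j(x+i\varepsilon)=+\infty\}$.

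The heart of the argument is then to show that $S_j$ is countable. Here I would invoke Theorem \ref{T6.6}: there is a finite set $F_0\subset\bbR$ off which each $G_j$ is the boundary value of a function meromorphic on a finite-sheeted Riemann surface. Consequently, on $\bbR\setminus F_0$ the limit $G_j(x+i0)$ exists and is finite except exactly at the poles of that meromorphic continuation. Those poles are isolated, so within any compact subset of $\bbR\setminus F_0$ there are finitely many, and their only possible accumulation points lie in the finite set $F_0$; hence the full set of real poles $P_j$ is at most countable (indeed, since $H$ is bounded, only the compact interval $[A,B]$ is relevant and only finitely many poles occur in each band). Therefore $S_j\subseteq F_0\cup P_j$ is countable.

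It follows that $\mu_{j,\sing}$ is supported on a countable set, and a purely singular measure carried by a countable set is necessarily pure point; hence $\mu_{j,\sing}$ has no continuous component, i.e.\ $\mu_{j,\singc}=0$ for every vertex $j$. The final step promotes this from spectral measures to the operator. Let $P_{\singc}$ be the orthogonal projection onto the singular-continuous subspace $\calH_{\singc}$; since $\calH_{\singc}$ is a spectral subspace, $\|P_{\singc}\delta_j\|^2$ equals the total mass of $\mu_{j,\singc}$, which is $0$, so each $\delta_j\perp\calH_{\singc}$. As $\{\delta_j\}_{j\in\calT}$ is an orthonormal basis of $\ell^2(\calT)$, this forces $\calH_{\singc}=\{0\}$, which is precisely the claim.

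The one genuinely nontrivial ingredient is Theorem \ref{T6.6} --- the algebraicity and the resulting finite-sheeted meromorphic continuation with boundary values controlled off a finite set --- and that has already been done. Given it, the remaining steps are standard; the only points requiring care are the precise statement of the de la Vall\'ee Poussin support theorem and the elementary measure-theoretic observation that a purely singular measure living on a countable set is pure point. I do not anticipate any serious obstacle beyond this bookkeeping.
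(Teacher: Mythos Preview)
Your argument is correct and is precisely the approach the paper has in mind: the paper does not give a separate proof of Theorem \ref{T6.7} at all, simply noting (in Remark 1 after the statement) that it ``is an immediate consequence of the fact that the $G_j(z)$ are algebraic,'' i.e.\ of Theorem \ref{T6.6}. You have spelled out exactly the standard Herglotz/de la Vall\'ee Poussin bookkeeping that makes that immediacy explicit, so there is nothing to add.
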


\begin{remarks} 1. This result has not appeared previously and we regard it as the most important result in this paper.  That said, it is an immediate consequence of the fact that the $G_j(z)$ are algebraic.  In \cite{AomotoPoint}, Aomoto claims that the Green's functions are algebraic although he makes no mention at all of singular continuous spectrum.  His argument that these functions are algebraic depends on his equations \eqref{6.6A}.  They are not polynomial but are what he calls algebraic.  He remarks that these are $p$ algebraic equations in $p$ unknowns and such functions are algebraic.  He is thus relying on some unstated (but probably correct) extension of Theorem \ref{T6.5}.  More importantly, his argument is incomplete since he doesn't prove that these equations are independent, essentially some kind of condition on the invertibility of a Hessian matrix.

2. This is an analog of Fact \ref{F2.6}.  One can ask about Fact \ref{F2.7}.  In the next section we'll see that there are examples with point spectrum and in the section after, we'll discuss the Theorem of Aomoto \cite{AomotoPoint} that periodic Jacobi matrices on regular trees have no point spectrum.
\end{remarks}

\section{Several Simple Examples} \lb{s7}

Using discriminants (Fact \ref{F2.25}), you can compute closed forms for virtually any 1D periodic Jacobi matrices (up to the solution of a high degree polynomial in two variables).   There seem to be only are a few examples where one can do the same for periodic trees.  Since we've found these examples illuminating, we will discuss some here and in the next few sections.

\begin{example} \lb{E7.1} (Degree d regular tree) Let $\calT_d$ be the homogenous tree of degree $d$ and $H$ the Jacobi matrix with all $b=0$ and all $a=1$.  \eqref{6.5} yields
\begin{align}
  m(z) = \frac{1}{-z-(d-1)m(z)} &\Rightarrow (d-1)m^2+zm+1=0 \nonumber \\
                                &\Rightarrow m(z) = \frac{-z+\sqrt{z^2-4(d-1)}}{2(d-1)} \lb{7.1}
\end{align}
We want the branch of the square root which is $\text{O}(\tfrac{1}{z})$ near $\infty$ (at least on the principal sheet) since near $\infty$, $m\to 0$ rather than $m\to\infty$.

Using \eqref{6.4}, we find that
\begin{align}
  G(z) &= \frac{1}{-z-dm(z)} \nonumber \\
       &= \frac{2(d-1)}{(2-d)z-d\sqrt{z^2-4(d-1)}} \lb{7.2}
\end{align}
Multiplying numerator and denominator by $(2-d)z+d\sqrt{z^2-4(d-1)}$ and using $[(2-d)z]^2+[d\sqrt{z^2-4(d-1)}]^2=4(d-1)(d^2-z^2)$, we get
\begin{equation}\label{7.3}
  G(z)=\frac{(2-d)z+d\sqrt{z^2-4(d-1)}}{2(d^2-z^2)}
\end{equation}

The boundary value of $\text{Im}(G)$ on the real axis comes from the square root and is non-zero only when its argument is negative so we get what appeared as \eqref{4.2} ($s_d=\sqrt{4(d-1)}$).
\begin{equation}\label{7.4}
    dk_d(\lambda) = \frac{d\sqrt{4(d-1)-\lambda^2}}{2\pi(d^2-\lambda^2)}\chi_{[-s_d,s_d]}(\lambda)d\lambda
\end{equation}

Returning to $G$, it may appear that it has a pole at $z=\pm d$ and so $\pm d$ an eigenvalue but (at least on the principal sheet) the numerator also vanishes there, so $G$ is not singular on the principal sheet at those points.  This phenomenon is connected to the fact that $Hu=du$  has a positive solution (namely $u\equiv 1$) which is not in $\ell^2$.

We note that in the sense of \cite{JB}, this model is spherically symmetric, so the computation of $m$ can be reduced to the half-line $m$ functions associated to a conventional Jacobi matrix (see \eqref{7.5} below).
\end{example}

\begin{example}  \lb{E7.2}  Fix two integers $r$ and $g$, each at least 2.  This model will have an underlying finite graph, $\calG$, with $p=r+g$ vertices which we think of as $r$ red vertices and $g$ green vertices.  $\calG$ has $q=rg$ edges - specifically, every red vertex has degree $g$ and is connected to every green vertex by a single edge.  Thus each green vertex has degree $r$.  There are no edges between vertices of the same color. This model was introduced by Aomoto \cite{AomotoPoint} because, as we will see, this Jacobi matrix has point spectrum when $r\ne g$ (illuminating partially how much Fact \ref{F2.7} extends to general tree).  Aomoto mentions that the existence of point spectrum in this model follows from point spectrum in a more complicated model studied in \cite{BK, Co, Fa, Ku}.

While Aomoto does not write down explicit eigenfunctions, one can.  To see this, one writes the model as a radial tree. Consider a tree with a single red vertex at level $0$, linked (i.e.\ connected by an edge) to $g$ green vertices at level $1$.  Each of these green vertices is also linked to $r-1$ red vertices at level $2$ and, in turn, these vertices are also linked to $g-1$ green vertices at level $3$.  In general, level $2k-1;\,k=1,2,\dots$ has $g[(r-1)(g-1)]^{k-1}$ green vertices while level $2k;\,k=1,2,\dots$ has $g(r-1)[(r-1)(g-1)]^{k-1}$ red vertices.  Each level $m$ vertex, $m=1,2,\dots$ has one edge linking it to a level $m-1$ vertex and either $r-1$ or $g-1$ edges linking to level $m+1$ vertices. It is easy to see this tree is the universal cover to the basic finite red-green graph described at the start.

We note in passing, that using the formalism in \cite{JB}, the $m$-function at the central point and radial eigenfunctions can be studied with the classical tri-diagonal Jacobi matrix:
\begin{equation} \lb{7.5}
  J = \left(
        \begin{array}{ccccc}
          0 & \sqrt{g} &   &  & \dots \\
          \sqrt{g} & 0 & \sqrt{r-1} &  & \dots \\
            & \sqrt{r-1} & 0 & \sqrt{g-1} & \dots \\
            &  & \sqrt{g-1} & 0 & \sqrt{r-1} \\
          \dots  &\dots  & \ddots & \ddots & \ddots \\
        \end{array}
      \right)
\end{equation}
We will instead use direct calculation of eigenfunctions and the formulae for $m$- and Green's functions in Section \ref{s6}.  We want a radially symmetric (i.e.\ constant on each level) solution of $Hu=0$.  We'll indicate the value at level $j$ as $u_j$ and suppose that $u_0=1$.  That $Hu$ vanishes at level $0$ implies that $u_1=0$ and that $Hu$ vanishes at level $1$ implies that $u_2 = -1/(r-1)$.  A simple induction shows that
\begin{equation}\label{7.6}
  u_{2j+1} = 0 \qquad u_{2j} = (-1/(r-1))^j
\end{equation}

That the eigenfunction be $\ell^2$ is equivalent to $\sum_{j=0}^{\infty} N_j|u_j|^2 < \infty$ where $N_j$ is the number of vertices at level $j$ which we computed above.  This is equivalent to
\begin{equation}\label{7.7}
  \sum_{j=1}^{\infty} \left[\frac{g-1}{r-1}\right]^j < \infty
\end{equation}
which happens if and only if $r > g$.  There is also a square integrable eigenfunction when $g > r$ but radially symmetric in a form that the tree is written as one centered at a green vertex.  We'll see below that when $r > g$, all $\ell^2$ eigenfunctions vanish at the green sites.  In \cite{CSZinprep}, Christiansen et al prove that linear combinations of the radially symmetric functions about each of the red vertices (which are linearly independent but not orthogonal) are dense in the eigenspace.

Let $m_r$, resp.\ $m_g$, be the $m$-function at the red, resp.\ green, vertices (i.e.\ $m_r=m_r^{(rg)})$.  Then \eqref{6.1} says that
\begin{equation}\label{7.8}
  m_r(z) = \frac{1}{-z-(g-1)m_g(z)} \qquad m_g(z) = \frac{1}{-z-(r-1)m_r(z)}
\end{equation}

Substituting one equation in the other yields quadratic equations whose solutions involve a function
\begin{equation}\label{7.9}
  \Phi(z) = z^4+2(2-(r+g))z^2+(r-g)^2
\end{equation}
in terms of which the solutions are
\begin{equation}\label{7.10}
  m_r(z) = \frac{(g-r)-z^2+\sqrt{\Phi(z)}}{2(r-1)z} \qquad m_g(z) = \frac{(r-g)-z^2+\sqrt{\Phi(z)}}{2(g-1)z}
\end{equation}
where the root of $\Phi$ is taken which is $+z^2$ near $\infty$ on the principle sheet (this branch is chosen so that the $m$-functions go to $0$ at $\infty$).

By \eqref{6.4}, we find that the Green's functions at red and green vertices are given by:

\begin{align}
  G_r(z) &= \frac{1}{-z-g m_g(z)} \lb{7.11} \\
         &= \frac{2(g-1)z}{\left((2-g)z^2-g(r-g)-g\sqrt{\Phi(z)}\right)} \lb{7.12} \\
         &= \frac{(2-g)z^2-g\left[(r-g)-\sqrt{\Phi(z)}\right]}{2rgz-2z^3} \lb{7.13}
\end{align}
where we get \eqref{7.13} by multiplying numerator and denominator by $((2-g)z^2-g(r-g)+g\sqrt{\Phi(z)})$ and using that
\begin{equation*}
  [(2-g)z^2-g(r-g)]^2+g^2\Phi(z) = 2(g-1)z[2rgz-2z^3]
\end{equation*}

Similarly, we get
\begin{equation}\label{7.14}
  G_g(z) = \frac{(2-r)z^2-r\left[(g-r)-\sqrt{\Phi(z)}\right]}{2rgz-2z^3}
\end{equation}

In analyzing these equations, we take $r > g > 2$ without requiring them to be integral (later we'll also consider $r=g$). Write $\Phi(z) = Q(z^2)$ where $Q$ is a quadratic polynomial whose discriminant is
\begin{equation}\label{7.15}
  \Delta = [2(r+g-2)]^2-4(r-g)^2 = 16rg+16-16(r+g) = 16(r-1)(g-1) > 0
\end{equation}
Thus $Q$ has two positive roots (where $s=r-1, t=g-1$):
\begin{align}
  \gamma_\pm = \frac{1}{2}\left[2r+2g-4 \pm \sqrt{\Delta}\right] &= s+t \pm 2\sqrt{st} \label{7.16} \\
                                                                 &= (\sqrt{s}\pm\sqrt{t})^2 \nonumber
\end{align}
Thus $\Phi(x)$ on the real axis is negative on $(-\sqrt{\gamma_+},-\sqrt{\gamma_-})\cup(\sqrt{\gamma_-},\sqrt{\gamma_+})$ and positive on $(-\infty,-\sqrt{\gamma_+})\cup(-\sqrt{\gamma_-},\sqrt{\gamma_-})\cup(\sqrt{\gamma_+},\infty)$.  Since we take the branch of $\sqrt{\Phi}$ which is positive near $\pm\infty$ on $\bbR$ and negative on the imaginary axis, we conclude that $\sqrt{\Phi(x)}$ is positive on $(-\infty,-\sqrt{\gamma_+})\cup(\sqrt{\gamma_+},\infty)$ and negative on $(-\sqrt{\gamma_-},\sqrt{\gamma_-})$.

The denominator in \eqref{7.13} and \eqref{7.14} vanishes at $z=0$ and $z=\pm\sqrt{rg}$.  Notice that $rg=(s+1)(t+1)=s+t+(1+st)>\gamma_+$, so we know that $\sqrt{\Phi(\sqrt{rg})}$ is the positive square root. Since $\Phi(\pm\sqrt{rg}) = (rg-g-r)^2$, and $rg-r-g=(r-1)(g-1)-1>0$, we conclude that $\sqrt{\Phi(\sqrt{rg})}=rg-r-g$ which implies that the numerators of \eqref{7.13} and \eqref{7.14} vanish and there are no principle sheet poles of $G$ at $z=\pm\sqrt{rg}$.  On the other hand since $\sqrt{\Phi(0)}$ by the above is negative, we conclude that $\sqrt{\Phi(0)}=g-r$.  It follows that the numerator of \eqref{7.14} vanishes at $z=0$ and so $G_g$ has no poles in the spectral region, so, in particular, all the eigenfunctions with eigenvalue $0$ vanish at the green sites.  On the other hand, the numerator of \eqref{7.13} at $z=0$ is $-2g(r-g)$ so we have that near $z=0$.
\begin{equation}\label{7.17}
  G_r(z) = -\frac{r-g}{rz}+ \text{O}(1)
\end{equation}

Next, by \eqref{7.13} and \eqref{7.14}, the Stieltjes transform of the DOS is
\begin{equation}\label{7.18}
  \frac{rG_r(z)+gG_g(z)}{r+g} = \frac{(r+g-rg)z^2+rg\sqrt{\Phi(z)}}{z(r+g)(rg-z^2)}
\end{equation}
from which we get (taking into account that $r$ times the residue in \eqref{7.17} is $-(r-g)$) that the DOS measure has a point mass of weight $\frac{r-g}{r+g}$ at zero and an a.c.\ weight on $(-\sqrt{\gamma_+},\sqrt{\gamma_-})\cup(\sqrt{\gamma_-},\sqrt{\gamma_+})$ of
\begin{equation}\label{7.19}
  \frac{dk}{d\lambda}(\lambda)=\frac{rg\sqrt{-\Phi(\lambda)}}{(\pi\lambda)(r+g)(rg-\lambda^2)}
\end{equation}
Since the total mass of the DOS is $\frac{r+g}{r+g}$ and the model is symmetric about $0$, in line with gap labeling, we see each of the ac bands has DOS total weight $\frac{g}{r+g}$.

Finally, we note that if $r=g=d$ the radial picture shows that the Jacobi matrix is the same as the degree $d$ regular tree and \eqref{7.19} yields \eqref{7.4}.  By \eqref{3.2}, the graph $\calG$ of this model has $rg-r-g+1 = (r-1)(g-1)$ loops.  When $d=2\ell$ is even, we can write the $d$-regular tree as a period one operator as the universal cover of a finite graph with $\ell$ loops and, for general $d$ as a period $2$ operator as the universal cover of a graph with $d$ edges.  The period $2d$ graph of this model is just a finite cover of these simpler graphs.
\end{example}

\begin{example} \lb{E7.4} This is the simplest period $2$ example and confirms the idea that for non--constant $b$, all gaps are open.  The graph $\calG$ has $p=2$ points, and so two different values of the parameter $b$ that we set to be $b>0$ and $-b$. There are $q=d$ edges joining the vertices with all $a=1$.  The universal cover is $\calT_d$, the regular tree of degree $d$ and $H$ is the sum of the adjacency matrix of this tree and the diagonal matrix with values $b$ and $-b$ on ``alternate'' vertices.  By replacing $H$ by $\gamma H+ \eta\bdone$, one can describe the model where all the edges have a common value and the two vertices any pair of values.

There are two Green's functions, $G_+$ and $G_-$ for the vertices that have diagonal value $+b$ and $-b$.  The $m$-functions concern a rooted tree where the root only has degree $d-1$.  We use $m_\pm$ when the root has diagonal value $\pm b$.  Then \eqref{6.5} becomes
\begin{equation}\label{7.20}
  m_\pm(z) = \frac{1}{-z\pm b - (d-1) m_\mp(z)}
\end{equation}
while \eqref{6.4} becomes
\begin{equation}\label{7.21}
  G_\pm(z) = \frac{1}{-z\pm b - d m_\mp(z)}
\end{equation}

The equation \eqref{7.20} for $m_+$ has $m_-$ on the right but one can use the $m_-$ equation from \eqref{7.20} to obtain
\begin{equation}\label{7.22}
  m_+ = \frac{(b+z)+(d-1)m_+}{b^2-z^2+(d-1)(b-z)m_++(d-1)}
\end{equation}
or
\begin{equation}\label{7.23}
  (d-1)(b-z)m_+^2 + (b^2-z^2)m_+-(z+b) = 0
\end{equation}

Solving this equation and the similar one for $m_-$, we obtain
\begin{equation}\label{7.24}
  m_\pm = \frac{z^2-b^2-\sqrt{(z^2-b^2)^2-4(d-1)(z^2-b^2)}}{(d-1)(-z\pm b)}
\end{equation}

The discriminant is $\Delta = (z^2-b^2)^2-4(d-1)(z^2-b^2)$, so $\Delta = 0$ at $z=\pm b,\pm\sqrt{b^2+4(d-1)} \equiv \pm c$ so we expect that the spectrum of $H$ is $[-c,-b] \cup [b,c]$.

Inserting the formula for the $m$-functions in \eqref{7.21}, we get that
\begin{equation} \label{eq:GreenEx}
G_\pm(z)=\frac{(d-2)\left(b^2-z^2\right)+ d \sqrt{\Delta}}{2(z\mp b)\left(d^2-z^2+b^2 \right)}
\end{equation}
The Stieltjes transform of the DOS is
\begin{equation} \label{eq:DOS1Ex}
\frac{G_++G_-}{2}=\frac{z\left((d-2)\left(b^2-z^2 \right)+d\sqrt{\Delta}\right)}{2\left(d^2-z^2+b^2 \right)\left( z^2-b^2 \right)}
\end{equation}
so, by taking the limit of the imaginary part, we get that
\begin{equation} \label{eq:DOS2Ex}
\frac{dk}{d\lambda}(\lambda)=\frac{ |\lambda| d \sqrt{\left(\lambda^2-b^2 \right)\left(c^2-\lambda^2 \right)}}{2\pi\left(c^2+(d-2)^2-\lambda^2 \right)\left(\lambda^2-b^2 \right)}\chi_{[-c,-b] \cup [b,c]}(\lambda)
\end{equation}
which reduces to the Kesten-Mckay distribution when $b=0$ and to the one-dimensional result for $d=2$.
\end{example}

\section{Aomoto's Index Theorem} \lb{s8}

In \cite{AomotoPoint}, Aomoto proved a remarkable result about point spectra of periodic Jacobi matrices on trees.  Fix $\lambda\in\bbR$ and suppose that $Hu=\lambda u$ has $\ell^2$ solutions for a periodic Jacobi matrix built over a graph $\calG$.  Since the eigenspace, $\mathrm{Im} (P_{\{\lambda\}})$, is invariant under the group action and the unitaries in the group action go weakly to $0$ as the group parameter leaves compact sets, this eigenspace is infinite dimensional.  Aomoto defines $X^{(1)}_\lambda$ to be the set of $x\in\calG$ for which there is $y\in\calT$ with $\Xi(y)=x$ and some eigenfunction $u$ with eigenvalue $\lambda$ and $u(y)\ne 0$.  These are precisely the set of $x$ so that the Green's function $G_x(z)$ has a pole at $\lambda$.

If follows from the arguments in \cite{AomotoPoint} that $X^{(1)}_\lambda$ cannot contain any cycles or self-loops (Aomoto actually considers more general isomorphism groups that may have nontrivial elements of finite order and so needs to consider also the case of self-loops in $X^{(1)}_\lambda$). Therefore $X^{(1)}_\lambda$ is always a proper subset of $\calG$.  Let $p\left(X^{(1)}_\lambda \right)$ be the number of points in $X^{(1)}_\lambda$ and $q\left(X^{(1)}_\lambda \right)$ the number of edges of $\calG$ whose end points are both in $X^{(1)}_\lambda$.  Let $X^{(-1)}_\lambda$ be the set of vertices of $\calG$ that are not in $X^{(1)}_\lambda$ but are connected to some point in $X^{(1)}_\lambda$ by an edge in $\calG$ and let $p\left(X^{(-1)}_\lambda \right)$ be the number of points in it. Aomoto proves

\begin{theorem} [Aomoto's Index Theorem \cite{AomotoPoint}] \lb{T8.1} Let $T$ be the unnormalized trace discussed in the Appendix and suppose $\lambda$ is an eigenvalue of $H$.  Then
\begin{equation}\label{8.1}
  T\left(P_{\{\lambda\}} \right) = p\left(X^{(1)}_\lambda \right)-q\left(X^{(1)}_\lambda \right)-p\left(X^{(-1)}_\lambda \right)
\end{equation}
\end{theorem}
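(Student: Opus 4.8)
The plan is to read $T(P_{\{\lambda\}})$ as the value of the unnormalized trace on the eigenprojection, i.e. the von Neumann dimension of the eigenspace $\calE_\lambda:=\ran P_{\{\lambda\}}$, and to compute it by a rank--nullity count adapted to the forest structure of the support. Writing $X^{(1)}:=X^{(1)}_\lambda$ and $X^{(-1)}:=X^{(-1)}_\lambda$, I would first record that $T(P_{\{\lambda\}})=\sum_{x\in\calG}\langle\delta_{y_x},P_{\{\lambda\}}\delta_{y_x}\rangle$, with the $x$-term vanishing unless $x\in X^{(1)}$. The structural input is that every eigenfunction is supported on $\hat F:=\Xi^{-1}(X^{(1)})$: since $X^{(1)}$ contains no cycles, $\Xi$ restricts to a covering of the forest it induces, so $\hat F$ is a $\Gamma$-invariant disjoint union of finite trees, each mapped isomorphically onto a connected component $C_i$ of $X^{(1)}$ in $\calG$. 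Because $\calT$ is a tree, the eigenvalue equation at a vertex of a component $\tilde C$ couples only values inside $\tilde C$, so the restriction of any eigenfunction to $\tilde C$ lies in $\ker(H|_{\tilde C}-\lambda)$.

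The next step pins down the kernel on each component. By definition of $X^{(1)}$ every vertex of $C_i$ is non-zero for some eigenfunction, so $\ker(H|_{C_i}-\lambda)$ has no common zero vertex; by the Parter--Wiener theorem on eigenvalue multiplicities of symmetric matrices whose graph is a tree, an eigenvalue of multiplicity $\ge 2$ forces a common zero (a Parter vertex at which every eigenvector vanishes), so each $\dim\ker(H|_{C_i}-\lambda)=1$. Write $\phi_{\tilde C}$ for a generator of the one-dimensional space $\ker(H|_{\tilde C}-\lambda)$; then $\phi_{\tilde C}(w)\ne 0$ for every $w\in\tilde C$. Consequently the $\Gamma$-invariant space $\calK:=\bigoplus_{\tilde C}\ker(H|_{\tilde C}-\lambda)$ has $T(P_\calK)$ equal to the number of components per fundamental domain, which is the Euler characteristic $p(X^{(1)})-q(X^{(1)})$ of the forest. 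Extending by zero, $u\in\calK$ is a genuine eigenfunction precisely when the residual equations at the boundary hold, i.e. $\calE_\lambda=\ker(L|_\calK)$ for the $\Gamma$-equivariant boundary map $L:\calK\to\ell^2(\Xi^{-1}(X^{(-1)}))$, $(Lu)(y')=\sum_{w\sim y',\,w\in\hat F}a_{(y',w)}u(w)$. Rank--nullity for the trace then gives
\[
  T(P_{\{\lambda\}}) = T(P_\calK) - T(P_{\overline{\ran}\,L}) = \big(p(X^{(1)})-q(X^{(1)})\big) - T(P_{\overline{\ran}\,L}).
\]

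The crux, and what I expect to be the main obstacle, is to show $T(P_{\overline{\ran}\,L})=p(X^{(-1)})$, equivalently that $L$ has full trace-rank onto $\ell^2(\Xi^{-1}(X^{(-1)}))$, i.e. $\dim_\Gamma\ker L^\ast=0$. I would prove this combinatorially. First, activation is self-improving: if a boundary vertex $y'$ had a single neighbour $w\in\hat F$, the boundary equation at $y'$ would read $a_{(y',w)}u(w)=0$ for every eigenfunction $u$, making $w$ a common zero and contradicting $w\in X^{(1)}$; since $\calT$ is a tree, $y'$ has at most one neighbour per component, so in fact \emph{every} boundary vertex meets at least two distinct components. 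Second, contracting each component of $\hat F$ to a point produces, together with the boundary vertices, a bipartite incidence graph $\calB$ which is again a forest (a cycle in $\calB$ would lift to a cycle in $\calT$). In this forest every boundary vertex has degree $\ge 2$, so for any finite set $S$ of boundary vertices the induced subforest has at least $2|S|$ edges and hence, being a forest, at least $|S|+1$ component-neighbours; this strong Hall condition yields a matching saturating the boundary. Because $\calB$ is a forest and each $\phi_{\tilde C}(w)\ne 0$, the weighted incidence matrix has rank equal to its maximal matching (there are no alternating cycles to produce cancellation), so $L$ maps onto the boundary space and $T(P_{\overline{\ran}\,L})=p(X^{(-1)})$.

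Combining the two displays yields $T(P_{\{\lambda\}})=p(X^{(1)})-q(X^{(1)})-p(X^{(-1)})$. The only point needing genuine care beyond the combinatorics is the passage from the finite Hall/rank statement to the equivariant statement $\dim_\Gamma\ker L^\ast=0$: here the strict expansion $|N(S)|\ge|S|+1$ should upgrade to a uniform lower bound on $LL^\ast$ over the periodic forest $\calB$, forcing $\overline{\ran}\,L=\ell^2(\Xi^{-1}(X^{(-1)}))$. I would present the forest--Hall argument as the heart of the proof and treat the support reduction and the Parter--Wiener multiplicity-one input as the supporting bookkeeping.
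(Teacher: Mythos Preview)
The paper does not give its own proof of Theorem~\ref{T8.1}; it attributes the result to Aomoto, says the authors ``can follow Aomoto's proof'' but find it ``involved'' and ``don't understand why it works,'' and explicitly poses finding a natural proof as Problem~\ref{P8.2}. So your proposal is really an attempt at Problem~\ref{P8.2}, and should be judged on its own merits.

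Your reduction is correct and clean. The support of every eigenfunction does lie in $\hat F=\Xi^{-1}(X^{(1)})$, and since $X^{(1)}$ is a forest the components $\tilde C$ of $\hat F$ are finite lifts of the components $C_i$. The multiplicity-one claim is also correct: if $\phi\in\ker(H|_{C_i}-\lambda)$ is nowhere zero, conjugating by $D=\mathrm{diag}(\phi)$ turns $H|_{C_i}-\lambda$ into a matrix $N$ with zero row sums and nonzero off-diagonals on a tree, and the relation $\sum_{j\sim i}N_{ij}(\chi_j-\chi_i)=0$ forces $\chi$ constant by leaf-stripping. Hence $T(P_{\calK})=p(X^{(1)})-q(X^{(1)})$, and $\calE_\lambda=\ker(L\!\restriction_\calK)$ for the boundary map $L$ you describe. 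The rank--nullity identity in the finite von Neumann algebra then gives $T(P_{\{\lambda\}})=\bigl(p(X^{(1)})-q(X^{(1)})\bigr)-T(P_{\overline{\ran}\,L})$, so everything hinges on $T(P_{\overline{\ran}\,L})=p(X^{(-1)})$, i.e.\ $\ker L^\ast=0$ (the trace is faithful, so ``trace-zero kernel'' is the same as ``zero kernel'').

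This last step is a genuine gap, and I think it is where the real difficulty lives rather than a technicality. Your combinatorial argument --- every boundary vertex meets at least two components, the contracted bipartite graph $\calB$ is a forest, strict Hall $|N(S)|\ge|S|+1$ gives a saturating matching, and on a finite forest the weighted bipartite matrix has rank equal to the matching number --- is correct for any \emph{finite} piece of $\calB$. But the components of $\calB$ are typically infinite: in the $rg$-model of Example~\ref{E7.2}, $\calB$ is the whole tree $\calT$. The expansion $|N(S)|\ge|S|+1$ is not uniform (the ratio is $1+1/|S|$), so it does not by itself produce a lower bound on $LL^\ast$. Concretely, in the $rg$-model with all $a_\alpha=1$, the statement $\ker L^\ast=0$ unwinds to: no nonzero $v\in\ell^2(\text{green})$ satisfies $\sum_{g\sim r}v(g)=0$ for every red $r$. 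Equivalently $-r\notin\sigma_{\mathrm{pp}}$ of the distance-two adjacency on green vertices, which is the adjacency matrix of a regular tree --- a spectral fact (absence of point spectrum), not a combinatorial one. So the forest--Hall argument, as stated, does not close the loop, and upgrading it to an honest operator bound on the infinite periodic forest $\calB$ looks to require an idea of comparable depth to what you are trying to prove.

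In short: your framework gives a transparent proof of the inequality $T(P_{\{\lambda\}})\ge p(X^{(1)})-q(X^{(1)})-p(X^{(-1)})$ and reduces equality to the single clean statement $\ker L^\ast=0$; but that statement is not established, and the proposed route to it does not survive the passage from finite to infinite $\calB$. This is consistent with the paper's remark that Aomoto's proof is genuinely mysterious --- the difficulty you have isolated may well be the heart of it.
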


\begin{remark}
As noted above, the main result in \cite{AomotoPoint} includes the case where there are self-loops in $X^{(1)}_\lambda$, and in that case the right hand side of \eqref{8.1} is a little different.
\end{remark}

While we can follow Aomoto's proof, it is involved and we don't understand why it works.  Moreover, while the striking fact is that the right side is an integer, the proof has many intermediate formulae with non-integral terms.  So we raise

\begin{problem} \lb{P8.2} Find a natural, easy to understand, proof of Theorem \ref{T8.1}
\end{problem}

Gap labeling implies that if $\lambda$ is an isolated point of $\spec(H)$ with $H$ a period $p$ Jacobi matrix on a tree then the DOS measure gives weight to $\{\lambda\}$ of the form $j/p$ with $j$ a positive integer.  Theorem \ref{T8.1} implies the more general result

\begin{corollary} \lb{C8.3} Let $\lambda$ be an eigenvalue $($perhaps not isolated in $\spec(H))$ of $H$, a period $p$ Jacobi matrix on a tree.  Then the DOS measure gives weight to $\{\lambda\}$ of the form $j/p$ with $j$ a positive integer.
\end{corollary}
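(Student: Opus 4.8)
The plan is to identify the $dk$-mass of the atom at $\lambda$ with the quantity $p^{-1}T(P_{\{\lambda\}})$, and then read off both integrality and positivity of $T(P_{\{\lambda\}})$ from Aomoto's Index Theorem (Theorem \ref{T8.1}) together with the elementary structure of the trace. First I would note that $P_{\{\lambda\}} = \chi_{\{\lambda\}}(H)$ lies in the von Neumann algebra $\calV_{2\ell;p}$, so that $T$ is defined on it: this is immediate from Theorem \ref{T3.2}, since $\chi_{\{\lambda\}}$ is a bounded Borel function.

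Next I would compute the weight that $dk$ assigns to the single point $\{\lambda\}$. Directly from the definition \eqref{4.1} of the DOS and the definition of $T$ as the sum of diagonal matrix elements over one representative $r$ of each of the $p$ orbits, one has
\[
  dk(\{\lambda\}) = \frac{1}{p}\sum_{\text{one }r\text{ per orbit}} \mu_r(\{\lambda\}) = \frac{1}{p}\sum_{\text{one }r\text{ per orbit}} \jap{\delta_r,P_{\{\lambda\}}\delta_r} = p^{-1}T(P_{\{\lambda\}}),
\]
which is exactly \eqref{4.3} with the spectral projection onto $\{\lambda\}$ in place of $E_{(-\infty,\lambda)}(H)$. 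With this identity in hand, Theorem \ref{T8.1} gives $T(P_{\{\lambda\}}) = p(X^{(1)}_\lambda) - q(X^{(1)}_\lambda) - p(X^{(-1)}_\lambda)$, a finite integer, so $dk(\{\lambda\}) = j/p$ with $j = T(P_{\{\lambda\}}) \in \bbZ$.

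It remains to see that $j$ is strictly positive. Since $\lambda$ is an eigenvalue, $P_{\{\lambda\}} \ne 0$, so there is a vertex $w\in\calT$ with $P_{\{\lambda\}}\delta_w \ne 0$, i.e.\ $\jap{\delta_w,P_{\{\lambda\}}\delta_w} = \norm{P_{\{\lambda\}}\delta_w}^2 > 0$. Because $P_{\{\lambda\}}$ commutes with the unitaries induced by the deck transformations, the diagonal value $\jap{\delta_r,P_{\{\lambda\}}\delta_r}$ is constant along each orbit; choosing the representative $r$ in the orbit of $w$ shows that one term in the sum defining $T(P_{\{\lambda\}})$ is $\norm{P_{\{\lambda\}}\delta_w}^2>0$, while all terms are nonnegative. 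Hence $T(P_{\{\lambda\}}) > 0$, so $j$ is a positive integer and $dk(\{\lambda\}) = j/p$, as claimed. (One could equally invoke faithfulness of the trace $T$ on $\calV_{2\ell;p}$, but the orbitwise argument is self-contained.)

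I expect no genuine analytic obstacle here: the entire substantive content—the integrality of $T(P_{\{\lambda\}})$—is supplied by Aomoto's theorem, which we take as given. The only points requiring care are the bookkeeping that the atom of $dk$ is the trace of $P_{\{\lambda\}}$ (an interval-projection computation specialized to a point), and the passage from $P_{\{\lambda\}}\ne 0$ to $T(P_{\{\lambda\}})>0$, both of which are routine once the orbit-invariance of the diagonal is used.
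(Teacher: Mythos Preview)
Your proposal is correct and follows essentially the same route as the paper: identify $dk(\{\lambda\})=p^{-1}T(P_{\{\lambda\}})$ via \eqref{4.3}, invoke Aomoto's Index Theorem for integrality, and then argue positivity. The only minor difference is that the paper simply cites faithfulness of $T$ (Theorem \ref{TA.2}) for $j>0$, whereas you give the orbitwise diagonal argument directly (and mention faithfulness parenthetically); these are equivalent, since your argument is exactly the content of the faithfulness proof specialized to a projection.
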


\begin{remark}
We are not aware of any example of a periodic Jacobi matrix on a tree with a non-isolated eigenvalue. It would be interesting to find such an example or prove this is impossible.
\end{remark}

That $j>0$ follows from the fact that the trace $T$ is faithful (Theorem \ref{TA.2}).  Aomoto has a corollary to his index theorem that can be regarded as the most important result in his paper \cite{AomotoPoint}; we state it as a theorem.

\begin{theorem} [\cite{AomotoPoint}] \lb{T8.4} Let $H$ be a period $p$ Jacobi matrix on a regular tree. Then $H$ has no eigenvalues so its spectrum is purely absolutely continuous.
\end{theorem}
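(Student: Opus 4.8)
The plan is to derive the result from Aomoto's Index Theorem (Theorem \ref{T8.1}) by a counting argument that exploits regularity through the constant vertex degree. Suppose, for contradiction, that $\lambda$ is an eigenvalue of $H$, so that $P_{\{\lambda\}}\ne 0$ and $X^{(1)}_\lambda$ is a nonempty set of vertices of the underlying finite graph $\calG$. Since the universal cover $\calT$ is the $d$-regular tree, $\calG$ is $d$-regular, i.e.\ every vertex has degree $d$. My goal is to show that the right-hand side of \eqref{8.1} is at most $0$, so that $T(P_{\{\lambda\}})\le 0$; since $T$ is faithful (Theorem \ref{TA.2}) and $P_{\{\lambda\}}\ne 0$, we would in fact have $T(P_{\{\lambda\}})>0$, a contradiction. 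Hence $H$ has no eigenvalues, and combined with the absence of singular continuous spectrum (Theorem \ref{T6.7}) this yields purely absolutely continuous spectrum.

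First I would use the fact, quoted from \cite{AomotoPoint}, that the induced subgraph of $\calG$ on $X^{(1)}_\lambda$ has no cycles and no self-loops; hence it is a forest, say with $c\ge 1$ connected components. A forest on $p(X^{(1)}_\lambda)$ vertices with $c$ components has exactly $p(X^{(1)}_\lambda)-c$ edges, so $q(X^{(1)}_\lambda)=p(X^{(1)}_\lambda)-c$ and \eqref{8.1} collapses to $T(P_{\{\lambda\}})=c-p(X^{(-1)}_\lambda)$. It therefore suffices to prove the purely combinatorial inequality $c\le p(X^{(-1)}_\lambda)$.

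Regularity enters in counting the edges leaving $X^{(1)}_\lambda$. Summing degrees over $X^{(1)}_\lambda$ gives $d\,p(X^{(1)}_\lambda)$; each edge internal to the forest is counted twice (there are no self-loops, so internal edges join distinct vertices) while each edge to the complement is counted once. By definition any edge leaving $X^{(1)}_\lambda$ lands in $X^{(-1)}_\lambda$, so the number $e$ of boundary edges satisfies
\[
e = d\,p(X^{(1)}_\lambda) - 2\bigl(p(X^{(1)}_\lambda)-c\bigr) = (d-2)\,p(X^{(1)}_\lambda) + 2c .
\]
On the other hand every vertex of $X^{(-1)}_\lambda$ has degree $d$, and the boundary edges form a subset of those incident to $X^{(-1)}_\lambda$, so $e\le d\,p(X^{(-1)}_\lambda)$. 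Combining these with the trivial bound $p(X^{(1)}_\lambda)\ge c$ and with $d\ge 2$ gives
\[
d\,c \le (d-2)\,p(X^{(1)}_\lambda)+2c = e \le d\,p(X^{(-1)}_\lambda),
\]
whence $c\le p(X^{(-1)}_\lambda)$, which is exactly what was needed.

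The one genuinely nontrivial ingredient is Theorem \ref{T8.1} itself, which I take as a black box; granting it, the argument above is elementary. Within the plan the points deserving the most care are the forest structure of $X^{(1)}_\lambda$ and the claim that edges leave it only into $X^{(-1)}_\lambda$ rather than to farther vertices, which is precisely the defining property of $X^{(-1)}_\lambda$; the crucial role of regularity is the \emph{uniform} degree $d$, without which the two edge counts could not be compared and the index need not be nonpositive (indeed Example \ref{E7.2} shows point spectrum can appear when $\calG$ is not regular). The only bookkeeping subtlety is handling possible multi-edges of $\calG$ in the degree sums; these are counted with multiplicity in $d$ and cause no difficulty, and a multi-edge joining two vertices of $X^{(1)}_\lambda$ would be a $2$-cycle, which is excluded by the no-cycles property.
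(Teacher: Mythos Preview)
Your proof is correct and follows essentially the same edge-counting strategy as the paper: both compute the number of edges leaving $X^{(1)}_\lambda$ as $d\,p(X^{(1)}_\lambda)-2q(X^{(1)}_\lambda)$, bound it above by $d\,p(X^{(-1)}_\lambda)$, and conclude that the right side of \eqref{8.1} is nonpositive. The paper's version is marginally more direct: it never invokes the forest structure of $X^{(1)}_\lambda$, instead simply using $d\ge 2$ to get $d\,p(X^{(1)}_\lambda)-2q(X^{(1)}_\lambda)\ge d\bigl[p(X^{(1)}_\lambda)-q(X^{(1)}_\lambda)\bigr]$ (equivalent to $(d-2)q\ge 0$), so the acyclicity of $X^{(1)}_\lambda$ is not actually needed for this theorem. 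Your detour through the forest decomposition does yield the pleasant identity $T(P_{\{\lambda\}})=c-p(X^{(-1)}_\lambda)$, but the core argument is the same.
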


\begin{proof} Suppose that $\lambda$ is an eigenvalue. Let $d$ be the degree of the underlying tree and let $r$ be the total number of edges between $X^{(1)}_\lambda$ and $X^{(-1)}_\lambda$ so
\begin{equation}\label{8.2}
  r \le dp\left(X^{(-1)}_\lambda\right)
\end{equation}

Let $d_1(x)$ be the number of edges with both ends in $X^{(1)}_\lambda$ for which one end is $x$.  Thus
\begin{equation}\label{8.3}
  r = \sum_{x\in X^{(1)}_\lambda}(d-d_1(x)) = dp(X^{(1)}_\lambda)-2q(X^{(1)}_\lambda)
\end{equation}
Therefore, since $d\ge 2$, we that
\begin{equation}\label{8.4}
  dp(X^{(-1)}_\lambda) \ge dp(X^{(1)}_\lambda) - 2q(X^{(1)}_\lambda) \ge d[p(X^{(-1)}_\lambda)-q(X^{(1)}_\lambda)]
\end{equation}
so the right side of \eqref{8.1} is non-positive.  Since $T$ is faithful, $\lambda$ is not an eigenvalue after all.
\end{proof}

\begin{example} \lb{E8.5} Aomoto applied \eqref{8.1} to the model in Example \ref{E7.2}.  One has that $p(X^{(1)}_\lambda)=r$, $q(X^{(1)}_\lambda)=0$ and $p(X^{(-1)}_\lambda)=g$ so he gets a result equivalent to the weight of $\{0\}$ in the DOS measure being $(r-g)/(r+g)$ (he first needs a separate argument that $0$ is an eigenvalue).  This is, of course, what we found with our explicit calculation of Green's functions.  His argument also shows directly there is no pole in $G_g$.
\end{example}

Aomoto raises a general question which we turn to in a moment but there are already interesting open questions about the $rg$-model:

\begin{problem} \lb{P8.6} Does every periodic Jacobi matrix associated with a Jacobi matrix on the graph of an $rg$-model with $r\ne g$ have a point eigenvalue.
\end{problem}

\begin{problem} \lb{P8.7} If the answer to Problem \ref{P8.6} is yes and $r>g$ does every eigenvector live only on the $r$ vertices?
\end{problem}

There is the most general question asked by Aomoto:

\begin{problem} \lb{P8.8} Fix a finite connected leafless graph $\calG$.  If the universal cover of one Jacobi matrix on $\calG$ has a point eigenvalue, is that true of every other Jacobi matrix on $\calG$.
\end{problem}

\section{Borg's Theorem} \lb{s9}

In this section, we want to discuss a number of conjectures that we feel are among the most intriguing open questions in this area.  We begin with a first guess about what might be the analog of Borg's Theorem (Fact \ref{F2.14}):

\begin{guess} [Wrong!] \lb{G9.1} Let $H$ be a Jacobi matrix of period $p$ on a tree.  Suppose $\spec(H)$ has no gaps.  Then all $a$'s are equal to each other and all $b$'s are equal to each other.
\end{guess}

\begin{example} [$ac$ model] \lb{E9.2} Let $\calG$ be the graph with one vertex and two self loops so $p=1$, $q=2$ and the universal cover is the regular tree of degree $4$.  Let $J$ be the matrix with $b=0$ but two values $a$ and $c$ on the two edges.  Then $H$ has period $1$ so by Sunada's Theorem (Theorem \ref{T5.1}), there is no gap in the spectrum.  This is a counterexample to the strong guess that appears in Initial Guess \ref{G9.1}!  It also provides additional insight connected to Fact \ref{F2.11}.  $H(a,c)$ has spectrum $[-E(a,c),E(a,c)]$.  If $a=c=1/\sqrt{12}$, then $E=1$ by \eqref{4.2} with $d=4$. Moreover, if $a=0, c=1/2$, we also have $E=1$ by \eqref{4.2} with $d=2$.  It follows that for any $a\in (0,1/2)$, there is a $c(a)$ also in $(0,1/2)$ with $E(a,c(a))=1$, so we get a one parameter family of isospectral period $1$ Jacobi matrices on $\calT_4$. But for $a=1/\sqrt{12}$, the DOS is the scaled Kesten McKay distribution, \eqref{4.2}, with $d=4$ while as $a\downarrow 0$, the DOS converges to the distribution with $d=2$.  We presume the distributions are all different for $0<a<1/\sqrt{12}$.  We conclude that the analog of Fact \ref{F2.11} is false.
\end{example}

\begin{fact} \lb{F9.3} Unlike the $2$-regular case, for general periodic Jacobi matrices on trees, the graph $\calG$ and the spectrum do not determine the DOS!
\end{fact}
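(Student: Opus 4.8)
The plan is to read this off directly from the $ac$ model of Example \ref{E9.2}, which already furnishes everything needed: a single finite graph $\calG$ (one vertex, two self-loops, universal cover $\calT_4$) carrying a whole curve of genuine periodic Jacobi matrices $H(a,c)$ (with $b=0$ and edge weights $a,c>0$) that share a common spectrum while having different densities of states. Since Fact \ref{F2.11} asserts the opposite in the $2$-regular (one-dimensional) case, exhibiting one such curve suffices to establish the claim.

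First I would pin down the spectrum. Because $p=1$, Sunada's theorem (Theorem \ref{T5.1}) forces $\spec(H(a,c))$ to be gapless, i.e. a single closed interval; because $b=0$, the operator anticommutes with the sign involution $V\varphi(j)=(-1)^{\rho(j)}\varphi(j)$ of the proof of Theorem \ref{T5.2} (trees are bipartite), so the spectrum is symmetric, $\spec(H(a,c))=[-E(a,c),E(a,c)]$. Next I would locate the endpoints of a convenient slice: at $a=c=1/\sqrt{12}$ the operator is $1/\sqrt{12}$ times the degree-$4$ adjacency matrix, so by \eqref{4.2} one has $E=1$; as $a\downarrow 0$ the $a$-edges decouple $\calT_4$ into disjoint lines carrying the free half-weight operator, whose spectrum is again $[-1,1]$. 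Using continuity of the top of the spectrum in the Jacobi parameters together with the intermediate value theorem, I would produce a curve $c=c(a)$, $a\in(0,1/\sqrt{12}]$, along which $E(a,c(a))\equiv 1$, i.e. an honest isospectral family on the fixed graph $\calG$ with common spectrum $[-1,1]$.

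Finally I would show that the DOS is non-constant along this curve. The interior endpoint $a=c=1/\sqrt{12}$ is a genuine operator whose DOS is the scaled Kesten--McKay law of degree $4$ (vanishing like a square root at the band edges). The degenerate limit $a\downarrow 0$, where the $a$-edges are cut, is not itself an allowed Jacobi matrix, but it indicates where the curve is heading: the DOS there is the Kesten--McKay law of degree $2$, i.e. the arcsine law, which blows up at the band edges. Since these two measures are manifestly different, continuity of $dk$ in $(a,c)$ forces the DOS to vary along the curve, so a genuine parameter value with small $a$ yields a Jacobi matrix on the same graph $\calG$ with the same spectrum $[-1,1]$ but DOS different from that at $a=c=1/\sqrt{12}$. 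The one point requiring care -- and the main obstacle -- is justifying the two continuity statements (continuity of the band edge $E$ and weak-$*$ continuity of $dk$ in $(a,c)$, including the degenerate $a\to 0$ limit). I would handle these using the explicit machinery of Section \ref{s6}: the $m$- and Green's functions of $H(a,c)$ solve the coupled quadratic system \eqref{6.5} with coefficients polynomial in $(a,c,z)$, and Theorem \ref{T6.6} supplies their boundary values on the real axis; reading $dk$ off \eqref{6.8} then makes both its continuity in $(a,c)$ and the decoupling limit at $a=0$ transparent, and in fact yields a closed form for $dk$ that exhibits its dependence on $a$ beyond the single edge parameter $E$.
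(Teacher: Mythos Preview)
Your proposal is correct and follows essentially the same route as the paper: the paper derives Fact \ref{F9.3} directly from Example \ref{E9.2} (the $ac$ model on the one-vertex, two-loop graph), using Sunada's theorem for gaplessness, the endpoints $a=c=1/\sqrt{12}$ and $(a,c)\to(0,1/2)$ to produce an isospectral curve with $E=1$, and the contrast between the degree-$4$ and degree-$2$ Kesten--McKay laws to see that the DOS varies along this curve. You supply somewhat more explicit justification for the symmetry of the spectrum and for the continuity of $E$ and $dk$ than the paper does (the paper leaves these implicit and even writes ``We presume the distributions are all different''), but the argument is the same.
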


While we've only established this when $p=1$ and a regular tree of even degree, it surely must be true in great generality.

\begin{example} [$ace$ model] \lb{E9.4} Looking at the model in Example \ref{E9.2} one might worry it was a more general indication that Guess \ref{G9.1} fails not only in the case of $\calT_{2d}$ by also for $\calT_{2d+1}$.  So it is natural to consider the graph $\calG$ with $p=2$, $q=3$ of $2$ vertices, connected by $3$ edges with $b_i=0$ but three distinct values, say, $a, c, e$ on the three edges.  The corresponding $H$ lives on $\calT_{3}$ and has three edges with distinct values in the Jacobi parameters coming out of each vertex.  Perhaps this model also has no gap (although its period seems to be $2$).  In fact for many non-zero values of the three parameters, there is a gap.  Consider first what happens if $\calG$ has one edge removed.  Then the tree is $1D$ with all $b=0$ and alternating values on the edges of period $2$.  By Borg's theorem, this $1D$ model has a gap.  Indeed, that problem has discriminant $\Delta(z)=\frac{z^2-a^2-c^2}{ac}$. The edges of the spectrum are given by $\Delta(x)=\pm 2$ or $x = \pm(a\pm c)$.  If $c<a$, the spectrum is thus $(-a-c,-(a-c))\cup(a-c,a+c)$ which has a gap of size $2(a-c)$.  If $e=0$, the operator on $\calT_3$ degenerates into a direct sum of the $1D$ operators.  The $2\times 2$ matrix with 0 on diagonal and $e$ off-diagonal has norm $e$. By a standard argument from spectral theory the gap persists if $2(a-c)>2e$.  Thus if $a>c+e$, there is a gap.  We presume there is also a gap unless they are all equal.
\end{example}

With this example in mind, we break our conjecture about the analog of Borg's theorem into three parts:

\begin{conjecture} \lb{C9.5} A periodic Jacobi matrix on a tree which is not of constant degree always has a gap in its spectrum.
\end{conjecture}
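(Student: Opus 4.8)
The plan is to prove the contrapositive: if $\spec(H)$ is a single interval, then the tree has constant degree. I first record a reduction that isolates what must be shown. By Theorem \ref{T6.7} there is no singular continuous spectrum, and by Theorem \ref{T6.6} the Green's functions are algebraic, so the absolutely continuous spectrum is a finite union of closed bands; moreover, by Corollary \ref{C8.3} every eigenvalue carries density-of-states weight of the form $j/p$ with $j\ge 1$, and since the total DOS mass is $1$ this forces the point spectrum to be supported on at most $p$ energies. Hence $\spec(H)$ is the union of finitely many a.c.\ bands together with a finite set of (infinitely degenerate) eigenvalues, and deleting finitely many points never destroys a nonempty open interval. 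It therefore suffices to show that the a.c.\ bands---equivalently the support of the a.c.\ part of $dk$, equivalently the set where the boundary values of the $G_j$ have nonzero imaginary part---do \emph{not} form a single interval when the degrees $d_1,\dots,d_p$ of the vertices of $\calG$ are not all equal.

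The cleanest instance, which already displays the mechanism, is the \emph{spherically symmetric} case. When $\calT$ is radial in the sense of \cite{JB}, $H$ is unitarily equivalent to a direct sum of half-line Jacobi matrices that all share a single period-$p$ bulk block, and the reduction \eqref{7.5} identifies the off-diagonal entries of that block as $a_\alpha\sqrt{d_i-1}$. The point is that each edge of $\calG$ is crossed by the radial geodesic in both directions, so the two traversals of an edge $\alpha=(v_i,v_j)$ weight the \emph{same} $a_\alpha$ against the two \emph{different} branchings $d_i-1$ and $d_j-1$; consequently nonconstant degrees force the bulk weights to be nonconstant and no choice of the $a_\alpha$ can compensate. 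Borg's theorem (Fact \ref{F2.14}) then produces a gap in the spectrum of the bulk block, which is exactly the common a.c.\ spectrum of all the summands, hence an a.c.\ gap for $H$. The $rg$-model of Example \ref{E7.2}, with alternating bulk weights $\sqrt{r-1}$ and $\sqrt{g-1}$ and its eigenvalue at $0$ sitting inside the resulting band-gap, is precisely this picture.

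For the general, non-radial tree I would work directly with the algebraic system \eqref{6.5} for the $2q$ $m$-functions, whose real branch points are the band edges of $H$. The aim is to show that nonconstant degrees force the discriminant of this system to have a real zero interior to the convex hull of the spectrum at which two sheets genuinely separate, that is an \emph{open} gap rather than a closed one. Concretely, I would continue the distinguished real solution with all $m_j^\beta\to 0$ at $z=+\infty$ down the real axis and use the tree analog of the one-dimensional fact that poles of $m$ correspond to zeros of $G$ (Fact \ref{F2.10}), together with the monotonicity of each $G_j$ in a gap, to extract a forced pole of some $G_j$ from the mismatch between a maximal-degree and a minimal-degree vertex. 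A second, more hands-on route generalizes Example \ref{E9.4}: remove a suitable edge-orbit to expose a lower-dimensional piece carrying a Borg gap and then bound the reconnecting term; but, exactly as there, this only yields a gap on a subregion of parameter space and would have to be upgraded to all parameters.

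The main obstacle is the general case, and its source is structural. Unlike on $\bbZ$, there is no abelian Floquet theory, so the band edges are not the eigenvalues of a single $p\times p$ matrix under periodic and antiperiodic boundary conditions, and the positive generalized eigenfunction at the top of the $\ell^2$-spectrum is not periodic: already for the free Laplacian on $\calT_d$ it decays like $(1+|w|)(d-1)^{-|w|/2}$ at the edge $2\sqrt{d-1}$, while the periodic positive solution lives at the strictly larger value $d$. Thus the finite ``ratio equation'' at a band edge that one writes on $\bbZ$ degenerates here, and the count of interior real branch points of \eqref{6.5} must be read off from the global algebraic geometry of the system rather than from finite linear algebra. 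Making that count uniform over every nonconstant degree sequence, and proving that the extra branch points open rather than close, is exactly the difficulty that the radial reduction sidesteps and that a proof of the full conjecture must confront.
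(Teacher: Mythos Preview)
This statement is a conjecture in the paper, not a theorem, so there is no proof of it to compare against. More to the point, the paper's own Addendum reports that Conjecture~\ref{C9.5} is \emph{false}: Garza-Vargas and Kulkarni exhibit a period~$2$ graph on two vertices of unequal degree, with $b_1=b_2=0$ and certain edge weights, for which $0\in\spec(H)$. Since the period is $2$ and the spectrum is symmetric about $0$, the only allowed gap is closed and $\spec(H)$ is a single interval on a tree that is not of constant degree. No correct proof of the statement can exist.

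Your write-up is really a plan rather than a proof, and you already flag the general (non-radial) case as the unresolved obstacle. The counterexample shows that obstacle is not merely technical but fatal: your proposed mechanism of forcing an interior real branch point of the $m$-system from a degree mismatch, or of isolating a Borg gap by edge removal and then controlling the reconnection, must break down for some parameter choices, because the GK example has a genuine degree mismatch and yet no open gap. Even your radial argument is not airtight as written: the assertion that ``no choice of the $a_\alpha$ can compensate'' for nonconstant degrees needs more than the observation that each edge is traversed in both directions, and in any event radial trees form a very special subclass that excludes the counterexample.
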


\begin{conjecture}  \lb{C9.6} A periodic Jacobi matrix on an odd degree regular tree, $\calT_{2j+1}$,  with no gap in its spectrum has constant $a$'s  and constant $b$'s.
\end{conjecture}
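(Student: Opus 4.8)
The plan is to follow the architecture of the classical one-dimensional Borg theorem (Fact \ref{F2.14}) as closely as the non-abelian setting permits, while using the examples of Section \ref{s9} as a guide to where the argument must genuinely exploit the odd degree. First I would record the inputs already available on a regular tree: by Aomoto's Theorem \ref{T8.4} the spectrum is purely absolutely continuous, and by Theorem \ref{T6.6} the Green's and $m$-functions are algebraic, with meromorphic continuations to a finite-sheeted Riemann surface whose only real singularities lie in a finite set $F_0$. The hypothesis that $\spec(H)$ has no gaps means precisely that the a.c.\ spectrum is a single interval $[A,B]$, so that the band edges $A$ and $B$ are the only points of $F_0$ at which $\Im G_j(\lambda+i0)$ vanishes while bounding the spectrum.

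The heart of the one-dimensional proof is the discriminant $\Delta$ (Fact \ref{F2.25}): no gaps forces $\Delta^{-1}[-2,2]$ to be a single interval, which rigidifies $\Delta$ into an affine image of a Chebyshev polynomial and hence forces constant Jacobi parameters. Since the non-abelian symmetry blocks any Floquet direct-integral decomposition, there is no ready-made $\Delta$; so the first substantive step would be to manufacture a discriminant-type object from the algebraic system \eqref{6.5} itself. Concretely, I would form the polynomial $P(z,w)$ (equivalently the resultant of the quadratic system for the $2q$ $m$-functions) whose real branch locus carries the band edges, and show that ``single interval'' forces a maximal degeneracy of this branch structure: every interior real branch point must either be absent or cancel between sheets, leaving only $A$ and $B$ as genuine edges. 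This is the tree analog of all gaps closing simultaneously.

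The decisive step---and the one I expect to be the main obstacle---is the rigidity statement that this minimal branch structure can only be produced by constant $a$'s and $b$'s. Two features of the tree setting show why this cannot be imported verbatim from 1D. First, Fact \ref{F9.3} warns that on trees the graph together with the spectrum does \emph{not} determine the DOS, so no argument that sees only $[A,B]$---moment matching, a potential-theoretic Thouless formula, or a DOS-equals-equilibrium-measure principle---can succeed; the parity of the degree must enter essentially. Second, the $ac$ model of Example \ref{E9.2} shows that on even-degree trees the period-$1$ realization has no gap for \emph{any} choice of unequal $a$'s, so the statement is simply false there; the proof must therefore use that an odd degree $d=2j+1$ forces period at least $2$, making ``no gaps'' a nontrivial codimension condition rather than an automatic one. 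I would try to turn this into a quantitative statement by adapting the band-edge analysis (Fact \ref{F2.18}): characterize the top $B$ of the spectrum and its multiplicity by a Perron--Frobenius/positive-eigenfunction argument on the fundamental domain, and then show that closing the lone gap predicted by gap labeling (Theorem \ref{T5.1}) forces a coincidence of periodic and antiperiodic-type edge data that, on an odd-degree tree, is only consistent with constant parameters.

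A reasonable way to make progress before the general case is to settle the smallest instances explicitly: the period-$2$ operators on $\calT_3$, of which the $ace$ model (Example \ref{E9.4}) and the model of Example \ref{E7.4} are the prototypes. There the $m$-function system reduces to a single quadratic and the spectrum is computable in closed form, so one can verify directly that the spectrum is a single interval only when the edge weights coincide and $b$ is constant. A successful treatment of this base case would both confirm the conjecture in the first nontrivial case and, I hope, reveal the correct form of the rigidity argument to be lifted to higher period.
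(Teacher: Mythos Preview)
Your proposal cannot succeed because the statement is false. Conjecture~\ref{C9.6} is stated in the paper as a \emph{conjecture}, not a theorem, and the paper's Addendum records that it has since been disproved: Garza-Vargas and Kulkarni observed that a result of Fig\`{a}-Talamanca--Steger (Theorem~\ref{TB.1}) gives explicit counterexamples. On the degree $d$ homogeneous tree with $b\equiv 0$ and edge weights $a_1\ge a_2\ge\cdots\ge a_d$ (a period~$2$ model), one has $0\in\spec(H)$---and hence no gap, by gap labeling and the $H\mapsto -H$ symmetry---if and only if $a_1^2\le\sum_{j=2}^d a_j^2$. For $d=3$ this inequality is satisfied by plenty of unequal triples (e.g.\ $a_1=1.2$, $a_2=a_3=1$), so the $ace$ model on $\calT_3$ already furnishes a gapless periodic Jacobi matrix with nonconstant $a$'s.

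This strikes precisely at the step you flagged as ``decisive'': the rigidity claim that minimal branch structure forces constant parameters is simply not true, and your proposed base case---verifying directly for period~$2$ on $\calT_3$ that a single interval forces equal edge weights---would fail the moment you carried it out. Your heuristic that odd degree should behave differently because it rules out period~$1$ is also undermined: the FTS criterion applies uniformly for all $d\ge 3$, so the parity of the degree does not supply the rigidity you were hoping for. The paper's replacement conjectures (\ref{BC.4} and \ref{BC.5}) retreat to the case $a\equiv 1$, asking only whether $b$ must then be constant.
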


\begin{conjecture} \lb{C9.7} A periodic Jacobi matrix on an even degree regular tree, $\calT_{2j}$, with no gap in its spectrum is of period $1$. \end{conjecture}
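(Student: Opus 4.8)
The plan is to argue by contraposition: assuming $H$ has minimal period $p\ge 2$ on $\calT_{2j}$, I would produce a gap in $\spec(H)$. The natural vehicle is the realization of Theorem \ref{T3.1}, under which $H$ becomes a simple Jacobi matrix with $p\times p$ matrix coefficients $\wti{B},\wti{A}_1,\dots,\wti{A}_\ell$ on $\calH_{2\ell;p}$, together with the coupled fixed-point equations \eqref{6.4}--\eqref{6.5} for the matrix-valued Green's and $m$-functions. These are precisely the operator-valued subordination relations for $\bbF_\ell$, and $\spec(H)$ is read off as the set of real $\lambda$ at which the boundary value of the Green's function has nonzero imaginary part. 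The goal is to show this set is disconnected unless the data reduces to period-$1$ data (a single scalar $b$ together with scalars $a_1,\dots,a_j$), which is exactly the situation of the $ac$ model in Example \ref{E9.2}. That example is the sharp extremal case: it is genuinely period $1$ with unequal edge weights and no gap, so it shows one cannot hope to conclude anything stronger than period $1$, and it is the target to which the no-gap hypothesis must force us.

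First I would set up a tree analog of the $1$D band-edge picture. In one dimension the band edges are the critical values of the discriminant, i.e.\ the points where two solutions of the characteristic equation collide; here the counterpart is the real branch locus $F_0\subset\bbR$ furnished by Theorem \ref{T6.6}. I would prove that each finite spectral endpoint and each internal band edge occurs exactly where the Jacobian $\det(\partial P_j/\partial w_k)$ of the system \eqref{6.5} (the determinant in the hypothesis of Theorem \ref{T6.5}) degenerates along the relevant solution branch, so that band edges are the real roots of an explicit resultant polynomial $D(\lambda)$ built from \eqref{6.5}. The number of real roots of $D$ then controls the number of bands, while gap labeling (Theorem \ref{T5.1}) bounds the number of gaps by $p-1$. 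The no-gap hypothesis forces $\spec(H)$ to be a single interval, hence $D$ to retain only its two extreme real roots.

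Next I would run a rigidity argument in the spirit of Borg's Theorem (Fact \ref{F2.14}): the collapse of all $p-1$ potential gaps should force the matrix system to decouple into a period-$1$ form. The clean way to phrase the target is that $\bbF_j$ acts simply transitively on $\calT_{2j}$, so period $1$ is equivalent to the existence of a vertex-transitive symmetry commuting with $H$; equivalently, the algebraic curve $P(z,w)=0$ for the Green's function should factor, with the distinguished factor being the scalar period-$1$ equation and any remaining factors necessarily contributing extra branch points in $\bbR$ and thus extra gaps. Thus I would try to show that a connected real spectrum forces the matrix $m$-functions to carry no genuine $\bbC^p$-structure, which is precisely the sought symmetry.

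The hard part will be this rigidity step, and the obstruction is exactly the non-abelian symmetry stressed in the introduction: there is no Floquet decomposition into scalar fibers, so the band edges live on a multisheeted surface governed by a matrix (operator-valued free convolution) equation rather than a scalar discriminant, and proving that a single real band forces that equation to reduce to the scalar one is a delicate connectivity and regularity statement. I expect the even-degree hypothesis to enter decisively precisely here: only for $\calT_{2j}$ does a simply-transitive $\bbF_j$ exist, so only there can the reduced no-gap equation be the scalar period-$1$ equation. On $\calT_{2j+1}$ the same analysis should instead force full homogeneity, giving Conjecture \ref{C9.6}, and on non-regular trees the Jacobian should never be able to shed all its internal real roots, giving Conjecture \ref{C9.5}; establishing the needed factorization/connectivity dichotomy uniformly is, I believe, where the real difficulty lies.
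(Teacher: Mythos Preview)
This is a conjecture, not a theorem: the paper offers no proof of it, and more importantly the Addendum records that the conjecture is \emph{false}. Garza-Vargas and Kulkarni observed that a result of Fig\`{a}-Talamanca--Steger (Theorem \ref{TB.1} in the Addendum) supplies explicit counterexamples. Concretely, take the graph with two vertices and $d=2j\ge 4$ edges, all $b$'s equal to $0$, and edge weights $a_1\ge a_2\ge\dots\ge a_{2j}$. The lift $H$ on $\calT_{2j}$ has period $2$, and by the $H\mapsto -H$ symmetry the only possible gap is at $0$; Theorem \ref{TB.1} says $0\in\spec(H)$ precisely when $a_1^2\le\sum_{k\ge 2}a_k^2$. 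Choosing the $a_k$ all distinct and satisfying this inequality gives a gapless $H$ that is \emph{not} of period $1$ in the sense of the paragraph following Conjecture \ref{C9.7} (the edge multiset at each vertex is not of the form ``two copies each of $j$ values'').

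Your argument therefore cannot be completed, and the failure is exactly at the step you flagged as ``the hard part'': the rigidity claim that a single band forces the matrix $m$-system \eqref{6.5} to decouple to a scalar period-$1$ equation is simply not true. In the FTS example the algebraic system genuinely retains its $\bbC^2$-structure, yet the real branch locus of the Green's function consists only of the two outer endpoints. So the inference ``$D(\lambda)$ has only two real roots $\Rightarrow$ the curve factors with a scalar period-$1$ factor'' fails; extra sheets can be present without producing extra real branch points. Your expectation that the even-degree hypothesis enters ``decisively'' to force a scalar reduction is exactly what the counterexample refutes, and the same mechanism also defeats Conjectures \ref{C9.5} and \ref{C9.6}. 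The replacement conjectures proposed in the Addendum (Conjectures \ref{BC.4} and \ref{BC.5}) retreat to the claim that gaplessness forces only $b\equiv\text{const}$, not any constraint on the $a$'s.
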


The last conjecture says that any such Jacobi matrix has constant $b$'s and $j$ (possibly not distinct) values of $a$ so that each vertex has $2$ of each of these $j$ values on the edges attached to it.

With our definition of period, there are no period $1$ Jacobi matrices on $\calT_{2j+1}$ (or on non-regular trees).  This leads us to propose

\begin{problem} \lb{P9.8} Find a definition of period for periodic Jacobi matrices on trees so that gap labeling holds and so that a Jacobi matrix on $\calT_{2j+1}$ has period $1$ if and only if $a$ and $b$ (as functions on the edges and vertices respectively) are each constant and so that non-regular trees have no period $1$ Jacobi matrices.
\end{problem}

If that is done the union of the three conjectures above is that a gapless periodic Jacobi matrix on a tree has period $1$.  With a proper definition of period one can also hope to extend the Borg--Hochstadt theorem:

\begin{conjecture} \lb{C9.9} There is a definition of period for Jacobi matrices on trees so that gap labeling holds and so that the following analog of the Borg--Hochstadt Theorem holds: If the IDS of a periodic Jacobi matrix on a tree has a value $j/p$ in each gap of the spectrum, then the period is (a divisor of) $p$.
\end{conjecture}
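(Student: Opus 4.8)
The plan is to attack the two halves of the conjecture in turn: first produce a workable \emph{definition} of period (the content of Problem \ref{P9.8}), and only then prove the Borg--Hochstadt implication with respect to it. For the definition, I would abandon the presentation-dependent vertex count $p$ and instead define the period intrinsically from the von Neumann trace. Concretely, the density of states is $dk = p^{-1}T(\,\cdot\,)$ where $T$ is the unnormalized trace on $\calV_{2\ell;p}$, and by Theorem \ref{T5.1} the numbers $T(E_{(-\infty,\lambda)}(H))$ are integers in every gap. I would set the period $p(H)$ to be the minimal positive integer $P$ for which there is a realization of $(\calT,H)$ whose associated trace $T$ satisfies $T(I)=P$ and still takes integer values on all spectral gap projections; equivalently, $p(H)$ should be the covolume of the \emph{largest} group of Jacobi-matrix symmetries of $H$ acting on $\calT$. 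The first thing to verify is that gap labeling survives this redefinition --- it does, since the integrality of $T$ on gap projections (Theorem \ref{TA.3}) is a property of the trace, not of the chosen graph $\calG$ --- and that on $\calT_{2j+1}$ with constant $a,b$ the maximal symmetry group yields $p(H)=1$, which is exactly what is needed to force the free Laplacian into period one.

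With the definition in hand, the forward direction is immediate: if the true period $p(H)=p_0$ divides $p$, then by Theorem \ref{T5.1} every gap value is $j/p_0=(j\,p/p_0)/p$, hence of the form (integer)$/p$. The substantive content is the converse. Assuming the integrated density of states takes a value in $\tfrac1p\bbZ$ in each gap, I would exploit the algebraic structure supplied by Theorem \ref{T6.6}: the Green's and $m$-functions are algebraic, so $(\calT,H)$ carries a canonical finite-sheeted Riemann surface $\mathcal{R}$, and the gap-labeling values should be realizable as periods (winding numbers) of the natural differential attached to $dk$ against the cycles of $\mathcal{R}$. The goal is to show that the hypothesis forces every such period to lie in $\tfrac1p\bbZ$, which constrains the relevant period lattice, and then to recover $p_0$ as an invariant of that lattice --- the tree analog of reconstructing the degree-$p$ discriminant from the band structure in the classical proof of Fact \ref{F2.15}.

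The hard part, and the reason I expect this to be genuinely difficult, is the \emph{gapless} end of the statement. Taking $p=1$, the converse asserts that a gapless periodic Jacobi matrix has period one, which is precisely the union of the Borg-type Conjectures \ref{C9.5}, \ref{C9.6} and \ref{C9.7}; thus any proof of Conjecture \ref{C9.9} contains a proof of the tree Borg theorem as its $p=1$ special case. Here the Riemann-surface bookkeeping gives nothing, since there are no interior branch points and no cycles to integrate against, and one must instead establish a rigidity statement directly: an operator whose $m$-functions extend analytically across all of $\spec(H)$ must be invariant under the maximal automorphism group. Unlike the one-dimensional setting, there is no discriminant or Floquet decomposition to drive this, so the main obstacle is to manufacture a substitute --- perhaps a trace-identity or moment argument forcing the Jacobi parameters to be symmetric under the full automorphism group of $\calT$ --- and to do so without the abelian Bloch-wave machinery underlying every known proof of Fact \ref{F2.14}. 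Until the definition of period in Problem \ref{P9.8} is pinned down and this gapless rigidity is understood, the general case seems out of reach, which is why we state it only as a conjecture.
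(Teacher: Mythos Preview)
There is nothing to compare: the paper offers no proof of Conjecture \ref{C9.9}. It is stated purely as a conjecture, and you correctly recognize this, closing your proposal by saying the general case ``seems out of reach, which is why we state it only as a conjecture.'' Your write-up is a research strategy, not a proof, and that is appropriate.

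That said, your strategy has a concrete obstruction that you should be aware of, coming from the paper's own Addendum. You correctly observe that the $p=1$ case of Conjecture \ref{C9.9} is exactly the union of the Borg-type Conjectures \ref{C9.5}--\ref{C9.7}. But the Addendum reports that all three of those conjectures are \emph{false}: the Fig\`a-Talamanca--Steger models give gapless period-$2$ Jacobi matrices on $\calT_d$ with non-constant $a$'s, and Garza-Vargas--Kulkarni produce a gapless example on a \emph{non-regular} tree. Your proposed definition of period --- covolume of the maximal symmetry group, or equivalently the minimal $P$ for which the trace takes integer values on gap projections --- does not rescue the non-regular example: vertices of different degree cannot lie in the same orbit of any graph automorphism, so the period is forced to be at least $2$, yet there are no gaps, so the $p=1$ hypothesis of Borg--Hochstadt holds vacuously while the conclusion fails. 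The Addendum also explicitly warns that for the FTS model the vertex-transitive $\bbZ_2^{*d}$ action ``is period $1$, but gap labelling does \emph{not} hold for this definition of period,'' which undercuts the hope that enlarging the symmetry group is cost-free. So the hard part is not merely the ``gapless rigidity'' you flag; it is that any definition of period satisfying both desiderata in your first paragraph must somehow assign period $1$ to the GVK non-regular example, and it is not at all clear such a definition exists.
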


\section{Additional Conjectures and Problems} \lb{s10}

In this final section, we discuss a number of conjectures and open problems in the spectral theory of periodic Jacobi matrices on trees.

\subsection{The Riemann surface of $G$} On the basis of the analytic structure in the $1$D case, there are some natural guesses about the more general case:

\begin{problem} \lb{C10.3} Are all the branch points of the $m-$ and Green's functions are square root so the varieties defined by them are manifolds (i.e. have no singularities).
\end{problem}

\begin{problem} \lb{C10.4} Are the $m-$ and Green's functions are two sheeted. \end{problem}

These are consistent with all the examples where we can do calculations although we haven't any other strong reasons in support of them.  At one point we conjectured that all branch points are on the real axis, but on the basis of some examples we hope to discuss in detail elsewhere, we no longer expect that is true.

In the $1$D case, one argues that $G_j$ has a single zero in each gap. Those zeros are associated to poles of either $m_+$ or $m_-$ and, then, the $m_-$ poles to second sheet poles of $m_+$.

\begin{problem} \lb{P10.4A} Explore what connection there is between non-physical sheet poles (i.e.\ poles that lie on the sheets associated to the analytic continuation, and not on the original domain of definition) of an $m_j^\beta$ and physical sheet poles of the other $m_k^\alpha$ in \eqref{6.5}.  Resolve the notion that there are $d-1$ such functions and, we suspect, only two branches for $m$.
\end{problem}

\subsection{Open gaps}
Let $\calG$ be a finite graph. Let $\calP(\calG)$ be the set of allowed Jacobi parameters. It is an open orthant of $\bbR^{p+q}$ since $p + q$ is the number of vertices plus the number of edges (it is only an orthant since all $a>0$).  We say a period $p$ Jacobi matrix has all gaps open if the spectrum has $p-1$ gaps.  It is easy to see the set of Jacobi parameters for which all gaps
are open is an open set in $\bbR^{p+q}$. We believe the most interesting open question except perhaps for Borg's Theorem is

\begin{conjecture} \lb{C10.5} The set of parameters with all gaps open is a dense open set in the set of allowed parameters. \end{conjecture}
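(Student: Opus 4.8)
The plan is to treat openness and density separately, granting openness (which, as the authors note, follows from the Hausdorff-continuity of $\spec(H)$ and of the integer gap-labels $pk(\lambda)$ in the finitely many real parameters) and putting all the work into density. Write $\calP(\calG)\subset\bbR^{p+q}$ for the connected, convex orthant of allowed parameters and $C\subseteq\calP(\calG)$ for the locus where at least one gap is closed. Openness says $C$ is closed, so it suffices to show $C$ has empty interior. I would do this by exhibiting a single nonzero polynomial on $\calP(\calG)$ whose zero set contains $C$: since $\calP(\calG)$ is a nonempty open subset of $\bbR^{p+q}$, a nonzero polynomial cannot vanish on any open ball, so $C$ would be nowhere dense and its complement—the all-gaps-open set—open and dense, hence in particular nonempty.

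The polynomial comes from the algebraic structure of Theorem \ref{T6.6}. The $m$-functions solve the system \eqref{6.10}, a collection of $2q$ polynomial equations in the spectral variable and the $m$-values whose coefficients are polynomials in the Jacobi parameters $(a,b)$. The real band edges are precisely the real branch points of these algebraic functions, i.e.\ the values of $z$ at which the system acquires a degenerate solution—equivalently, where the Jacobian determinant of Theorem \ref{T6.5} drops rank. Eliminating the $m$-values by taking resultants produces a single polynomial $D(z;a,b)$, polynomial in $z$ with coefficients polynomial in $(a,b)$, whose roots include all band edges. A closed gap at some $j/p$ is exactly a collision of two adjacent band edges at one interior point $\lambda_0$, which forces $D(\,\cdot\,;a,b)$ to have a repeated root; hence $C$ is contained in the real-algebraic set $Z=\{(a,b):\mathrm{disc}_z D(z;a,b)=0\}$, where $\mathrm{disc}_z$ is the discriminant in $z$, a polynomial in $(a,b)$.

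Granting $C\subseteq Z$, the theorem reduces to the single assertion $\mathrm{disc}_z D\not\equiv 0$ on $\calP(\calG)$: because the orthant is connected, $Z\cap\calP(\calG)$ is either all of $\calP(\calG)$ (iff the discriminant vanishes identically) or nowhere dense. So it remains only to produce one parameter value at which $D(\,\cdot\,;a,b)$ has simple roots, i.e.\ one Jacobi matrix on $\calG$ with all band edges distinct. The natural attempt is a decoupling degeneration: if $\calG$ contains a spanning $2$-regular subgraph $\calG_0$ (a union of cycles meeting every vertex), then sending the weights of all edges outside $\calG_0$ to $0$ turns $H$ into a direct sum of one-dimensional periodic Jacobi matrices, one per cycle, whose band edges are simple for generic parameters (Fact \ref{F2.25}); choosing the cycle weights generically separates all of these edges, so $D$ is separable there, and $\mathrm{disc}_z D\not\equiv 0$ follows by continuity of the discriminant near this boundary configuration.

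The main obstacle is making this last step work for an arbitrary leafless $\calG$. The decoupling argument requires a spanning union of cycles, and not every graph of minimum degree $\ge 2$ admits one; when it does not, any decoupling leaves higher-degree tree fragments, which contribute flat bands—forced multiple roots of $D$—and the easy non-degeneracy is lost. Indeed, a rigid symmetry of $\calG$ could in principle pin a repeated factor of $D$ for all parameters (so that $\mathrm{disc}_z D\equiv 0$), and ruling this out is exactly the substantive content of the conjecture: it asserts that the maximal number $p-1$ of gaps is genuinely attainable. Establishing this existence statement—all band edges simple for some Jacobi matrix on an arbitrary $\calG$—without the explicit one-dimensional discriminant to lean on is where genuinely new input beyond the present machinery will be needed; by comparison, the real-algebraic reduction and the continuity arguments above are soft.
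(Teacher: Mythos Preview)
The statement you are attempting to prove is presented in the paper as Conjecture~\ref{C10.5}, not as a theorem; the paper offers no proof. The only supporting claim the authors make is that the all-gaps-open set is nonempty: when all $b_j$ are distinct and $\sum_\alpha a_\alpha < \min_{i\ne j}|b_i-b_j|$, all $p-1$ gaps are open. There is thus no proof in the paper to compare your proposal against.

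Your outline is a reasonable strategy, and your self-assessment of its central obstruction is accurate. Two remarks on the reduction itself. First, the paper's nonemptiness observation already hands you a candidate ``good parameter point'' without any decoupling or spanning $2$-regular subgraph: for $b$'s sufficiently spread apart, all gaps are open. But this does not close your argument, because ``all gaps open'' is not the same as ``$D(\,\cdot\,;a,b)$ has only simple roots.'' The resultant-elimination polynomial $D$ will in general have degree much larger than $2p$ and may carry extraneous or identically repeated factors unrelated to the physical band edges; you would need to control these, or to replace $D$ by a more carefully constructed object whose real roots are exactly the band edges, each simple. Second, even the containment $C\subseteq\{\mathrm{disc}_z D=0\}$ needs justification: a closed gap is a collision of two band edges, but whether this forces a repeated root of $D$ depends on knowing that distinct band edges are distinct simple roots of $D$ generically, which is precisely the point at issue. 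So the soft algebraic reduction is plausible but not yet rigorous, and the hard step---ruling out that the discriminant vanishes identically---remains open, which is exactly why the paper states this as a conjecture rather than a theorem.
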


We at least know the set is non-empty, for, if all b are different and $\sum_\alpha a_\alpha < \min_{i\ne j}  |b_i -  b_j|$, then all gaps are open.

\begin{conjecture} \lb{C10.6} The set of parameters where not all gaps are open is a variety of codimension $2$.
\end{conjecture}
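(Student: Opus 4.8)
The plan is to mimic the one-dimensional Wigner–von Neumann argument behind Fact \ref{2.20}. There, a gap labelled $j/p$ closes exactly when two consecutive eigenvalues of the real symmetric Floquet matrix $H(0)$ or $H(\pi)$ collide, and the separation of two eigenvalues of a real symmetric matrix is locally $\sqrt{(\alpha-\gamma)^2+4\beta^2}$, coming from the $2\times2$ block on the relevant eigenspace; its vanishing forces the two \emph{independent} conditions $\alpha=\gamma$ and $\beta=0$, hence codimension $2$. The reason this is only a conjecture in the tree case is that there is no Floquet matrix: the analysis must be carried out on the algebraic functions of Section \ref{s6} rather than on a finite self-adjoint matrix. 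First I would reduce to a local statement: it suffices to show that near a \emph{generic} point $\xi=(\{a_\alpha\},\{b_j\})\in\calP(\calG)\subset\bbR^{p+q}$ of the bad set --- one where exactly one gap is marginally closed, at a single interior energy $E_0$, all other gaps open --- the set is cut out by two functions with independent differentials; higher-order or multi-energy coincidences then lie in strictly smaller strata.

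\textbf{Band edges as real branch points.} By Theorem \ref{T6.6} the $m$--functions solve the polynomial system \eqref{6.5}, and the Green's functions are algebraic with meromorphic continuation to a finite sheeted surface; the edges of $\spec(H)$ are among the real branch points, i.e.\ the real zeros of the discriminant $D(z;\xi)=\det\Big(\tfrac{\partial F}{\partial \mathbf m}\big(z,\mathbf m(z);\xi\big)\Big)$, where $F=0$ is the system \eqref{6.10}. Since $H$ is self--adjoint, each $G_j$ is Herglotz, hence real and monotone in any gap, so the two branch points $\beta(\xi)\le\alpha(\xi)$ bounding the gap at $j/p$ are \emph{real}; by the definition in the Remark after Theorem \ref{T5.1} the gap is open iff $\beta<\alpha$ and closed iff $\beta(\xi)=\alpha(\xi)=E_0$. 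Thus the bad set is locally the zero set of the nonnegative gap width $\alpha-\beta$, and everything reduces to showing that $(\alpha-\beta)^2$ is, near such a point, a sum of two squares of functions with independent gradients.

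\textbf{The effective $2\times2$ block (the crux).} To produce the Wigner–von Neumann structure without a Floquet matrix I would extract a finite self--adjoint model near $E_0$ by a Feshbach/Schur reduction of the type used to prove Theorem \ref{T6.1}. Writing $H-z$ in block form relative to the finite spectral data that become singular at the colliding edges and taking the Schur complement onto the two--dimensional resonant subspace, one obtains a $2\times2$ matrix $M(z;\xi)=\left(\begin{smallmatrix} \alpha_{11} & \alpha_{12}\\ \alpha_{12} & \alpha_{22}\end{smallmatrix}\right)$, \emph{real symmetric} because $H$ and the reduced Green's functions are real on the gap, whose eigenvalues give the colliding band edges. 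The gap width is then $\alpha-\beta=\big[(\alpha_{11}-\alpha_{22})^2+4\alpha_{12}^2\big]^{1/2}$, so the bad set is exactly $\{\alpha_{11}-\alpha_{22}=0\}\cap\{\alpha_{12}=0\}$; since the defining function is a sum of two squares, its zero set has no codimension $1$ component, and it remains only to check that $d(\alpha_{11}-\alpha_{22})$ and $d\alpha_{12}$ are linearly independent in $\xi$ at a generic closed gap. I expect this transversality to be the main obstacle: it is the exact analog of the 1D fact that $\xi\mapsto H(0)$ is transverse to the degenerate locus, but here one must verify it for the nonlinearly defined block $M(z;\xi)$, with no linear dependence on the parameters to exploit.

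\textbf{Globalization.} Finally, because $D(z;\xi)$ and hence the $\alpha_{ij}(\xi)$ are branches of algebraic functions of $\xi$, the local sum--of--squares descriptions assemble into a genuine real--algebraic set: the bad locus is the finite union, over labels $j\in\{1,\dots,p-1\}$ and over pairs of band--edge branches, of the codimension $2$ sets above, together with the higher--codimension strata of multiple or complex coincidences; its closedness follows from the uniform boundary limits in Theorem \ref{T6.6}. I would regard the identification of the correct finite resonant subspace --- equivalently, a tree analog of the periodic/antiperiodic fiber matrix whose eigenvalue coincidences label closed gaps --- as the single step that must be made rigorous to turn this outline into a proof.
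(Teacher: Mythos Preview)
This statement is a \emph{conjecture}, not a theorem: the paper offers no proof at all, only the remark that ``we have no way of describing gap edges analogous to periodic and anti-periodic eigenvalues in the $1$D case.'' More importantly, the Addendum records that the conjecture is \emph{false}. Garza-Vargas and Kulkarni observed, using a result of Fig\`{a}-Talamanca--Steger (Theorem \ref{TB.1} in the Addendum), that for the two-vertex, $d$-edge graph with $d\ge 3$, the lifted operator on $\calT_d$ has no gap whenever $b_1=b_2$ and $a_1^2\le\sum_{j\ge 2}a_j^2$. This closed-gap locus has codimension $1$ in the $(d+2)$-dimensional parameter space, not $2$. So any purported proof must contain an error.

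Your outline correctly identifies the honest gap: the construction of an ``effective $2\times 2$ real symmetric block'' whose eigenvalues are the two colliding band edges. In $1$D this block is literally a $2\times 2$ submatrix of the Floquet matrix $H(0)$ or $H(\pi)$, and the Wigner--von Neumann count is rigorous. On a tree there is no such finite self-adjoint fiber; you propose to manufacture one by a Schur complement onto a ``two-dimensional resonant subspace,'' but you never specify what that subspace is, and the counterexample shows no such object with the required properties can exist in general. Concretely: if the gap width were locally $\sqrt{(\alpha_{11}-\alpha_{22})^2+4\alpha_{12}^2}$ with $\alpha_{ij}$ smooth in the Jacobi parameters, its zero set would automatically have no codimension $1$ component, contradicting the example above. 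The failure is not merely the transversality check you flag; the sum-of-squares representation itself is unavailable. The replacement question the paper poses (Problem \ref{BP.6}) is whether codimension $2$ might still hold once $p\ge 3$.
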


The problem is we have no way of describing gap edges analogous to periodic and anti--periodic eigenvalues in the $1$D case.

\begin{problem} \lb{P10.7} Find an effective specification of gap edges.
\end{problem}

\subsection{Isospectral sets}
We've seen by example that unlike the $1$D case, two different periodic Jacobi matrices with the same tree, same period and same spectrum, can have different DOS (see Example \ref{E9.2}).

\begin{problem} \lb{P10.8} Classify the possible DOS allowed for a given tree, period and spectrum.
\end{problem}

The analog of having the same spectrum is the fine property of having the same DOS.  We then say they lie in the same IsoDOS set.

\begin{problem} \lb{P10.9} Is the IsoDOS set a manifold? Is it perhaps a torus?
\end{problem}

\begin{problem} \lb{P10.10} Is there an natural flow on the IsoDOS set?
\end{problem}

\subsection{Direct integral decomposition}
One of the most powerful tools in understanding the $1$D case is looking at the direct integral decomposition of the representation of the translation group into irreducibles which is simple because those irreducibles are one dimensional.  There has been considerable literature studying the representations of the free non-abelian groups and of the decomposition of translations on trees of which we mention Cartier \cite{Cartier}, Fig\`{a}-Talamanca-Steger \cite{FTS} and Woess \cite{Woess}.

\begin{problem} Determine if the direct integral decomposition is of any use in spectral analysis. In particular, do gap edges have anything to do with particular irreducible representations? \end{problem}

\appendix

\section{Proof of the Gap Labeling Theorem} \lb{appendix}

In this appendix, we both set notation and provide a proof of the fundamental gap labeling theorem accessible to spectral theorists.  The gap labeling theorem is close to a theorem of Pimsner--Voiculescu \cite{PV} and our proof follows that of Effros \cite{Eff}, which, in turn, simplifies an approach of Cunze \cite{Cun} and Connes \cite{Connes}.  We differ from Effros in discussing general $\ell$ and $\bbC^p$ valued functions, as well as providing some technical issues that he omits.  Sunada \cite{Sun} also provides an appendix with a proof but we feel our discussion here is more approachable, in part because we restrict to the situation relevant to the simpler lattice case discussed in this paper.

Because of our audience, we assume that the reader has knowledge of the basic facts about the trace class, $\calI_1$, in $\calL(\calH)$, the bounded operators on a separable Hilbert space, $\calH$, including the definition of the trace map, $\tr$.  These basics can be found, for example, in the books of Goh'berg--Krein \cite{GK} or Simon \cite{TI} or in Simon \cite[Chapter 3]{OT}.  We will need the following result

\begin{theorem} \lb{TA.1} Let $P, Q$ be orthogonal projections in $\calL(\calH)$ so that $P-Q\in\calI_1$.  Then $\tr(P-Q)\in\bbZ$.
\end{theorem}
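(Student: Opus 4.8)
The plan is to reduce the statement to a counting of eigenvalues, exploiting a pair of purely algebraic identities relating $P$ and $Q$. Set $A = P - Q$ and $B = P + Q - \bdone$; both are self--adjoint, and $A \in \calI_1$ by hypothesis, hence $A$ is compact. The whole argument will hinge on showing that the only eigenvalues of $A$ contributing a non--cancelling amount to $\tr(A)$ are $\pm 1$, whose eigenspaces are manifestly finite dimensional and whose signed dimensions are integers.

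First I would record the two identities $A^2 + B^2 = \bdone$ and $AB = -BA$, each of which follows from a one--line computation using $P^2 = P$ and $Q^2 = Q$: for the first, $(P-Q)^2 + (P+Q-\bdone)^2$ expands to $\bdone$; for the second, $AB = PQ - QP = -BA$. Two consequences are immediate. Since $B^2 \ge 0$, the first identity gives $A^2 \le \bdone$, so $\spec(A) \subseteq [-1,1]$; and since $A$ is compact and self--adjoint, its nonzero spectrum consists of eigenvalues of finite multiplicity accumulating only at $0$. By the spectral theorem for compact self--adjoint operators together with the definition of the trace, $\tr(A) = \sum_{\lambda} \lambda\, \dim \ker(A-\lambda)$, the sum running over the eigenvalues of $A$ and absolutely convergent because $A \in \calI_1$.

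Next I would use the anticommutation to pair off eigenvalues. If $Av = \lambda v$ then $A(Bv) = -BAv = -\lambda Bv$, so $B$ maps $\ker(A-\lambda)$ into $\ker(A+\lambda)$; moreover on $\ker(A-\lambda)$ one has $B^2 = \bdone - A^2 = (1-\lambda^2)\bdone$. Hence for every $\lambda$ with $0 < |\lambda| < 1$ the operator $B$ is a linear isomorphism between the finite dimensional spaces $\ker(A-\lambda)$ and $\ker(A+\lambda)$ (its inverse being $(1-\lambda^2)^{-1}B$), so $\dim\ker(A-\lambda) = \dim\ker(A+\lambda)$ and the contributions of $\pm\lambda$ to $\tr(A)$ cancel. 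The eigenvalue $0$ contributes nothing, so only $\lambda = \pm 1$ survive, giving $\tr(A) = \dim\ker(A-\bdone) - \dim\ker(A+\bdone)$. Finally I would identify these spaces: $(P-Q)v = v$ with $\|v\|=1$ forces $\jap{Pv,v} = 1$ and $\jap{Qv,v}=0$, i.e.\ $Pv = v$ and $Qv = 0$, so $\ker(A-\bdone) = \ran P \cap \ker Q$ and likewise $\ker(A+\bdone) = \ran Q \cap \ker P$. Both dimensions are finite (these are eigenspaces of the compact operator $A$ for a nonzero eigenvalue), so $\tr(P-Q)$ is their difference, an integer.

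The routine computations are the two algebraic identities; the step requiring the most care is the spectral bookkeeping in the second paragraph, namely that for a trace class self--adjoint operator the trace genuinely equals the absolutely convergent sum of its eigenvalues (so that the $\pm\lambda$ terms may be rearranged and cancelled), and that all nonzero eigenspaces are finite dimensional. Both are standard consequences of trace class theory, but they are the only analytic (as opposed to algebraic) input, so I would state them explicitly rather than gloss over the interchange of summation.
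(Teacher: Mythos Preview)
Your proof is correct and is essentially the Avron--Seiler--Simon argument the paper presents: the same $A=P-Q$, the same auxiliary operator (your $B=P+Q-\bdone$ is the negative of the paper's $B=\bdone-P-Q$, which is immaterial), the same identities $A^2+B^2=\bdone$ and $AB=-BA$, and the same pairing of $\pm\lambda$ eigenspaces via $B$. Your additional identification of $\ker(A\mp\bdone)$ with $\ran P\cap\ker Q$ and $\ran Q\cap\ker P$ is a harmless extra, not needed for the conclusion but correct.
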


This was first proven by Effros \cite{Eff} and rediscovered, with a different proof, by Avron--Seiler--Simon \cite{ASS}.  \cite{OT} has three proofs: the original proof of Effros \cite[Problem 3.15.20]{OT}, the proof of Avron et.\ al.\ \cite[Theorem 3.15.20]{OT} and an otherwise unpublished proof using the Krein spectral shift \cite[Problem 5.9.1]{OT}.  We note the simple proof of \cite{ASS}: Let $A=P-Q, B=\bdone-P-Q$.  Simple algebra shows that
\begin{equation}\label{A.1}
  A^2+B^2=\bdone, \qquad AB=-BA
\end{equation}

Since $A$ is assumed trace class and self-adjoint trace class operators have an orthonormal basis of eigenvectors \cite[Section 3.2]{OT}, if, for $\lambda \in\calE(A)$, the non--zero eigenvalues of $A$, we define $\calH_\lambda = \{\varphi\,|\,A\varphi=\lambda\varphi\}$,  we have that
\begin{equation}\label{A.2}
  \tr(P-Q) = \sum_{\lambda\in\calE(A)} \lambda \dim(\calH_\lambda)
\end{equation}
By \eqref{A.1}, $B$ maps $\calH_\lambda$ to $\calH_{-\lambda}$ and if $\varphi\in\calH_\lambda$, we have $B^2\varphi=(1-\lambda^2)\varphi$, so if $\lambda\ne\pm 1$ then $B$ is an invertible map of $\calH_\lambda$ to $\calH_{-\lambda}$ and thus
\begin{equation}\label{A.3}
  \lambda\ne\pm 1 \Rightarrow \dim(\calH_\lambda)=\dim(\calH_{-\lambda})
\end{equation}
Thus, by \eqref{A.2},
\begin{equation}\label{A.4}
  \tr(P-Q) = \dim(\calH_1)-\dim(\calH_{-1}) \in \bbZ
\end{equation}

Now, fix $\ell\in\{1,2,\dots\}$ and let $\bbF_\ell$ be the free group on $\ell$ generators (which is non-abelian if $\ell>1$ and $\bbZ$ if $\ell=1$).  We let $\calT_{2\ell}$ be the regular tree of degree $2\ell$.  One can associate the vertices in $\calT_{2\ell}$ with $\bbF_\ell$ (so that $\calT_{2\ell}$ becomes the Cayley graph of the group).  To do this, first pick an orientation so that each vertex in $\calT_{2\ell}$ has $\ell$ edges coming out of it and $\ell$ edges coming in; for example, start at one vertex and inductively define orientations starting with the chosen vertex.

If $\bbF_\ell$ is generated by $x_1,\dots,x_\ell$ and their inverses, we label the $\ell$ edges coming out of each vertex with $x_1,\dots,x_\ell$ and then view the $\ell$ coming in with $x_1^{-1},\dots,x_\ell^{-1}$ so that an edge coming out of one vertex as $x_j$ comes into its other vertex as $x_j^{-1}$.

Given this tree with labels, we can define $\ell$ maps of $\calT_{2\ell}$ to itself, $\tau(x_j),\,j=1,\dots,\ell$ by mapping a vertex $w$ into the vertex connected to it by the edge that starts at $w$ and is labeled $x_j$.  Any element, $w$ in $\bbF_\ell$, is uniquely associated to a word $y_1\dots y_k$ where each $y_m$ is an $x_j$ or $x_j^{-1}$ with the rule that no $x_j$ is next to an $x_j^{-1}$. We associate $x_j^{-1}$ to the map $\tau(x_j)^{-1} \equiv \tau(x_j^{-1})$ and then define $\tau(w) \equiv \tau(y_1)\dots\tau(y_k)$.  $\tau$ defines a free transitive action of the group $\bbF_\ell$ on the set $\calT_{2\ell}$.  By picking, once and for all, a vertex, $e_0$ in $\calT_{2\ell}$ to associate with $e$, the identity in $\bbF_\ell$, the map $w\mapsto \tau(w)e_0$ defines a bijection, $\sigma$, of $\bbF_\ell$ onto $\calT_{2\ell}$ so that
\begin{equation}\label{A.4A}
  \tau(w_1)\sigma(w_2) = \sigma(w_1 w_2)
\end{equation}

Fix $p\in\{1,2,\dots\}$. Our basic Hilbert space, $\calH_{2\ell;p}$ will be $\ell^2(\calT_{2\ell},\bbC^p)$ of square summable functions on $\calT_{2\ell}$ with values in $\bbC^p$. Associating to any $g \in \bbF_\ell$, the unitary map $U_0(g): f\in\calH_{2\ell;p}\mapsto U_0(g)f\in\calH_{2\ell;p}$ by
\begin{equation}\label{A.4B}
  (U_0(g)f)_{\sigma(w)} = f_{\sigma(g^{-1}w)}
  \end{equation}
defines a natural unitary representation of $\bbF_\ell$ which is just the direct sum of $p$ copies of the (left) regular representation.

We let $\calV_{2\ell;p}$ be the von Neumann algebra of all bounded operators, $B$, which commute with $\{U_0(g)\,|\,g\in\bbF_\ell\}$.  For any such $B$ there is a function $\widehat{B}: \bbF_\ell \rightarrow \hom(\bbC^p)$, the $p\times p$ matrices, so that
\begin{equation}\label{A.4C}
  (Bf)(g) = \sum_{h\in\bbF_\ell} \widehat{B}(h^{-1}g)\,f(h)
\end{equation}
Thus, for $\varphi\in\bbC^p$, $\widehat{B}(g)\varphi = B(\delta_{e;\varphi})(g)$, where $\delta_{e;\varphi}$ is the function that is supported at the identity and has the value $\varphi$ there.  It follows that $\widehat{B}$ is in $\ell^2(\calT_{2\ell},\hom(\bbC^p))$ so that the sum in \eqref{A.4C} converges.  If $\widehat{B}$ is supported at $g$, it acts like a right multiplication operator.

We define the unnormalized trace, $T:\calV_{2\ell;p}\to\bbC$ by
\begin{equation}\label{A.4D}
  T(B) = \tr(\widehat{B}(e))
\end{equation}
where $\tr$ is the trace on $p\times p$ matrices.

\begin{theorem} \lb{TA.2} $T$ is a positive, finite, faithful trace on $\calV_{2\ell;p}$ with $T(\bdone)=p$.
\end{theorem}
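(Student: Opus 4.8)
The plan is to recognize $T$ as a finite sum of vector states built from the vectors sitting over the identity of $\bbF_\ell$, and then to reduce each claim to an elementary fact about traces of $p\times p$ matrices together with the structure of the regular representation. Let $\{e_1,\dots,e_p\}$ be the standard basis of $\bbC^p$ and, in the notation of \eqref{A.4C}, let $\delta_{e;e_i}$ be the function supported at the identity with value $e_i$. Since $\widehat{B}(e)\varphi = B(\delta_{e;\varphi})(e)$, one reads off at once that
\[
T(B)=\tr(\widehat{B}(e))=\sum_{i=1}^{p}\jap{\delta_{e;e_i},\,B\,\delta_{e;e_i}}.
\]
From this representation finiteness and the normalization are immediate: $\widehat{\bdone}(g)=\delta_{g,e}I_p$, so $T(\bdone)=\tr(I_p)=p$, while in general $\norm{\widehat{B}(e)}\le\norm{B}$ gives $|T(B)|\le p\norm{B}<\infty$. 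Positivity is equally immediate, since $T(B^*B)=\sum_i\norm{B\,\delta_{e;e_i}}^2\ge 0$.

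For faithfulness I would exploit that $B$ lies in the commutant $\calV_{2\ell;p}$. If $B\ge 0$ with $T(B)=0$, then, $B^{1/2}$ again lying in the von Neumann algebra $\calV_{2\ell;p}$, the identity $T(B)=\sum_i\norm{B^{1/2}\delta_{e;e_i}}^2=0$ forces $B^{1/2}\delta_{e;e_i}=0$, hence $B\,\delta_{e;e_i}=0$, for every $i$. The key point is that the vectors $U_0(g)\delta_{e;e_i}$, as $g$ ranges over $\bbF_\ell$ and $i$ over $1,\dots,p$, are precisely the standard orthonormal basis $\{\delta_{\sigma(g);e_i}\}$ of $\calH_{2\ell;p}$. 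Since $B$ commutes with each $U_0(g)$, we obtain $B\,U_0(g)\delta_{e;e_i}=U_0(g)\,B\,\delta_{e;e_i}=0$, so $B$ annihilates an orthonormal basis and therefore $B=0$.

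The substantive step is the trace identity $T(BC)=T(CB)$. First I would compute the composition law for the symbols: inserting \eqref{A.4C} twice, matching coefficients, and making the substitution $h=gt$ gives
\[
\widehat{BC}(e)=\sum_{t\in\bbF_\ell}\widehat{B}(t^{-1})\,\widehat{C}(t),
\]
so that $T(BC)=\sum_t\tr\!\bigl(\widehat{B}(t^{-1})\widehat{C}(t)\bigr)$, and likewise $T(CB)=\sum_t\tr\!\bigl(\widehat{C}(t^{-1})\widehat{B}(t)\bigr)$. Reindexing the latter sum by $t\mapsto t^{-1}$ and using cyclicity of the matrix trace, $\tr(\widehat{C}(t)\widehat{B}(t^{-1}))=\tr(\widehat{B}(t^{-1})\widehat{C}(t))$, identifies the two expressions term by term. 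I expect the only genuine obstacle to be justifying these rearrangements: the double sum defining $\widehat{BC}$ must be shown to converge and to factor, and the reindexing requires absolute convergence. This is handled by the fact, recorded after \eqref{A.4C}, that $\widehat{B},\widehat{C}\in\ell^2(\calT_{2\ell},\hom(\bbC^p))$; Cauchy--Schwarz then yields $\sum_t|\tr(\widehat{B}(t^{-1})\widehat{C}(t))|\le\sum_t\norm{\widehat{B}(t^{-1})}_2\norm{\widehat{C}(t)}_2\le\norm{\widehat{B}}_2\norm{\widehat{C}}_2<\infty$, where $\norm{\cdot}_2$ is the Hilbert--Schmidt norm, legitimizing every interchange. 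Together these steps show that $T$ is a positive, finite, faithful trace on $\calV_{2\ell;p}$ with $T(\bdone)=p$.
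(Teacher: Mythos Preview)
Your argument is correct. The trace identity $T(BC)=T(CB)$ is handled exactly as in the paper: expand $\widehat{BC}(e)$ as a convolution sum, reindex by $t\mapsto t^{-1}$, and invoke cyclicity of the $p\times p$ matrix trace; you are also more explicit than the paper about the Cauchy--Schwarz justification for the rearrangement.

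Where you diverge is in positivity and faithfulness. The paper first establishes the adjoint relation $\widehat{B^*}(w)=\widehat{B}(w^{-1})^*$ and from it derives the explicit formula
\[
T(B^*B)=\sum_{w\in\bbF_\ell}\tr\!\bigl(\widehat{B}(w)^*\widehat{B}(w)\bigr),
\]
which yields positivity and faithfulness in one stroke (the right side is the squared $\ell^2$-norm of $\widehat{B}$). You instead read positivity directly off the vector-state expression $T(B^*B)=\sum_i\norm{B\,\delta_{e;e_i}}^2$ and then argue faithfulness by taking $B^{1/2}\in\calV_{2\ell;p}$ and using that the $U_0(g)$ translate the $\delta_{e;e_i}$ to an orthonormal basis. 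Your route is a bit more conceptual---it makes transparent that faithfulness comes from the vectors at the identity being cyclic for the commutant---while the paper's route produces a concrete formula identifying $T(B^*B)$ with an $\ell^2$-norm, which is itself a useful byproduct. Either approach is entirely adequate here.
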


\begin{proof}  For $w\in\bbF_\ell$ and $\varphi\in\bbC^p$, let $\delta_{w;\varphi}$ be the function in $\calH_{2\ell;p}$ which is supported only at $w$ and has the value $\varphi$ there.  Then
\begin{equation}\label{A.5}
  \jap{\delta_{v;\psi},B\delta_{w;\varphi}}_{\calH_{2\ell;p}} = \jap{\varphi,\widehat{B}(w^{-1}v)\varphi}_{\bbC^p}
\end{equation}

Let $\left \{\varphi_j \right \}_{j=1}^p$ be the canonical basis for $\bbC^p$ and let $B_{v,w;j,k} \equiv \jap{\delta_{v;\varphi_j},B\delta_{w;\varphi_k}} = \widehat{B}(w^{-1}v)_{jk}$.  Then, for $B, C \in \calV_{2\ell;p}$, we have that
\begin{align}
  T(BC) &= \sum_{j=1}^{p} [\widehat{BC}(e)]_{jj} \nonumber  \\
        &= \sum_{j,k=1}^{p} \sum_{w\in\bbF_\ell} B_{e,w;jk}C_{w,e;kj} \nonumber \\
        &= \sum_{j,k=1}^{p} \sum_{w\in\bbF_\ell} \widehat{B}(w^{-1})_{jk}\widehat{C}(w)_{kj}   \label{A.5A} \\
        &= \sum_{j,k=1}^{p} \sum_{w\in\bbF_\ell} \widehat{B}(w)_{jk}\widehat{C}(w^{-1})_{kj}   \label{A.6} \\
        &= T(CB) \label{A.6A}
\end{align}
proving that $T$ is an (obviously finite) trace. \eqref{A.6} comes from the fact that summing over all $w$ is the same as summing over all $w^{-1}$ and \eqref{A.6A} comes from undoing all the steps that led to \eqref{A.5A}.

Next notice that using \eqref{A.5} twice, we have that
\begin{align*}
  \jap{\psi,\widehat{B^*}(w)\varphi} &= \jap{\delta_{w;\psi},B^*\delta_{e;\varphi}} \\
                                 &= \overline{\jap{\delta_{e;\varphi},B\delta_{w;\psi}}} \\
                                 &= \overline{\jap{\varphi,\widehat{B}(w^{-1})\psi}} \\
                                 &= \jap{\psi,\widehat{B}(w^{-1})^*\varphi}
\end{align*}
proving that
\begin{equation}\label{A.6B}
  \widehat{B^*}(w) = \widehat{B}(w^{-1})^*
\end{equation}
This and \eqref{A.5A} show that
\begin{equation}\label{A.6C}
  T(B^*B) = \sum_{w\in\bbF_\ell} \tr(\widehat{B}(w)^*\widehat{B}(w))
\end{equation}
which implies that $T$ is positive and that $T(B^*B)=0\Rightarrow \widehat{B} \equiv 0 \Rightarrow B=0$, so $T$ is faithful.
\end{proof}

We let $\calC^{(0)}_{2\ell;p}$ be the set of those $B\in\calV_{2\ell;p}$ with $\{w\,|\,\widehat{B}(w)\ne 0\}$ finite.  It is generated by the set of diagonal $B$ (i.e.\ $B$'s with $\widehat{B}(w) = 0$ if $w \ne e$) and $\left \{U_0(x_j),U_0(x_j^{-1});\,j=1,\dots,\ell \right \}$.  $C^*_{red}(\bbF_\ell;\bbC^p)$, the reduced $C^*$ algebra of $\bbF_\ell$ (when $p=1$), is the $C^*$ closure of $\calC^{(0)}_{2\ell;p}$.  One has that $C^*_{red}(\bbF_\ell;\bbC^p) = C^*_{red}(\bbF_\ell;\bbC)\otimes\hom(\bbC^p)$.  The main gap labeling theorem is

\begin{theorem} \lb{TA.3} Let $P\in C^*_{red}(\bbF_\ell;\bbC^p)$ be an orthogonal projection.  Then
\begin{equation}\label{A.7}
  T(P) \in \bbZ
\end{equation}
\end{theorem}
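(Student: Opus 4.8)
The plan is to reduce the statement to Theorem \ref{TA.1} (integrality of the trace of a difference of projections) by manufacturing, from the geometry of the tree $\calT_{2\ell}$, a Fredholm operator whose index is on the one hand manifestly an integer and on the other hand equal to $T(P)$. First I would exploit that $\calT_{2\ell}$ is a tree with the chosen root $e_0$: every vertex $w \neq e_0$ has a unique neighbor $w^-$ closer to $e_0$, and the assignment $w \mapsto (\text{the edge joining } w \text{ to } w^-)$ is a bijection from the non-root vertices onto the edge set $E$. Tensoring with $\bbC^p$, this yields a unitary $\Theta \colon \calH_{2\ell;p} \to \ell^2(E,\bbC^p) \oplus \bbC^p$, the last summand absorbing the fiber over $e_0$. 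The decisive geometric observation is that the representation $U_0$ is intertwined by $\Theta$ up to finite rank: for each $g \in \bbF_\ell$ the operator $\Theta U_0(g)\Theta^* - (U_0^E(g) \oplus \bdone)$ is supported on the finite geodesic segment from $e_0$ to $g\cdot e_0$, hence has rank $O(|g|)$, where $U_0^E$ is the permutation representation on edges.

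Second, I would package this into an odd symmetry $F$ on $\calK = \calH_{2\ell;p} \oplus (\ell^2(E,\bbC^p)\oplus\bbC^p)$, built from $\Theta$ off-diagonally with respect to the grading $\gamma = \bdone \oplus (-\bdone)$ that exchanges the two summands. By the previous step $[F,U_0(g)]$ is finite rank for every $g$, so $[F,B]$ is finite rank for $B \in \calC^{(0)}_{2\ell;p}$ and, passing to norm limits, compact for every $B \in C^*_{red}(\bbF_\ell;\bbC^p)$; the $\bbC^p$-coefficients cost nothing, since commutators with the constant matrix part vanish. Consequently, for an orthogonal projection $P$ in the algebra, letting $P'$ denote its image under the edge-plus-root representation, the compressed operator $P\,\Theta^*\,P'$ is Fredholm; equivalently $P$ and $\Theta^* P' \Theta$ are two projections differing by a compact operator, so their index is a well-defined integer.

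Third, I would identify this integer with $T(P)$. On the dense subalgebra $\calC^{(0)}_{2\ell;p}$ the commutators are genuinely finite rank, so the two end-projections differ by a trace-class operator and Theorem \ref{TA.1} applies verbatim; a finite computation then evaluates the index cocycle $B \mapsto \tfrac12\tr(\gamma F[F,B])$ on the generators $U_0(g)\otimes E_{ij}$ and shows it is supported at $g = e$ with value $\tr(\widehat B(e))$, i.e.\ that it coincides there with the trace $T$ of \eqref{A.4D}. Finally I would transfer this identity to a general projection $P$ in the norm closure, using that both $T$ and the index pairing are invariant under the homotopy of projections furnished whenever $\|P-Q\|<1$, so each descends to the same integer-valued map on projection classes.

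The hard part will be this last transfer. Because $\bbF_\ell$ is torsion-free, $\calC^{(0)}_{2\ell;p}$ contains no nontrivial projections (its only idempotents are $0$ and $\bdone$), so one cannot simply replace $P$ by a finitely supported projection and rerun the finite-rank computation. The Fredholm module above is only finitely summable on the dense subalgebra, the commutators $[\Theta,U_0(g)]$ having rank growing like $|g|$, so the trace formula that makes Theorem \ref{TA.1} literally applicable is not available verbatim for $P$ itself. One must instead control the accumulation of the finite-rank errors under norm approximation of $P$ by elements of $\calC^{(0)}_{2\ell;p}$, and argue that the integer index is locally constant while $T$ is norm continuous with $\|T\| = p$, pinning $T(P)$ to the integer value already computed on the dense subalgebra. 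Making this limiting argument airtight, supplying exactly the estimates that Effros \cite{Eff} leaves implicit, is the technical core of the proof.
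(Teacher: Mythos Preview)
Your Fredholm module is the paper's construction in different clothing: the edge representation $U_0^E$ decomposes as $\ell$ copies of the regular representation (edges carry generator labels, giving $\ell$ free orbits), so comparing $U_0$ with $U_0^E\oplus\bdone$ via your $\Theta$ is exactly the paper's comparison of $\pi_0$ with $\pi_1=0\oplus U_0^{\oplus\ell}$ via the decomposition $\calH_{2\ell;p}\cong\bbC^p\oplus\calH_{2\ell;p}^{\,\ell}$ obtained by deleting $e_0$ and pairing the $2\ell$ branches into $\ell$ copies of $\calT_{2\ell}$. Your observation that the discrepancy is supported on the geodesic from $e_0$ to $ge_0$ is the paper's Proposition~\ref{PA.5}(a)--(c): the diagonal of $\pi_0(B)-\pi_1(B)$ is $\widehat B(e)$ at the root and zero elsewhere, and when that difference is trace class its trace is $T(B)$.

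The gap is in your transfer step, and you have located it without closing it. Knowing that the cocycle $B\mapsto\tfrac12\tr(\gamma F[F,B])$ agrees with $T$ as a \emph{linear functional} on $\calC^{(0)}_{2\ell;p}$ does not let you evaluate either at a projection $P$ outside that algebra; and local constancy of the index only says $\mathrm{ind}$ is constant on path components of projections, not that it equals $T$ on any nontrivial component. You need a projection in the finitely-summable domain close to $P$ to anchor the comparison, and since $\calC^{(0)}_{2\ell;p}$ has none, your sketch stops short. The paper supplies the missing device: it shows (Proposition~\ref{PA.8}) that $\calA_0=\{B:\pi_0(B)-\pi_1(B)\in\calI_1\}$ is closed under holomorphic functional calculus, so resolvents and contour-integral spectral projections of its elements remain in $\calA_0$. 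One then approximates $P$ in norm by self-adjoint $B_n\in\calC^{(0)}_{2\ell;p}\subset\calA_0$; for large $n$ there is a spectral gap near $\tfrac12$, and $Q_n=E_{(1/2,3/2)}(B_n)$ is a genuine projection in $\calA_0$ with $Q_n\to P$ (Theorem~\ref{TA.6}). On $\calA_0$ the trace-class formula and Theorem~\ref{TA.1} give $T(Q_n)\in\bbZ$, and norm continuity of $T$ finishes. This holomorphic-closure step, not further estimates on the finite-rank errors, is what your outline is missing.
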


\begin{corollary} \lb{CA.4} If $P\in C^*_{red}(\bbF_\ell;\bbC)$ is an orthogonal projection, then $P=0$ or $P=1$.
\end{corollary}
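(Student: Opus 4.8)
The plan is to derive the corollary directly from the integrality statement of Theorem \ref{TA.3} together with the fact, recorded in Theorem \ref{TA.2}, that $T$ is a positive, faithful, finite trace with $T(\bdone)=p$; here we take $p=1$, so that $T(\bdone)=1$. First I would note that any orthogonal projection satisfies $0\le P\le\bdone$, and since $T$ is positive this gives $0\le T(P)\le T(\bdone)=1$. Theorem \ref{TA.3} applied with $p=1$ forces $T(P)\in\bbZ$, so the only surviving possibilities are $T(P)=0$ and $T(P)=1$.

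Next I would eliminate each case using faithfulness. If $T(P)=0$, then because $P$ is a projection we have $P=P^*P$, whence $T(P^*P)=T(P)=0$; faithfulness of $T$ (Theorem \ref{TA.2}) then yields $P=0$. If instead $T(P)=1$, I would pass to the complementary projection $Q=\bdone-P$. This step uses that $\bdone\in C^*_{red}(\bbF_\ell;\bbC)$, which is clear since $\bdone$ is the diagonal element with $\widehat{\bdone}(e)=1$ and $\widehat{\bdone}(w)=0$ for $w\ne e$, so it lies in $\calC^{(0)}_{2\ell;p}$ and hence in the $C^*$ closure; thus $Q$ is again an orthogonal projection in the algebra. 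Then $T(Q)=T(\bdone)-T(P)=0$, and the same faithfulness argument applied to $Q$ gives $Q=0$, i.e. $P=\bdone$.

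There is essentially no genuine obstacle in this argument; it is a formal consequence of integrality plus normalization and faithfulness. The only points requiring a moment's attention are the normalization $T(\bdone)=1$ in the scalar case $p=1$ and the membership $\bdone\in C^*_{red}(\bbF_\ell;\bbC)$, both of which are immediate from the constructions in the appendix. Combining the two cases, $P\in\{0,\bdone\}$, which is the assertion of the corollary.
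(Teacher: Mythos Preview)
Your proof is correct and follows essentially the same approach as the paper's: bound $T(P)$ between $0$ and $T(\bdone)=1$, invoke Theorem \ref{TA.3} to force $T(P)\in\{0,1\}$, and then apply faithfulness of $T$ to $P$ or to $\bdone-P$. You have simply made explicit the steps $P=P^*P$ and $\bdone\in C^*_{red}(\bbF_\ell;\bbC)$ that the paper leaves implicit.
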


\begin{remark}  This is the celebrated theorem of Pimsner--Voiculescu \cite{PV} which established a conjecture of Kadison.
\end{remark}

\begin{proof} [Proof (Of the Corollary)] Since $0\le P \le \bdone$, we have that $T(P)$ is $0$ or $1$, so either $T(P)=0$ or $T(\bdone-P)=0$.  Since $T$ is faithful, either $P=0$ or $\bdone-P=0$.
\end{proof}

To prove Theorem \ref{TA.3}, we introduce a degenerate representation, $U_1$, of $\bbF_\ell$.  Single out $e_0\in\calT_{2\ell}$ and remove it.  We get $2\ell$ rooted trees with roots corresponding to the generators and their inverses. Join the $x_j^{-1}$ tree to the $x_j$ tree  and get $\ell$ copies of the original tree which can be directed and labeled consistently with the original directions and labels.  We thus find that
\begin{equation}\label{A.8}
  \calH_{2\ell;p} = \bbC^p\oplus\,\,\ell\text{ copies of } \calH_{2\ell;p}
\end{equation}
$U_1$ is then $0\oplus\,\,\ell\text{ copies of } U_0$.  It is a degenerate representation since the identity in $\bbF_\ell$ goes into a codimension $1$ projection rather than the identity on $\calH_{2\ell;p}$.

Both $U_0$ and $U_1$ extend to representations $\pi_0$ and $\pi_1$ (with $\pi_1$ degenerate) of $\calV_{2\ell;p}$ and so, by restriction, of $C^*_{red}(\bbF_\ell;\bbC^p)$.  $\pi_0(B)=B$ and $\pi_1(B)$ is a bounded operator on $\calH_{2\ell;p}$ (which is not usually in $\calV_{2\ell;p}$).

\begin{proposition} \lb{PA.5} (a) the diagonal part of $\pi_1(B)$ at site $w$ is
\begin{equation}\label{A.9}
  \widehat{B}(e) \text{ if } w\ne e; \qquad 0 \text{ if } w=e
\end{equation}

(b) If $P\in C^*_{red}(\bbF_\ell;\bbC^p)$ is an orthogonal projection, so are $\pi_0(P)$ and $\pi_1(P)$.

(c) If $B\in C^*_{red}(\bbF_\ell;\bbC^p)$ is such that
\begin{equation}\label{A.10}
   \pi_0(B)-\pi_1(B)\in\calI_1(\calH_{2\ell;p})
\end{equation}
then
\begin{equation}\label{A.11}
  T(B) = \tr(\pi_0(B)-\pi_1(B))
\end{equation}

(d) If $P\in C^*_{red}(\bbF_\ell;\bbC^p)$ is an orthogonal projection so that
\begin{equation}\label{A.12}
  \pi_0(P)-\pi_1(P)\in\calI_1(\calH_{2\ell;p})
\end{equation}
then
\begin{equation}\label{A.13}
  T(P)\in \bbZ
\end{equation}
\end{proposition}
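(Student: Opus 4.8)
The plan is to dispatch the four parts in the order (b), (a), (c), (d), since (b) is purely formal, (a) amounts to unwinding the definition of the degenerate representation, and (c)--(d) are then short deductions built on these together with Theorem \ref{TA.1}. For (b), I would simply note that $\pi_0$ and $\pi_1$ are $*$-representations of $C^*_{red}(\bbF_\ell;\bbC^p)$, so they preserve the relations $P=P^*=P^2$; hence $\pi_i(P)$ is self-adjoint and idempotent, i.e.\ an orthogonal projection.

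For (a), I would unpack $U_1 = 0\oplus\ell\,U_0$. Because $\pi_1$ is the representation generated by $U_1$ (the right-multiplication diagonal part acting blockwise), $\pi_1(B)$ is the block-diagonal operator that is $0$ on the fiber $\bbC^p$ over $e_0$ and equals $B$ on each of the $\ell$ copies of $\calH_{2\ell;p}$ produced by regluing the punctured tree; since the regluing is done consistently with the original labels, each copy carries the genuine regular action, so $\pi_1(B)$ really acts as the same operator $B$ there. Reading off the diagonal $p\times p$ block at a vertex $w$ then gives the claim: at $w=e$ (the fiber over $e_0$) the block is $0$, while for $w\neq e$ the vertex lies in one of the copies, where $\pi_1(B)$ acts as $B$ and the diagonal block of $B$ at any vertex is $\widehat B(e)$ by \eqref{A.5}. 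This is exactly \eqref{A.9}.

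For (c), the essential observation is that although neither $\pi_0(B)=B$ nor $\pi_1(B)$ is trace class (each has constant diagonal block $\widehat B(e)$ over infinitely many vertices), their difference is, by hypothesis, and the trace of a trace-class operator is the sum of its diagonal matrix elements in the orthonormal basis $\{\delta_{w;\varphi_j}\}$. By \eqref{A.5} the diagonal block of $\pi_0(B)$ at every $w$ is $\widehat B(e)$, whereas by part (a) the diagonal block of $\pi_1(B)$ is $\widehat B(e)$ for $w\neq e$ and $0$ for $w=e$. The blocks therefore telescope away at every vertex except $w=e$, where the surviving block is $\widehat B(e)$, and summing over the basis yields $\tr(\pi_0(B)-\pi_1(B)) = \tr_{\bbC^p}(\widehat B(e)) = T(B)$, which is \eqref{A.11}. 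Part (d) is then immediate: by (b) both $\pi_0(P)$ and $\pi_1(P)$ are orthogonal projections, by hypothesis their difference lies in $\calI_1(\calH_{2\ell;p})$, so Theorem \ref{TA.1} gives $\tr(\pi_0(P)-\pi_1(P))\in\bbZ$, and (c) identifies this integer as $T(P)$.

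The only genuinely delicate step is (a): one must correctly identify $\pi_1(B)$ as the block operator $0\oplus\ell B$ after the regluing and verify vertex by vertex that its diagonal blocks come out as stated. This bookkeeping is precisely what makes the cancellation in (c) telescope to the single term $\widehat B(e)$; everything after it is either formal ((b)) or a direct appeal to Theorem \ref{TA.1} ((d)).
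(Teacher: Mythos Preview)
Your proposal is correct and follows essentially the same approach as the paper: part (b) is the formal fact that (possibly degenerate) $*$-representations preserve the relations $P=P^*=P^2$; part (a) unwinds $\pi_1=0\oplus\ell\,\pi_0$ to read off the diagonal blocks; part (c) computes the trace of the trace-class difference as the sum of diagonal blocks, which telescope to the single block $\widehat B(e)$ at $w=e$; and part (d) combines (b), (c), and Theorem~\ref{TA.1}. The only difference from the paper is the order in which you treat the parts, which is immaterial.
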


\begin{proof} (a) The diagonal part of $\pi_0$ is just $\widehat{B}(e)$.  Since $\pi_1$ is just a copy of $\pi_0$ on each of the $\ell$ trees, the first assertion in \eqref{A.9} follows.  The second assertion is immediate.

(b) follows from the fact that $\pi_0$ is a representation and $\pi_1$ a degenerate representation.

(c) If $C\in\calI_1$, its trace is just the sum over $w\in\bbF_\ell$ of $\tr_{\bbC^p}(C_{w w})$.  By (a)
\begin{equation*}
  [\pi_0(B)-\pi_1(B)]_{w w} = \jap{\delta_{w; e},\widehat{B}(e)\delta_{w; e}}
\end{equation*}
so \eqref{A.11} is immediate.

(d) follows from (c) and Theorem \ref{TA.1}
\end{proof}

Let
\begin{equation}\label{A.14}
  \calA_0 = \{B\in C^*_{red}(\bbF_\ell;\bbC^p)\,|\,\pi_0(B)-\pi_1(B)\in\calI_1(\calH_{2\ell;p})\}
\end{equation}
Since $T$ is operator norm continuous on $C^*_{red}(\bbF_\ell;\bbC^p)$, Theorem \ref{TA.3} follows from Proposition \ref{PA.5}(d) and

\begin{theorem} \lb{TA.6} Any orthogonal projection in $C^*_{red}(\bbF_\ell;\bbC^p)$ is an operator norm limit of orthogonal projections in $\calA_0$.
\end{theorem}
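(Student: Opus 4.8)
The plan is to produce, for a given orthogonal projection $P\in C^*_{red}(\bbF_\ell;\bbC^p)$ and any $\eta>0$, a genuine projection $P'\in\calA_0$ with $\norm{P-P'}<\eta$; since $T$ is operator-norm continuous and integer-valued on projections of $\calA_0$ by Proposition \ref{PA.5}(d), this yields $T(P)\in\bbZ$. First I would record two structural facts about the set $\calA_0$ of \eqref{A.14}. Because $\calI_1(\calH_{2\ell;p})$ is a two-sided ideal and both $\pi_0$ and $\pi_1$ are $*$-homomorphisms, the telescoping identity
\[
  \pi_0(BC)-\pi_1(BC)=\pi_0(B)\bigl[\pi_0(C)-\pi_1(C)\bigr]+\bigl[\pi_0(B)-\pi_1(B)\bigr]\pi_1(C)
\]
shows that $\calA_0$ is a $*$-subalgebra. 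Moreover $\calC^{(0)}_{2\ell;p}\subseteq\calA_0$: for a diagonal $B$ the operator $\pi_0(B)-\pi_1(B)$ is supported at the single site $e_0$ and so has rank at most $p$, while for the generators $U_0(x_j^{\pm1})$ the representations $\pi_0$ and $\pi_1$ differ only on the finitely many edges meeting $e_0$ (this is exactly what the regluing producing \eqref{A.8} changes), so that difference is again finite rank; the same telescoping then makes $\pi_0(B)-\pi_1(B)$ finite rank for every $B\in\calC^{(0)}_{2\ell;p}$. In particular $\calA_0$ is dense in $C^*_{red}(\bbF_\ell;\bbC^p)$.

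Given density, I would approximate. Pick $B\in\calC^{(0)}_{2\ell;p}$ with $\norm{B-P}$ small and replace it by the self-adjoint $A=\tfrac12(B+B^*)\in\calC^{(0)}_{2\ell;p}\subseteq\calA_0$, which still satisfies $\norm{A-P}$ small. Since $P$ is a projection, $\spec(P)\subseteq\{0,1\}$, so once $\norm{A-P}<\tfrac14$ the spectrum of $A$ lies in $[-\tfrac14,\tfrac14]\cup[\tfrac34,\tfrac54]$ and has a gap across $\tfrac12$. Let $f$ be the function equal to $0$ on a neighborhood of $[-\tfrac14,\tfrac14]$ and to $1$ on a neighborhood of $[\tfrac34,\tfrac54]$; it is holomorphic on a neighborhood $\calU$ of $\spec(A)$ and satisfies $f(0)=0$ and $f^2=f$ there. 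Then $P'\equiv f(A)$ is an orthogonal projection, and continuity of the functional calculus gives $\norm{P'-P}=\norm{f(A)-f(P)}\to0$ as $\norm{A-P}\to0$.

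The remaining, and \textbf{main}, point is to check that $P'\in\calA_0$, which is where the degeneracy of $\pi_1$ must be handled. Writing $P'=\tfrac{1}{2\pi i}\oint_\Gamma f(z)(z-A)^{-1}\,dz$ over a contour $\Gamma\subseteq\calU$ enclosing $\spec(A)$, I would apply $\pi_0$ and $\pi_1$ under the integral. As $\pi_0$ is a unital $*$-homomorphism, $\pi_0(P')=\tfrac{1}{2\pi i}\oint_\Gamma f(z)(z-\pi_0(A))^{-1}\,dz$. For the degenerate $\pi_1$, writing $Q=\pi_1(\bdone)$ one has $\pi_1\bigl((z-A)^{-1}\bigr)=(z-\pi_1(A))^{-1}-z^{-1}(\bdone-Q)$, and the extra term integrates to zero (its only possible pole inside $\Gamma$ is at $z=0$, with residue $f(0)=0$); hence $\pi_1(P')=\tfrac{1}{2\pi i}\oint_\Gamma f(z)(z-\pi_1(A))^{-1}\,dz$ as well. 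Subtracting and using the resolvent identity,
\[
  \pi_0(P')-\pi_1(P')=\frac{1}{2\pi i}\oint_\Gamma f(z)\,(z-\pi_0(A))^{-1}\,C\,(z-\pi_1(A))^{-1}\,dz,
\]
where $C=\pi_0(A)-\pi_1(A)\in\calI_1$ because $A\in\calA_0$. The integrand is an $\calI_1$-valued function that is continuous in trace norm on the compact contour $\Gamma$, so the integral converges in $\calI_1$; thus $\pi_0(P')-\pi_1(P')\in\calI_1$ and $P'\in\calA_0$, completing the construction. I expect the genuine obstacle to be precisely this last step: verifying that the non-unital calculus for $\pi_1$ contributes nothing spurious (the role of $f(0)=0$), and that the resolvent difference is trace class so that the Cauchy integral, rather than merely a norm limit of elements of $\calA_0$, actually lands in $\calA_0$.
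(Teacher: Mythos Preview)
Your proof is correct and follows essentially the same route as the paper's. The paper packages the structural facts you establish inline (that $\calA_0$ is a $*$-algebra containing $\calC^{(0)}_{2\ell;p}$, and that Riesz projections of elements of $\calA_0$ onto spectral components avoiding $0$ remain in $\calA_0$) into a separate Proposition \ref{PA.8}, and then the proof of Theorem \ref{TA.6} simply takes self-adjoint $B_n\in\calC^{(0)}_{2\ell;p}$ close to $P$ and invokes that proposition for the spectral projection $Q_{(1/2,3/2)}(B_n)$. Your use of a holomorphic $f$ with $f(0)=0$ is equivalent to the paper's requirement $0\notin E$ in Proposition \ref{PA.8}(d): both ensure the $z^{-1}(\bdone-Q)$ correction from the degenerate $\pi_1$ contributes nothing to the contour integral, and the remaining trace-class resolvent-identity computation is identical.
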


We begin the proof of Theorem \ref{TA.6} with

\begin{lemma} \lb{LA.7} Let $f$ be a continuous function from $[0,1]$ to $\calI_1$ (continuous in $\calI_1$-norm).  Then the Riemann sums for $\int_{0}^{1} f(s)\,ds$ converge in $\calI_1$-norm.
\end{lemma}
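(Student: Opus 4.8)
The plan is to recognize this as the standard fact that a continuous map from a compact interval into a Banach space is Riemann integrable, the Banach space being the trace class $\calI_1$ with its trace norm, which I will write $\norm{\cdot}_1$. The two structural inputs I would use are that $(\calI_1,\norm{\cdot}_1)$ is complete (a basic property of the trace class; see \cite[Chapter 3]{OT}) and that $[0,1]$ is compact. The statement to be established is that the net of Riemann sums $S(P,\tau)=\sum_{i=1}^{n} f(s_i^*)\,(s_i-s_{i-1})$, indexed by tagged partitions $P:0=s_0<\dots<s_n=1$ with tags $s_i^*\in[s_{i-1},s_i]$ and ordered by mesh tending to $0$, is $\norm{\cdot}_1$-convergent; its limit is then by definition $\int_0^1 f(s)\,ds$.

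First I would invoke uniform continuity: since $f$ is continuous on the compact set $[0,1]$, for every $\varepsilon>0$ there is $\delta>0$ so that $\norm{f(s)-f(t)}_1<\varepsilon$ whenever $|s-t|<\delta$. The key step is then a Cauchy estimate through a common refinement. Given two tagged partitions $(P_1,\tau_1)$ and $(P_2,\tau_2)$, each of mesh less than $\delta$, let $P$ be the common refinement of $P_1$ and $P_2$ as a partition and fix any tagging $\tau$ of $P$. For each piece $J$ of $P$, let $I$ be the subinterval of $P_1$ containing $J$; the tag of $I$ coming from $\tau_1$ and the tag of $J$ coming from $\tau$ both lie in $I$, hence within $\delta$ of one another, so the two contributions of $J$ differ in trace norm by at most $\varepsilon\,|J|$. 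Summing over all pieces of $P$, whose lengths total $1$, gives $\norm{S(P_1,\tau_1)-S(P,\tau)}_1\le\varepsilon$, and likewise $\norm{S(P_2,\tau_2)-S(P,\tau)}_1\le\varepsilon$. The triangle inequality then yields $\norm{S(P_1,\tau_1)-S(P_2,\tau_2)}_1\le 2\varepsilon$.

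Consequently the net of Riemann sums is Cauchy in $\norm{\cdot}_1$ as the mesh tends to $0$, and completeness of $\calI_1$ produces a limit, which is the asserted integral. Equivalently, any sequence of tagged partitions with mesh tending to $0$ yields a $\norm{\cdot}_1$-Cauchy, hence convergent, sequence of Riemann sums, and the estimate above shows that all such sequences share a single limit independent of the tags.

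There is no genuine analytic obstacle here; the only point demanding care is the bookkeeping involved in comparing an arbitrary pair of partitions through a common refinement, so that ``convergence of Riemann sums'' is correctly interpreted as convergence of the net indexed by mesh. The metric estimate uses nothing about $\calI_1$ beyond the triangle inequality, while completeness of $\calI_1$ is invoked only at the final step to guarantee that the limit exists.
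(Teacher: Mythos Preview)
Your proof is correct and takes essentially the same approach as the paper: the paper's own proof is just a one-line reference to the standard Riemann-integral argument together with uniform continuity on $[0,1]$, and you have simply written out that standard argument in detail (Cauchy via common refinement, then completeness of $\calI_1$).
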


\begin{proof} Follows from the standard proof of convergence of the Riemann integral \cite[Theorem 4.1.1]{RA} and the fact that continuity on $[0,1]$ implies uniform continuity.
\end{proof}

Recall \cite[Section 2.3]{OT}, that in any Banach algebra, $\calB$, if $x\in\calB$ and $E$ is a closed subset of the spectrum, $\sigma(x)$, of $x$, then
\begin{equation}\label{A.15}
  p=(2\pi i)^{-1}\int_\Gamma \frac{dz}{z-x}
\end{equation}
(here $\Gamma$ is a curve that winds around $E$) defines a natural Gel'fand projection which when $x=B\in\calL(\calH)$ and $B$ is self-adjoint agrees with the spectral projection, $P_E(B)$.

\begin{proposition} \lb{PA.8} (a) $\calA_0$ is a *-algebra.

(b) $\calC^{(0)}_{2\ell;p} \subset \calA_0$

(c) If $B\in C^*_{red}(\bbF_\ell;\bbC^p)$ lies in $\calA_0$ and $z\ne 0$ with $z \notin\sigma(A)$, then $(B-z)^{-1} \in \calA_0$ and on $\bbC\setminus\sigma(B)$, $z\mapsto\pi_0((B-z)^{-1})-\pi_1((B-z)^{-1})$ is $\calI_1$-norm continuous.

(d) If $E$ is a closed subset of the spectrum of $B\in\calA_0$ with $0 \notin E$, then $P$ given by \eqref{A.15} lies in $\calA_0$.
\end{proposition}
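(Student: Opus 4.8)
The plan is to dispatch (a)--(d) in sequence, using repeatedly that $\pi_0$ and $\pi_1$ are $*$-homomorphisms (with $\pi_1$ degenerate, i.e.\ $\pi_1(\bdone)=P_1$ the codimension-$p$ projection of \eqref{A.8}) and that $\calI_1$ is a two-sided $*$-ideal in $\calL(\calH_{2\ell;p})$. For (a), stability under linear combinations and adjoints is immediate, since $\pi_0,\pi_1$ are linear and $*$-preserving while $\calI_1$ is a $*$-closed subspace. The only substance is closure under products, which I would obtain from the telescoping identity
\begin{equation*}
  \pi_0(BC)-\pi_1(BC)=\bigl(\pi_0(B)-\pi_1(B)\bigr)\pi_0(C)+\pi_1(B)\bigl(\pi_0(C)-\pi_1(C)\bigr),
\end{equation*}
in which each summand is (trace class)$\times$(bounded) or (bounded)$\times$(trace class), hence in $\calI_1$.

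For (b), now that $\calA_0$ is known to be a $*$-algebra, it suffices to place the generators of $\calC^{(0)}_{2\ell;p}$ in $\calA_0$, namely the diagonal operators and the shifts $U_0(x_j)$, $U_0(x_j^{-1})$. For a diagonal $B$ (with $\widehat B$ supported at $e$), Proposition \ref{PA.5}(a) together with the fact that $\pi_1=0\oplus\pi_0^{\oplus\ell}$ under \eqref{A.8} shows that $\pi_0(B)-\pi_1(B)$ is supported on the single excised fibre and so has rank $\le p$. For the shifts I would read off $U_0(x_j)-U_1(x_j)$ directly from the folding that defines $U_1$: the two unitaries differ only on the finitely many edges meeting the removed vertex $e_0$, so their difference is again finite rank. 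Finite-rank operators lie in $\calI_1$, placing the generators, and hence all of $\calC^{(0)}_{2\ell;p}$, in $\calA_0$.

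Part (c) is where I expect the main obstacle, precisely because $\pi_1$ is not unital, so $\pi_1((B-z)^{-1})$ is \emph{not} the resolvent of $\pi_1(B)$. Writing $A_0=\pi_0(B)$, $A_1=\pi_1(B)=0\oplus B^{\oplus\ell}$, $D=A_0-A_1\in\calI_1$ and $P_0=\bdone-P_1$, the relation $\pi_1=0\oplus\pi_0^{\oplus\ell}$ gives, for $z\ne0$ with $z\notin\sigma(B)$,
\begin{equation*}
  \pi_1\bigl((B-z)^{-1}\bigr)=(A_1-z)^{-1}+z^{-1}P_0,
\end{equation*}
after which the resolvent identity $(A_0-z)^{-1}-(A_1-z)^{-1}=-(A_0-z)^{-1}D(A_1-z)^{-1}$ yields
\begin{equation*}
  \pi_0\bigl((B-z)^{-1}\bigr)-\pi_1\bigl((B-z)^{-1}\bigr)=-(A_0-z)^{-1}D(A_1-z)^{-1}-z^{-1}P_0\in\calI_1,
\end{equation*}
so $(B-z)^{-1}\in\calA_0$. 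The $\calI_1$-continuity then follows from the norm-analyticity of the two resolvents in $z$, the joint continuity of $(X,K,Y)\mapsto XKY$ from bounded$\times$trace-class$\times$bounded into $\calI_1$, and the continuity of $z^{-1}P_0$ away from $0$. The one delicate point, which I would verify by hand, is continuity \emph{at} $z=0$ in the case $0\notin\sigma(B)$: the apparently singular $z^{-1}P_0$ cancels the $-z^{-1}P_0$ singularity of $(A_1-z)^{-1}$ because $A_1$ annihilates the excised fibre (so that $DP_0=BP_0$), leaving an $\calI_1$-valued function that extends continuously through the origin.

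Finally, for (d) I would push $\pi_0-\pi_1$ through the Gel'fand integral \eqref{A.15}. Since $E$ is a compact subset of $\sigma(B)$ with $0\notin E$, the contour $\Gamma$ can be chosen inside the resolvent set, winding once around $E$ while avoiding both $\sigma(B)\setminus E$ and the origin. Operator-norm convergence of the integral together with the norm-continuity of the linear maps $\pi_0,\pi_1$ gives
\begin{equation*}
  \pi_0(P)-\pi_1(P)=\frac{1}{2\pi i}\int_\Gamma\Bigl[\pi_0\bigl((z-B)^{-1}\bigr)-\pi_1\bigl((z-B)^{-1}\bigr)\Bigr]\,dz.
\end{equation*}
By part (c) the integrand is $\calI_1$-valued and trace-norm continuous along $\Gamma$ (which avoids $0$), so Lemma \ref{LA.7} upgrades the convergence from operator norm to $\calI_1$-norm; hence the integral lies in $\calI_1$ and $P\in\calA_0$, completing the proof.
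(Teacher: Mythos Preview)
Your proof is correct and follows essentially the same route as the paper: the same telescoping identity for (a), the same finite-rank observation for generators in (b), the same correction term $z^{-1}P_0$ (the paper's $z^{-1}P_{e_0}$) combined with the resolvent identity in (c), and the same appeal to Lemma \ref{LA.7} for (d). If anything, you are more careful than the paper in (c): the statement asserts $\calI_1$-continuity on all of $\bbC\setminus\sigma(B)$, and you explicitly verify the removable singularity at $z=0$ when $0\notin\sigma(B)$, a point the paper's formula \eqref{A.16} leaves implicit (and which is in any case not needed for (d), since $\Gamma$ avoids the origin).
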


\begin{proof} (a) That $B\in\calA_0\Rightarrow B^*\in\calA_0$ follows from the fact that $\calI_1$ has that property.  That $A,B\in\calA_0 \Rightarrow AB\in\calA_0$ follows from the fact that $\calI_1$ is an ideal in $\calL(\calH)$ and
\begin{equation*}
  \pi_0(AB)-\pi_1(AB) = \pi_0(A)(\pi_0(B)-\pi_1(B))+(\pi_0(A)-\pi_1(A))\pi_1(B)
\end{equation*}

(b) If $B$ is a generator of $\calC^{(0)}_{2\ell;p}$, $C=\pi_0(B)-\pi_1(B)$ is finite rank (because $C_{w,v;j,k} = 0$ unless both $w$ and $v$ are $e$ or one of its neighbors).  By (a), $\calC^{(0)}_{2\ell;p}\subset\calA_0$.

(c) We note that $(\pi_1(B)-z)^{-1} = \pi_1((B-z)^{-1})-z^{-1}P_{e_0}$ with $P_{e_0}$ the projection onto those functions supported at $e_0$ while $(\pi_0(B)-z)^{-1} = \pi_0((B-z)^{-1})$.  Thus
\begin{align}
  \pi_0(&(B-z)^{-1})-\pi_1((B-z)^{-1}) =  \nonumber \\
           &-\pi_0((B-z)^{-1})[\pi_0(B)-\pi_0(B)](\pi_1(B)-z)^{-1})-z^{-1}P_{e_0} \lb{A.16}
\end{align}
which proves both assertions.

(d) By \eqref{A.16}, the integrand in
\begin{equation*}
  \pi_0(P)-\pi_1(P) = -(2\pi i)^{-1} \int_\Gamma[\pi_0((B-z)^{-1})-\pi_1((B-z)^{-1})]\,dz
\end{equation*}
is continuous in $\calI_1$, so by Lemma \ref{LA.7}, $P\in\calA_0$.
\end{proof}

\begin{proof} [Proof of Theorem \ref{TA.6}] Let $P \in C^*_{red}(\bbF_\ell;\bbC^p)$ be an orthogonal projection.  Since $\calC^{(0)}_{2\ell;p}$ is dense in $C^*_{red}(\bbF_\ell;\bbC^p)$, we can find $B_n$, self-adjoint, in $\calC^{(0)}_{2\ell;p}$ so that $\norm{B_n-P} < 1/2n$.  Since $\norm{B_n-P} < 1/2$, there is a gap about $\tfrac{1}{2}$ in $\sigma(B_n)$, so the spectral projection $Q_{\left(\tfrac{1}{2},\tfrac{3}{2}\right)}(B_n)$ is given by an integral like \eqref{A.15}.  Thus  $Q_{\left(\tfrac{1}{2},\tfrac{3}{2}\right)}(B_n)\in\calA_0$.  Since $Q_{\left(\tfrac{1}{2},\tfrac{3}{2}\right)}(B_n)$ is given by this integral, as $n\to\infty$, we have that $Q_{\left(\tfrac{1}{2},\tfrac{3}{2}\right)}(B_n)\to Q_{\left(\tfrac{1}{2},\tfrac{3}{2}\right)}(P) = P$ in norm.
\end{proof}

This completes the proofs of the results.  We end this appendix with a remark that shows some of the technicalities of proof are needed.  Let $p=1$.  As we've seen, for any projection, $P$, in $\calV_{2\ell;p=1}$ $\pi_0(P)-\pi_1(P)$ has only one non-zero diagonal element in the natural basis of $\ell^2(\calT_{2p})$ and its value is $T(P)$.  Surely this must be some kind of trace of a difference of projections and so an integer.  But if $J_0$ is the free Jacobi matrix on the tree, it has continuous spectrum on $[-2\sqrt{2\ell},2\sqrt{2\ell}]$ (see \eqref{4.2}).  For any $\lambda\in (-2\sqrt{2\ell},2\sqrt{2\ell})$, the spectral projection $P_{(-\infty,\lambda)}(J_0)$ lies in $\calV_{2\ell;p=1}$ and has $T$ value, $k(\lambda)$ which is not an integer.  Thus Theorem \ref{TA.3} does not extend to $P\in\calV_{2\ell;p=1}$ and requires the $C^*_{red}(\bbF_\ell;\bbC^p)$ condition.



\section{Addendum} \lb{addendum}

Garza-Vargas and Kulkarni \cite{GVK} (henceforth GK) have noted that a result of Fig\'{a}-Talamanca-Steger \cite{FTS} (henceforth FTS) can be used to show that Conjectures \ref{C9.6}, \ref{C9.7} and \ref{C10.6} are all false! And GK also found a counterexample to Conjecture \ref{C9.5}. In this addendum, we want to explain this example in detail using the equations from our paper and propose possible replacement conjectures.  Before that, we want to mention that connected to our results on absence of singular continuous spectrum, there are earlier results on different but related models in Keller, Lenz and Warzel \cite{KLW2, KLW3}.  FTS \cite{FTS} studied the degree $d$ homogeneous tree with no potential (i.e. $b\equiv 0$) and all the same $d$ values on the vertices leaving each vertex.  Here's a key result (they call it a lemma):

\begin{theorem} \lb{TB.1} Consider a graph Jacobi matrix with $2$ vertices and $d$ edges between them and $a_1\ge a_2\ge\ldots\ge a_d$ on the edges (and all $b's=0$).  Let $H$ be the lifted Jacobi matrix on the universal cover (so a degree $d$ homogenous tree with potentially $d$ different $a$'s on the edges).  Then $0\in\spec(H)$ if and only if
\begin{equation}\label{B1}
  a_1^2 \le \sum_{j=2}^{d} a_j^2
\end{equation}
\end{theorem}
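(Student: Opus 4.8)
The plan is to exploit the bipartite structure together with the recursion of Theorem \ref{T6.1}. Two-color the degree-$d$ tree by the two vertices $R,G$ of $\calG$; since every $b=0$, in this splitting $H=\left(\begin{smallmatrix}0 & C\\ C^* & 0\end{smallmatrix}\right)$ with $C\colon \ell^2(G)\to\ell^2(R)$ given by $(Cv)(r)=\sum_{i=1}^d a_i\,v(g_i(r))$, where $g_i(r)$ is the neighbor of $r$ along the edge of weight $a_i$. Then $\spec(H)$ is symmetric about $0$ (conjugate by $V\varphi(j)=(-1)^{\rho(j)}\varphi(j)$ as in Theorem \ref{T5.2}), and $H^2=\left(\begin{smallmatrix}CC^*&0\\0&C^*C\end{smallmatrix}\right)$, so $0\in\spec(H)\iff 0\in\spec(CC^*)$. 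The $R\leftrightarrow G$ automorphism of $\calG$ (fixing every edge and swapping its ends) conjugates $CC^*$ to $C^*C$, so it suffices to decide when $\inf\spec(CC^*)=0$. Moreover the tree is regular, so by Theorem \ref{T8.4} $H$ has no eigenvalues; hence $0$ can enter only through continuous spectrum and we never need to account for an atom at $0$.

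To locate $\inf\spec(CC^*)$ I would pass to the $m$-functions. Specializing \eqref{6.5} and using the $R\leftrightarrow G$ symmetry collapses the $2q$ functions to one family $m^{(i)}(z)$, $i=1,\dots,d$ (the value at a vertex of the subtree obtained by deleting the incident edge of weight $a_i$), obeying
\begin{equation*}
m^{(i)}(z)=\frac{1}{-z-\sum_{k\ne i}a_k^2\,m^{(k)}(z)}.
\end{equation*}
Whether $0\in\spec(H)$ is detected by the boundary behavior as $z=i\eta\downarrow 0$. Writing $v_i=\Im m^{(i)}(i\eta)\ge 0$ (Herglotz) and taking imaginary parts gives
\begin{equation*}
v_i=\bigl|m^{(i)}(i\eta)\bigr|^2\Bigl(\eta+\sum_{k\ne i}a_k^2\,v_k\Bigr).
\end{equation*}
Thus $0$ lies in the (absolutely continuous) spectrum precisely when the $v_i$ do not all tend to $0$, which by a Perron--Frobenius analysis of the linearization happens exactly when the spectral radius of the nonnegative matrix $M_{ik}=\bigl|m^{(i)}(0)\bigr|^2 a_k^2$ (for $k\ne i$, zero on the diagonal) is at least $1$.

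The threshold $\rho(M)=1$ is then to be read off from the real solution of the system at $z=0$: each $x_i:=m^{(i)}(0)$ satisfies $a_i^2 x_i^2-S x_i-1=0$ with $S=\sum_k a_k^2 x_k$, and a short computation rewrites $\rho(M)=1$ as the scalar condition $\sum_i \frac{a_i^2 x_i^2}{1+a_i^2 x_i^2}=1$. My claim is that, for the correct Herglotz branch and with $a_1\ge\cdots\ge a_d$, this degenerates to the exact balance $a_1^2=\sum_{j\ge 2}a_j^2$, with $\rho(M)<1$ (a genuine gap at $0$) when $a_1^2>\sum_{j\ge 2}a_j^2$ and $\rho(M)\ge 1$ (so $0\in\spec(H)$) otherwise. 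Equivalently, in the variational picture one must show $\inf_v\|Cv\|^2/\|v\|^2>0$ iff $a_1^2>\sum_{j\ge 2}a_j^2$, by producing a quantitative lower bound in the dominant case and an approximate null sequence in the balanced or recessive case.

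The main obstacle is exactly this last equivalence. The difficulty is that at $z=0$ the Herglotz branch is singular: already in the solvable case $d=2$ (the period-$2$ chain on $\bbZ$, whose spectrum is $[-(a_1{+}a_2),-(a_1{-}a_2)]\cup[a_1{-}a_2,a_1{+}a_2]$, so the gap at $0$ closes only when $a_1=a_2$) some of the $m^{(i)}(0)$ blow up while others vanish, so the naive real fixed-point computation degenerates and the branch and limit bookkeeping must be done with care. On the variational side a crude Cauchy--Schwarz estimate loses a factor of order $d-1$ and is far too weak to see the sharp constant; the clean threshold reflects the exact $\ell^2$-bookkeeping along the tree (this is the content of the Fig\`a-Talamanca--Steger lemma \cite{FTS}), so I expect the real work to be an exact Green's-function computation---tracking when the relevant discriminant acquires a zero at $0$---rather than an inequality. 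As consistency checks the argument must reproduce the $d=2$ answer $a_1=a_2$ and, when all $a_i$ are equal, recover $0\in\spec(H)$ for the free degree-$d$ tree of Example \ref{E7.1}.
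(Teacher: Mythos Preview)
Your proposal is a plan with the right instincts but not yet a proof: you set up the $m$-function recursion and a Perron--Frobenius heuristic, and then correctly flag that the core equivalence (your $\rho(M)=1$ condition versus $a_1^2=\sum_{j\ge 2}a_j^2$) is unproved and that the naive object $M_{ik}=|m^{(i)}(0)|^2a_k^2$ is ill-defined precisely in the gapped case (one $m^{(i)}(0)=\infty$, the others $=0$). That degeneracy is not a technicality to be cleaned up later; it is the whole content of the problem, and your write-up stops exactly where the work begins. The variational paragraph is similarly unfinished: you note a Cauchy--Schwarz bound is too crude but supply no replacement.

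The paper's proof avoids the Perron--Frobenius route entirely and is more concrete. For $0\notin\spec(H)\Rightarrow a_1^2>\sum_{j\ge 2}a_j^2$, it first analyzes the pole/zero structure of the $m_j$ at $z=0$ (exactly your observed degeneracy: one $m_j$ has a pole, the rest vanish) and uses the off-diagonal recursion $G_{e,yx_k}=-a_k m_k G_{e,y}$ to compute $G_{e,y}(0)$ \emph{explicitly} for every $y$; the resulting $\ell^2$ norm over the sphere $\rho(y)=2M{+}1$ is a geometric series with ratio $a_j^{-2}\sum_{k\ne j}a_k^2$, and square-summability of the resolvent kernel forces this ratio $<1$, hence $j=1$ and the inequality. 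For the converse, the same explicit formula defines a \emph{formal} Green's function $f_1$ with $Hf_1=\delta_e$; to upgrade this to $0\notin\spec(H)$ one must show convolution by $f_1$ is bounded on $\ell^2$, and here the paper invokes Haagerup's radial inequality on the free product $\bbZ_2*\cdots*\bbZ_2$, which bounds the convolution norm by $\sum_r (r{+}1)\bigl(\sum_{\rho(y)=r}|f_1(y)|^2\bigr)^{1/2}$. This is the step your outline is missing: some operator-norm input beyond $\ell^2$ finiteness of the kernel is needed, and Haagerup's inequality supplies it cleanly. If you want to salvage your approach, the honest route is to carry out the $z\to 0$ limit of the $m$-system with the pole/zero bookkeeping done explicitly (this reproduces the paper's Proposition in the addendum), and then recognize that the reverse implication still requires a convolution bound of Haagerup type rather than a spectral-radius argument.
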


\begin{remarks} 1. Of course, if $d=2$ this only holds if $a_1=a_2$ but when $d\ge 3$, there are many examples of unequal $a$'s where \eqref{B1} holds, e.g. if $d-1$ a's all equal $1$, it holds if the remaining one lies between $0$ and $\sqrt{d-1}$.

2. As noted in GK \cite{GVK}, by gap labelling and period $2$, there is at most one gap.  Moreover, since $H$ is unitarily equivalent to $-H$, the gap is open if and only if $0\notin\spec(H)$.  Thus this provides many counterexamples to Conjectures \ref{C9.6} and \ref{C9.7}.  To understand why Conjecture \ref{C10.6} has counterexamples, we need to consider $b_1$, $b_2$ also.  The gap remains absent if $b_1=b_2$, so we get a $d+1$ dimensional set with no gap in a space of total dimension $d+2$, so codimension $1$, not $2$. By Example \ref{E7.4}, if all $a$'s are equal, it opens when $b_1\ne b_2$ but from what we can prove, it is possible, although we think not likely, that there is an open set of parameters in the full $d+2$-dimensional parameter space with no gap open.

3. Garza Vargas-Kulkarni \cite{GVK} also construct a rather different period $2$ model that provides a counterexample to the non-homogeneous tree conjecture, Conjecture \ref{C9.5}.  Their underlying graph has two vertices and three edges with $b_1=b_2=0$ and $a$'s given by \\

\centerline{\includegraphics[scale=.25]{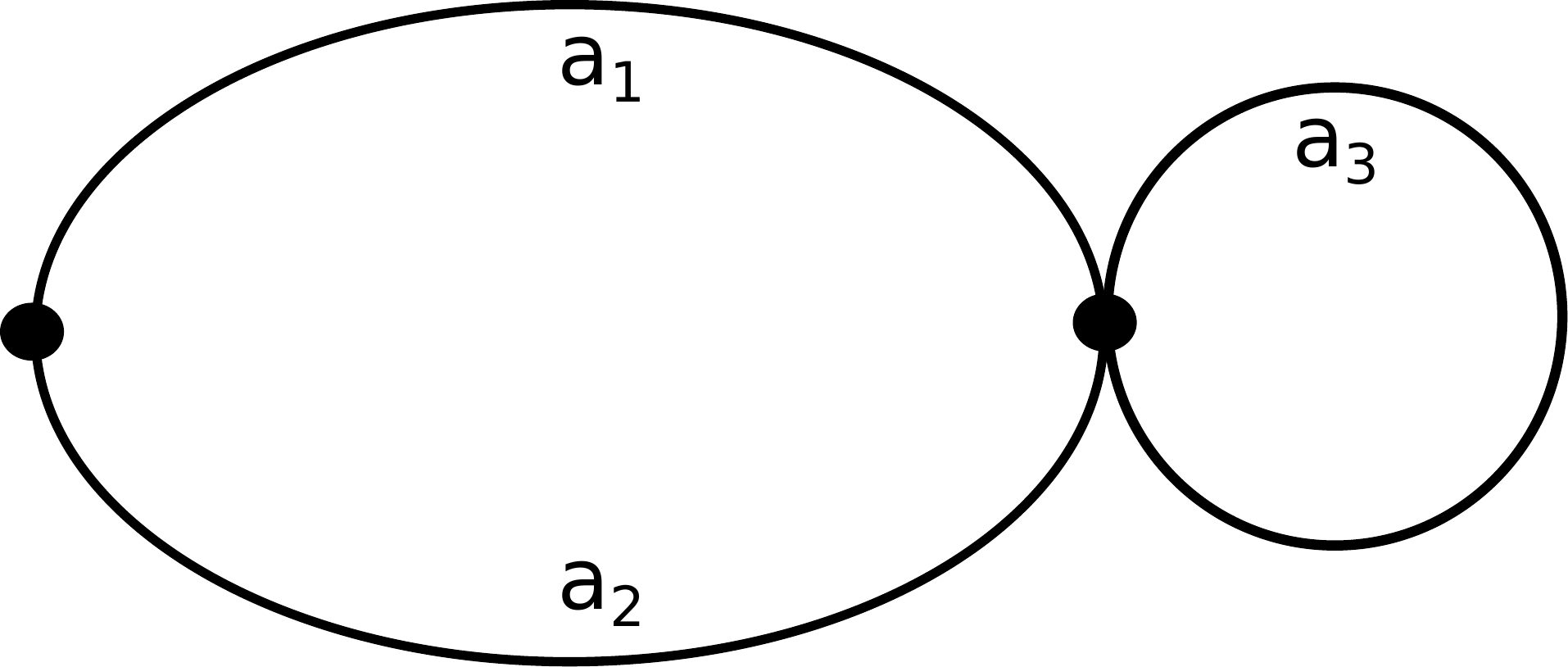}}
\medskip
If $a_1=a_2$, one can find a $1$-dimensional free model inside it but where even vertices have trees growing out of them. It is easy to write down a sequence of unit vectors, $\varphi_n$, with $\norm{H\varphi_n}\to 0$ (see \cite{GVK}) proving that $0\in\spec(H)$, which implies that there is no gap open.
\end{remarks}

The notation is made simpler if one realizes, as noted in \cite{FTS} that the degree $d$ homogeneous tree is a Cayley graph of the free product of $d$ copies of $\bbZ_2$.  Namely, take the group with $d$ generators, $x_1,\ldots,x_d$ with the only relations being that $x_j^2=e$ for all $j$.  Thus the group is all words $y=x_{j_1}x_{j_2}\ldots x_{j_\rho}$ with no two successive $j$ indices the same.  One has the obvious product and inverse.  One can define $\rho(y)=\rho$ which is the distance in the tree.  Thus the vertices of the tree can be associated to the points in a group, $\calG$, and our Hilbert space with $\ell^2(\calG)$.  There is an edge between $y_1, y_2 \in G$ if and only if $y_1=y_2x_j$ for some $j$. Left multiplication acts on the tree in such a way  that it also takes edges into edges.  This gives us a set of symmetries of the tree that acts freely and transitively on the vertices (but the action is not transitive on the edges).

$H$ is a matrix acting on $\l^2(\calG)$ with
\begin{equation}\label{B2}
  H_{y,yx{_j}} = a_j
\end{equation}
and all other matrix elements zero.  If $f\in\ell^2$ and we define the left regular representation of $\calG$ by
\begin{equation}\label{B3}
  (U(y)f)_w=f_{y^{-1}w}
\end{equation}
then $U$ commutes with $H$ and $U$ defines a free transitive action on the vertices of $\calT$, even for odd $d$.  Thus there is a sense in which this is period $1$, but gap labelling does \emph{not} hold for this definition of period.

We need some formulae for $G$ and $m$ from our paper as well as a formula for the off-diagonal Green's function that we didn't note explicitly in Section \ref{s6} but follow from the formulae there.  On general graphs, the $m$ functions depend on an edge and a particular vertex incident on the edge but, by the symmetry of this model, they only depend on the edge.  Thus we have one diagonal Green's function $G(z)$ and $d$ $m$-functions $m_j; \, j=1,\ldots,d$ \eqref{6.4} and \eqref{6.5} become
\begin{equation}\label{B4}
  G(z) = \frac{1}{-z-\sum_{j=1}^{d} a_j^2 m_j(z)}
\end{equation}
\begin{equation}\label{B5}
  m_j(z) = \frac{1}{-z-\sum_{k\ne j} a_k^2 m_k(z)}
\end{equation}

If $G_{w,y}(z)$ is the matrix element of the resolvent, then one has the following: if $y\in\calG$ so that $\rho(yx_j) = \rho(y)+1$ (i.e. the word formulae for $y$ doesn't end in $x_j$) then for $z\in\bbC_+$,
\begin{equation}\label{B6}
  G_{e,yx_j}(z) = -a_j m_j(z) G_{e,y}(z)
\end{equation}
(the analogs of this are well-known in the one-dimensional case). This follows from the off-diagonal term of the second equation after \eqref{6.2} above (if one writes the full $\ell^2$ space as a direct sum breaking at the edge that goes from $y$ to $yx_j$).  We will need special subsets, $S_j, j=1,\ldots,d$, of the group $\calG$ of all words of odd length the form $x_jx_{k_1}x_j\ldots x_jx_{k_m}x_j,\,k_q\ne j$ (a word of length $2m+1)$.

\begin{proposition} \lb{PB.2} Suppose that $0\notin\spec(H)$.  Then:
\begin{SL}
  \item[\rm{(a)}] $G(0)=0$, indeed, $G_{ey}(0)=0$ whenever $\rho(y)$ is even.
  \item[\rm{(b)}] All the $m_j$ functions have meromorphic continuations across a real neighborhood of $z=0$. There is a single $j$ among $1,2,\dots,d$ so that $m_j$ has a pole at z=0.  All the other $m_k$'s obey $m_k(0)=0$.
  \item[\rm{(c)}] $m_j(z)G(z)$ has a removable singularity at $z=0$ with value $-1/a_j^2$.  For $k\ne j$, $m_j(z)m_k(z)$ has a removable singularity at $z=0$ with value $-1/a_j^2$.
  \item[\rm{(d)}] We have the formula
  \begin{equation}\label{B7}
    G_{ey}(0)= \left\{
                 \begin{array}{ll}
                   \frac{1}{a_j}\prod_{i}^{m}-\left(\frac{a_{k_i}}{a_j}\right), & \hbox{ if } y= x_jx_{k_1}x_j\ldots x_jx_{k_m}x_j\in S_j\\
                   0, & \hbox{ if }y\notin S_j
                 \end{array}
               \right.
  \end{equation}
\end{SL}
\end{proposition}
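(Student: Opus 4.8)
The plan is to establish the four parts in order, using the $H\mapsto -H$ symmetry of the model together with the coupled equations \eqref{B4}--\eqref{B6} and removable-singularity bookkeeping. Throughout, $0\notin\spec(H)$ guarantees that $(H-z)^{-1}$ is analytic at $z=0$, so every $G_{ey}$ is analytic there and, since $0$ lies in a spectral gap (hence off the essential spectrum), Theorem \ref{T6.6} makes each $m_j$ meromorphic in a real neighborhood of $0$. For (a) I would reuse the unitary $V$ from the proof of Theorem \ref{T5.2}, $(V\varphi)(y)=(-1)^{\rho(y)}\varphi(y)$. Since all $b=0$ and every edge joins vertices whose distances to $e$ differ by one, $VHV^{-1}=-H$, so $(H-z)^{-1}=-V(H+z)^{-1}V$. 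Taking the $(\delta_e,\delta_y)$ matrix element and using $V\delta_e=\delta_e$, $V\delta_y=(-1)^{\rho(y)}\delta_y$ gives $G_{ey}(z)=-(-1)^{\rho(y)}G_{ey}(-z)$; at $z=0$ this forces $G_{ey}(0)=0$ whenever $\rho(y)$ is even, and $y=e$ yields $G(0)=0$.

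For (b) and (c) I would run the coupled equations near $0$. Feeding $G(0)=0$ into \eqref{B4} shows the denominator $-z-\sum_j a_j^2 m_j(z)$ blows up at $z=0$, so at least one $m_j$ has a pole there. If $m_j$ has a pole, then for each $k\ne j$ the sum $\sum_{l\ne k}a_l^2 m_l$ in \eqref{B5} contains the divergent term $a_j^2 m_j$, forcing $m_k(0)=0$; in particular no second $m$-function can diverge, giving existence and uniqueness of the special index $j$. For (c) I would first record the symmetric instance of \eqref{6.6}, obtained by writing the denominator of \eqref{B4} as $m_j(z)^{-1}-a_j^2 m_j(z)$, namely
\[
  G(z)=\frac{m_j(z)}{1-a_j^2 m_j(z)^2}.
\]
Then $m_j G=m_j^2/(1-a_j^2 m_j^2)\to -1/a_j^2$ as $z\to 0$, a removable singularity. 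For $k\ne j$, inverting \eqref{B5} gives $m_k(z)^{-1}=-z-a_j^2 m_j(z)-\sum_{l\ne k,j}a_l^2 m_l(z)$, so
\[
  m_j(z)m_k(z)=\frac{m_j(z)}{-z-a_j^2 m_j(z)-\sum_{l\ne k,j}a_l^2 m_l(z)}\longrightarrow -\frac{1}{a_j^2},
\]
using $m_l(0)=0$ for $l\ne j$ and $m_j\to\infty$.

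For (d) I would iterate \eqref{B6} along the geodesic from $e$ to $y=x_{j_1}\cdots x_{j_\rho}$ (valid near $0$ by analytic continuation from $\bbC_+$), obtaining
\[
  G_{ey}(z)=(-1)^\rho\Big(\prod_{i=1}^\rho a_{j_i}\Big)\Big(\prod_{i=1}^\rho m_{j_i}(z)\Big)G(z).
\]
Writing $N_j$ for the number of indices equal to $j$, this is a product of $N_j$ diverging factors $m_j$ against $\rho-N_j$ vanishing factors $m_k$ ($k\ne j$) and one vanishing factor $G$, and each $m_j$ pairs with one vanishing factor via (c), contributing $-1/a_j^2$. A reduced word has no two $x_j$ adjacent, so $N_j\le\lceil \rho/2\rceil\le(\rho+1)/2$, with equality precisely when $y\in S_j$; hence for $y\notin S_j$ there are strictly more vanishing factors than poles and the limit is $0$ (consistent with (a) when $\rho$ is even). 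For $y=x_jx_{k_1}\cdots x_j x_{k_m}x_j\in S_j$ one has $N_j=m+1$, $\rho=2m+1$; pairing each $m_{k_i}$ with an $m_j$ and the remaining $m_j$ with $G$ gives limit $(-1/a_j^2)^{m+1}$, and combining with the prefactor $(-1)^{2m+1}a_j^{m+1}\prod_i a_{k_i}$ yields $(-1)^m\prod_i a_{k_i}/a_j^{m+1}=a_j^{-1}\prod_{i}(-a_{k_i}/a_j)$, as in \eqref{B7}.

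The main obstacle is part (d): the combinatorial bookkeeping that matches each $m_j$ pole with a distinct vanishing factor, and the verification that the reduced-word structure forces $N_j\le(\rho+1)/2$ with the equality case being exactly $S_j$ (which is what rules out a spurious pole and simultaneously pins down the support of $G_{e\bddot}(0)$). Parts (a)--(c) are routine once the symmetry and the coupled equations are available.
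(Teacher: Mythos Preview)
Your proposal is correct and follows essentially the same route as the paper: the $H\mapsto -H$ symmetry for (a), the coupled equations \eqref{B4}--\eqref{B5} for (b) and (c), and iterated use of \eqref{B6} for (d), where the paper phrases your closed-product-and-counting argument as an induction in $\rho$ by steps of two. One small omission in (b): to conclude that only one $m_j$ has a pole you implicitly need that two or more poles cannot cancel in the sum $\sum_{l\ne k}a_l^2 m_l$; the paper handles this by noting that each $m_l$ is Herglotz, so any real pole has a \emph{negative} residue and a positive-coefficient sum of such terms still diverges.
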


\begin{proof} (a) If $W$ is the unitary on $\ell^2$ with $W\delta_y=(-1)^{\rho(y)}\delta_y$, then $WHW^{-1}=-H$ which implies that for $z\notin\spec(H)$, one has that $G_{ey}(z)=-(-1)^{\rho(y)}G_{ey}(-z)$.  Evaluating at $z=0$ yields the result.

(b) Since $-G^{-1}$ is Herglotz, and $G(0)=0$, we see that $-G^{-1}(z)$ is meromorphic across a real neighborhood of $z=0$ with a pole which has a negative residue there (since $-G^{-1}$ is real on the real axis near but away from zero, the residue is real and is given by $\lim_{\varepsilon\downarrow 0} (i\epsilon)i \Im(-G^{-1}(i\varepsilon))$ which is negative).  It follows from \eqref{B4} that, except perhaps at $x=0$, the $m$'s have imaginary parts going to zero so they are meromorphic and that at least one $m_k$ has a pole with negative residue there, suppose at least $m_j$.  \eqref{B5} then implies that the other $m_k$ must vanish at $z=0$.

(c) Because the functions are Herglotz, all the poles and zeros are simple.  It follows the singularities are removable.  By \eqref{B4} and \eqref{B5}, we can compute the limits as $z\to 0$.

(d) The formulae for $G_{ey}$ when $\rho(y)$ is even follow from (a) and when $\rho(y)=1$ by using (b) taking limits from \eqref{B6}.  By (b), for $k\ne j$, the limit from $\bbC_+$ as $z\to 0$ of $(-a_k m_k(z))(-a_j m_j(z))$ is $-a_k/a_j$.  The formulae for $G_{ey}(z)$ then follow by induction in $\rho(y)$
\end{proof}

Finally, to prove Theorem \ref{TB.1}, one needs the following result Haagerup \cite{Hag}:

\begin{theorem} \lb{BT.3} For $f, g\in\ell^2$, define $f*g(x)=\sum_{y\in G} f(xy^{-1})g(y)$.  Then
\begin{equation}\label{B8}
  \norm{f*g}_2 \le \norm{g}_2 \sum_{r=0}^{\infty} |n+1|\left(\sum_{\rho(y)=r}|f(y)|^2\right)^{1/2}
\end{equation}
\end{theorem}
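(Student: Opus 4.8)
The plan is to prove Haagerup's inequality by exploiting the tree (free-product) combinatorics of $\calG$, the free product of $d$ copies of $\bbZ_2$, in which the convolution $f*g(x)=\sum_z f(z)g(z^{-1}x)$ is assembled from products of reduced words. (I read the factor $|n+1|$ in \eqref{B8} as the intended $(r+1)$.) First I would decompose $f$ according to distance from the identity, writing $f=\sum_{n\ge 0} f_n$, where $f_n$ is the restriction of $f$ to the sphere $\{y:\rho(y)=n\}$, so that $\norm{f_n}_2=\big(\sum_{\rho(y)=n}|f(y)|^2\big)^{1/2}$ is exactly the quantity on the right of \eqref{B8}. Since $f*g=\sum_n f_n*g$, the triangle inequality reduces the whole statement to the single-sphere estimate
\[
  \norm{f_n*g}_2 \le (n+1)\,\norm{f_n}_2\,\norm{g}_2, \qquad g\in\ell^2(\calG),
\]
whose summation over $n$ immediately yields \eqref{B8}.

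The heart of the matter is a ``one sphere against one sphere'' bound. Decomposing $g=\sum_m g_m$ into spheres and fixing the output radius $k$, I claim that for every triple $(n,m,k)$,
\[
  \norm{(f_n*g_m)_k}_2 \le \norm{f_n}_2\,\norm{g_m}_2,
\]
where $(\,\cdot\,)_k$ denotes restriction to the sphere of radius $k$. In the free product of $\bbZ_2$'s the product of a reduced word of length $n$ and one of length $m$ has length $k=n+m-2i$, where $i$ is the (well-defined) number of cancellations, and each $x$ of length $k$ contributing to $f_n*g_m$ factors uniquely as a reduced product $x=pq$ with $\rho(p)=n-i$, $\rho(q)=m-i$, together with a summation over a cancelled middle word $u$ of length $i$: $(f_n*g_m)_k(pq)=\sum_{\rho(u)=i} f_n(pu)\,g_m(u^{-1}q)$. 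Introducing arrays $A_{p,u}=f_n(pu)$ and $B_{u,q}=g_m(u^{-1}q)$, this exhibits $(f_n*g_m)_k$ as the matrix product $AB$ read off on indices $(p,q)$, so $\norm{(f_n*g_m)_k}_2=\norm{AB}_{\mathrm{HS}}$. Since $(p,u)\mapsto pu$ and $(u,q)\mapsto u^{-1}q$ inject into the spheres of radius $n$ and $m$, one has $\norm{A}_{\mathrm{HS}}\le\norm{f_n}_2$ and $\norm{B}_{\mathrm{HS}}\le\norm{g_m}_2$, and then $\norm{AB}_{\mathrm{HS}}\le\norm{A}_{\mathrm{op}}\norm{B}_{\mathrm{HS}}\le\norm{A}_{\mathrm{HS}}\norm{B}_{\mathrm{HS}}$ gives the claim.

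I would then assemble the single-sphere bound from this lemma by double counting. For fixed $n$ and output radius $k$, only the $m$ with $|k-n|\le m\le k+n$ and $m\equiv k+n\pmod 2$ contribute, i.e. at most $n+1$ values; conversely, for fixed $n$ and $m$ the output radii $k$ lie in $\{|n-m|,\dots,n+m\}$, again at most $n+1$ values. Applying the lemma and Cauchy--Schwarz to the first count,
\[
  \norm{(f_n*g)_k}_2 \le \sum_{m} \norm{(f_n*g_m)_k}_2 \le \norm{f_n}_2\,(n+1)^{1/2}\Big(\sum_{m\in M_k}\norm{g_m}_2^2\Big)^{1/2},
\]
where $M_k$ is the contributing set. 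Squaring, summing over $k$, and invoking the second count (each $m$ lies in at most $n+1$ of the $M_k$) collapses $\sum_k\sum_{m\in M_k}\norm{g_m}_2^2\le (n+1)\norm{g}_2^2$, so that $\norm{f_n*g}_2^2\le (n+1)^2\norm{f_n}_2^2\norm{g}_2^2$, which is the desired single-sphere estimate.

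The step I expect to be the main obstacle is the combinatorial lemma: verifying precisely that the cancellation structure in the free product of $\bbZ_2$'s produces a \emph{bijective} factorization $x=pq$ with an \emph{independent} middle summation over $u$, and that the resulting arrays $A,B$ have Hilbert--Schmidt norms genuinely bounded by $\norm{f_n}_2$ and $\norm{g_m}_2$ (in particular that $(p,u)\mapsto pu$ loses no $\ell^2$ mass). Once the matrix-product identity is in hand, the two triangle inequalities and the counting are routine.
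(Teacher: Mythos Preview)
The paper does not supply a proof of this theorem; it is quoted as a result of Haagerup \cite{Hag} and used as a black box in the proof of Theorem~\ref{TB.1}. Your proposal is a correct proof and is essentially Haagerup's original argument: decompose $f$ into spherical layers, prove the one--sphere--against--one--sphere estimate $\norm{(f_n*g_m)_k}_2\le\norm{f_n}_2\norm{g_m}_2$ by interpreting the cancelled middle word as a matrix index and bounding a Hilbert--Schmidt norm, and then count that at most $n+1$ values of $m$ (resp.\ $k$) contribute for fixed $k$ (resp.\ $m$). Two small points worth tightening: the map $(p,q)\mapsto pq$ is only an injection from the subset with $pq$ reduced, so you really get $\norm{(f_n*g_m)_k}_2\le\norm{AB}_{\mathrm{HS}}$ rather than equality, which is the direction you need; and the ``independence'' of the middle summation over $u$ comes out automatically once you set $A_{p,u}=0$ when $pu$ is not reduced and $B_{u,q}=0$ when $u^{-1}q$ is not reduced, since those two constraints together encode exactly that $p$ is the longest common prefix of $z$ and $x$.
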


\begin{proof} [Proof of Theorem \ref{TB.1}] We will prove
\begin{equation}\label{B9}
  0\notin\spec(H)\iff a_1^2 > \sum_{j=2}^{d} a_j^2
\end{equation}

Suppose first that $0\notin\spec(H)$.  By \eqref{B7},
\begin{equation}\label{B10}
    \sum_{y\,\mid\,\rho(y)=2M+1}|G_{ey}(0)|^2 =a_j^{-2} \left[a_j^{-2}\sum_{k\ne j}a_k^2\right]^M
\end{equation}
Since $0\notin\spec(H)\Rightarrow \sum_y |G_{ey}(0)|^2 < \infty$, we see that $a_j^{-2}\sum_{k\ne j}a_k^2<1\Rightarrow j=1$ and $a_1^2 > \sum_{j=2}^{d} a_j^2$.

Conversely suppose that $a_1^2 > \sum_{j=2}^{d} a_j^2$.  Let $f_j$ be the function on the right side of \eqref{B7}.  An easy calculation shows that
\begin{equation}\label{B11}
  \sum_{k=1}^{d} a_k f_j(yx_k) = \delta_{y e}
\end{equation}
that is $f_j$ is a formal Green's function of $H$ with eigenvalue 0. To see this, note that if $yx_j\in S_j$ with $y\ne e$, the sum has two terms which cancel and if $yx_j\notin S_j$, all terms in the sum  are zero.  Finally, if $y=e$, there is only one term in the sum which is $a_j(1/a_j)=1$.  If $a_1^2 > \sum_{j=2}^{d} a_j^2$, then by \eqref{B10}, convolution with $f_1$ is bounded on $\ell^2$ by \eqref{B8}.  By \eqref{B11}, $H(f_1*h)=h$ for any $h$ of finite support.  It follows that $0\notin\spec(H)$.
\end{proof}

We can replace the various conjectures that have counterexamples with alternatives.  So far, the only counter examples to Borg's theorem involve cases where $b$ is constant so the following might be true.

\begin{conjecture} \lb{BC.4} Consider a homogeneous degree $d$ periodic Jacobi matrix with $a\equiv 1$ and all gaps closed.  Then $b$=constant.
\end{conjecture}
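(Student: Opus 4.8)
The plan is to mimic the classical one--dimensional proof of Borg's theorem (Facts~\ref{F2.6}, \ref{F2.13}, \ref{F2.14}) while isolating, and then trying to replace, the two ingredients of that proof that genuinely fail on trees. First I would normalize: replacing $H$ by $H-\bar b$, where $\bar b=\frac1p\sum_j b_j=\int\lambda\,dk(\lambda)$, we may assume $\bar b=0$, so the goal becomes to show that ``all gaps closed'' forces $b\equiv 0$. The base case $p=2$ is already settled by Example~\ref{E7.4}: there $b=\pm b_0$, the spectrum is $[-c,-b_0]\cup[b_0,c]$, and the gap is closed exactly when $b_0=0$. The reason the $1$D argument succeeds is that there the DOS equals the equilibrium measure of its spectrum, which for a single band is the arcsine law, and the capacity of the spectrum equals the geometric mean of the $a$'s (i.e.\ $1$ when $a\equiv1$); these pin both $B-A=4$ and the \emph{shape} of $dk$, after which matching $\int\lambda^2\,dk=\frac1p\sum_j b_j^2+d$ against the arcsine second moment forces $\mathrm{Var}(b)=0$. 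On a tree \emph{both} ingredients fail: the DOS is not the equilibrium measure (Example~\ref{E9.2}, Fact~\ref{F9.3}) and there is no Thouless/capacity identity. Manufacturing substitutes is the heart of the problem.

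It is worth recording the facts we \emph{can} use unconditionally. Since the tree is regular, Aomoto's Theorem~\ref{T8.4} gives that $\spec(H)$ is purely absolutely continuous, so every $G_j$ is free of real poles; by Theorem~\ref{T6.6} each $G_j$ and each $m_j^\beta$ is algebraic; and the hypothesis says $\spec(H)=[A,B]$ is a single interval, so on the principal sheet the only real singularities of all these functions are the two band edges $A$ and $B$.

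My primary approach would then exploit the coupled algebraic equations \eqref{6.4}--\eqref{6.5} (with $a\equiv1$) directly on the Riemann surface of $G$. Conditional on the expected minimal structure (Problems~\ref{C10.3}, \ref{C10.4}: square--root branch points, two sheets), the single--band hypothesis forces that surface to be the genus--$0$ two--sheeted cover branched only at $A,B$; uniformizing by the Joukowski map $z=\tfrac{A+B}{2}+\tfrac{B-A}{4}(\zeta+\zeta^{-1})$ turns every $m_j^\beta$ and $G_j$ into a rational function of $\zeta$ that vanishes at $\zeta=0$ and has poles only off the unit circle (no open gaps). Substituting these into \eqref{6.5} yields finitely many polynomial identities in $\zeta$ coupling the $dp$ $m$--functions; the plan is to match principal parts and the normalization to force all $b_j$ equal, hence $b\equiv0$. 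I would first carry this out for a single vertex orbit, recovering the free solution \eqref{7.1}, and then argue that any genuine $p>1$ dependence of the $b_j$ produces an extra branch point, i.e.\ an open gap.

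The main obstacle is precisely the step the $1$D theory supplies for free: controlling the global shape of the single band. Proving \emph{unconditionally} that a regular tree with $a\equiv1$ and nonconstant $b$ must develop an interior branch point (an open gap) requires either establishing the Riemann--surface conjectures \ref{C10.3}--\ref{C10.4} in this setting or finding a tree analogue of the capacity/Thouless identity that pins $B-A$ together with the moments of $dk$. A promising unconditional substitute is the ground--state transform: writing $H\phi=A\phi$ with $\phi>0$ the Perron eigenfunction at the band bottom and conjugating to the Markov generator $\phi^{-1}(H-A)\phi$, one might bound $B-A$ from below by a tree isoperimetric or return--probability quantity minimized exactly at constant $b$. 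Making either the algebraic reduction or this geometric bound rigorous is where essentially all the difficulty lies.
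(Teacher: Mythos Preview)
The statement you are attempting to prove is Conjecture~\ref{BC.4}, and it is genuinely a \emph{conjecture}: the paper offers no proof. It is proposed in the Addendum precisely because the earlier Borg--type Conjectures~\ref{C9.5}--\ref{C9.7} were shown to be false by the examples of Garza-Vargas--Kulkarni, and the authors put it forward ``with some caution'' as a possible replacement. So there is no proof in the paper to compare your attempt against.

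Your write-up is not a proof either, and to your credit you say so: it is a research strategy with the hard steps explicitly flagged as open. The two substantive routes you sketch are each conditional on unresolved problems. The algebraic route assumes the two-sheeted, square-root structure of Problems~\ref{C10.3}--\ref{C10.4}, which are themselves open (and the paper even remarks that it no longer expects all branch points to be real). Without that structure you cannot conclude that ``no gaps'' forces a genus-$0$ surface, and the Joukowski uniformization step collapses. The analytic route---a tree analogue of the Thouless/capacity identity, or a ground-state/isoperimetric bound pinning $B-A$---is exactly the missing ingredient the paper identifies as unavailable (Fact~\ref{F9.3}: the DOS is not determined by the spectrum, so equilibrium-measure arguments do not transfer). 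Your $p=2$ observation via Example~\ref{E7.4} is correct, but it does not extend: that example has all $a$'s equal and uses an explicit quadratic for $m_\pm$, whereas for general $p$ you have $2q$ coupled quadratics and no discriminant. In short, the proposal is a reasonable outline of where one might look, but the core difficulty---producing an interior branch point from nonconstant $b$ without the $1$D potential-theoretic machinery---remains untouched, and that is precisely why the statement is still a conjecture.
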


We note that Borg's original theorem concerned $-d^2/dx^2+V(x)$, so one can argue this is closer to his original result than Conjecture \ref{C9.6}.  More generally one might guess that

\begin{conjecture} \lb{BC.5} If a periodic tree Jacobi matrix has all gaps closed then $b$ is constant.
\end{conjecture}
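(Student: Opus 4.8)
By the discussion following Theorem~\ref{T5.1}, the hypothesis that all gaps are closed means that $\spec(H)$ is a single closed interval $[A,B]$, even though the integrated density of states still attains each value $j/p$, $j=1,\dots,p-1$, at an interior point. The first point to record is that the desired conclusion is genuinely asymmetric in the $a$'s and $b$'s: Theorem~\ref{TB.1} exhibits homogeneous trees with non-constant edge weights and no gap, so any proof must use a feature special to the diagonal rather than a symmetric ``Borg'' argument. My plan is therefore to normalize $\bar b:=\frac1p\sum_j b_j=\int\lambda\,dk$ (a gap-labeling invariant) to $0$, to settle the homogeneous case $a\equiv1$ (Conjecture~\ref{BC.4}) first, and only then to address the general statement.

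For the core mechanism I would use the algebraic structure of Theorem~\ref{T6.6}. Eliminating the $2q$ functions $m_j^\beta$ from the coupled relations \eqref{6.4}--\eqref{6.5} yields a single polynomial relation $P(z,G)=0$ satisfied by the averaged Green's function $\frac1p\sum_j G_j$, which by \eqref{6.8} is the Stieltjes transform of $dk$. The band edges of $\spec(H)$ are then the real branch points of this algebraic function, i.e.\ the real zeros of the $z$-discriminant $D(z)=\operatorname{disc}_G P(z,G)$. In these terms, ``all gaps closed'' says that the only real branch points are the two outer edges $A,B$: each of the $p-1$ interior gaps that is present for generic parameters has collapsed, forcing coincidences among the real roots of $D$. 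These coincidences form a system of real algebraic conditions, the tree analog of the codimension-$2$ Wigner--von Neumann mechanism recorded in Fact~\ref{2.20}, and the heart of the conjecture is the claim that this system forces the $p-1$ equalities $b_1=\dots=b_p$ while, consistently with Theorem~\ref{TB.1}, leaving the $a$'s unconstrained. I would try to extract these equalities from the Newton--Puiseux expansion of $G$ at each collapsed branch point, reading off a linear relation among the $b_j$, or alternatively by a rigidity argument that deforms $b$ toward a constant within the all-gaps-closed set.

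The principal obstacle is precisely the gap identified in Problem~\ref{P10.7}: trees admit neither a discriminant nor a Floquet theory, so there is no effective description of the band edges, and controlling the real branch points of the high-degree eliminant $D(z)$ in full generality looks hard (Bezout bounds the number of sheets only by $2^{2q}$). I would attack this first in the homogeneous case $a\equiv1$, where two additional tools are available. First, the reflection unitary $V\delta_y=(-1)^{\rho(y)}\delta_y$ of Theorem~\ref{T5.2} shows that, once $\bar b=0$, the obstruction to $b\equiv0$ is exactly the failure of $\spec(H)$ to be symmetric about its centre, a single and more tractable condition. Second, the free-product realization and the explicit band-edge Green's-function computation behind Theorem~\ref{TB.1}, namely \eqref{B7} and the Haagerup bound \eqref{B8}, give a concrete criterion for promoting a formal solution of $Hu=\lambda u$ at a candidate closed-gap energy to a polynomially bounded generalized eigenfunction. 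Showing that such a solution exists at an interior gap-labeling energy only when $b$ is constant would settle the homogeneous case and, I expect, indicate the route to the general one.
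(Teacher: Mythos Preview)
The statement you are attempting to prove is Conjecture~\ref{BC.5}, and the paper offers \emph{no proof} of it: it is explicitly presented as an open conjecture, introduced with the caveat that ``we make these conjectures with some caution bearing in mind that even a wrong conjecture can be useful.'' There is therefore nothing in the paper to compare your argument against.

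Your submission is not a proof either, and you are candid about this. What you have written is a research strategy: you propose to eliminate the $m$-functions to obtain an algebraic relation $P(z,G)=0$ for the averaged Green's function, identify band edges with real branch points, and then argue that the collapse of all interior branch points forces $b_1=\dots=b_p$. But the central step --- extracting the equalities among the $b_j$ from the coincidence conditions on the roots of the discriminant $D(z)$ --- is never carried out, and you yourself identify the obstruction as exactly the unresolved Problem~\ref{P10.7} (no effective description of gap edges, no Floquet theory, no tree discriminant). The auxiliary tools you invoke for the homogeneous case (the reflection unitary $V$, the Haagerup bound \eqref{B8}) are relevant ingredients, but the passage from ``a formal solution at an interior gap-labeling energy can be promoted to a generalized eigenfunction only when $b$ is constant'' to an actual argument is left entirely open. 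In short, this is a plausible outline of where one might look, not a proof, and since the paper proves nothing here there is no discrepancy to report --- only the observation that the conjecture remains open and your proposal does not close it.
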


We make these conjectures with some caution bearing in mind that even a wrong conjecture can be useful \cite{Jito}.  As for Conjecture \ref{C10.6}, we note that if one looks at $1D$ periodic Schr\"{o}dinger operators (with $a=1$), those with not all gaps open have codimension $1$ when $p=2$ because there is only one non-trivial parameter (namely $b_2-b_1$) but once $p\ge 3$, the codimension should be $2$.  Perhaps the same is true on trees, that is the only case where Conjecture \ref{C10.6} is false is the case we know it is, namely when $p=2$!  So

\begin{problem} \lb{BP.6} Assuming conjecture \ref{C10.5} is correct and $p\ge 3$, is the set where not all gaps are open of codimension $1$ or codimension $2$?
\end{problem}

\makeatletter
\renewcommand\@bibitem[1]{\item\if@filesw \immediate\write\@auxout
    {\string\bibcite{#1}{B\the\value{\@listctr}}}\fi\ignorespaces}
\def\@biblabel#1{[B#1]}
\makeatother

\renewcommand\refname{Additional References}

\end{document}